\documentclass[11pt]{amsart}

\setcounter{tocdepth}{3}
 
\let\oldtocsection=\tocsection
 
\let\oldtocsubsection=\tocsubsection

\renewcommand{\tocsection}[2]{\hspace{0em}\oldtocsection{#1}{#2}}
\renewcommand{\tocsubsection}[2]{\hspace{1em}\oldtocsubsection{#1}{#2}}

\setlength{\textwidth}{\paperwidth}
\addtolength{\textwidth}{-2.7in}
\calclayout

\usepackage{enumitem}

\newcommand{\subscript}[2]{$#1#2$}

\setenumerate{nolistsep}
\setitemize{nolistsep}
\usepackage[square, numbers, comma, sort&compress]{natbib}
\usepackage[final]{pdfpages}
\usepackage{graphicx}
\usepackage{esint}
\usepackage{latexsym}
\usepackage{hyperref}
\usepackage{cleveref}
\usepackage{autonum}
\usepackage{mathtools}


\usepackage{amsmath}
\usepackage{amssymb}
\usepackage{mwe}

\usepackage{tikz}
\usetikzlibrary{graphs, angles, quotes,patterns,hobby}
\usepackage{pgfplots}
\pgfplotsset{compat=1.6}

\DeclareMathOperator{\dist}{dist}

\DeclareMathOperator{\supp}{supp}

\DeclareMathOperator{\pv}{p.v.}

\DeclareMathOperator{\E}{\mathcal{E}}
\DeclareMathOperator{\M}{\mathcal{M}}
\DeclareMathOperator{\Hcal}{\mathcal{H}}

\DeclareMathOperator{\diver}{div}
\DeclareMathOperator{\diam}{diam}
\DeclareMathOperator{\Div}{div}
\DeclareMathOperator{\Lip}{Lip}
\DeclareMathOperator{\D}{\mathcal{D}}
\DeclareMathOperator{\ext}{ext}
\DeclareMathOperator{\db}{db}
\DeclareMathOperator{\inter}{int}
\DeclareMathOperator{\n1}{\nabla_1}
\DeclareMathOperator{\Rn1}{\mathbb{R}^{n+1}}
\DeclareMathOperator{\R}{\mathbb{R}}

\DeclareMathOperator{\LD}{\mathsf{LD}}
\DeclareMathOperator{\Stop}{\mathsf{Stop}}
\DeclareMathOperator{\Bad}{\mathsf{Bad}}
\DeclareMathOperator{\ve}{\varepsilon}

\DeclarePairedDelimiter{\norm}{\lVert}{\rVert}

\renewcommand{\epsilon}{\varepsilon}
\renewcommand{\tilde}{\widetilde}

\theoremstyle{plain}
\newtheorem{theor}{Theorem}[section] 

\theoremstyle{plain}

\theoremstyle{plain}
\newtheorem*{theor*}{Theorem}

\theoremstyle{plain}

\theoremstyle{remark}
\newtheorem{rem}{Remark}[section] 

\theoremstyle{definition}
\newtheorem{defin}{Definition}[section] 

\theoremstyle{plain}
\newtheorem{coroll}{Corollary}[section] 

\theoremstyle{remark}

\theoremstyle{plain}
\newtheorem{lemm}{Lemma}[section] 




\numberwithin{equation}{section}

\usepackage{hyperref}
\hypersetup{
    colorlinks=true,
    linkcolor=black,
    filecolor=black,      
    urlcolor=blue,
    citecolor=red,
}

\textwidth15.5cm
\textheight21cm
\evensidemargin.4cm
\oddsidemargin.4cm

\begin{document}


\title[Single layer potentials and rectifiability for general measures]{Gradient of the single layer potential and quantitative rectifiability for general Radon measures}

\author{Carmelo Puliatti}
\address{BGSMath and Departament de Matem\`atiques, Universitat Aut\`onoma
de Barcelona, 08193, Bellaterra, Barcelona, Catalonia}
\email{puliatti@mat.uab.cat}
\thanks{\textit{2010 Mathematics Subject Classification.} 42B37, 42B20, 35J15, 28A75.\\
The author was partially supported by 2017-SGR-0395 (Catalonia), MTM-2016-77635-P (MICINN, Spain) and MDM-2014-0445  (MINECO through the Mar\'ia de Maeztu Programme for Units of Excellence in R\&D, Spain).
}

\begin{abstract}
We identify a set of sufficient local conditions under which a significant portion of a Radon measure $\mu$ on $\Rn1$ with compact support can be covered by an $n$-uniformly rectifiable set at the level of a ball $B\subset \Rn1$ such that $\mu(B)\approx r(B)^n$. This result involves a flatness condition, formulated in terms of the so-called $\beta_1$-number of $B$, and  the $L^2(\mu|_B)$-boundedness, as well as a control on the mean oscillation on the ball, of the operator
\begin{equation}
T_\mu f(x)=\int \nabla_x\mathcal{E}(x,y)f(y)\,d\mu(y).
\end{equation}
Here $\mathcal{E}(\cdot,\cdot)$ is the fundamental solution for a uniformly elliptic operator in divergence form associated with an $(n+1)\times(n+1)$ matrix with H\"older continuous coefficients. This generalizes a work by Girela-Sarri\'on and Tolsa for the $n$-Riesz transform. The motivation for our result stems from a two-phase problem for the elliptic harmonic measure.
\end{abstract}
 \maketitle
 \tableofcontents
\section{Introduction}
In the recent work \cite{PPT}, Laura Prat, Xavier Tolsa and the author dealt with the connection between rectifiability and the boundedness of the gradient of the single layer potential. This operator plays a central role in the study of partial differential equations.  Our goal is to investigate the nature of the gradient of the single layer potential
for certain elliptic operators and apply the results to the study of elliptic measure.

An elliptic equivalent of the so-called David-Semmes problem in codimension 1 was considered in \cite{PPT}, under the assumption of H\"older continuity of the coefficients of the uniformly elliptic matrix defining a differential operator in divergence form. The case of the codimension 1 Riesz transform was studied in the deep works of Mattila, Melnikov and Verdera in the plane and by Nazarov, Tolsa and Volberg for higher dimensions (see \cite{MMV} and \cite{NTV_acta}). We remark that the David-Semmes problem for higher codimensions is still unsolved.

In the same spirit of \cite{PPT}, the aim of the present article is to establish an elliptic equivalent of a quantitative rectifiability theorem that Girela-Sarri\'on and Tolsa proved for the Riesz transfom in \cite{GT}.

Let $\mu$ be a Radon measure on $\Rn1$. Its associated $n$-dimensional Riesz transform is
\begin{equation}
\mathcal{R}^n_\mu f(x)=\int\frac{x-y}{|x-y|^{n+1}}f(y)\,d\mu(y), \,\,\qquad f\in L^1_{ \rm loc}(\mu),
\end{equation}
whenever the integral makes sense.
Given $x\in\mathbb{R}^{n+1}$ and $r>0,$ we denote by $B(x,r)$ the open ball of center $x$ and radius $r$.
A Radon measure $\mu$ has growth of degree $n$ if there exists a constant $C>0$ such that
\begin{equation}\label{n_growth}
\mu\big(B(x,r)\big)\leq C r^n\, \qquad \text{ for all }x\in\Rn1, \,\,r>0.
\end{equation}
We call $\mu$ $n$-Ahlfors-David regular (also abbreviated by $n$-AD-regular or just AD-regular) if there exists some constant $C>0$, also referred to as an AD-regularity constant, such that
\begin{equation}
C^{-1}r^n\leq \mu\big(B(x,r)\big)\leq Cr^n \,\qquad \text{ for all }x\in\supp\mu, \,0<r<\diam(\supp \mu).
\end{equation}
A set $E\subset \Rn1$ is said $n$-AD-regular if $\mathcal H^n|_E$ is a $n$-AD-regular measure, $\mathcal H^n$ denoting the $n$-dimensional Hausdorff measure in $\Rn1$.  Note that the support of an $n$-AD-regular measure is $n$-AD-regular.
	
A set $E\subset \Rn1$ is called $n$-rectifiable if there exists a countable family of Lipschitz functions $f_j\colon \mathbb{R}^n\to \Rn1$ such that
\begin{equation}
\mathcal H^n\Big(E\setminus \bigcup_jf_j(\mathbb{R}^n) \Big)=0.
\end{equation}
A measure $\mu$ is rectifiable if it vanishes outside a rectifiable set $E$ and, moreover, it is absolutely continuous with respect to $\mathcal H^n|_E$.

David and Semmes introduced the quantitative version of the notion of rectifiability, which is important because of its relations with singular integrals.
A set $E$ is called $n$-uniformly rectifiable (or just uniformly rectifiable) if it is $n$-AD regular and there exist $\theta,M>0$ such that for all $x\in E$ and all $r>0$ there is a Lipschitz mapping $g$ from the ball $B_n(0,r)\subset\mathbb R^n$ to $\Rn1$ with $\Lip(g)\leq M$ such that
\begin{equation}
\mathcal H^n\big(E\cap B(x,r)\cap g(B_n(0,r))\big)\geq \theta r^n.
\end{equation}
We say that a measure $\mu$ is $n$-uniformly rectifiable if it is $n$-AD-regular and it vanishes out of a $n$-uniformly rectifiable set.

Many characterizations of uniformly rectifiable measures are present in the literature. In particular, if the measure is $n$-AD-regular, then it is $n$-uniformly rectifiable if and only if its associated $n$-Riesz transform is bounded on $L^2$ (see \cite{DS}, \cite{MMV} and \cite{NTV_acta}).

This fact plays a crucial role in the study of the geometric properties of harmonic measure. In particular, it was used in \cite{seven_authors} to prove that the mutual absolute continuity of the the harmonic measure for an open set $\Omega\subset\Rn1$ with respect to surface measure $\mathcal H^n$ in a subset of $\partial\Omega$ implies the $n$-rectifiability of that subset. This answered a problem raised by Bishop (see \cite{bishop}).

The analogous result for elliptic measure has been proved in \cite{PPT}, following the ideas of \cite{seven_authors},  as an application of the characterization of uniform rectifiability via the boundedness of the gradient of single layer potential.

Another question proposed by Bishop asks whether, given two disjoint domains $\Omega_1,\Omega_2\subset\Rn1$, mutual absolute continuity of their respective harmonic measures implies absolute continuity with respect to surface
measure  in $\partial\Omega_1\cap\partial\Omega_2$ and rectifiability.

This is a so-called \textit{two phase problem} for harmonic measure and was eventually solved in its full generality in \cite{AMTV}. This work relies on three main tools: a blow-up argument for harmonic measure (see also \cite{KPT} and \cite{TV}), a monotonicity formula (\cite{acf}) and a quantitative rectifiability criterion (see \cite{GT}).

In particular, we point out that the theorem by Girela-Sarri\'on and Tolsa served to overcome some intrinsic technical  issue in the formulation of the problem and it can be interpreted as an adapted version of previous results by David and L\'eger,  which were formulated in terms of the so-called Menger curvature of a measure (see \cite{david} and \cite{leger}). Their theorem is of fundamental importance also in other two-phase problems examined in \cite{AMT2} and the very recent work \cite{PT}.
The goal of the present paper is to encounter an analogue criterion in the context of elliptic PDE's in divergence form with H\"older coefficients.

Let $A=(a_{ij})_{1\leq i,j \leq n+1}$ be an $(n+1)\times (n+1)$ matrix whose entries $a_{ij}\colon\R^{n+1} \to \R$  are measurable functions in $L^\infty(\R^{n+1})$. Assume also that there exists $\Lambda>0$ such that
\begin{align}\label{eqelliptic1}
&\Lambda^{-1}|\xi|^2\leq \langle A(x) \xi,\xi\rangle,\quad \mbox{ for all $\xi \in\R^{n+1}$ and a.e. $x\in\R^{n+1}$,}\\
&\langle A(x) \xi,\eta \rangle  \leq\Lambda |\xi| |\eta|, \quad \mbox{ for all $\xi, \eta \in\R^{n+1}$ and a.e. $x\in\R^{n+1}$.} \label{eqelliptic2}
\end{align}
We consider the elliptic equation
\begin{equation}\label{eq:ellipticeq}
L_A u(x)\coloneqq   -\mathrm{div}\left(A(\cdot) \nabla u (\cdot) \right)(x)=0,
\end{equation}
which should be understood in the distributional sense.
We say that a function $u \in W^{1,2}_{\rm loc}(\Omega)$ is a {\it solution} of \eqref{eq:ellipticeq}, or {\it $L_A$-harmonic}, in an open set $\Omega \subset \R^{n+1}$ if
$$
\int A \nabla u \cdot \nabla \varphi = 0, \quad \mbox{ for all $\varphi \in C_c^\infty(\Omega)$.}
$$

We denote by $\mathcal{E}_A(x,y)$, or just by $\mathcal{E}(x,y)$ when the matrix $A$ is clear from the context, the {\it fundamental solution} for $L_A$ in $\R^{n+1}$, so that $L_x \mathcal{E}_A(x,y) = \delta_y$ in the distributional sense, where $\delta_y$ is the Dirac mass at the point $y \in \R^{n+1}$. For a construction of the fundamental solution under the assumptions \eqref{eqelliptic1} and \eqref{eqelliptic2} on the matrix $A$ we refer to \cite{HK}.
Given a measure $\mu$, the function $f(x)=\int \mathcal{E}_A(x,y)\,d\mu(y)$  is usually known as the {\it single layer potential} of $\mu$.
We define
\begin{equation}\label{eq:Kdef}
K(x,y) = \nabla_1 \mathcal{E}_A(x,y),
\end{equation}
the subscript $1$ indicating that we take the gradient with respect to the first variable, and we consider \eqref{eq:Kdef} as the kernel of the singular integral operator
\begin{equation}\label{eq:Tmudef}
T\mu(x) = \int K(x,y) \,d\mu(y),
\end{equation}
for $x$ away from $\mathrm{supp}(\mu)$.   Observe that $T\mu$ is the gradient of the single layer potential of $\mu$.

Given a function $f\in L^1_{loc}(\mu)$,
we set also
\begin{equation}\label{eq:Tfdef}
T_\mu f(x) = T(f\,\mu)(x) = \int K(x,y) f(y)\,d\mu(y),
\end{equation}
and, for $\ve>0$, we consider the $\ve$-truncated version
$$
T_{\ve}\mu(x) = \int_{|x-y|>\ve} K(x,y) \,d\mu(y).
$$
We also write $T_{\mu,\ve} f(x) = T_{\ve} (f\mu)(x)$. We say that the operator $T_\mu$ is bounded on $L^2(\mu)$ if the operators $T_{\mu,\ve}$ are bounded on $L^2(\mu)$ uniformly on $\ve>0$.

In the specific case when $A$ is the identity matrix, $-L_A=\Delta$ and $T$ is the $n$-dimensional Riesz transform up to a dimensional constant factor.
We say that the matrix $A$ is H\"older continuous with exponent $\alpha\in (0,1)$ (or briefly $C^\alpha$ continuous), if there exists $C_h>0$ such that
\begin{equation}\label{eq:Holdercont}
|a_{ij}(x)-a_{ij}(y)| \leq C_h |x-y|^\alpha\quad \mbox{ for all $x,y \in \mathbb{R}^{n+1}$ and $1 \leq i,j \leq n+1$.}
\end{equation}
Under this assumption on the coefficients, the kernel $K(\cdot,\cdot)$ turns out to be locally of Calder\'on-Zygmund type (see Lemma \ref{lemcz} for more details). However we remark that, contrarily to what happens in the case of the kernel of the Riesz transform, in general $K(\cdot,\cdot)$ is neither homogeneous nor antisymmetric (not even locally).

For our applications, it is useful to determine whether $T_{\mu,\varepsilon}f$ converges pointwise $\mu$-almost everywhere for $\varepsilon\to 0$. In case it does, we denote the limit as
\begin{equation}
\pv T_\mu f(x)=\lim_{\varepsilon\to 0}T_{\mu,\varepsilon}f(x)
\end{equation}
and we call it the \textit{principal value} of the integral $T_\mu f(x)$. One can prove the existence of the principal values for general Radon measures with compact support under the additional assumption of $L^2(\mu)$-boundedness of $T_\mu$. In particular, our first result is the following.
\begin{theor}\label{theorem_pv_layer_pot}
Let $\mu$ be a Radon measure on $\Rn1$ with compact support and with growth of degree $n$, i.e. suppose that there is $C>0$ such that
\begin{equation}
\mu(B(x,r))\leq C r^n \,\,\,\,\,\text{ for all } \,\,x\in\Rn1.
\end{equation}
Let $A$ be a matrix that satisfies \eqref{eqelliptic1}, \eqref{eqelliptic2} and \eqref{eq:Holdercont} and assume, moreover, that the gradient of the single layer potential $T_\mu$ associated with $L_A$ is bounded on $L^2(\mu)$. Then:
\begin{enumerate}
\item for $1\leq p<\infty$ and all $f\in L^p(\mu)$, $\pv T_\mu f(x)$ exists for $\mu$-a.e. $x\in\Rn1$;
\item for all $\nu\in M(\Rn1)$, $\pv T\nu(x)$ exists for $\mu$-a.e. $x\in\Rn1$.
\end{enumerate}
\end{theor}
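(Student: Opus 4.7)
The plan is to follow the classical scheme developed by Mattila--Verdera and Tolsa for principal values of singular integrals with respect to non-doubling measures, adapting it to the present elliptic setting.

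The first step is to establish bounds for the maximal truncated operator $T_{\mu,*}f(x) := \sup_{\varepsilon>0}|T_{\mu,\varepsilon}f(x)|$. Since $\mu$ has $n$-growth and $T_\mu$ is $L^2(\mu)$-bounded, and since by Lemma \ref{lemcz} the kernel $K$ is locally of Calder\'on--Zygmund type, a Cotlar-type inequality of the shape
\[
T_{\mu,*}f(x) \lesssim M_{\mu,(2)}\bigl(T_\mu f\bigr)(x) + M_\mu f(x),
\]
where $M_{\mu,(2)}$ and $M_\mu$ are suitable centered non-doubling maximal functions, should follow by the standard truncation argument. Combined with the known $L^p$ bounds for these maximal functions this gives strong type $(p,p)$ for $1<p<\infty$ and weak type $(1,1)$ for $T_{\mu,*}$; an analogous weak-type $(1,1)$ bound for $\nu\mapsto\sup_\varepsilon|T_\varepsilon\nu|$ as a map from $M(\R^{n+1})$ into $L^{1,\infty}(\mu)$ is obtained the same way.

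For item (1), by a standard density argument built on these maximal bounds, it suffices to prove $\mu$-a.e.\ pointwise convergence of $T_{\mu,\varepsilon}f(x)$ as $\varepsilon\to 0$ for $f$ in a dense subclass of $L^p(\mu)$. I would take this class to consist of Lipschitz functions with compact support, and for such $f$ split
\[
T_{\mu,\varepsilon}f(x) = \int_{|x-y|>\varepsilon}K(x,y)\bigl(f(y)-f(x)\bigr)\,d\mu(y) + f(x)\int_{|x-y|>\varepsilon}K(x,y)\,d\mu(y).
\]
The first integral converges absolutely by the size estimate $|K(x,y)|\lesssim |x-y|^{-n}$, the Lipschitz bound on $f$, and the $n$-growth of $\mu$. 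For the delicate second integral I would invoke a \emph{freezing} principle, replacing $K(x,y)$ by $\nabla_1\mathcal{E}_{A(y)}(x,y)$, a constant-coefficient kernel which, up to a dimensional factor, is antisymmetric in $x$ and $y$. H\"older continuity of $A$ and standard fundamental-solution estimates should yield $\bigl|K(x,y)-\nabla_1\mathcal{E}_{A(y)}(x,y)\bigr|\lesssim |x-y|^{-n+\alpha}$, so that this error contributes absolutely convergent integrals, while the frozen antisymmetric principal part is handled by cancellation arguments analogous to those used for the Riesz transform, together with the $L^2(\mu)$-boundedness of $T_\mu$.

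Once $\pv T_\mu g$ exists $\mu$-a.e.\ for $g$ in the dense class, any $f\in L^p(\mu)$ can be written as $f=g+h$ with $h$ small in $L^p(\mu)$; the oscillation $\limsup_{\varepsilon,\delta\to 0}|T_{\mu,\varepsilon}f-T_{\mu,\delta}f|\leq 2T_{\mu,*}h$ is then controlled in $\mu$-measure via the weak-type bound, yielding (1) by Chebyshev. For (2), use the Lebesgue decomposition $\nu=f\mu+\nu_s$: the absolutely continuous part is handled by (1) with $p=1$, since $f\in L^1(\mu)$, while for $\nu_s\perp\mu$ mutual singularity together with the $n$-growth of $\mu$ forces $\lim_{r\to 0}r^{-n}|\nu_s|(B(x,r))=0$ at $\mu$-a.e.\ $x$, and a dyadic tail summation exploiting $|K|\lesssim |x-y|^{-n}$ then shows that $T_\varepsilon\nu_s(x)$ is Cauchy as $\varepsilon\to 0$ at such points. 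The main obstacle is the dense-class analysis: contrary to the Riesz kernel, $K$ is neither homogeneous nor antisymmetric, not even locally, so the familiar cancellation arguments do not apply directly; the freezing trick circumvents this by isolating a constant-coefficient antisymmetric principal part and paying only a H\"older-type error, which hypothesis \eqref{eq:Holdercont} makes integrable with respect to $\mu$ near the diagonal.
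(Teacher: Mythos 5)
Your plan follows the general Cotlar/density-argument template, but it diverges from the paper's proof in a way that leaves the central difficulty unaddressed, and it contains two concrete errors.

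The paper's proof is organized around a dichotomy that you do not use. By a structure theorem from \cite{PPT}, a measure $\mu$ with $n$-growth and bounded $T_\mu$ splits into a rectifiable part and a part with zero $n$-density $\mu$-a.e. For the rectifiable part the paper proves existence of the principal value for the frozen odd kernel $b(x,z)=\nabla_1\Theta(z,0;A(x))$ via a spherical-harmonics expansion (Lemma~\ref{lemma_mt_lip}, based on the David--Semmes and Mitrea--Taylor machinery), and absorbs the freezing error as a compact operator. For the zero-density part it invokes the Mattila--Verdera theorem for antisymmetric Calder\'on--Zygmund kernels (Theorem~\ref{thm_zero_density_mv}) applied to $K^{(a)}(x,y)=\tfrac12(K(x,y)-K(y,x))$, with the genuinely symmetric part $K^{(s)}$ again compact by freezing. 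Your proposal contains neither the decomposition of $\mu$ nor either of these two key lemmas.

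The specific gaps are as follows. First, the dense-class reduction does not reduce the difficulty: after writing
\begin{equation}
T_{\mu,\varepsilon}f(x)=\int_{|x-y|>\varepsilon}K(x,y)\bigl(f(y)-f(x)\bigr)\,d\mu(y)+f(x)\,T_\varepsilon\mu(x),
\end{equation}
the first term is indeed absolutely convergent, but the second requires the $\mu$-a.e.\ existence of $\pv T\mu(x)$, which is precisely item (2) of the theorem with $\nu=\mu$. Replacing $f$ by Lipschitz competitors does not make this term disappear, so the argument is circular unless you give an independent proof of $\pv T\mu(x)$ — and that proof is the substance of the theorem. Second, the kernel $\nabla_1\Theta(x,y;A(y))$ is \emph{not} antisymmetric in $(x,y)$: for fixed constant $A_0$ one has $\nabla_1\Theta(x,y;A_0)=-\nabla_1\Theta(y,x;A_0)$, but once the coefficient is frozen at the variable point $y$ the relation $\nabla_1\Theta(x,y;A(y))=-\nabla_1\Theta(y,x;A(x))$ fails whenever $A(x)\neq A(y)$. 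A genuinely antisymmetric principal part requires either the explicit antisymmetrization $K^{(a)}$ or a symmetrized freezing such as $\tfrac12\bigl(\nabla_1\Theta(x,y;A(x))+\nabla_1\Theta(x,y;A(y))\bigr)$, and the ``cancellation arguments analogous to those for the Riesz transform'' that you invoke for it are, even in the Riesz case, precisely the rectifiable/zero-density dichotomy that you omit. Third, the tail estimate for the singular part $\nu_s$ in item (2) does not close: from $\nu_s\perp\mu$ and the $n$-growth of $\mu$ one correctly gets $\eta(r):=r^{-n}|\nu_s|(B(x,r))\to 0$ at $\mu$-a.e.\ $x$, but the dyadic estimate $\bigl|T_{\delta_1}\nu_s(x)-T_{\delta_2}\nu_s(x)\bigr|\lesssim\sum_{k\geq 0}\eta(2^{-k}\delta_2)$ based solely on $|K|\lesssim|x-y|^{-n}$ does not go to $0$ as $\delta_1,\delta_2\to 0$ unless $\sum_k\eta(2^{-k})<\infty$, which the mere vanishing of $\eta$ does not provide. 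This is exactly why the Mattila--Verdera result uses the antisymmetry of the kernel rather than size bounds alone. Absent these three ingredients the proposal does not constitute a proof.
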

If $A\equiv Id,$ Theorem \ref{theorem_pv_layer_pot} reduces to its analogous for the Riesz transform (see for example \cite[Chapter 8]{tolsa_book}).
In light of this result, in the rest of the paper we will often denote the principal value operator simply as $T\nu$ with a slight abuse of notation.

Given a ball $B=B(x,r)\subset \Rn1,$ we denote by $r(B)$ its radius and, for $a>0,$ by $a B$ its dilation $B(x,a r).$
Multiple notions of density come into play in this paper. For a ball $B$, we denote
\begin{equation}
\Theta_\mu(B)=\frac{\mu(B)}{r(B)^n}
\end{equation}
and, for $\gamma>0$, its smoothened version
\begin{equation}\label{p_density_balls}
P_{\mu, \gamma}(B)\coloneqq  \sum_{j\geq 0} 2^{-j\gamma}\Theta_{\mu}(2^j B).
\end{equation}
We remark that if $\gamma_1\leq\gamma_2,$ then
\begin{equation}
P_{\mu,\gamma_2}(B)=\sum_{j\geq 0}2^{-j\gamma_2}\Theta_\mu(2^j B )\leq \sum_{j\geq 0}2^{-j\gamma_1}\Theta_\mu(2^jB)=P_{\mu,\gamma_1}(B).
\end{equation}
Another notion of density that we need is the pointwise one. In particular, we  denote the upper and lower $n$-densities of $\mu$ at $x$ respectively as
\begin{equation}
\Theta^*_\mu(x)\coloneqq  \limsup_{r\to 0}\frac{\mu\big(B(x,r)\big)}{(2r)^n} \,\, \text{ and } \,\,\Theta_{*,\mu}(x)\coloneqq  \liminf_{r\to 0}\frac{\mu\big(B(x,r)\big)}{(2r)^n}.
\end{equation}
A way to quantify the flatness of a measure at the level of a ball $B$ is in terms of the $\beta$-coefficients. For an $n$-plane $L$ we denote
\begin{equation}
\beta^L_{\mu,1}(B)=\frac{1}{r(B)^n}\int_B\frac{\dist(x,L)}{r(B)}d\mu(x)\qquad\text{ and }\qquad \beta_{\mu,1}(B)=\inf_L \beta^L_{\mu,1}(B),
\end{equation}
the infimum being taken over all hyperplanes in $\Rn1$.
Using a standard notation, given $E\subset\Rn1$ with $\mu(E)>0$ and $f\in L^1_{loc}(\mu)$ we write
\begin{equation}
m_{\mu,E}(f)=\frac{1}{\mu(E)}\int_E f d\mu
\end{equation}
for the mean of $f$ with respect to the measure $\mu$ on the set $E$.
The main result of the paper is the following.
\begin{theor}\label{main_theorem}
Let $n>1$, let $\mu$ be a Radon measure on $\Rn1$ with compact support and consider an open ball $B\subset\Rn1$. Let $C_0, C_1>0$ and let $A$ be a matrix satisfying \eqref{eqelliptic1}, \eqref{eqelliptic2} and \eqref{eq:Holdercont}. Denote by $T_\mu$ the gradient of the single layer potential associated with $L_A$ and $\mu$. Suppose that $\mu$ and $B$ are such that, for some positive $\lambda, \delta$ and $\epsilon$ and some $\tilde{\alpha}\in (0,1),$ the following properties hold
\begin{enumerate}
\item $r(B)\leq\lambda$.
\item $C_0^{-1}r(B)^n\leq \mu(B)\leq C_0 r(B)^n.$
\item $P_{\mu,\tilde{\alpha}}(B)\leq C_0$ and for all $x\in B$ and $0<r\leq r(B)$ we have $\mu\big(B(x,r)\big)\leq C_0 r^n.$
\item $T_{\mu|_B}$ is bounded on $L^2\big(\mu|_B\big)$ with $\|T_{\mu|_B}\|_{L^2(\mu|_B)\to L^2(\mu|_B)}\leq C_1$ and $T\big(\chi_{2B}\mu\big)\in L^2\big(\mu|_B\big)$.
\item $\beta_{\mu,1}(B)\leq \delta.$
\item We have
\begin{equation}
\int_B\big|T\mu(x)-m_{\mu,B}(T\mu)\big|^2d\mu(x)\leq\epsilon\mu(B).
\end{equation}
\end{enumerate}
There exists a choice of $\lambda,\delta$ and $\epsilon$ small enough and a proper choice of $\tilde{\alpha}=\tilde \alpha(\alpha,n)$, all possibly depending on $C_0$ and $C_1$, such that if $\mu$ satisfies (1)$-\cdots-$(6), there exists a $n$-uniformly rectifiable set $\Gamma$ that covers a big portion of the support of $\mu$ inside $B$. That is to say, there exists $\tau>0$ such that
\begin{equation}
\mu(B\cap\Gamma)\geq \tau\mu(B).
\end{equation}
\end{theor}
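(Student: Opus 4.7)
The strategy I would pursue follows the blueprint of Girela-Sarri\'on and Tolsa in \cite{GT} for the Riesz transform, adapted to handle the peculiarities of the kernel $K(x,y)=\nabla_1\mathcal{E}_A(x,y)$: the lack of antisymmetry and homogeneity, and the presence of a smoothly varying (H\"older) matrix $A$. The guiding idea is that under hypothesis (1), the scale of $B$ is small enough that $A$ is essentially constant on $2B$, so we should compare $K$ with a frozen-coefficient kernel $K_0$ obtained by fixing $A_0=A(z_0)$ at some distinguished point $z_0\in B$. Since $L_{A_0}$ is conjugated to the Laplacian by the linear map $\Phi(x)=A_0^{-1/2}x$, the kernel $K_0$ becomes, up to a matrix-valued constant depending on $A_0$ only, the $n$-dimensional Riesz kernel in the $\Phi$-coordinates. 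The plan is to transfer the hypotheses (1)--(6) to analogous hypotheses for the Riesz transform of the push-forward measure $\tilde\mu=\Phi_\ast(\mu)$ on $\tilde B=\Phi(B)$, apply the theorem of \cite{GT}, and pull back the resulting uniformly rectifiable set.

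The first technical step is to quantify the perturbation $K-K_0$. Using the H\"older continuity \eqref{eq:Holdercont} together with the pointwise Calder\'on--Zygmund bounds for $K$ recorded in Lemma \ref{lemcz}, one obtains an estimate of the form $|K(x,y)-K_0(x,y)|\lesssim |x-y|^{\alpha-n}$ with constant controlled by $C_h$, valid for $x,y$ in a fixed neighborhood of $B$. Combined with the $n$-growth condition (3) and $r(B)\le \lambda$, this yields
\begin{equation*}
\int_B |T\mu(x)-T_0\mu(x)|^2\,d\mu(x)\le C(C_0,C_h)\,\lambda^{2\alpha}\,\mu(B),
\end{equation*}
and similarly a bound on the $L^2(\mu|_B)\to L^2(\mu|_B)$ norm of the difference $T_{\mu|_B}-T_{0,\mu|_B}$. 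Choosing $\lambda$ small, the $L^2$-boundedness in (4) transfers to $T_0$, and the mean oscillation in (6) transfers to $T_0\mu$ (the passage from $T\mu$ to $T_0\mu$ introduces only an error controlled by $\lambda^{2\alpha}\mu(B)$, absorbed into $\epsilon$). The choice of the exponent $\tilde\alpha=\tilde\alpha(\alpha,n)$ in the smoothed density $P_{\mu,\tilde\alpha}(B)$ is dictated by this H\"older comparison: one needs enough decay of the tails to bound the contribution of $\mu$ outside $2B$ to $T\mu-T_0\mu$ by a constant times $r(B)^\alpha P_{\mu,\tilde\alpha}(B)$.

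Once the frozen problem is in place, applying the bi-Lipschitz linear map $\Phi$ produces a measure $\tilde\mu$ on $\tilde B$ for which all six hypotheses hold, with the operator $T_0$ replaced by the Riesz transform $\mathcal R^n_{\tilde\mu}$ (up to multiplication by a constant matrix depending on $A_0$, which does not affect oscillation estimates since it can be absorbed in the mean). Conditions (1)--(3) and (5) are preserved up to multiplicative constants because $\Phi$ is a linear map with operator norm and inverse norm bounded in terms of the ellipticity constant $\Lambda$; hypothesis (6) transfers in the same way, possibly after further adjusting $\epsilon$. At this point one invokes the main theorem of \cite{GT} for the Riesz transform to produce an $n$-uniformly rectifiable set $\tilde\Gamma\subset\tilde B$ with $\tilde\mu(\tilde\Gamma\cap \tilde B)\gtrsim \tilde\mu(\tilde B)$. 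Setting $\Gamma=\Phi^{-1}(\tilde\Gamma)$ and using that bi-Lipschitz images of $n$-UR sets are $n$-UR produces the desired set, and the density estimates in (2) translate the mass bound on $\tilde\Gamma$ into the conclusion.

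I expect the main obstacle to be the faithful transfer of the mean oscillation hypothesis (6). The quantity $m_{\mu,B}(T\mu)$ involves both the elliptic kernel and the full measure $\mu$, including its tails outside $B$, while the GT theorem controls oscillations of the Riesz transform of a truncated, transported measure. Reconciling these requires writing the full error as a sum of (a) the H\"older perturbation $T\mu-T_0\mu$ on $B$, (b) the change-of-variable contribution, and (c) the tail of $\mu$ outside $2B$ seen through the frozen kernel. Each must be bounded by a small multiple of $\mu(B)$ after the choices of $\lambda,\delta,\epsilon,\tilde\alpha$, and the bookkeeping of constants (which must not depend on $\mu$ or $B$ except through $C_0,C_1$) is delicate. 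The other nontrivial point is the dependence of $\tilde\alpha$ on $\alpha$ and $n$, which must be sharp enough to make the smoothed density term in the kernel estimates summable while weak enough to be guaranteed by hypothesis (3).
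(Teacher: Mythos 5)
Your plan is the natural first attempt, but it misses the central technical obstruction that forces the paper onto a much longer road. You propose to compare $T\mu$ with the frozen potential $T_0\mu = \int \nabla_1\Theta(x-y;A_0)\,d\mu(y)$ and to claim
\begin{equation}
\int_B |T\mu(x)-T_0\mu(x)|^2\,d\mu(x)\lesssim \lambda^{2\alpha}\mu(B).
\end{equation}
This step is not available, because the freezing estimate of Lemma \ref{lemm_freezing} is \emph{local}: $|\nabla_1\E_A(x,y)-\nabla_1\Theta(x,y;A(x))|\lesssim |x-y|^{\alpha-n}$ only for $|x-y|\le R$, with a constant depending on $R$. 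At large separations the kernel $K=\nabla_1\E_A$ does \emph{not} have Calder\'on--Zygmund decay: Lemma \ref{lemcz}(c) gives the much weaker bound $|K(x,y)|\lesssim |x-y|^{(1-n)/2}$ for $|x-y|\geq 1$, and there is no analogue of the smoothness estimate (b) at those scales. Since $T\mu$ and hypothesis (6) involve the \emph{full} measure $\mu$ (including arbitrarily far tails), the far-field contribution to $T\mu-T_0\mu$ cannot be bounded by any quantity independent of $\diam(\supp\mu)$. You acknowledge the tail as an obstacle, but describe it as ``the tail of $\mu$ outside $2B$ seen through the frozen kernel''---that piece is in fact harmless, since $K_0$ is a genuine global CZ kernel after the linear change of variables; the dangerous tail is the one seen through the original kernel $K$.

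This is precisely why the paper does not reduce to \cite{GT} by freezing. Its route is different: after the reduction to a cube $Q_0$ (the Main Lemma \ref{mainlemma_origin}), the matrix $A$ is \emph{replaced by a periodic matrix} $\bar A$ (Section \ref{section_change}) that coincides with $A$ on $2Q_0$, is obtained by iterated reflections across the faces of $3Q_0$, and is then extended periodically. The whole point of this construction is that for a periodic matrix the homogenization estimates of Avellaneda--Lin and Kenig--Shen (Lemmas \ref{CZ_1}, \ref{CZ_l}, \ref{lemm_homog_1_periodic}, \ref{lemm_estim_freezing_periodic}) give \emph{global} CZ bounds $|\bar K(x,y)|\lesssim |x-y|^{-n}$ and global smoothness, plus the sharp homogenization comparison with $\nabla_1\Theta(\cdot,\cdot;A_0)$ with error $\ell^\gamma|x-y|^{-n-\gamma}$. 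Only then can the paper periodize the measure, control the full potential $\bar T\eta$, set up the variational argument and the maximum principle on an infinite strip, and arrive at the Key Lemma. The loss $\tilde\alpha=\alpha/2^{n+1}$ in the theorem is the price of the reflections: each reflection halves the H\"older exponent of the modified matrix. If your one-shot freezing reduction worked, one would get the theorem for $\tilde\alpha=\alpha$ or even $\tilde\alpha=1$, which the paper explicitly says it cannot achieve; this mismatch is another signal that the direct reduction fails.

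A second, related issue: you treat the part of $K-K_0$ of size $r(B)^\alpha|x-y|^{-n}$ as a harmless small perturbation. That kernel has critical (non-integrable) decay, so it is not controllable by a Schur test; one must exploit cancellation. Bounding this piece in $L^2(\mu)$ requires $L^2(\mu)$-boundedness of a Riesz-type transform of $\mu$, which is essentially what one is trying to prove. The bootstrap you sketch resolves the operator-norm circularity for a truncated restriction of $\mu$, but it does not by itself produce the BMO-type oscillation estimate (6) for the Riesz transform of the \emph{entire} pushforward measure, which is what an application of \cite{GT} requires. Closing that gap is precisely what Sections \ref{section_key_lemma}--\ref{section_var_arg} of the paper accomplish by redoing the stopping-time, periodization, and variational arguments intrinsically for the elliptic kernel, rather than reducing to the Riesz transform.
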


Notice that Theorem \ref{main_theorem} immediately implies that a big piece of $\mu|_B$ is mutually absolutely continuous with a big piece of $\mathcal H^n|_\Gamma$. This is a relevant feature in light of possible applications, in particular to elliptic measure.

{Our proof of the theorem shows that a good choice for $\tilde{\alpha}$ is $\tilde{\alpha}=\alpha/2^{n+1}$.}
It is not clear whether  Theorem \ref{main_theorem} holds with a condition on $P_{\mu,\alpha}(B),$ that would be a more natural homogeneity to assume.
We remark that the integral in the left hand side of the assumption (6) makes sense because of the existence of principal values ensured by Theorem \ref{theorem_pv_layer_pot} and the hypothesis $P_{\mu,\alpha}(B)<+\infty.$ For a sketch of the argument we refer to the end of Section \ref{section_pv}.

The main conceptual difference with respect to the analogous theorem for the Riesz transform in \cite{GT} is that we need to require the ball $B$ to be small enough.
The locality of our result reflects the non-scale invariant character of the H\"older regularity assumption for the coefficients of the matrix $A$. This issue is evident also in \cite{PPT} and it is not clear how to overcome this difficulty without making further assumptions on the matrix.

Another difference is that we could not formulate the theorem in terms of $P_{\mu, 1}$. The proofs of the rectifiability results for the harmonic measure in \cite{AMT} and \cite{AMTV} actually rely on the fact that the theorem of Girela-Sarri\'on and Tolsa holds for $\tilde\alpha=1$. However, a slight variation on their arguments allows to overcome this technical obstacle. We close
the introduction by presenting an application of Theorem \ref{main_theorem}, which is, in fact, its main motivation.

Before stating it, recall that if $\Omega$ is a Wiener regular set, the elliptic measure $\omega^p_{L_A}$ with pole at $p$ associated with the elliptic operator $L_A$ is the probability measure supported on $\partial\Omega$ such that, for $f\in C_0(\partial\Omega)$, 
\begin{equation}
\int f d \omega^p_{L_A}=\tilde{f}(p),
\end{equation}
where $\tilde{f}$ denotes the $L_A$-harmonic extension of $f$. A large literature is available on the subject. For example, we refer to \cite{HKM} and \cite{Ke} for its definition and basic properties.

\begin{theor}\label{theorem_two_phase_elliptic}
Let $n\geq 2$ and let $A$ be an elliptic matrix satisfying \eqref{eqelliptic1}, \eqref{eqelliptic2} and \eqref{eq:Holdercont}. Let $\Omega_1,\Omega_2\subset\Rn1$ be two Wiener-regular domains and, for $p_i\in\Omega_i$, $i\in\{1,2\},$ let $\omega^{p_i}_{L_A,i}$ be the respective elliptic measures in $\Omega_i$ associated with $L_A$ and with pole $p_i$. Suppose that $E$ is a Borel set such that $\omega^{p_1}_{L_A,1}|_E\ll\omega^{p_2}_{L_A,2}|_E\ll\omega^{p_1}_{L_A,1}|_E.$
Then there exists an $n$-rectifiable set $F\subset E$ with $\omega^{p_1}_{L_A,1}(E\setminus F)=0$ such that $\omega^{p_1}_{L_A,1}|_F$ and $\omega^{p_2}_{L_A,2}|_F$ are mutually absolutely continuous with respect to $\mathcal H^n|_F.$
\end{theor}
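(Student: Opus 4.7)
The strategy follows the blueprint of \cite{AMTV} for harmonic measure, where the three main ingredients are a blow-up/tangent measure analysis, the Alt-Caffarelli-Friedman monotonicity formula, and the quantitative rectifiability criterion. In our setting, Theorem \ref{main_theorem} plays the role of the third ingredient, so the plan is to install the other two in the variable-coefficient Hölder setting and then feed the output of the blow-up/ACF machinery into Theorem \ref{main_theorem}. Concretely, it suffices to show that at $\omega^{p_1}_{L_A,1}$-a.e.\ point $x_0\in E$ there is a ball $B=B(x_0,r)$, with $r$ arbitrarily small, on which $\mu\coloneqq \omega^{p_1}_{L_A,1}$ satisfies hypotheses (1)--(6) of Theorem \ref{main_theorem} with universal constants; by a Vitali/exhaustion argument this produces a uniformly rectifiable set $\Gamma$ covering a $\mu$-full subset $F\subset E$.

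First I would reduce to working at a \emph{good point} $x_0\in E$: by standard density arguments one may assume that the Radon--Nikodym derivatives $d\omega_1/d\omega_2$ and $d\omega_2/d\omega_1$ are both comparable to a constant on small balls $B(x_0,r)$, and that $\Theta^{n,*}_{\omega_1}(x_0)$ is finite (for $\omega_1$-a.e.\ such $x_0$, by the results of the present paper combined with \cite{PPT}, this amounts to $\omega_1$ being pointwise doubling and having polynomial upper density of degree $n$). This furnishes the $n$-growth bound (3), the non-degeneracy bound $\mu(B)\gtrsim r(B)^n$ in (2), and the smallness $r(B)\le\lambda$ in (1). The $L^2(\mu|_B)$-bound (4) and the fact that $T(\chi_{2B}\mu)\in L^2(\mu|_B)$ come from the identification, via CFMS-type boundary estimates for the Green's function of $L_A$ in $\Omega_i$, of $T\omega_i$ with the gradient of the Green's function with pole at $p_i$, together with the Caccioppoli/Cauchy--Schwarz control it provides on small balls far from the poles.

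The heart of the argument is to obtain (5) and (6). For this I would combine a blow-up argument with an elliptic Alt--Caffarelli--Friedman-type monotonicity formula. For Hölder continuous matrices the classical ACF formula only holds up to an admissible multiplicative error tending to $1$ at small scales (this is the variable-coefficient almost-monotonicity of Matevosyan--Petrosyan/Caffarelli--Jerison--Kenig-type), and this is precisely why the locality condition $r(B)\le\lambda$ cannot be removed. Applying this almost-monotonicity to the Green's functions $G_1(\cdot,p_1),G_2(\cdot,p_2)$ at a good point $x_0$ where both $\omega_i$ are non-degenerate produces, in the limit, two half-space solutions of $L_A$ separated by a hyperplane. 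Translating this back to a finite scale yields the flatness estimate $\beta_{\mu,1}(B)\le\delta$ in (5); at the same time the quantitative almost-monotonicity inequality, integrated against $d\mu$, yields the mean-oscillation smallness of $T\mu$ needed in (6), because $T\mu$ in $B$ coincides, modulo a Calderón--Zygmund-controlled error, with $\nabla G_1(\cdot,p_1)$, which is nearly constant on $B$ by the monotonicity output.

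The main obstacle, and where I would concentrate the technical effort, is Step 3: pushing through an ACF-type quasi-monotonicity formula that is quantitatively sharp enough (with error depending on a power of $r^\alpha$ through the Hölder constant $C_h$) to imply both the flatness and the oscillation bounds simultaneously, and doing so with constants that survive the blow-up to produce the universal $\delta$ and $\epsilon$ required to invoke Theorem \ref{main_theorem}. Once a rectifiable set $F\subset E$ with $\omega_1(E\setminus F)=0$ is produced, the claim that $\omega_1|_F$ and $\omega_2|_F$ are mutually absolutely continuous with $\mathcal{H}^n|_F$ follows because on the uniformly rectifiable pieces the elliptic measure is $A_\infty$-equivalent to surface measure (by the elliptic analogue, available in the Hölder setting, of the results used in \cite{seven_authors} and extended in \cite{PPT}), and $\omega_1\approx \omega_2$ on $E$ by hypothesis.
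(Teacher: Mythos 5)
Your blueprint is the right one at a high level (blow-up, ACF-type monotonicity, quantitative rectifiability criterion), but there is a genuine gap: you cannot directly verify hypotheses (4) and (6) of Theorem \ref{main_theorem} at the good points. Hypothesis (4) asks that $T_{\mu|_B}$ be bounded on $L^2(\mu|_B)$, and this is essentially equivalent to what you are trying to prove — it does not follow from CFMS/Bourgain estimates on the Green function plus Caccioppoli. Likewise, the BMO-type bound in (6) is over the whole ball $B$, whereas the Green function/ACF argument only controls $T\mu$ pointwise and its oscillation on a good subset of $B$. The paper resolves this by first reformulating Theorem \ref{main_theorem} as Theorem \ref{main_thm_new}, in the spirit of \cite[Theorem 3.3]{AMT}: hypothesis (4) is replaced by a pointwise maximal bound $\sup_{r\le 2r(B)}\Theta_\mu(B(x,r)) + T_*(\chi_{2B}\mu)(x)\lesssim\Theta_\mu(B)$ on a good subset $G_B\subset B$, and (6) by a mean-oscillation estimate over $G_B$ only. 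The proof of Theorem \ref{main_thm_new} then hinges on a $Tb$ theorem for suppressed kernels of Nazarov--Treil--Volberg applied to the antisymmetric part of $K$, with the symmetric part handled via the freezing Lemma \ref{lemm_freezing}. Without this reformulation, the blow-up/ACF machinery (the elliptic almost-monotonicity from \cite{AGMT}, the tangent-measure flatness from \cite{AM2}, and Lemmas \ref{transition_monotonicity_elliptic_measure}--\ref{estim_T_dens}) has nowhere to plug in, so your Step 3 cannot close.

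A secondary imprecision: in the last step, mutual absolute continuity with $\mathcal H^n$ on $F$ is not obtained via an $A_\infty$-equivalence of the elliptic measure with surface measure on UR pieces. The paper instead splits the good set into a zero-density part $G^{zd}_m$ (shown to be $\omega_1$-null via Theorem \ref{main_thm_new} and Lemma \ref{estim_T_dens}) and a positive-density part $G^{pd}_m$, and then invokes \cite[Theorem 3]{PPT} on $G^{pd}_m$ to produce an $n$-rectifiable set on which $\omega_1$ and $\mathcal H^n$ are mutually absolutely continuous; $\omega_2$ then inherits the same property by hypothesis. Finally, your appeal to a Matevosyan--Petrosyan/CJK-type almost-monotonicity is in the right spirit, but you should note that the paper uses the formulation of \cite{AGMT} (Lemma \ref{monotonicity}), which requires $A_s(\xi)=Id$ at the base point; the general case is handled via the linear change of variables of Lemma \ref{lemma_change_matrix_identity}, and this normalization is what produces the precise exponential error factor $e^{c(s^\alpha-r^\alpha)}$ that you need for the argument to work with a smallness parameter $\lambda$.
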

We remark that the generalization of the blow-up methods for the harmonic measure to our elliptic context is contained in the work \cite{AM2}. Also, the proof of Theorem \ref{theorem_two_phase_elliptic} follows closely the path of the work \cite{AMTV}. However, some variations are needed so that we decided to sketch the proof at the end of the paper, where we also provide precise references for the reader's convenience.

We finally remark that recently several studies have appeared concerning the connection between the geometry of a domain and the properties of its associated elliptic measure, among which we list \cite{ABHM}, \cite{AGMT}, \cite{HKMP}, \cite{HMiT}, \cite{HMT} and \cite{KKiPT}.

\subsection*{The structure of the work} \textit{Section 2} is devoted to settle our notation and to make an overview of the results in PDE's relevant for our work. In particular, we need some estimate for the gradient of the fundamental solution coming from homogenization theory. 

In \textit{Section 3} we prove Theorem \ref{theorem_pv_layer_pot}.

\textit{Section 4} contains the statement of the Main Lemma that we use to prove Theorem \ref{main_theorem}. The biggest advantage of the formulation of this lemma with respect to the one of the main theorem is that the flatness condition on the $\beta_1$-number is replaced by an hypothesis on the $\alpha$-numbers. The latter are more powerful when trying to transfer the flatness estimates to the integrals.

In \textit{Section 5} we discuss an equivalent formulation of the Main Lemma in terms of an auxiliary elliptic operator which shares more symmetries than $L_A$. This is a novelty of the elliptic case, this issue not being present in the work of Girela-Sarri\'on and Tolsa.

The \textit{Sections 6, 7, 8 and 9} follow the path of the original work for the Riesz transform, with some minor variations. They are necessary for expository reasons; indeed, they present the core of the contradiction argument for the proof of the Main Lemma and the construction of a periodic auxiliary measure.

\textit{Section 10} consists of the proof of two crucial results: the existence of the limit of proper smooth truncates of the potential of bounded periodic functions and a localization estimate for the potential close to a cube. We emphasize that these proofs rely on the periodicity of the modification of the elliptic matrix.

In \textit{Section 11} we complete the proof of the Main Lemma via a variational technique. We highlight that one of the most delicate point consists in finding an appropriate variant of a maximum principle in an infinite strip in our elliptic setting. Our argument heavily exploits the additional symmetries provided by the modified matrix.

In the final \textit{Section 12}, we present the application of the main rectifiability theorem to the study of elliptic measure,  sketching the proof of Theorem \ref{theorem_two_phase_elliptic}.

\subsection*{Acknowledgments} This work is part of my PhD thesis, which was supervised by Xavier Tolsa and Joan Verdera. I want to thank them for their guidance and their help.
I am also grateful to Mihalis Mourgoglou for useful discussions.

\section{Preliminaries and notation}\label{section_preliminaries}
It is useful to write $a\lesssim b$ to denote that there is a constant $C>0$ such that $a\leq C b.$ To make the dependence of the constant on a parameter $t$ explicit, we will write $a\lesssim_t b.$ Also, we say that $b\gtrsim a$ if $a\lesssim b$ and $a\approx b$ if both $a\lesssim b$ and $b\lesssim a.$

{All the cubes, unless specified, will be considered with their sides parallel to the coordinate axes.} Given a cube $Q$, we denote its side length as $\ell(Q)$ and, for $a>0,$ we understand $a Q$ as the cube with side length $a \ell(Q)$ and sharing the center with $Q$.

We say that a cube $Q$ has $t$-thin boundary if
\begin{equation}
\mu\big\{x\in 2Q:\dist(x,\partial Q)\leq \lambda \ell(Q)\big\}\leq t\lambda \mu(2Q)
\end{equation}
for every $\lambda>0.$
Analogously to \eqref{p_density_balls}, we define
\begin{equation}
P_{\mu,\gamma}(Q)=\sum_{j\geq 0}2^{-j\gamma}\Theta_\mu(2^jQ)=\sum_{j\geq 0}2^{-j\gamma}\frac{\mu(2^jQ)}{\ell(2^j Q)^n}.
\end{equation}

Given a measure $\mu$ and a measurable set $E$, we denote as $\mu|_E$ the restriction of $\mu$ to $E$ and, for $\phi\colon\Rn1\to\Rn1$, we use the notation $\phi\sharp\mu (E)\coloneqq \mu(\phi^{-1}(E))$.
An important tool in the study of rectifiabilty is the so-called $\alpha$-number introduced by Tolsa in   \cite{tolsa_proc_lms}. Let us fix a cube $Q\subset \mathbb{R}^{n+1}$ and consider two Radon measures $\mu$ and $\nu$ on $\mathbb{R}^{n+1}$. A natural way to define a distance between $\mu$ and $\nu$ is to consider the supremum
\begin{equation}
d_Q(\mu,\nu)\coloneqq  \sup_f\int f d(\mu-\nu),
\end{equation}
where $f\in\Lip(\mathbb{R}^{n+1}),$ $\|f\|_{\Lip}\leq 1$ and $\supp f\subseteq Q$. The distance $d_Q$ offers a way of quantifying the ``flatness'' of a measure alternative to that via $\beta_1$-numbers. More precisely, if we consider a $n$-plane $L$ in $\mathbb{R}^{n+1},$ we can define
\begin{equation}
\alpha^L_\mu(Q)\coloneqq  \frac{1}{\ell(Q)^{n+1}}\inf_{c\geq 0}d_Q(\mu,c\mathcal{H}^n|_L).
\end{equation}

Given a matrix $A(\cdot)$, possibly with variable coefficients, we use the notation $A^T(\cdot)$ to indicate its transpose.
Also, we write $\mathcal{L}^{n+1}$ for the Lebesgue measure on $\Rn1$.

\vspace{2mm}
\noindent{\bf Partial Differential Equations.} For any uniformly elliptic matrix $A$ with H\"older continuous coefficients, one can show that
$K(x,y)=\nabla_1 \E(x,y)$ is locally a Calder\'on-Zygmund kernel.

\begin{lemm}\label{lemcz}
Let $A$ be an elliptic matrix with H\"older continuous coefficients satisfying \eqref{eqelliptic1}, \eqref{eqelliptic2} and \eqref{eq:Holdercont}. If $K(\cdot,\cdot)$ is given by \eqref{eq:Kdef}, then it is locally a Calder\'on-Zygmund kernel. That is, for any given $R>0$,
\begin{itemize}
\item[(a)] $|K(x,y)| \lesssim |x-y|^{-n}$ for all $x,y\in \R^{n+1}$ with $x \not=y$ and $|x-y|\leq R$.
\item[(b)] $|K(x,y)-K(x,y')| + |K(y,x) - K(y',x)| \lesssim |y-y'|^{\alpha} |x-y|^{-n-\alpha}$ for all $y,y'\in B(x,R)$ with $2|y-y'| \leq |x-y|$.
\item[(c)] $|K(x,y)| \lesssim |x-y|^{{(1-n)}/2}$  for all $x,y\in \R^{n+1}$ with $|x-y|\geq 1$.
\end{itemize}
All the implicit constants in (a), (b) and (c) depend on $\Lambda$ and  $\|A\|_\alpha$, while the ones in (a) and (b) depend also on $R$.
\end{lemm}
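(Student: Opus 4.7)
The plan is to derive all three estimates from classical interior regularity theory for divergence-form elliptic operators with $C^\alpha$ coefficients—specifically Schauder's $C^{1,\alpha}$ interior estimate applied to $L_A$-harmonic functions—combined with the pointwise bound $|\mathcal{E}_A(x,y)| \lesssim |x-y|^{1-n}$ on the fundamental solution established in \cite{HK} for $n\geq 2$, together with the reciprocity identity $\mathcal{E}_A(x,y) = \mathcal{E}_{A^T}(y,x)$. Reciprocity is essential because it lets us transfer an estimate in the first variable of $\mathcal{E}_A$ into one in the second variable, at the price of working with the transposed matrix $A^T$, whose ellipticity constants and Hölder seminorm coincide with those of $A$. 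Versions of this lemma already appear in the related work \cite{PPT}, and I would follow the same strategy.

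For (a), fix $x\neq y$ with $|x-y|\leq R$, set $r=|x-y|/2$, and apply the Schauder gradient estimate to $u(\cdot)=\mathcal{E}_A(\cdot,y)$, which is $L_A$-harmonic on $B(x,r)$ and satisfies $\|u\|_{L^\infty(B(x,r))}\lesssim |x-y|^{1-n}$; this gives $|\nabla u(x)|\lesssim r^{-1}\|u\|_\infty \lesssim |x-y|^{-n}$. For the summand $|K(y,x)-K(y',x)|$ of (b), I apply the $C^{1,\alpha}$ interior estimate to $v(\cdot)=\mathcal{E}_A(\cdot,x)$ on $B(y,|y-x|/2)$ (which contains $y'$ by the hypothesis $2|y-y'|\leq |y-x|$) to obtain $[\nabla v]_{C^\alpha(B(y,|y-x|/4))}\lesssim |y-x|^{-n-\alpha}$, and hence the desired Hölder bound. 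The delicate summand $|K(x,y)-K(x,y')|$ is treated by writing $w(\cdot)=\mathcal{E}_A(\cdot,y)-\mathcal{E}_A(\cdot,y')$: this is $L_A$-harmonic on $B(x,|x-y|/2)$ since both $y$ and $y'$ lie outside this ball, and the Schauder gradient estimate yields $|\nabla w(x)|\lesssim |x-y|^{-1}\sup_{B(x,|x-y|/2)}|w|$. To control $\sup|w|$ I invoke reciprocity: $\eta\mapsto \mathcal{E}_A(z,\eta)=\mathcal{E}_{A^T}(\eta,z)$ is $L_{A^T}$-harmonic away from $\eta=z$, so part (a) applied to $A^T$ gives a gradient bound which, integrated along the segment joining $y$ to $y'$, produces $|w(z)|\lesssim |y-y'|\cdot |x-y|^{-n}$ for $z\in B(x,|x-y|/2)$, from which the claimed Hölder-in-$y$ estimate follows. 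Part (c) is a weaker global estimate whose constant is independent of $R$; it follows by combining Caccioppoli at unit scale with the $L^\infty$-bound on $\mathcal{E}_A$ to obtain an $L^2$-gradient bound, and then upgrading to pointwise via a standard interior argument with constants depending on $\Lambda$ and $\|A\|_\alpha$ but not on the scale.

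The main obstacle is the second summand of (b). Since $K(x,y)=\nabla_1\mathcal{E}_A(x,y)$ is by definition a derivative in the first slot, extracting Hölder regularity in the second slot is not a direct application of Schauder: the argument must route through the reciprocity identity so as to exploit the $L_{A^T}$-harmonicity of the fundamental solution in its second argument, while carefully tracking which matrix governs each intermediate estimate so that the final constant depends only on $\Lambda$, $\|A\|_\alpha$, and $R$. Once this interplay is resolved, the remaining work is a bookkeeping application of the interior Schauder estimates together with the pointwise bound on $\mathcal{E}_A$ from \cite{HK}.
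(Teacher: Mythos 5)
Your argument is correct and is essentially the standard proof to which the paper defers (the paper cites \cite[Lemma~2.1]{CMT} rather than reproving the lemma): interior Schauder $C^{1,\alpha}$ estimates for $L_A$-harmonic functions at scale comparable to $|x-y|$, the Hofmann--Kim pointwise bound $|\mathcal{E}_A(x,y)|\lesssim|x-y|^{1-n}$, and the reciprocity identity $\mathcal{E}_A(x,y)=\mathcal{E}_{A^T}(y,x)$ to transfer regularity to the second variable, with the rescaled Schauder constant controlled by $R^\alpha\|A\|_\alpha$ for scales $\leq R$.

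Two small radius adjustments are needed in a careful write-up. For the first summand of (b), the hypothesis $2|y-y'|\leq|x-y|$ only places $y'$ in the closed ball of radius $|x-y|/2$ around $y$, not in the interior ball on which Schauder controls $[\nabla v]_{C^\alpha}$; one fixes this by splitting into $|y-y'|\leq|x-y|/4$ (Schauder applies directly) and $|x-y|/4<|y-y'|\leq|x-y|/2$ (apply (a) to each term and use that $|y-y'|/|x-y|\approx 1$). In the bound on $\sup|w|$ for the second summand, $z$ must be restricted to a smaller ball such as $B(x,|x-y|/4)$ so that $|\eta-z|\gtrsim|x-y|$ is guaranteed for all $\eta$ on the segment $[y,y']$; over the full ball $B(x,|x-y|/2)$ this lower bound can degenerate. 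Finally, note that your route actually gives the stronger Lipschitz-in-$y$ bound $|K(x,y)-K(x,y')|\lesssim |y-y'|\,|x-y|^{-n-1}$, which implies the stated H\"older estimate since $|y-y'|/|x-y|\leq 1$, and for (c) the unit-scale Schauder gradient estimate combined with the Hofmann--Kim bound already yields $|K(x,y)|\lesssim |x-y|^{1-n}$, which improves on the stated exponent $(1-n)/2$.
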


The statements above are rather standard. For more details, see Lemma 2.1 from \cite{CMT}.

Let $\omega_{n}$ denote the surface measure of the unit sphere of $\mathbb{R}^{n+1}$.
For any elliptic matrix $A_0$ with constant coefficients, we have an explicit expression for the fundamental solution of $L_{A_0}$, which we denote by $\Theta(x,y;A_0).$ More precisely, $\Theta(x,y;A_0)=\Theta(x-y;A_0)$ with
\begin{equation}\label{fund_sol_const_matrix}
\Theta(z;A_0)=\Theta(z;A_{0,s})=
\begin{cases}\displaystyle
\frac{-1}{(n-1)\omega_n\sqrt{\det A_{0,s}}}\frac{1}{(A_{0,s}^{-1}z\cdot z)^{(n-1)/2}} \;\;\;\text{ for }n\geq 2, \\\\
\displaystyle
\frac{1}{4\pi\sqrt{\det A_{0,s}}}\log\big(A_{0,s}^{-1}z\cdot z\big)\;\;\text{ for }n=1,
\end{cases}
\end{equation}
where $A_{0,s}$ is the symmetric part of $A_0$, that is, $A_{0,s}=\frac12(A_0+A_0^T)$.

The reason why only the symmetric part of $A_0$ enters \eqref{fund_sol_const_matrix} it that, using Schwarz's theorem to exchange the order of partial derivatives writing $A_0=\{a_{ij}\}_{i,j}$, for every appropriate function $u$ we have
\begin{equation}\label{a_eq_a_sym}
L_{A_0} u=-\sum_{i,j}\partial_i(a_{ij}\partial_j u)=-\frac{1}{2}\sum_{i,j}a_{ij}\partial_i\partial_j u -\frac{1}{2}\sum_{i,j}a_{ij}\partial_j\partial_i u =-	\sum_{i,j}\frac{a_{ij}+a_{ji}}{2}\partial_i\partial_j u =L_{A_{0,s}}u.
\end{equation}
These formal considerations can be made rigorous by standard arguments.

Differentiating \eqref{fund_sol_const_matrix} we have
\begin{equation}\label{grad_const_coeff_sol}
\nabla \Theta(z;A_0)=\frac{1}{\omega_n\sqrt{\det A_{0,s}}}\frac{A_{0,s}^{-1}z}{(A_{0,s}^{-1}z\cdot z)^{(n+1)/2}}.
\end{equation}

The next result is proven in \cite[Lemma 2.2]{KS}.

\begin{lemm}\label{lemm_freezing}
Let $A$ be an elliptic matrix with H\"older continuous coefficients satisfying \eqref{eqelliptic1}, \eqref{eqelliptic2} and \eqref{eq:Holdercont}. Let also  $\Theta(\cdot,\cdot; \cdot)$ be given by  \eqref{fund_sol_const_matrix}. Then, for $x,y\in\Rn1$, $0<|x-y|\leq R,$
\begin{enumerate}
\item $|\mathcal{E}_A (x,y) - \Theta(x,y;A(x))| \lesssim |x-y|^{\alpha-n+1}$,
\item $|\nabla_1\mathcal{E}_A (x,y) - \nabla_1\Theta(x,y;A(x))| \lesssim |x-y|^{\alpha-n}$,
\item $|\nabla_1\mathcal{E}_A (x,y) - \nabla_1\Theta(x,y;A(y))| \lesssim |x-y|^{\alpha-n}$.
\end{enumerate}
Similar inequalities hold if we reverse the roles of $x$ and $y$ and we replace $\nabla_1$ by $\nabla_2$.
All the implicit constants depend on $\Lambda$,  $\|A\|_\alpha$, and  $R$.
\end{lemm}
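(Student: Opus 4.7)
My plan is to attack the lemma by the classical \emph{freezing coefficients} method. Fix a base point $x_0 \in \{x, y\}$ (take $x_0 = x$ for (1)–(2) and $x_0 = y$ for (3)), set $A_0 := A(x_0)$, and introduce the error
\begin{equation*}
w(z) := \E_A(z, y) - \Theta(z - y; A_0).
\end{equation*}
Both summands are fundamental solutions with pole at $y$, for $L_A$ and $L_{A_0}$ respectively, so their Dirac singularities cancel and a distributional computation (using \eqref{a_eq_a_sym} to handle the symmetric part of $A_0$) gives
\begin{equation*}
L_A w = -\Div F, \qquad F(z) := \bigl(A(z) - A_0\bigr)\,\nabla \Theta(z - y; A_0).
\end{equation*}
The H\"older hypothesis \eqref{eq:Holdercont} together with the explicit formula \eqref{grad_const_coeff_sol} immediately give the pointwise bound $|F(z)| \lesssim |z - x_0|^{\alpha}\,|z - y|^{-n}$. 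This is the quantitative heart of the matter: the forcing is a Calder\'on--Zygmund kernel weighted by the extra H\"older factor $|z - x_0|^{\alpha}$, which is exactly what will produce the gain $|x - y|^{\alpha}$ in the final estimates.

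For the scalar bound (1), I would represent $w$ against the fundamental solution of $L_A$ on the ball $B := B(x, 2|x - y|)$ (working on a bounded domain avoids global integrability issues, which are delicate for small $n$). After integration by parts this yields
\begin{equation*}
w(x) = \int_{B} \nabla_2 \E_A(x, \xi) \cdot F(\xi)\, d\xi + (\text{terms involving } w|_{\partial B}),
\end{equation*}
and Lemma \ref{lemcz}(a) provides $|\nabla_2 \E_A(x, \xi)| \lesssim |x - \xi|^{-n}$. Splitting the integral into the three regions $\{|x - \xi| \le |x - y|/2\}$, $\{|\xi - y| \le |x - y|/2\}$ and the complementary "far field" region, each contributes at most $|x - y|^{\alpha - n + 1}$ by elementary radial calculus (in the first two regions one coordinate is frozen at scale $|x - y|$; in the third the integrand is $\lesssim |\xi - y|^{\alpha - 2n}$ against the outer boundary at scale $|x - y|$). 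The boundary contribution is handled by a crude a priori $L^\infty$ bound on $w$ obtained from De Giorgi--Nash--Moser estimates, then refined by bootstrapping.

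The gradient estimates (2) and (3) are where I expect the main obstacle, since one must upgrade scalar control on $w$ to pointwise control on $\nabla_1 w(x)$ with the sharp gain $|x - y|^{\alpha - n}$, one power better than the naive differentiation of (1). The clean route is to invoke interior $C^{1,\alpha}$ Schauder theory for divergence form operators with $C^{\alpha}$ coefficients applied to $L_A w = -\Div F$ on $B(x, |x - y|/2)$, where $w$ is a classical $C^{1, \alpha}$ solution: on this ball $F \in C^{0,\alpha}$ with $C^{0, \alpha}$ norm controlled by $|x - y|^{\alpha - n}$, so after rescaling to a ball of unit radius and applying the standard Schauder inequality, one recovers $|\nabla w(x)| \lesssim |x - y|^{\alpha - n}$. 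Part (3), in which one freezes the coefficients at $y$ instead of $x$, then follows from the symmetric argument applied to the adjoint operator $L_{A^T}$, together with the identity $\E_{A^T}(y, x) = \E_A(x, y)$ which exchanges the roles of the two variables. The subtle point throughout will be tracking the correct powers of $|x - y|$ under rescaling: the radius of the comparison ball and the H\"older norm of the frozen coefficient difference both scale with $|x - y|$, and it is their interaction, not the Schauder constant alone, that produces the asserted exponent.
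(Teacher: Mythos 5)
The paper does not provide a proof of this lemma: it is stated with a pointer to \cite[Lemma~2.2]{KS} (Kenig--Shen), so there is no in-paper argument to compare against. I therefore assess your proposal on its own terms.

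Your skeleton is the standard and correct one: freeze coefficients at $x_0\in\{x,y\}$, set $A_0=A(x_0)$ and $w=\E_A(\cdot,y)-\Theta(\cdot-y;A_0)$, compute $L_A w=-\Div F$ with $F=(A-A_0)\nabla\Theta(\cdot-y;A_0)$, and use $|F(z)|\lesssim|z-x_0|^\alpha|z-y|^{-n}$. Your rescaled Schauder argument for the gradient estimates (2)--(3) is also sound. After scaling by $r=|x-y|$, $\tilde w(\zeta)=w(x+r\zeta)$ solves $L_{\tilde A}\tilde w=-\Div\tilde F$ on $B(0,1/2)$ with $\|\tilde F\|_{C^\alpha(B(0,1/2))}\lesssim r^{1+\alpha-n}$, and interior $C^{1,\alpha}$ estimates give $|\nabla\tilde w(0)|\lesssim\|\tilde w\|_{L^\infty(B(0,1/2))}+\|\tilde F\|_{C^\alpha}$; undoing the scaling yields $|\nabla_1 w(x)|\lesssim r^{\alpha-n}$ \emph{provided} you feed in $\|\tilde w\|_{L^\infty(B(0,1/2))}\lesssim r^{1+\alpha-n}$, which is precisely the conclusion of part (1) restricted to $B(x,r/2)$. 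Two small comments: for (3) the adjoint identity $\E_{A^T}(y,x)=\E_A(x,y)$ is unnecessary --- just take $x_0=y$, so on $B(x,r/2)$ one still has $|F|\lesssim r^{\alpha-n}$ and $[F]_{C^\alpha}\lesssim r^{-n}$, and the same Schauder step applies; the adjoint identity is rather the right tool for the final sentence of the lemma (swapping the variables and replacing $\nabla_1$ by $\nabla_2$).

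The genuine gap is in part (1), which is also a prerequisite for the Schauder step. You propose to represent $w(x)$ against $\E_A$ on the ball $B=B(x,2|x-y|)$ and to dispose of the boundary terms using ``a crude a priori $L^\infty$ bound on $w$ from De Giorgi--Nash--Moser, then refined by bootstrapping.'' This does not close as written. The only available a priori bound is $|w(\xi)|\lesssim|\xi-y|^{1-n}$ (the Gr\"uter--Widman size estimate applied to both $\E_A$ and $\Theta$), and inserting this into the boundary integrals --- kernel of size $\approx|x-y|^{-n}$ over a boundary of measure $\approx|x-y|^n$ --- yields a contribution of order $|x-y|^{1-n}$, with no $\alpha$-gain whatsoever. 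Feeding that back in reproduces the same bound, so the iteration is stationary. The boundary terms involving $\nu\cdot A^T\nabla w$ are worse still, since you have no pointwise a priori control of $\nabla w$ on $\partial B$. You also cannot escape by letting the ball grow to infinity: for $|x-\xi|\ge 1$ one only has $|\nabla_2\E_A(x,\xi)|\lesssim|x-\xi|^{(1-n)/2}$ (Lemma~\ref{lemcz}(c)), so the full-space integral of $|\nabla_2\E_A(x,\xi)||F(\xi)|$ diverges unless $n$ is large. The clean fix is to replace $\E_A$ by the Green function $G_B$ of $L_A$ on $B$ (which has $|\nabla_\xi G_B(x,\xi)|\lesssim|x-\xi|^{-n}$ and vanishes on $\partial B$, killing all boundary terms), after which your three-region splitting goes through verbatim and gives $|w(x)|\lesssim|x-y|^{1+\alpha-n}$. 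Alternatively, one can rescale first and set up a genuine perturbation / fixed-point argument around the constant-coefficient fundamental solution in a weighted norm, which is closer in spirit to what Kenig--Shen do. As it stands, the boundary-term step is the one place where your proposal is not a proof.
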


\vspace{2mm}
\noindent{\bf The gradient of the fundamental solution in the periodic case.}
We denote as $\Lambda_\alpha$ the set of matrices such that \eqref{eqelliptic1}, \eqref{eqelliptic2} hold and with $\alpha$-H\"older coefficients. We say that the matrix $A\in \Lambda_\alpha$ is $\ell$-periodic, $\ell >0$, if
\begin{equation}
A(x+ \ell z)=A(x) \,\,\text{ for every }z\in\mathbb{Z}^{n+1}.
\end{equation}
For periodic matrices the estimates in Lemma \ref{lemcz} turn out to be global.
\begin{lemm}[\cite{KS}]\label{CZ_1}
Let $A\in\Lambda_\alpha$ be $1$-periodic and let $\E_A$ be the fundamental solution of $L_A$. Let $K(\cdot,\cdot)$ is given by \eqref{eq:Kdef}. Then
\begin{enumerate}
\item \label{cz1.1}$|\nabla_1\E_A (x,y)|\leq c_1 |x-y|^{-n}$ for every $x,y\in\Rn1$ with $x\neq y$.
\item \label{cz1.2}$|\n1\E_A(x,y)-\n1\E_A(x',y)|+|\n1\E_A(y,x)-\n1\E_A(y,x')|\leq c_2 |x-x'|^{\alpha}|x-y|^{-(n+\alpha)}$ for every $x,x',y\in\Rn1$ such that $2|x-x'|\leq |x-y|.$
\end{enumerate}
The constants appearing in (\ref{cz1.1}) and (\ref{cz1.2}) are such that $c_1\approx_{n,\Lambda} c_2\approx_{n,\Lambda} \|A\|_{\alpha}$.
\end{lemm}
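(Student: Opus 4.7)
The plan is to combine the local Calder\'on--Zygmund bounds of Lemma~\ref{lemcz} (which are only available up to a fixed scale $R$, with a constant depending on $\|A\|_\alpha$) with a rescaling argument exploiting the periodicity of $A$. For short distances, namely $|x-y|\le 1$, both (\ref{cz1.1}) and (\ref{cz1.2}) follow directly from Lemma~\ref{lemcz} applied with $R=1$, and the linear dependence of the constants on $\|A\|_\alpha$ can be extracted from the proof of that lemma.

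For $|x-y|\ge 1$ I would set $\lambda := |x-y|$ and consider the rescaled matrix $A_\lambda(z) := A(\lambda z)$, which still lies in $\Lambda_\alpha$ with the same ellipticity constant $\Lambda$ and is $1/\lambda$-periodic. The standard scaling of divergence-form equations gives $\E_{A_\lambda}(z,w)=\lambda^{n-1}\E_A(\lambda z,\lambda w)$, whence
\begin{equation*}
\nabla_1 \E_A(x,y)\,=\,\lambda^{-n}\,\nabla_1 \E_{A_\lambda}(z,w),\qquad z=x/\lambda,\;w=y/\lambda,\;|z-w|=1 .
\end{equation*}
Thus (\ref{cz1.1}) reduces to a uniform-in-$\lambda$ bound $|\nabla_1 \E_{A_\lambda}(z,w)| \lesssim 1$ for $|z-w|=1$, and an analogous computation reduces the two summands in (\ref{cz1.2}) to uniform-in-$\lambda$ H\"older estimates on $\nabla_1 \E_{A_\lambda}(\cdot,w)$ and $\nabla_1 \E_{A_\lambda}(w,\cdot)$ at distance $1$ from the pole.

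The hard point is producing these uniform-in-$\lambda$ bounds: a direct application of Lemma~\ref{lemcz} to $A_\lambda$ yields a constant proportional to $\|A_\lambda\|_\alpha = \lambda^\alpha\|A\|_\alpha$, which blows up as $\lambda\to\infty$, and this failure of scale invariance is exactly why the periodicity assumption is needed. The Avellaneda--Lin homogenization theory for periodic elliptic operators with rapidly oscillating coefficients provides Lipschitz and $C^{1,\alpha}$ estimates for solutions of $L_{A_\lambda} u=0$ that are \emph{uniform} in the oscillation rate $1/\lambda$. Applying them to $u(\cdot)=\E_{A_\lambda}(\cdot,w)$ on the ball $B(z,1/4)$, on which $u$ is $L_{A_\lambda}$-harmonic, together with the uniform pointwise bound $|\E_{A_\lambda}(z,w)|\lesssim |z-w|^{1-n}$ (for $n\ge 2$; the case $n=1$ is analogous, up to a logarithmic correction), then yields both (\ref{cz1.1}) and (\ref{cz1.2}). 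These are precisely the Avellaneda--Lin-type theorems for Green's functions of periodic operators proved in \cite{KS}, which we invoke as a black box.
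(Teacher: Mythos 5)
The paper does not actually prove this lemma; it is cited verbatim from \cite{KS}, and there is no argument in the text to compare against. Your proposal is therefore best judged as an explanation of why the statement is true, and in that role it is sound and consistent with the source. The split at scale $|x-y|=1$, the rescaling identity $\nabla_1\E_A(x,y)=\lambda^{-n}\nabla_1\E_{A_\lambda}(z,w)$ with $A_\lambda(\cdot)=A(\lambda\,\cdot)$, and the observation that $\|A_\lambda\|_{C^\alpha}=\lambda^\alpha\|A\|_{C^\alpha}$ precisely identifies why the local Lemma~\ref{lemcz} argument degenerates at large scales, are all correct. Reducing the remaining $|x-y|\geq 1$ bound to uniform-in-period Avellaneda--Lin interior Lipschitz and $C^{1,\alpha}$ estimates, applied on $B(z,1/4)$ together with the pointwise bound on $\E_{A_\lambda}$, is exactly the mechanism behind the estimates in \cite{KS}. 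Since the lemma is attributed to \cite{KS} and you invoke \cite{KS} as a black box for those uniform estimates, the proposal is not an independent proof, but the scaling reduction you supply is the right conceptual bridge.

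One step you gloss over: the second summand in (\ref{cz1.2}) is a H\"older estimate on $x\mapsto\nabla_1\E_A(y,x)$, i.e.\ in the \emph{second} argument of the kernel. Applying the uniform $C^{1,\alpha}$ estimate to $u(\cdot)=\E_{A_\lambda}(\cdot,w)$, as you write, controls only the first summand. For the second, one should use $\E_A(y,x)=\E_{A^T}(x,y)$, so that $\nabla_1\E_A(y,\cdot)=\nabla_2\E_{A^T}(\cdot,y)$, and exploit that $A^T$ is again $1$-periodic in $\Lambda_\alpha$ with the same ellipticity constant: for $x\neq y$ the map $x\mapsto\nabla_1\E_A(y,x)$ is $L_{A^T}$-harmonic, and the same uniform interior estimates (now for $L_{A^T_\lambda}$) give the required H\"older bound. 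This is still within the Avellaneda--Lin/Kenig--Shen framework and does not change your conclusion, but it is worth making explicit, since the kernel here is not antisymmetric and the two arguments genuinely play different roles.
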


The period of the matrix plays an important role in our construction, so it is useful to rephrase the previous lemma for matrices with a period different from $1$. We are interested in studying matrices with small period, so we only consider  the case in which it is strictly smaller than $1.$
\begin{lemm}\label{CZ_l}
Let $0<\ell<1$. Let $A\in\Lambda_\alpha$ be $\ell$-periodic and let $\E_A$ be the fundamental solution associated with $L_A$. Then 
\begin{enumerate}
\item \label{czl.1}$|\nabla_1\E_A (x,y)|\leq c'_1 |x-y|^{-n}$ for every $x,y\in\Rn1$ with $x\neq y$.
\item \label{czl.2}$|\n1\E_A(x,y)-\n1\E_A(x',y)|+|\n1\E_A(y,x)-\n1\E_A(y,x')|\leq c'_2 |x-x'|^{\alpha}|x-y|^{-n-\alpha}$ for every $x,x',y\in\Rn1$ such that $2|x-x'|\leq |x-y|.$
\end{enumerate}
The constants appearing in (\ref{cz1.1}) and (\ref{cz1.2}) are such that $c'_1\approx_{n,\Lambda} c'_2\approx_{n,\Lambda} \|A\|_\alpha$.
\end{lemm}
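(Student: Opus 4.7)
The plan is to reduce the $\ell$-periodic case to the $1$-periodic case of Lemma \ref{CZ_1} by a rescaling argument. Set $\tilde A(x) \coloneqq A(\ell x)$. Since $A$ is $\ell$-periodic and $0<\ell<1$, the matrix $\tilde A$ is $1$-periodic, it inherits the ellipticity bounds \eqref{eqelliptic1}--\eqref{eqelliptic2} with the same constant $\Lambda$, and its H\"older seminorm satisfies $[\tilde A]_\alpha = \ell^\alpha [A]_\alpha \leq [A]_\alpha$; hence $\|\tilde A\|_\alpha \leq \|A\|_\alpha$. In particular $\tilde A \in \Lambda_\alpha$ and Lemma \ref{CZ_1} applies to it with constants controlled by $\|A\|_\alpha$, independently of $\ell$.

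The key step is the rescaling identity
\begin{equation*}
\mathcal{E}_A(x,y) = \ell^{\,1-n}\,\mathcal{E}_{\tilde A}(x/\ell,y/\ell) \qquad (n\geq 2),
\end{equation*}
with the analogous logarithmic correction for $n=1$ which disappears upon differentiation. To justify it, one verifies by the chain rule that the right-hand side $F(x,y)$ satisfies $L_{x,A}F(\cdot,y)=\delta_y$ in the distributional sense, using the computation
\begin{equation*}
L_{x,A}\bigl[v(x/\ell)\bigr] = \ell^{-2}\,(L_{\tilde A} v)(x/\ell),
\end{equation*}
together with $\delta_{y/\ell}(x/\ell) = \ell^{n+1}\delta_y(x)$. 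Combined with the uniqueness of the fundamental solution constructed in \cite{HK} (for $n+1\geq 3$, up to functions vanishing at infinity; for $n+1=2$ the gradient is well defined), this gives the identity.

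Differentiating once in $x$ produces
\begin{equation*}
\nabla_1 \mathcal{E}_A(x,y) = \ell^{-n}\,\nabla_1 \mathcal{E}_{\tilde A}(x/\ell,y/\ell),
\end{equation*}
so Lemma \ref{CZ_1} applied to $\tilde A$ gives
\begin{equation*}
|\nabla_1 \mathcal{E}_A(x,y)| \leq \ell^{-n}\,c_1 |x/\ell-y/\ell|^{-n} = c_1 |x-y|^{-n},
\end{equation*}
and analogously for the H\"older difference
\begin{equation*}
|\nabla_1\mathcal{E}_A(x,y)-\nabla_1\mathcal{E}_A(x',y)| \leq \ell^{-n} c_2 (|x-x'|/\ell)^{\alpha}(|x-y|/\ell)^{-n-\alpha} = c_2 |x-x'|^\alpha |x-y|^{-n-\alpha}.
\end{equation*}
Since $c_1,c_2\approx_{n,\Lambda}\|\tilde A\|_\alpha\leq \|A\|_\alpha$, the resulting constants $c_1',c_2'$ are independent of $\ell$. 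The mixed-variable difference follows by the same computation applied to the second argument (or by repeating the argument with $\nabla_2$).

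The only delicate point is pinning down the rescaling identity with the correct normalization: this is purely a distributional computation together with an invocation of the construction of $\mathcal{E}_A$ in \cite{HK}. Once that is in hand, the whole lemma is a one-line consequence of Lemma \ref{CZ_1}.
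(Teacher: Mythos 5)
Your proof is correct and takes essentially the same route as the paper: rescale to $\tilde A(x)=A(\ell x)$, observe that $\|\tilde A\|_\alpha\leq\|A\|_\alpha$, invoke Lemma \ref{CZ_1} for the $1$-periodic matrix $\tilde A$, and transfer back via the identity $\nabla_1\mathcal{E}_A(x,y)=\ell^{-n}\nabla_1\mathcal{E}_{\tilde A}(x/\ell,y/\ell)$. The only difference is cosmetic: the paper states the gradient-level rescaling identity directly, whereas you derive it from the identity at the level of $\mathcal{E}_A$ itself via the distributional computation of $L_A[v(\cdot/\ell)]$, which is a correct and slightly more explicit justification of the same step.
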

\begin{proof}For $\ell\in(0,1)$ and all $x\in\Rn1$ we define the rescaled matrix
\begin{equation}
\tilde{A}(x)\coloneqq   A(\ell x)
\end{equation}
and we denote by $\tilde{\E}$ the fundamental solution of $L_{\tilde{A}}.$ By the definition of fundamental solution, it is not difficult to see that
\begin{equation}\label{scaledA}
\n1\tilde{\E}(x,y)=\ell^n\n1\E_A(\ell x,\ell y)\qquad\text{ for } x,y\in\Rn1.
\end{equation}
 Moreover,
\begin{equation}
|\tilde{A}(x)-\tilde{A}(y)|=|A(\ell x)-A(\ell y)|\leq \ell^\alpha \|A\|_\alpha |x-y|^\alpha\leq \|A\|_\alpha|x-y|^\alpha,
\end{equation}
so that $\|\tilde{A}\|_\alpha\leq \|A\|_\alpha.$
Applying Lemma \ref{CZ_1} together with \eqref{scaledA} we get
\begin{equation}
|\n1\E_A(x,y)|=\ell^{-n}|\n1\tilde{\E}(\ell^{-1}x,\ell^{-1}y)|\lesssim \ell^{-n}|\ell^{-1}x-\ell^{-1}y|^{-n}=|x-y|^{-n}
\end{equation}
for any $x,y$ and
\begin{equation}
\begin{split}
&|\n1\E_A(x,y)-\n1\E_A(x',y)|=\ell^{-n}|\n1\tilde{\E}(\ell^{-1}x,\ell^{-1}y)-\n1\tilde{\E}(\ell^{-1}x',\ell^{-1}y)|\\
&\qquad\lesssim \ell^{-n}\frac{|\ell^{-1}x-\ell^{-1}x'|^{\alpha}}{|\ell^{-1}x-\ell^{-1}y|^{n+\alpha}}=\frac{|x-x'|^{\alpha}}{|x-y|^{n+\alpha}.}
\end{split}
\end{equation}
for $2|x-x'|\leq |x-y|.$ The same estimate holds for $|\n1\E_A(y,x)-\n1\E_A(y,x')|$.
\end{proof}
The following is the (global) analogue of Lemma \ref{lemm_freezing} in the $1$-periodic setting.
\begin{lemm}\label{lemma_freezing_per_1}
Let $A\in\Lambda_\alpha$ be $1$-periodic. Then for every $x,y\in\mathbb{R}^{n+1},$ $x\neq y$, we have
\begin{align}
\big|\E_A(x,y)-\Theta(x,y; A(x))\big|&\lesssim |x-y|^{\alpha-n+1}\\
\big|\nabla_1\E_A(x,y)-\nabla_1\Theta(x,y; A(x))\big|&\lesssim |x-y|^{\alpha-n}\\
\big|\nabla_1\E_A(x,y)-\nabla_1\Theta(x,y; A(y))\big|&\lesssim |x-y|^{\alpha-n},
\end{align}
the implicit constants depending on $\|A\|_\alpha$ and $\Lambda$.
Similar estimates hold if we replace $\nabla_1$ by $\nabla_2.$
\end{lemm}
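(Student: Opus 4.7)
The plan is to split into a short-range regime $|x-y|\le 1$ and a long-range regime $|x-y|>1$. In the short-range regime, there is nothing to do beyond quoting the non-periodic freezing lemma: applying Lemma \ref{lemm_freezing} with $R=1$ immediately gives all three inequalities for $|x-y|\le 1$, and the implicit constants there depend only on $\Lambda$, $\|A\|_\alpha$, and the fixed $R=1$, so they are uniformly controlled. Periodicity plays no role at this scale.

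The content of the statement is therefore its extension to $|x-y|>1$, and this is where periodicity enters. For the two gradient estimates I would combine Lemma \ref{CZ_1}(\ref{cz1.1}), which yields $|\nabla_1\E_A(x,y)|\lesssim |x-y|^{-n}$ globally, with the bound $|\nabla_1\Theta(x,y;A(x))|\lesssim_\Lambda |x-y|^{-n}$ read off from the explicit formula \eqref{grad_const_coeff_sol} using uniform ellipticity of the symmetric part of $A(x)$. The triangle inequality then yields
\[
|\nabla_1\E_A(x,y)-\nabla_1\Theta(x,y;A(x))|\lesssim |x-y|^{-n}\le |x-y|^{\alpha-n},
\]
the last step using $|x-y|\ge 1$ and $\alpha>0$. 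The version with $A(y)$ in place of $A(x)$ is identical, and the $\nabla_2$ assertions are symmetric.

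For the potential inequality in the long-range regime, the essential ingredient is the global Green function bound $|\E_A(x,y)|\lesssim |x-y|^{1-n}$ for $n\ge 2$ (and $|\E_A(x,y)|\lesssim \log|x-y|+1$ when $n=1$) for the fundamental solution of a $1$-periodic uniformly elliptic operator. This is a classical Avellaneda--Lin-type estimate, in the same circle of results as Lemma \ref{CZ_1}, so the appropriate reference is \cite{KS}. Combined with the explicit bound on $|\Theta(x,y;A(x))|$ from \eqref{fund_sol_const_matrix}, the triangle inequality produces a difference of order $|x-y|^{1-n}$ (respectively $\log|x-y|+1$), which for $|x-y|\ge 1$ and $\alpha\in(0,1)$ is dominated by $|x-y|^{\alpha-n+1}$ (in the $n=1$ case one uses $\log t\lesssim_\alpha t^\alpha$ for $t\ge 1$).

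The main obstacle is precisely this global bound on $|\E_A|$: it is the only input not already quoted in the excerpt, and unlike Lemma \ref{CZ_1} it does not follow from the short-scale Calder\'on--Zygmund rescaling argument used in Lemma \ref{CZ_l}, but genuinely relies on periodic homogenization theory. Once it is in hand, all three assertions reduce to combining an explicit-formula estimate with a global a priori estimate and splitting at the scale $|x-y|=1$, as above.
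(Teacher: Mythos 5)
The paper itself offers no proof of this lemma: it is introduced only as ``the (global) analogue of Lemma~\ref{lemm_freezing} in the $1$-periodic setting,'' with the implicit understanding that, like Lemma~\ref{CZ_1}, it is a known consequence of the Avellaneda--Lin/Kenig--Shen estimates. Your argument is the natural way to substantiate that claim and it is correct: Lemma~\ref{lemm_freezing} with $R=1$ handles $0<|x-y|\le 1$ verbatim, and for $|x-y|>1$ the conclusion degenerates to a comparison of global size bounds, since $|x-y|^{-n}\le|x-y|^{\alpha-n}$ and $|x-y|^{1-n}\le|x-y|^{\alpha-n+1}$ once $|x-y|\ge 1$. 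The triangle inequality together with Lemma~\ref{CZ_1}(1) and the explicit formula~\eqref{grad_const_coeff_sol} settles the two gradient estimates, and the global pointwise bound $|\E_A(x,y)|\lesssim|x-y|^{1-n}$ (logarithmic when $n=1$) settles the undifferentiated one.

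One small remark: you flag the global Green function bound as the one input ``not already quoted'' and ascribe it directly to periodic homogenization. That is fair, and \cite{KS} is the right reference; but it is worth noting that it also follows from inequality~\eqref{approx_hom_fun} in Lemma~\ref{lemm_homog_1_periodic} (which the paper does state, a few lines below the present lemma, and which is independent of it) combined with $\|\nabla\chi\|_\infty\lesssim 1$ and the explicit decay of $\Theta(\cdot,\cdot;A_0)$. So the ingredient you identify as external is in fact already present in the paper's toolbox, and no circularity arises. In short, your proposal is correct, complete, and matches what the paper evidently has in mind.
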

Let us now recall some result from elliptic homogenization. For more details we refer to the work by Avellaneda and Lin \cite{Avell_Lin}. For this purpose, we need to recall the definition of vector of correctors $\chi$ and homogenized matrix $A_0$.
Let $\ell>0$ and let $A\in\Lambda_\alpha$ be a $1$-periodic matrix, i.e.
\begin{equation}
A(x+z)=A(x) \qquad \text{ for every }\quad z\in\mathbb{Z}^{n+1}.
\end{equation}
We will denote by $\chi(x)=(\chi^i(x)),$ for $i\in\{1,\ldots,n+1\}$ the vector of correctors, which is defined as the solution of the following cell problem
\begin{equation}
\begin{cases}
L\chi= \Div A, \\ \label{corr}
\chi \text{ is }1\text{-periodic}, \\
\int_{[0,1]^{n+1}}\chi(x)dx=0,
\end{cases}
\end{equation}
where the first condition in \eqref{corr} has to be understood in coordinates as
\begin{equation}
\sum_{i,j}\partial_{x^i}\big[a_{ij}\partial_{x^j}\chi^h\big](x)=-\sum_i \partial_{x^i} a_{ih}(x),
\end{equation}
$(a_{ij})_{i,j}$ being the coefficients of the matrix $A$.
An important fact is that that 
\begin{equation}\label{bound_grad_chi_1}
\|\nabla\chi\|_\infty\leq C,
\end{equation} the bound $C$ depending only on $n,\alpha$ and $\|A\|_{C^\alpha}$. We remark that $\nabla \chi$ denotes the matrix with variable coefficients whose entries are $\partial_i\chi^j$ for $i,j=1,\ldots,n+1$.
Now, if we consider the following family of elliptic operators 
\begin{equation}
L_\epsilon\coloneqq  \Div\big(A(x/\varepsilon)\nabla\cdot\big)
\end{equation}
depending on the parameter $\epsilon>0$, it can be proved that for any $f\in L^2(\mathbb{R}^{n+1}),$ the solutions $u_\epsilon\in W^{1,2}(\mathbb{R}^{n+1})$  of
\begin{equation}
L_\epsilon u_\epsilon=\Div f
\end{equation}
converge weakly in $W^{1,2}(\mathbb{R}^{n+1})$ to a function $u_0$ as $\varepsilon\to 0$. This function solves the equation
\begin{equation}
L_0 u_0\coloneqq  \Div(A_0\nabla u_0)=\Div f,
\end{equation}
where $A_0$ is an elliptic matrix with constant coefficients usually called \textit{homogenized matrix} (see, for example,  \cite{Sh}).

Homogenization is a powerful tool to study the fundamental solution of an elliptic equation in divergence form whose associated matrix is periodic and has $C^\alpha$ coefficients. The main result that we will use is the following (see \cite[Lemma 2]{Avell_Lin} and \cite[Lemma 2.5]{KS}).
\begin{lemm}\label{lemm_homog_1_periodic}
Let $A\in\Lambda_\alpha$. Let us assume that $A$ is $1$-periodic. Then there exists $\gamma\in(0,1)$ depending on $\alpha, \|A\|_{C^\alpha}$ and $n$ such that
\begin{equation}\label{approx_hom_fun}
\big|\E_A(x,y)-(Id+\nabla\chi(x))\Theta(x,y;A_0)\big|\lesssim \frac{c}{|x-y|^{n+\gamma-1}}
\end{equation}
and
\begin{equation}\label{approx_hom}
\big|\nabla_1\E_A(x,y)-(Id+\nabla\chi(x))\nabla_1\Theta(x,y;A_0)\big|\lesssim \frac{c}{|x-y|^{n+\gamma}},
\end{equation}
where $Id$ denotes the identity matrix and the implicit constants in \eqref{approx_hom_fun} and \eqref{approx_hom} depend just on $n,\alpha$ and $\|A\|_{\alpha}$.
\end{lemm}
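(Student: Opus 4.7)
The plan is to derive both estimates as specializations to $\epsilon=1$ of the classical two-scale asymptotic expansion of the Green's function from periodic homogenization theory, due to Avellaneda and Lin \cite{Avell_Lin} and as reformulated in \cite[Lemma 2.5]{KS}. The strategy is not to reprove these deep estimates from scratch but rather to identify the lemma as the $\epsilon=1$ slice of their $\epsilon$-scaled statement, making careful use of the $1$-periodicity assumption on $A$.

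First I would recall that for a $1$-periodic matrix $A\in \Lambda_\alpha$ and for the rescaled family of operators $L_\epsilon = -\Div\bigl(A(\cdot/\epsilon)\nabla\,\cdot\,\bigr)$, the corresponding fundamental solutions $\mathcal{E}_{A(\cdot/\epsilon)}$ admit the asymptotic expansion
\[
\mathcal{E}_{A(\cdot/\epsilon)}(x,y) \;=\; \Theta(x,y;A_0) \;+\; \epsilon\,\chi(x/\epsilon)\cdot\nabla_x\Theta(x,y;A_0) \;+\; R_\epsilon(x,y),
\]
where $A_0$ is the associated homogenized matrix, $\chi$ is the corrector solving the cell problem \eqref{corr}, and the remainder satisfies $|R_\epsilon(x,y)|\lesssim \epsilon^\gamma|x-y|^{-n-\gamma+1}$ for some $\gamma=\gamma(n,\alpha,\|A\|_\alpha)\in(0,1)$. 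Differentiating in $x$ and combining with the uniform interior Lipschitz estimates of Avellaneda--Lin yields the analogous gradient expansion
\[
\bigl|\nabla_1 \mathcal{E}_{A(\cdot/\epsilon)}(x,y) - (Id+\nabla\chi(x/\epsilon))\nabla_1\Theta(x,y;A_0)\bigr| \;\lesssim\; \epsilon^\gamma\,|x-y|^{-n-\gamma}.
\]
Specializing to $\epsilon=1$ gives $A(\cdot/\epsilon)=A$ and $\chi(x/\epsilon)=\chi(x)$, so the second display becomes exactly the stated gradient bound \eqref{approx_hom}. For the first display, the stated form \eqref{approx_hom_fun} is obtained by absorbing the corrector term $\chi(x)\cdot\nabla_x\Theta(x,y;A_0)$ into the error: it is of size $|x-y|^{-n}$ by \eqref{grad_const_coeff_sol} and the boundedness of $\chi$, which is controlled by $|x-y|^{-n-\gamma+1}$ for possibly smaller $\gamma$. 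The implicit constants depend only on $n,\alpha,\Lambda$ and $\|A\|_\alpha$, with the first pair entering through the corrector construction (and the uniform bound \eqref{bound_grad_chi_1}) and the last pair governing the Avellaneda--Lin remainder estimates.

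The main technical point I would expect to be delicate is the passage from local to global estimates on all of $\Rn1$: the Avellaneda--Lin bounds are customarily stated on bounded domains. This is precisely where the $1$-periodicity of $A$ becomes essential, since it renders the problem invariant under translations by the lattice $\mathbb{Z}^{n+1}$. A standard covering and translation argument, combined with the polynomial-type decay of the remainder $R_\epsilon$ away from the diagonal, then promotes interior estimates on unit-sized balls to global ones with uniform constants depending only on the parameters listed above. A secondary subtlety is the construction of the corrector $\chi$ itself, which requires solvability of \eqref{corr} together with the uniform Lipschitz bound \eqref{bound_grad_chi_1}; both follow from the periodic Schauder theory applied to the cell problem, but they are prerequisites for the whole scheme and must be verified before the expansion above can be meaningfully stated.
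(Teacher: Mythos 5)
Your identification of the lemma as the $\epsilon=1$ specialization of the Avellaneda--Lin / Kenig--Shen expansion is exactly the paper's approach: the paper offers no proof and simply cites \cite[Lemma~2]{Avell_Lin} and \cite[Lemma~2.5]{KS}. Your handling of the gradient estimate \eqref{approx_hom} is correct, including the observation that periodicity gives a global (not merely local) bound; for $|x-y|<1$ the claim is trivial since both $|\nabla_1\E_A|$ and $|(Id+\nabla\chi)\nabla_1\Theta|$ are $\lesssim|x-y|^{-n}\leq|x-y|^{-n-\gamma}$ there.

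Your treatment of \eqref{approx_hom_fun} has a genuine flaw, however. You quote the expansion
$\mathcal{E}_{A(\cdot/\epsilon)}(x,y)=\Theta(x,y;A_0)+\epsilon\chi(x/\epsilon)\cdot\nabla_x\Theta(x,y;A_0)+R_\epsilon(x,y)$
with $|R_\epsilon|\lesssim\epsilon^\gamma|x-y|^{-n-\gamma+1}$, and then try to absorb the corrector term into the error ``for possibly smaller $\gamma$.'' The inequality $|x-y|^{-n}\lesssim|x-y|^{-n-\gamma+1}$ is equivalent to $|x-y|^{\gamma-1}\lesssim 1$, which fails as $|x-y|\to0$ for any $\gamma<1$; shrinking $\gamma$ makes the small-distance regime strictly worse, not better. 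In fact the expansion you quote cannot hold uniformly for all $x\ne y$ at $\epsilon=1$: the corrector term is $\approx|x-y|^{-n}$ while $\E_A-\Theta$ is only $\approx|x-y|^{1-n}$, forcing $|R_1|\gtrsim|x-y|^{-n}\gg|x-y|^{1-n-\gamma}$ as $|x-y|\to0$. The Avellaneda--Lin asymptotics are valid on the scale $|x-y|\gtrsim\epsilon$; to obtain a statement for all $x\ne y$ at $\epsilon=1$ the regime $|x-y|<1$ must be treated by the crude size bounds, and there no corrector should appear. (Relatedly, as printed \eqref{approx_hom_fun} multiplies the scalar $\Theta(x,y;A_0)$ by the matrix $(Id+\nabla\chi(x))$ and subtracts the result from the scalar $\E_A(x,y)$, which is dimensionally inconsistent; the factor $(Id+\nabla\chi(x))$ is evidently a slip inherited from \eqref{approx_hom}, and the intended left-hand side is simply $|\E_A(x,y)-\Theta(x,y;A_0)|$, for which no absorption argument is needed at all.)
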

The period of the coefficients of $A$ plays a crucial role in these estimates. We will be dealing with matrices with periodicity different from $1$, so we need a suitably adapted version of the previous lemma.
Let $A\in\Lambda_\alpha$ be a $\ell$-periodic matrix. Let us define the $1$-periodic matrix
\begin{equation}
\tilde{A}(x)\coloneqq   A(\ell x)
\end{equation}
for $x\in\mathbb{R}^{n+1}$ and let $\tilde \chi$ denote the vector of correctors associated with $\tilde{A}$ defined according to \eqref{corr}.
For $\ell>0$ we define
\begin{equation}
\chi_\ell(x)\coloneqq  \ell \,\tilde \chi\Big(\frac {x}{\ell}\Big).
\end{equation}
Observe that, because of \eqref{bound_grad_chi_1} there exists $C>0$ depending on the $n, \alpha$ and $\|A\|_{C^\alpha}$ such that
\begin{equation}\label{bound_grad_chi}
\|\nabla\chi_\ell\|_\infty\leq C.
\end{equation}
\begin{lemm}\label{lemm_estim_freezing_periodic}
Let $0<\ell<1$. Let $A\in\Lambda_\alpha$ be an $\ell$-periodic matrix. Then there exists $\gamma\in(0,1)$ and $c>0$, both depending just on $n,\alpha$ and $\|A\|_{\alpha}$ such that
\begin{align}
\label{freezing_l_1} \big|\nabla_1\E_A(x,y)-\nabla_1\Theta(x,y; A(x))\big|&\leq c \ell^\alpha|x-y|^{\alpha-n},\\ 
\label{freezing_l_2} \big|\nabla_2\E_A(x,y)-\nabla_2\Theta(x,y; A(y))\big|&\leq c\ell^\alpha|x-y|^{\alpha-n}.\\ 
\big|\nabla_1\E_A(x,y)-(Id+\nabla\chi_\ell(x))\nabla_1\Theta(x,y;A_0)\big|&\leq c \ell^\gamma |x-y|^{-n- \gamma	}. \label{approx_hom_par}
\end{align}
for every $x\neq y.$
\end{lemm}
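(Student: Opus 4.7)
The plan is to reduce all three estimates to the $1$-periodic case by the same rescaling used in the proof of Lemma \ref{CZ_l}. Set $\tilde A(x)\coloneqq A(\ell x)$; this matrix is $1$-periodic, lies in $\Lambda_\alpha$ with H\"older seminorm $[\tilde A]_\alpha=\ell^\alpha [A]_\alpha$, and shares the homogenized matrix $A_0$ with $A$, since the cell problem \eqref{corr} is covariant under the dilation $x\mapsto x/\ell$ and only sees the behavior of the coefficients on a single period. The corrector $\tilde\chi$ of $\tilde A$ is built into the definition $\chi_\ell(x)=\ell\,\tilde\chi(x/\ell)$, so that $\nabla\chi_\ell(x)=\nabla\tilde\chi(x/\ell)$. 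Writing $\tilde{\mathcal{E}}$ for the fundamental solution of $L_{\tilde A}$, the tools I need are the rescaling identity
\begin{equation*}
\nabla_j\tilde{\mathcal{E}}(u,v)=\ell^n\,\nabla_j\mathcal{E}_A(\ell u,\ell v),\qquad j\in\{1,2\},
\end{equation*}
already verified in the proof of Lemma \ref{CZ_l}, together with the degree $-n$ homogeneity $\nabla_j\Theta(u,v;B)=\ell^n\,\nabla_j\Theta(\ell u,\ell v;B)$ for any constant matrix $B$, which is immediate from the explicit formula \eqref{grad_const_coeff_sol}, and the obvious identities $\tilde A(x/\ell)=A(x)$ and $\tilde A(y/\ell)=A(y)$.

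For \eqref{approx_hom_par}, I apply Lemma \ref{lemm_homog_1_periodic} to $\tilde A$ at $u=x/\ell$, $v=y/\ell$. Substituting the scaling identities above on both sides, a factor $\ell^n$ cancels throughout, while the right-hand side $|u-v|^{-n-\gamma}$ becomes $\ell^{n+\gamma}|x-y|^{-n-\gamma}$. Dividing by $\ell^n$ produces exactly the $\ell^\gamma|x-y|^{-n-\gamma}$ bound, with $\gamma$ inherited from the 1-periodic homogenization lemma. The improvement by $\ell^\gamma$ is therefore a pure consequence of the extra decay of the homogenization remainder in $|u-v|$.

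For the freezing estimates \eqref{freezing_l_1} and \eqref{freezing_l_2}, I apply the 1-periodic freezing Lemma \ref{lemma_freezing_per_1} to $\tilde A$ and use $\tilde A(x/\ell)=A(x)$ (respectively $\tilde A(y/\ell)=A(y)$) to recognize the frozen constant matrix. Translating back via the scaling identities, the $|u-v|^{\alpha-n}$ on the right becomes $\ell^{n-\alpha}|x-y|^{\alpha-n}$, and the factor $\ell^n$ coming from $\nabla_j\tilde{\mathcal{E}}$ cancels. The extra factor $\ell^\alpha$ then has to come from the implicit constant in Lemma \ref{lemma_freezing_per_1}: the dependence on $[\tilde A]_\alpha=\ell^\alpha[A]_\alpha$ must be tracked precisely through the Green's-function perturbation argument behind that lemma, so that the $\ell^\alpha$ survives the scaling bookkeeping and ends up on the right of \eqref{freezing_l_1}. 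Estimate \eqref{freezing_l_2} follows by the same argument with $\nabla_1$ replaced by $\nabla_2$ throughout.

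The main obstacle I anticipate is exactly this last bookkeeping step: a merely qualitative implicit constant \textquotedblleft depending on $\|A\|_\alpha$\textquotedblright\ in Lemma \ref{lemma_freezing_per_1} is not enough, because the naive scaling then returns only the standard $|x-y|^{\alpha-n}$ bound with no $\ell^\alpha$ gain. One must verify, revisiting the proof of that 1-periodic freezing lemma in the style of Avellaneda--Lin or Kenig--Shen, that the prefactor is genuinely proportional to the H\"older seminorm $[\tilde A]_\alpha$. Once this is granted, the remaining steps---identifying $A_0$ for $\tilde A$, verifying the rescaling of the corrector, and unwinding the powers of $\ell$---are direct manipulations based on Lemmas \ref{lemma_freezing_per_1} and \ref{lemm_homog_1_periodic}.
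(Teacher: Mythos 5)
Your approach matches the paper's: rescale to the $1$-periodic $\tilde A(x)\coloneqq A(\ell x)$ and pull back the $1$-periodic Lemmas \ref{lemma_freezing_per_1} and \ref{lemm_homog_1_periodic}. The derivation of \eqref{approx_hom_par} is correct and in line with the paper: because the homogenization remainder decays strictly faster than $|u-v|^{-n}$, the $\ell^{-n}$ from $\nabla_1\mathcal E_A(x,y)=\ell^{-n}\nabla_1\tilde{\mathcal E}(x/\ell,y/\ell)$ combines with the $\ell^{n+\gamma}$ from $|x/\ell-y/\ell|^{-n-\gamma}$ to produce a net gain of $\ell^\gamma$.

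For \eqref{freezing_l_1}--\eqref{freezing_l_2}, however, the exponent has the opposite sign relative to $-n$ and your scaling bookkeeping is off. With $u=x/\ell$, $v=y/\ell$ one has $|u-v|^{\alpha-n}=\ell^{n-\alpha}|x-y|^{\alpha-n}$, and multiplying by the $\ell^{-n}$ from the fundamental-solution identity gives a net factor $\ell^{-\alpha}$ — a \emph{loss}, not the clean cancellation of ``the factor $\ell^n$'' you describe. Granting your proposal that the implicit constant of Lemma~\ref{lemma_freezing_per_1} is proportional to $[\tilde A]_\alpha=\ell^\alpha[A]_\alpha$, this only compensates the $\ell^{-\alpha}$ you are hiding, and you land on $\lesssim|x-y|^{\alpha-n}$ with \emph{no} $\ell^\alpha$ improvement. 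To actually produce $\ell^\alpha|x-y|^{\alpha-n}$ the constant would have to scale quadratically in $[\tilde A]_\alpha$, which is neither standard nor something you attempt to establish. Note that the paper's own one-line proof of \eqref{freezing_l_1}--\eqref{freezing_l_2} records the bound $c\ell^\alpha|x-y|^{\alpha-n}$ without any intermediate arithmetic and exhibits the same $\ell^{-\alpha}$-versus-$\ell^{\alpha}$ discrepancy. If you wish to claim the $\ell^\alpha$ factor you need a genuinely quantitative input beyond naive rescaling; otherwise weaken the conclusion to $\lesssim|x-y|^{\alpha-n}$, which is in fact all that is used wherever the paper invokes these two estimates (e.g.\ \eqref{mean_error_loc_cube}, \eqref{near_estim}, where the eventual $\ell(Q_0)^{\tilde\alpha}$ comes from integrating $|x-y|^{\tilde\alpha-n}$ over a region of diameter $\lesssim\ell(Q_0)$, not from the prefactor).
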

\begin{proof} Let $\tilde{\E}$ denote the fundamental solution of the operator $L_{\tilde{A}}.$ As in \eqref{scaledA}, we have
\begin{equation}\label{homog}
\nabla_1\E_A(x,y)=\ell^{-n}\nabla_1\tilde{\E}(x/\ell,y/\ell),
\end{equation}
so an application of Lemma \ref{lemma_freezing_per_1} gives
\begin{equation}
\begin{split}
&\big|\nabla_1\E_A(x,y)-\nabla_1\Theta(x,y; A(x))\big|\\
&\qquad=\ell^{-n}|\nabla_1\E_{\tilde{A}}(\ell^{-1}x,\ell^{-1}y)-\nabla_1\Theta(x,y; \tilde{A}(\ell^{-1}x))|\leq c \ell^\alpha |x-y|^{\alpha-n}.
\end{split}
\end{equation}
Using \eqref{approx_hom} and \eqref{homog}, we get
\begin{equation}
\begin{split}
&|\nabla_1\E_A(x,y)-(Id+\nabla\chi_\ell(x))\nabla_1\Theta(x,y;A_0)|\\
&\qquad =\ell^{-n}|\nabla_1\tilde{\E}(x/\ell,y/\ell)-(Id+\nabla\tilde \chi(x/\ell))\nabla_1\Theta(x/\ell,y/\ell;A_0)|\\
&\qquad \lesssim \frac{c \ell^{n+\gamma}}{\ell^n|x-y|^{n+\gamma}}=\frac{c \ell^\gamma}{|x-y|^{n+\gamma}},
\end{split}
\end{equation}
where $c$ depends on $n,\alpha$ and $\|\tilde{A}\|_{\alpha}$, $\|\tilde{A}\|_{\alpha}\leq\|A\|_{\alpha}.$ Inequality \eqref{freezing_l_2} follows as \eqref{freezing_l_1}.
\end{proof}

\section{The existence of principal values}\label{section_pv}
The purpose of the present section is to prove Theorem \ref{theorem_pv_layer_pot}. The proof of the existence of principal values can be divided into the study of two different cases: the case in which $\mu$ is a rectifiable measure and the one in which $\mu$ has zero $n$-density, i.e.
\begin{equation}
\lim_{r\to 0}  \frac{\mu(B(x,r))}{r^n}=0 \qquad \text{ for } \mu\text{-a.e.}\,\,x\in\Rn1.
\end{equation}
Indeed, without providing the detailed argument, we recall that by means of \cite[Theorem 2]{PPT} we can decompose a measure $\mu$ for which $T_\mu$ is bounded on $L^2(\mu)$ into the sum of a rectifiable measure and a measure with zero $n$-density almost everywhere.

\subsection{Principal values for rectifiable measures with compact support} This subsection follows the scheme of \cite[Section 2.2]{CMT}.
The proof of the existence of principal values for $T_\mu$ if the measure $\mu$ is rectifiable and has compact support relies on the following result.
\begin{theor}\label{thm_DS_existence_pv}Let $\mu$ be a rectifiable measure. Let $K\in C^\infty(\mathbb{R}^{n+1}\setminus\{0\})$ be an odd kernel and homogeneous of degree $-n$, i.e. $K(x)=-K(-x)$ and $K(\lambda x)=\lambda^{-n}K(x)$. Assume, for some $M=M(n)$, the further regularity condition
\begin{equation}\label{dec_deriv_kernel_ds}
|\nabla_j K(x)|\lesssim_n C(j) |x|^{-n-j}\hbox{} \,\,\text{ for all }0\leq j\leq M\,\,\text{ and }x\in\mathbb{R}^{n+1}\setminus\{0\}.
\end{equation}
Then the operator $T_{K,\mu}$ is bounded on $L^2(\mu)$ with operator norm
\begin{equation}
\|T_{K,\mu}\|_{L^2(\mu)\to L^2(\mu)}\lesssim_n \|K|_{\mathbb{S}^n}\|_{C^M(\mathbb{R}^{n+1})}.
\end{equation}
Moreover, the principal value
\begin{equation}
T_{K,\mu}f(x)=\lim_{\varepsilon\to 0}\int_{|x-y|\geq \varepsilon } K(x-y)f(y)d\mu(y)
\end{equation}
exists $\mu$-almost everywhere.
\end{theor}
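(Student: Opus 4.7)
The overall strategy is the classical one: reduce to Lipschitz graphs and invoke standard Calder\'on--Zygmund theory there. Since $\mu$ is rectifiable as defined in the introduction, one can write $\mu = \sum_j \mu_j$, where each $\mu_j = \mu|_{E_j}$ is concentrated on a pairwise disjoint Borel set $E_j$ contained in an $n$-dimensional Lipschitz graph $\Gamma_j$ (over some $n$-plane $L_j$), with $\mu_j \ll \mathcal H^n|_{\Gamma_j}$. This follows by a standard measure-theoretic decomposition from the definition of rectifiability. The statement then splits naturally into an $L^2$-boundedness part and a pointwise convergence part, each attacked piece by piece.

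For the $L^2$-boundedness, I would argue on each Lipschitz graph $\Gamma_j$ separately. The measure $\mathcal H^n|_{\Gamma_j}$ is $n$-AD-regular, so the David--Semmes machinery applies: for a smooth, odd, $(-n)$-homogeneous kernel satisfying the derivative decay \eqref{dec_deriv_kernel_ds}, the operator $T_K$ is bounded on $L^2(\mathcal H^n|_{\Gamma_j})$ with norm controlled by $\|K|_{\mathbb{S}^n}\|_{C^M}$; the oddness and smoothness are what allow the bound to depend only on the spherical $C^M$-norm. Absolute continuity $\mu_j \ll \mathcal H^n|_{\Gamma_j}$ transfers the bound to $\mu_j$, and summing carefully (using the compactness of $\supp\mu$) yields the global $L^2(\mu)$ estimate.

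For the existence of principal values I would argue pointwise. At $\mu$-a.e.\ $x \in E_j$, the set $E_j$ is a $\mu$-density point and $\Gamma_j$ has an approximate tangent $n$-plane at $x$. Flattening $\Gamma_j$ via its graph parametrization reduces the existence of $\lim_{\varepsilon \to 0} T_{K,\varepsilon}\mu_j(x)$ to the classical a.e.\ convergence of smooth odd CZ operators on $\mathbb{R}^n$. For the cross terms $T_{K,\varepsilon}\mu_k(x)$ with $k \neq j$, disjointness of the $E_j$ together with rectifiability forces $\mu_k\bigl(B(x,r)\bigr) = o(r^n)$ as $r \to 0$ at $\mu_j$-a.e.\ $x$, and combining this with the $L^2$-bound applied on dyadic annuli produces the desired limit.

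The main obstacle is precisely the cross-interaction step: one must ensure that $T_{K,\varepsilon}\mu_k(x)$ converges at $\mu_j$-a.e.\ $x \in E_j$ for $k \neq j$, despite $\mu_k$ having no a priori growth control from the vantage point of $x$. The standard remedy, paralleling the treatment of the Cauchy and Riesz transforms in \cite[Chapter 8]{tolsa_book}, is a Cotlar-type inequality combined with the observation that at small scales $\mu_k$ becomes negligible near $x$. Since the hypotheses on $K$ are exactly smoothness, oddness and $(-n)$-homogeneity, nothing kernel-specific is needed beyond these, and the argument carries over to our setting essentially verbatim.
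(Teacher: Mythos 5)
The paper does not actually prove Theorem~\ref{thm_DS_existence_pv}: it is used as a black box, with the $L^2$-boundedness attributed to David and Semmes and the almost-everywhere convergence of the truncations referred to \cite[Chapter~20]{mat} (together with Mas's improvement to finitely many derivatives, \cite[Corollary~1.6]{mas}). Your sketch is therefore an attempt to reconstruct these classical arguments rather than anything parallel to text in the paper. The principal-value half of the sketch --- decompose $\mu$ into pieces carried by Lipschitz graphs, flatten each graph and reduce to a.e.\ convergence of CZ operators on $\R^n$, control cross pieces via the vanishing relative $n$-density of $E_k$ at $\mu_j$-a.e.\ point of $E_j$, glued together by a Cotlar-type maximal estimate --- has the right shape and matches the approach in Mattila's book, though the pointer to \cite[Chapter~8]{tolsa_book} is slightly off target: that chapter treats the zero-density Mattila--Verdera situation, which is the complementary branch in this paper's proof of Theorem~\ref{theorem_pv_layer_pot}, not the rectifiable one.

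The $L^2$ half, however, contains a genuine gap. You assert that "absolute continuity $\mu_j \ll \mathcal H^n|_{\Gamma_j}$ transfers the bound to $\mu_j$," but this is false: $L^2(\mathcal H^n|_\Gamma)$-boundedness of a singular integral does not pass to $L^2(f\,\mathcal H^n|_\Gamma)$ for an arbitrary density $f$ (take $\Gamma$ a hyperplane and $f$ a non-$A_2$ weight; the Riesz transforms are then unbounded on $L^2(f\,dx)$). What David--Semmes actually give is $L^2$-boundedness with respect to surface measure on a Lipschitz graph, or more generally for $n$-AD-regular uniformly rectifiable measures, and the uniform operator-norm bound in terms of $\|K|_{\mathbb S^n}\|_{C^M}$ only makes sense in that setting; the stated estimate cannot hold for a "rectifiable measure" in the paper's weak sense (a measure vanishing outside an $n$-rectifiable set and absolutely continuous with respect to $\mathcal H^n$), which need not even have growth of degree $n$. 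Moreover, "summing carefully (using the compactness of $\supp\mu$)" hides the entire cross-interaction analysis: disjointness of the sets $E_j$ yields no cancellation among the terms $T(\chi_{E_k}\mu\,\cdot)$ in $L^2(\mu)$, and without a global growth hypothesis on $\mu$ the operator need not even be well defined. As written, this half of the argument would fail, and the gap cannot be patched without imposing an AD-regularity or growth assumption that your proposal (and, admittedly, the theorem as stated in the paper) does not make explicit.
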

The proof of the boundedness of $T_{K,\mu}$ is due to David and Semmes.
The result on principal values was first proved imposing an analogous condition for all $j=0,1,2,\ldots$ (for a more detailed exposition  we refer, for example, to \cite[Chapter 20]{mat}). We remark that it has been recently improved by Mas (see \cite[Corollary 1.6]{mas}).

The previous theorem together with a spherical harmonics expansion of the kernel is the key tool to prove the following result.
\begin{lemm}\label{lemma_mt_lip}
Let $\mu$ be an $n$-rectifiable measure. There exists $M=M(n)$ such that the following holds. Let $b(x, z)$ be odd in $z$ and homogeneous of degree $-n$ in $z$, and assume
$D^\alpha_z b(x,z)$ is continuous and bounded on $\mathbb{R}^{n+1}\times\mathbb{S}^n$, for any multi-index $|\alpha|\leq M$. Then for every $f\in L^2(\mu)$, the limit
\begin{equation}
Bf(x)=\lim_{\varepsilon\to 0}\int_{|x-y|>\varepsilon}b(x,x-y)f(y)d\mu(y)
\end{equation}
exists for $\mu$-almost every $x$.
\end{lemm}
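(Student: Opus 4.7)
The plan is to expand $b$ in its second variable using spherical harmonics on $\mathbb{S}^n$, reducing the variable-coefficient operator $B$ to a weighted superposition of convolution-type Calder\'on--Zygmund operators to which Theorem \ref{thm_DS_existence_pv} directly applies. Concretely, for each $x \in \mathbb{R}^{n+1}$ I would write
\begin{equation*}
b(x, \theta) = \sum_{k \text{ odd}} \sum_{j=1}^{d_k} a_{k,j}(x)\, Y_{k,j}(\theta), \qquad \theta \in \mathbb{S}^n,
\end{equation*}
where $\{Y_{k,j}\}_{j}$ is an orthonormal basis of spherical harmonics of degree $k$, $d_k \lesssim k^{n-1}$, and only odd degrees $k$ appear because $b(x,\cdot)$ is odd. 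Extending by $(-n)$-homogeneity in the second variable yields
\begin{equation*}
b(x, z) = \sum_{k,j} a_{k,j}(x)\, K_{k,j}(z), \qquad K_{k,j}(z) := \frac{Y_{k,j}(z/|z|)}{|z|^{n}},
\end{equation*}
so that formally $B_\varepsilon f(x) = \sum_{k,j} a_{k,j}(x)\, T_{K_{k,j},\mu,\varepsilon} f(x)$, and the task reduces to handling one Riesz-type piece at a time and then resumming.

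Next I would control the two factors separately. Each $K_{k,j}$ is an odd, smooth, $(-n)$-homogeneous convolution kernel, so Theorem \ref{thm_DS_existence_pv} provides $L^2(\mu)$-boundedness of $T_{K_{k,j},\mu}$ and existence $\mu$-a.e.\ of the corresponding principal values. By Bernstein's inequality on $\mathbb{S}^n$ one has $\|Y_{k,j}\|_{C^M(\mathbb{S}^n)} \lesssim k^{cM}$, which when fed into Theorem \ref{thm_DS_existence_pv} produces an operator-norm bound $\|T_{K_{k,j},\mu}\|_{L^2 \to L^2} \lesssim k^{C_0}$ for some $C_0 = C_0(n,M)$; a standard Cotlar-type argument upgrades this to the same polynomial bound for the maximal truncation $T^{*}_{K_{k,j},\mu}f$. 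For the coefficients, the hypothesis that $D^\alpha_z b$ is bounded on $\mathbb{R}^{n+1}\times\mathbb{S}^n$ for every $|\alpha|\leq M$, integrated against the spherical Laplacian, gives the classical decay
\begin{equation*}
\sup_{x\in\mathbb{R}^{n+1}} |a_{k,j}(x)| \lesssim k^{-M},
\end{equation*}
with implicit constant controlled by $\sup_{|\alpha|\leq M}\|D^\alpha_z b\|_{L^\infty(\mathbb{R}^{n+1}\times\mathbb{S}^n)}$.

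Combining these, I would choose the threshold $M = M(n)$ large enough that $-M + C_0 + (n-1) < -1$, so that
\begin{equation*}
\sum_{k,j} \|a_{k,j}\|_{\infty}\, \bigl\|T^{*}_{K_{k,j},\mu} f\bigr\|_{L^2(\mu)} < \infty \qquad \text{for every } f\in L^2(\mu).
\end{equation*}
This forces $\sum_{k,j} |a_{k,j}(x)|\, T^{*}_{K_{k,j},\mu} f(x) < \infty$ for $\mu$-a.e.\ $x$, which legitimizes interchanging $\lim_{\varepsilon\to 0}$ with the sum in $B_\varepsilon f(x) = \sum_{k,j} a_{k,j}(x)\, T_{K_{k,j},\mu,\varepsilon} f(x)$; combined with the $\mu$-a.e.\ existence of each single principal value $\lim_{\varepsilon \to 0} T_{K_{k,j},\mu,\varepsilon} f(x)$, this yields the existence of $Bf(x)$ for $\mu$-a.e.\ $x$. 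The main obstacle is the quantitative bookkeeping at this last step: the $C^M$-type decay of the spherical-harmonic coefficients must strictly dominate the polynomial growth in $k$ of both the operator norms $\|T^{*}_{K_{k,j},\mu}\|$ and the dimensions $d_k$. This is what fixes the dimension-only threshold $M = M(n)$ in the statement, and is the only place where rectifiability of $\mu$ enters, through the input Theorem \ref{thm_DS_existence_pv}.
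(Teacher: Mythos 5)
Your proposal matches the paper's proof essentially step for step: both expand $b(x,\cdot)$ in spherical harmonics of odd degree, apply the David--Semmes result (Theorem \ref{thm_DS_existence_pv}) to each $(-n)$-homogeneous, odd, smooth kernel $Y_{k,j}(z/|z|)|z|^{-n}$, and then resum by trading the polynomial growth in $k$ of the Calder\'on--Zygmund constants and of the dimension $d_k\lesssim k^{n-1}$ against the $k^{-M}$ decay of the coefficients $a_{k,j}$ to justify the interchange of $\lim_{\varepsilon\to 0}$ with the sum. The only cosmetic differences are that you estimate $\|Y_{k,j}\|_{C^M(\mathbb{S}^n)}$ via Bernstein's inequality and explicitly invoke Cotlar to pass to maximal truncations, whereas the paper uses the Sobolev embedding $H^{s}(\mathbb{S}^n)\hookrightarrow C(\mathbb{S}^n)$ for the derivative bounds and phrases the a.e.\ domination via the Lebesgue differentiation theorem.
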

\begin{proof}{{This result is used in \cite{MT} (see for example \cite[(1.14)]{MT}). The proof is a variation of the argument in \cite[Proposition 1.2]{MT}.}}
{
{For the reader's convenience we discuss the details below.}

Let $\{\varphi_{j,l}\}_{j\geq 1, 1\leq l \leq N_j}$ be an orthonormal basis of $L^2(\mathbb{S}^n)$ consisting of surface spherical harmonics of degree $j$. Recall that (see \cite[(2.12)]{spherical_harmonics})
\begin{equation} \label{estim_nj}
N_j=O(j^{n-1}),\qquad \text{ for }j\gg 1.
\end{equation}
Using the homogeneity assumption for $b(x,\cdot)$ and the orthonormal expansion, we write
\begin{equation}\label{pvlip1}
\begin{split}
b(x,z)&=b\Big(x,\frac{z}{|z|}\Big)|z|^{-n}=\sum_{j\geq 1}\sum_{l=1}^{N_j} \langle b(x,\cdot),\varphi_{j,l}\rangle_{L^2(\mathbb{S}^n)}\varphi_{j,l}\Big(\frac{z}{|z|}\Big)|z|^{-n}\\
&=\sum_{j,l} b_{j,l}(x)\varphi_{j,l}\Big(\frac{z}{|z|}\Big)|z|^{-n},
\end{split}
\end{equation}
where $b_{j,l}(x)\coloneqq   \langle b(x,\cdot),\varphi_{j,l}\rangle_{L^2(\mathbb{S}^n)}$. Since $b(x,\cdot)$ is an odd function and $\varphi_{2j, l}$ is even for every $j$, $b_{j,l}(x)\equiv 0$ for $j$ even.  Being $b$ in $L^\infty(\mathbb{R}^{n+1}\times \mathbb{S}^n)$ by hypothesis and H\"older's inequality, we have
\begin{equation}\label{pvlip2}
|b_{j,l}(x)|\leq C(n)\|b(x,\cdot)\|_{L^\infty(\mathbb{S}^n)}\|\varphi_{j,l}\|_{L^2(\mathbb{S}^n)}\leq C(n)\|b\|_{L^\infty(\mathbb{R}^{n+1}\times \mathbb{S}^n)}\leq C(n).
\end{equation}
Moreover, recalling that we can suppose $j$ odd,  the function $\tilde{K}_{j,l}(z)\coloneqq  \varphi_{j,l}\big(z/|z|\big)|z|^{-n}$ satisfies the hypothesis in Theorem \ref{thm_DS_existence_pv}: there exists an harmonic polynomial $P_{j,l}$ of odd degree $j$ such that $\varphi_{j,l}(z/|z|)=P_{j,l}(z)/|z|^j$, so
\begin{equation}
\Big|\nabla \varphi_{j,l}\Big(\frac{z}{|z|}\Big)\Big|\lesssim \frac{1}{|z|}
\end{equation}
and
\begin{equation}
\big|\nabla \tilde K_{j,l}(z)\big|\lesssim \Big|\nabla \varphi_{j,l}\Big(\frac{z}{|z|}\Big)\Big|\frac{1}{|z|^n}+\Big|\varphi_{j,l}\Big(\frac{z}{|z|}\Big)\Big|\frac{1}{|z|^{n+1}}\lesssim \frac{1}{|z|^{n+1}}.
\end{equation}
Analogous estimates hold for higher order derivatives. So, Theorem \ref{thm_DS_existence_pv} ensures that
\begin{equation}\label{pvlip3}
T_{\tilde{K}_{j,l},\mu}f(x)=\lim_{\varepsilon\to 0}\int_{|x-y|>\varepsilon}\tilde{K}_{j,l}(x-y)f(y)d\mu(y)\equiv \lim_{\varepsilon\to 0}T_{\tilde K_{j,l}, \mu, \varepsilon}f(x)
\end{equation}
exists for $\mu$-a.e $x$. Recall also that by the Theorem \ref{thm_DS_existence_pv} there exists $M=M(n)$ such that $T_{\tilde{K}_{j,l},\mu}$ is bounded on $L^2(\mu)$ with operator norm
\begin{equation}\label{l2normharm}
\|T_{\tilde{K}_{j,l},\mu}\|_{L^2(\mu)\to L^2(\mu)}\lesssim  \|\tilde{K}_{j,l}|_{\mathbb{S}^{n}}\|_{C^M(\mathbb{S}^n)}=\|\varphi_{j,l}\|_{C^M(\mathbb{S}^n)}.
\end{equation}
Gathering \eqref{pvlip1}, \eqref{pvlip2} and \eqref{pvlip3}, to prove the lemma it is enough to show that the dominated convergence theorem applies and, in particular, that
\begin{equation} \label{thesis_dct}
\sum_{j,l}\big|b_j(x)T_{\tilde K_{j,l}, \mu, \varepsilon}f(x)\big|
 \leq C(x)<\infty,
\end{equation}
where $C(x)$ does not depend on $\varepsilon$.
By Lebesgue differentiation theorem, to prove \eqref{thesis_dct} it suffices to show that for every ball $B_0\subset\mathbb{R}^{n+1}$ we have
\begin{equation}
\begin{split}
\sum_{j,l}\int_{B_0} \big|b_{j,l}(x)T_{\tilde{K}_{j,l}, \mu, \varepsilon}f(x)\big|d\mu(x)&\lesssim_{B_0, n} \sum_{j,l, m}\|b_{j,l}\|_\infty \|\varphi_{j,l}\|_{C^m(\mathbb{S}^n)}\|f\|_{L^2(\mu)}\\ &\leq C \|f\|_{L^2(\mu)}
\end{split}
\end{equation}
for some $C>0$, where the first inequality above uses the $L^2$-boundedness \eqref{l2normharm}.

The smoothness of $b$ implies that (see \cite[3.1.5]{Stein})
$$\|b_{j,l}\|_\infty \lesssim \frac{1}{j^{\frac{3}{2}n + 2+M}},$$
where the exponent on the right hand side is chosen accordingly to what we need next.
Now, recall that the Sobolev space $H^s(\mathbb{S}^n)$, $s\in\mathbb{R}$ can be defined via spherical harmonics expansion. In particular, it is the completion of $C^\infty(\mathbb{S}^n)$ with respect to the norm
\begin{equation}\label{def_sobolev}
\|v\|_{H^s(\mathbb{S}^n)}\coloneqq \Big(\sum_{j,l}\Big(j+\frac{n-1}{2}\Big)^{2s}|v_{j,l}|^2\Big)^{1/2},
\end{equation}
where $v_{j,l}=\langle v,\varphi_{j,l}\rangle_{L^2(\mathbb{S}^n)}.$ For the definition and the properties of this space, we refer for example to \cite[Section 3.8]{spherical_harmonics} and to \cite[Section 6.3]{spherical_harmonics} for the relation of \eqref{def_sobolev} with that via the restriction of the gradient to the unit sphere. By Sobolev embedding theorem, $H^{s}(\mathbb{S}^n)$ continuously embeds into $C(\mathbb{S}^n)$ for $s>n/2$. So, choosing $s=\frac{n}{2}+1$ and using \eqref{def_sobolev} can estimate 
\begin{equation}
\|D^m \varphi_{j,l}\|_{C(\mathbb{S}^n)}\lesssim_n \|\varphi_{j,l}\|_{H^{s+m}(\mathbb{S}^n)}=\Big(\frac{2j+n-1}{2}\Big)^{\frac{n}{2}+m + 1}.
\end{equation}
Hence, using \eqref{estim_nj}
\begin{equation}
\sum_{j,l}\|b_{j,l}\|\|\varphi_{j,l}\|_{C^M(\mathbb{S}^n)} \lesssim_n \sum_{m=0}^M\sum_{j\geq 1}N_j j^{-\frac{3}{2}n - 2-M} j^{\frac{n}{2}+m +1 }\lesssim \sum_{j\geq 1}\frac{1}{j^2}<\infty,
\end{equation}
which concludes the proof.
}
\end{proof}
\begin{theor}\label{thm_pv_layer_pot_rect}
Let $\mu$ be an $n$-rectifiable measure on $\Rn1$ with compact support. Let $A$ be a matrix having the properties \eqref{eqelliptic1}, \eqref{eqelliptic2} and \eqref{eq:Holdercont}. Then for every $f\in L^2(\mu)$ the principal value
\begin{equation}
T_\mu f(x)=\lim_{\varepsilon\to 0}\int_{|x-y|>\varepsilon}\n1\E(x,y)f(y)d\mu(y)
\end{equation}
exists for $\mu$-almost every $x$.
\end{theor}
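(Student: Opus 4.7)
The plan is to freeze the matrix coefficients at the base point $x$, writing the kernel $K(x,y)=\nabla_1\mathcal{E}_A(x,y)$ as an odd, $-n$-homogeneous kernel in $x-y$ plus a weakly singular remainder, and then applying Lemma \ref{lemma_mt_lip} to the frozen part while handling the remainder by absolute convergence.

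Since $\mu$ has compact support, fix $R_0>0$ with $\supp\mu\subset B(0,R_0)$, so that $|x-y|\le 2R_0$ whenever $x,y\in\supp\mu$. Set
\[
b(x,z):=\nabla_z\Theta(z;A(x)),\qquad x\in\Rn1,\ z\in\Rn1\setminus\{0\},
\]
with $\Theta(\,\cdot\,;\,\cdot\,)$ as in \eqref{fund_sol_const_matrix}. The explicit formula \eqref{grad_const_coeff_sol} shows that $b(x,\cdot)$ is odd in $z$ and homogeneous of degree $-n$. By the ellipticity assumptions \eqref{eqelliptic1} and \eqref{eqelliptic2}, the symmetric part of $A(x)$ has eigenvalues uniformly confined to a compact subset of $(0,\infty)$, so every $z$-derivative $D^\beta_z b(x,z)$ is continuous and bounded on $\Rn1\times\mathbb{S}^n$. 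Lemma \ref{lemma_mt_lip} therefore yields, for each $f\in L^2(\mu)$, the $\mu$-a.e.\ existence of
\[
Bf(x):=\lim_{\varepsilon\to 0}\int_{|x-y|>\varepsilon}b(x,x-y)\,f(y)\,d\mu(y).
\]

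On the other hand, Lemma \ref{lemm_freezing}(2) applied with $R=2R_0$ gives, for $x,y\in\supp\mu$ with $x\ne y$,
\[
|R(x,y)|:=\bigl|K(x,y)-b(x,x-y)\bigr|=\bigl|\nabla_1\mathcal{E}_A(x,y)-\nabla_1\Theta(x,y;A(x))\bigr|\lesssim|x-y|^{\alpha-n},
\]
the constant depending on $\Lambda$, $\|A\|_\alpha$ and $R_0$. It therefore suffices to verify that
\[
\int\frac{|f(y)|}{|x-y|^{n-\alpha}}\,d\mu(y)<\infty\quad\text{for $\mu$-a.e.\ }x\in\supp\mu,
\]
as this, combined with dominated convergence, gives $\pv T_\mu f(x)=Bf(x)+\int R(x,y)f(y)\,d\mu(y)$ $\mu$-a.e. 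To prove the finiteness, use two standard facts: $n$-rectifiable measures on $\Rn1$ have finite upper $n$-density $\Theta^*_\mu(x)$ for $\mu$-a.e.\ $x$, providing $r_x\in(0,1]$ and $C_x<\infty$ with $\mu(B(x,r))\le C_x r^n$ for $0<r\le r_x$; and, via the Besicovitch covering theorem in $\Rn1$, the non-centered Hardy--Littlewood maximal operator $M_\mu$ is bounded on $L^2(\mu)$, so $M_\mu f(x)<\infty$ $\mu$-a.e. A dyadic splitting at radius $r_x$ then yields
\[
\int_{|x-y|\le r_x}\frac{|f(y)|}{|x-y|^{n-\alpha}}\,d\mu(y)\lesssim\sum_{k\ge 0}(r_x 2^{-k})^{\alpha-n}\mu(B(x,r_x 2^{-k}))\,M_\mu f(x)\lesssim C_x r_x^\alpha M_\mu f(x),
\]
while the integral over $r_x<|x-y|\le 2R_0$ is bounded by $r_x^{\alpha-n}\|f\|_{L^1(\mu)}\le r_x^{\alpha-n}\|f\|_{L^2(\mu)}\mu(\supp\mu)^{1/2}<\infty$.

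The main obstacle is checking that the frozen kernel $b(x,\cdot)$ satisfies the hypotheses of Lemma \ref{lemma_mt_lip}; the key observation is that the lemma demands regularity only in the $z$-variable, and this holds uniformly in $x$ thanks to uniform ellipticity, even though $A$ is merely Hölder continuous in $x$. The remaining fractional error is absolutely integrable $\mu$-a.e.\ as a direct consequence of the pointwise finite density afforded by rectifiability, so no further cancellation is needed.
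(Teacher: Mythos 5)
Your proof is correct and follows essentially the same strategy as the paper: freeze the coefficients at $x$ to produce an odd, $(-n)$-homogeneous kernel to which Lemma \ref{lemma_mt_lip} applies, then treat the difference $\nabla_1\mathcal{E}_A(x,y)-\nabla_1\Theta(x,y;A(x))$ as a weakly singular remainder controlled by Lemma \ref{lemm_freezing}. Your argument spells out the $\mu$-a.e.\ absolute convergence of the remainder via finite upper density and the maximal function, where the paper instead compresses this step into a brief remark that the remainder kernel defines a compact operator on $L^p(\mu)$; both routes yield the same conclusion.
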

\begin{proof}
Let $\varepsilon>0$ and denote $b(x,z)\coloneqq   \n1\Theta(z,0;A(x))$. As a consequence of the explicit formula \eqref{fund_sol_const_matrix}, it is not difficult to see that each component of $b$ verifies the hypothesis of Lemma \ref{lemma_mt_lip}. So, split $T_{\mu,\varepsilon}$ as
\begin{equation}\label{split_ex_pv_lip}
T_{\mu,\varepsilon} f(x)=\int_{|x-y|>\varepsilon}b(x,x-y)f(y)d\mu(y) + \int_{|x-y|>\varepsilon}\big(\n1\E(x,y)-\n1\Theta(x,y;A(x))\Big)f(y)d\mu(y).
\end{equation}
The limit for $\varepsilon\to 0$ of the first integral in the right hand side of \eqref{split_ex_pv_lip} exists $\mu$-a.e. because of Lemma \ref{lemma_mt_lip}. On the other hand, $\n1\E(x,y)-\n1\Theta(x,y;A(x))$ defines an operator which is compact on $L^p(\mu)$ because of Lemma \ref{lemm_freezing}, which guarantees that the limit for $\varepsilon\to 0$ exists for $\mu$-a.e. $x$ and concludes the proof.
\end{proof}

\subsection{Principal values for measures with zero density}
We argue as in \cite[Chapter 8]{tolsa_book}, proving the existence of the principal values passing through the existence of the weak limit and following the approach of Mattila and Verdera \cite{mattila_verdera}. Again, we suppose that $\mu$ has compact support. 

A combination of the proof of  \cite[Theorem 1.4]{mattila_verdera} (see also \cite[Theorem 8.10]{tolsa_book}) and Lemma \ref{lemm_freezing} makes possible to prove that if $\mu$ is a Radon measure in $\mathbb{R}^{n+1}$ with growth of degree $n$,
then for every $1<p<\infty$ and $f\in L^p(\mu)$, $\{T_{\mu,\epsilon}f\}_\epsilon$ admits a weak limit $T^w_\mu f$ in $L^p(\mu)$ as $\epsilon\to 0$.
Moreover, the representation formula 
\begin{equation}\label{formula_weak_limit}
T^w_\mu f(x)=\lim_{r\to 0}\fint_{B(x,r)} T_\mu\big(f\chi_{B(x,r)^c}\big)(y)d\mu(y)
\end{equation}
holds for $\mu$-almost every $x\in\Rn1$, giving an explicit way of computing the weak limit. We remark that, in general, we can only infer that formula \eqref{formula_weak_limit} holds if $T_\mu$ has an antisymmetric kernel.

Let us recall the following theorem by Mattila and Verdera (see \cite{mattila_verdera}), here reported in the formulation of \cite[Theorem 8.11]{tolsa_book}.
\begin{theor}\label{thm_zero_density_mv}
Let $\mu$ be a Radon measure in $\mathbb{R}^d$ that has growth of degree $n$ and zero
$n$-dimensional density $\mu$-a.e. Let $\mathcal T_\mu$ be an $n$-dimensional antisymmetric Calder\'on-Zygmund operator. 
Then, for all $1 < p < \infty$ and $f \in L^p(\mu)$, $\pv \mathcal T_\mu f(x)$ exists for $\mu$-a.e. $x\in\mathbb{R}^d$ and coincides with $\mathcal T^w_\mu f(x)$. Also, for all $\nu\in M(\mathbb{C})$, $\pv \mathcal T\nu(x)$ exists for $\mu$-a.e. $x\in\mathbb{R}^d$.
\end{theor}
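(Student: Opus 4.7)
The plan is to follow the Mattila--Verdera strategy, leveraging the weak-limit representation formula established immediately before the theorem statement. Since $\{\mathcal T_{\mu,\varepsilon}f\}_\varepsilon$ admits a weak limit $\mathcal T^w_\mu f$ in $L^p(\mu)$ with the pointwise $\mu$-a.e.\ representation
\begin{equation*}
\mathcal T^w_\mu f(x)=\lim_{r\to 0}\frac{1}{\mu(B(x,r))}\int_{B(x,r)}\mathcal T_\mu(f\chi_{B(x,r)^c})(y)\,d\mu(y),
\end{equation*}
the task reduces to proving $\mathcal T_{\mu,r}f(x)\to \mathcal T^w_\mu f(x)$ for $\mu$-a.e.\ $x$ as $r\to 0$; the coincidence $\pv\mathcal T_\mu f=\mathcal T^w_\mu f$ then follows automatically. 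Recognising that $\mathcal T_{\mu,r}f(x)=\mathcal T_\mu(f\chi_{B(x,r)^c})(x)$, the whole problem reduces to showing that the $\mu$-average over $y\in B(x,r)$ of $\mathcal T_\mu(f\chi_{B(x,r)^c})(y)-\mathcal T_\mu(f\chi_{B(x,r)^c})(x)$ tends to zero.

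I would fix a ``good'' base point $x$ satisfying both the averaging formula and the zero-density condition $\Theta^*_\mu(x)=0$; both conditions hold at $\mu$-a.e.\ point by hypothesis. Denoting by $K$ the kernel of $\mathcal T_\mu$, for $y\in B(x,r)$ the difference to be averaged equals
\begin{equation*}
\int_{B(x,r)^c}\bigl[K(y,z)-K(x,z)\bigr]f(z)\,d\mu(z),
\end{equation*}
and I would split the $z$-integration into a far region $\{|z-x|>2r\}$ and a near region $\{r<|z-x|\le 2r\}$. On the far region, the standard Calder\'on--Zygmund smoothness estimate $|K(y,z)-K(x,z)|\lesssim r^{\alpha}|x-z|^{-n-\alpha}$ together with the $n$-growth of $\mu$ yields, by a Lebesgue-differentiation-type argument (with $|f|^p$ in place of a generic $L^1$ function, controlled via H\"older), a quantity vanishing as $r\to 0$ for $\mu$-a.e.\ $x$.

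The near region is where the antisymmetry hypothesis on $\mathcal T_\mu$ is truly needed: the naive pointwise bound $|K(y,z)|\lesssim r^{-n}$ is useless, and one instead uses $K(z,y)=-K(y,z)$ to symmetrise the double integral in the variables $y,z$ so that the apparent $r^{-n}$ blow-up is absorbed, leaving an expression controlled by a multiple of $\mu(B(x,2r))/r^n$, which vanishes as $r\to 0$ thanks to $\Theta^*_\mu(x)=0$. For the second assertion about $\nu\in M(\mathbb{R}^{n+1})$, I would decompose $\nu=h\,\mu+\nu^\perp$ with $\nu^\perp\perp\mu$: the $L^p$ part just proved handles $h\,\mu$, while for $\nu^\perp$ the existence of principal values at $\mu$-a.e.\ point follows by the same near/far splitting, using that $\mu$-a.e.\ point sees $\nu^\perp$ only through a set of vanishing $\mu$-density. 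The main obstacle throughout is the near-region estimate: antisymmetry is the sole mechanism that transforms the unbounded size of $K$ near the diagonal into a quantity vanishing under the zero-density hypothesis, which is precisely why the theorem is stated for antisymmetric kernels only.
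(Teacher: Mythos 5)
The paper does not prove this statement: the text immediately preceding it introduces it explicitly as a recalled external result --- the theorem of Mattila and Verdera, quoted in the formulation of Theorem 8.11 of Tolsa's monograph --- which is then applied as a black box in the proof of Theorem~\ref{thm_zero_density_layer_pot}. So there is no in-paper proof to compare against; what you are reconstructing is Mattila and Verdera's argument.

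Your high-level plan follows their strategy, and the far-region estimate (Calder\'on--Zygmund smoothness combined with a density-point argument using $\Theta^*_\mu(x)=0$) is sound, but the near-region step has a genuine gap. You claim that antisymmetry ``symmetrises the double integral in the variables $y,z$ so that the apparent $r^{-n}$ blow-up is absorbed, leaving an expression controlled by a multiple of $\mu(B(x,2r))/r^n$.'' In the near region, however, $y$ runs over $B(x,r)$ while $z$ runs over $B(x,2r)\setminus B(x,r)$, two \emph{disjoint} sets; the identity $K(z,y)=-K(y,z)$ provides no cancellation between paired singular contributions across disjoint domains, so there is nothing to symmetrise. What remains is
\begin{equation}
\frac{1}{\mu(B(x,r))}\int_{B(x,r)}\int_{r<|z-x|\le 2r}K(y,z)\,f(z)\,d\mu(z)\,d\mu(y),
\end{equation}
and applying $|K(y,z)|\lesssim|y-z|^{-n}$ together with Fubini yields the inner factor $\int_{B(x,r)}|y-z|^{-n}\,d\mu(y)$, which for $z$ at distance $r+s$ from $x$ with $s\ll r$ is of order $\log(r/s)$ under the growth hypothesis alone --- not $O(\mu(B(x,2r))/r^n)$ --- and $\Theta^*_\mu(x)=0$ does not remove this logarithm, since zero density controls $\mu(B(x,t))/t^n$ but not integrals of the type $\int_0^r\mu(B(\cdot,t))\,t^{-n-1}\,dt$. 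Note also that the paper itself stresses that the antisymmetry of the kernel is what yields the \emph{representation formula} for $\mathcal T^w_\mu f$, the step you accept as given; invoking antisymmetry again here is the wrong tool. The genuine control of the near term in Mattila--Verdera rests instead on the $L^2(\mu)$-boundedness of $\mathcal T_\mu$ (built into the Calder\'on--Zygmund hypothesis) and a more elaborate maximal-operator and density argument. You located the delicate term correctly, but the mechanism you propose to handle it does not close the gap.
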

This result can be transferred to the gradients of the single layer potential $T_\mu$.
\begin{theor}\label{thm_zero_density_layer_pot}
Let $\mu$ be a Radon measure in $\mathbb{R}^{n+1}$ that has growth of degree $n$, zero $n$-dimensional density and compact support. Suppose that $T_\mu$ is a bounded operator from $L^2(\mu)$ to $L^2(\mu)$. Then, for all $1 < p < \infty$ and $f \in L^p(\mu)$, $\pv  T_\mu f(x)$ exists for $\mu$-a.e. $x\in\Rn1$ and coincides with $T^w_\mu f(x)$. Also, for all $\nu\in M(\mathbb{C})$, $\pv T\nu(x)$ exists for $\mu$-a.e. $x\in\Rn1$.
\end{theor}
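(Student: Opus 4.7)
The strategy is to reduce to the Mattila--Verdera principal value theorem (Theorem \ref{thm_zero_density_mv}) via a ``freezing'' of the coefficients followed by a spherical harmonic decomposition, in the spirit of the proof of Lemma \ref{lemma_mt_lip} but using Theorem \ref{thm_zero_density_mv} in place of Theorem \ref{thm_DS_existence_pv}. The plan is to split
\begin{equation*}
K(x,y)=b(x,x-y)+R(x,y),\qquad b(x,z):=\nabla_1\Theta(z;A(x)),
\end{equation*}
and treat each piece separately.

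For the remainder $R$, Lemma \ref{lemm_freezing}(2) together with the compactness of $\supp\mu$ gives $|R(x,y)|\lesssim |x-y|^{\alpha-n}$ on $\supp\mu\times\supp\mu$. A dyadic-shell decomposition combined with the $n$-growth of $\mu$ then yields $\int_{\Rn1} |R(x,y)|\,d\mu(x)\lesssim 1$ uniformly in $y\in\supp\mu$, so by Fubini $\int |R(x,y)|\,d|\nu|(y)<\infty$ for $\mu$-a.e.\ $x$ and every $\nu\in M(\Rn1)$; the analogous statement for the action on $L^p(\mu)$ follows from Schur's test. In particular, the $R$-contribution to $T_\varepsilon\nu$ is absolutely convergent, its $\varepsilon\to 0$ limit equals the plain Lebesgue integral, and (by Lebesgue differentiation applied to the representation \eqref{formula_weak_limit}) this also coincides with the $R$-contribution to the weak limit.

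For the frozen part, I would expand in surface spherical harmonics as in the proof of Lemma \ref{lemma_mt_lip}:
\begin{equation*}
b(x,z)=\sum_{j\text{ odd},\,l} b_{j,l}(x)\,\widetilde K_{j,l}(z),\qquad \widetilde K_{j,l}(z):=\varphi_{j,l}\!\left(\tfrac{z}{|z|}\right)|z|^{-n},
\end{equation*}
with only odd $j$ contributing since $b(x,\cdot)$ is odd in $z$ by \eqref{grad_const_coeff_sol}. For each odd $j$, $\widetilde K_{j,l}$ is an antisymmetric, translation-invariant, homogeneous $n$-dimensional Calder\'on--Zygmund kernel, so Theorem \ref{thm_zero_density_mv} furnishes the existence of $\pv T_{\widetilde K_{j,l},\mu}f(x)$ and $\pv T_{\widetilde K_{j,l}}\nu(x)$ for $\mu$-a.e.\ $x$, both coinciding with their weak limits. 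The smoothness of $b(x,\cdot)$ on $\mathbb{S}^n$ (since $\Theta(\cdot;A_0)$ is $C^\infty$ away from the origin) produces super-polynomial decay of $\|b_{j,l}\|_\infty$; combining this with the $L^p(\mu)$-bound for the maximal truncation $T_{\mu,*}$ inherited from the $L^2(\mu)$-boundedness of $T_\mu$ via a standard nonhomogeneous Cotlar-type inequality yields a uniform-in-$\varepsilon$ dominant that justifies the interchange of the sum and the $\varepsilon\to 0$ limit.

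Putting the two parts together gives the existence of $\pv T_\mu f(x)$ (respectively $\pv T\nu(x)$) for $\mu$-a.e.\ $x$, and the coincidence with $T^w_\mu f(x)$ (respectively $T^w\nu(x)$) follows term by term from \eqref{formula_weak_limit} applied to each antisymmetric piece and from the pointwise-equals-weak identity for the $R$-contribution. The main obstacle is the absence of individual $L^2(\mu)$-bounds for each spherical-harmonic piece: in the rectifiable setting of Theorem \ref{thm_pv_layer_pot_rect} these were supplied by the David--Semmes theorem, whereas here one must replace that input by the global maximal bound for $T_\mu$ together with the smoothness of the frozen kernel in order to push through the summability argument.
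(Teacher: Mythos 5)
Your approach is genuinely different from the paper's, and appreciably more elaborate. The paper decomposes the kernel into its antisymmetric part $K^{(a)}(x,y)=\tfrac12\big(\n1\E(x,y)-\n1\E(y,x)\big)$ and symmetric part $K^{(s)}(x,y)=\tfrac12\big(\n1\E(x,y)+\n1\E(y,x)\big)$. The antisymmetric part is a Calder\'on--Zygmund kernel by Lemma~\ref{lemcz}, so Theorem~\ref{thm_zero_density_mv} applies to it directly, with no expansion. The symmetric part satisfies $|K^{(s)}(x,y)|\lesssim|x-y|^{\alpha-n}$ (combine Lemma~\ref{lemm_freezing}\,(2)--(3) with the antisymmetry of $\n1\Theta(\cdot,\cdot;A(x))$), hence $\int|K^{(s)}(x,y)|\,d\mu(y)\lesssim\diam(\supp\mu)^\alpha$, so its truncations converge absolutely and the associated operator is compact. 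Two pieces, no infinite series, no summability issues. That simplicity is precisely what the symmetric/antisymmetric split buys over the frozen/remainder split.

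Your treatment of the remainder $R$ is correct and essentially matches the role of $K^{(s)}$ in the paper. The gap is in the frozen part. You expand $b(x,x-y)=\sum_{j,l}b_{j,l}(x)\widetilde K_{j,l}(x-y)$ and invoke Theorem~\ref{thm_zero_density_mv} for each odd piece $\widetilde K_{j,l}$. That is fine for the \emph{existence} of each $\pv T_{\widetilde K_{j,l},\mu}f(x)$, but Theorem~\ref{thm_zero_density_mv} carries no quantitative estimate whatsoever on $\sup_{\varepsilon>0}\big|T_{\widetilde K_{j,l},\mu,\varepsilon}f(x)\big|$. Such a bound is exactly what you need to dominate $\sum_{j,l}|b_{j,l}(x)|\sup_\varepsilon|T_{\widetilde K_{j,l},\mu,\varepsilon}f(x)|$ and interchange the sum with the limit. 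In the rectifiable case (Theorem~\ref{thm_pv_layer_pot_rect}) this came from the David--Semmes $L^2$-bound~\eqref{l2normharm}, which is a consequence of rectifiability; zero density supplies no analogue. Your fallback --- a Cotlar-type maximal bound for $T_{\mu,*}$ --- controls $\sup_\varepsilon|T_{\mu,\varepsilon}f|$, i.e.\ the maximal truncation of the \emph{full sum}, but cancellation among the spherical-harmonic pieces prevents you from inferring any bound on the individual terms, and a priori the sum of absolute values can be much larger than $T_{\mu,*}f$. So the super-polynomial decay of $\|b_{j,l}\|_\infty$ has nothing quantitative to pair with, and the dominated convergence step is unjustified. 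You correctly identify this as "the main obstacle," but the proposed remedy does not close it.
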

\begin{proof}
Let $1<p<\infty$ and $f\in L^p(\mu)$. We decompose $T_\mu f$ into its symmetric and antisymmetric part. That is to say,
\begin{equation}
T_\mu f(x)=T^{(a)}_\mu f(x)+ T^{(s)}_\mu f(x),
\end{equation}
where $T^{(a)}_\mu$ is the integral operator with kernel $(\n1 \E(x,y)-\n1\E(y,x))/2$ and $T^{(s)}_\mu$ whose kernel is $(\n1 \E(x,y)+\n1\E(y,x))/2$. We can apply Theorem \ref{thm_zero_density_mv} to antisymmetric part $T^{(a)}_\mu$, obtaining that $\pv T^{(a)}_\mu f(x)$ exists for $\mu$-a.e. $x$.

On the other hand, $T^{(s)}_\mu$ defines a compact operator on $L^p(\mu)$ since
\begin{equation}
\int |\n1 \E(x,y)+\n1\E(y,x)|d\mu(y)\lesssim \diam(\supp\mu)^\alpha,
\end{equation}
so that the principal values exist.

The fact that $T^w_\mu f$ coincides with $\pv T_\mu f$ a.e. follows from the definition of weak limit together with dominated convergence theorem:
\begin{equation}
\int T^w_\mu f g \,d\mu=\lim_{\varepsilon\to 0}\int T_{\mu,\varepsilon}f g \,d\mu =\int \pv T_\mu f g \,d\mu \quad \text{for all}\quad g\in L^{p'}(\mu),
\end{equation}
$p'$ being the H\"older conjugate exponent of $p$.
\end{proof}

\vspace{2mm}
\noindent{\bf A remark on the well-posedness of the assumption (6) of Theorem \ref{main_theorem}.}
Let $T,\mu$ and $B$ be as in Theorem \ref{main_theorem}. Let $x, y\in B$ and $\varepsilon>0$ and write
\begin{equation}\label{dec_sum_1}
T_{\epsilon}\mu(x)-T_{\epsilon}\mu(y)=T_{\mu,\epsilon}\chi_{2B}(x)-T_{\mu,\epsilon}\chi_{2B}(y)+ \big[T_{\mu,\epsilon}\chi_{\Rn1\setminus 2B}(x)-T_{\mu,\epsilon}\chi_{\Rn1\setminus 2B}(y)\big].
\end{equation}
Now observe that, being the operator $T_{\mu|_B}$ bounded on $L^2(\mu|_B)$, Theorem \ref{theorem_pv_layer_pot} (2) applies with $\nu=\chi_{2B}\mu$. So, the first two summands on the right hand side of \eqref{dec_sum_1} admit a limit as $\varepsilon\to 0$ for almost every $x,y\in B$. The limit for $\varepsilon\to 0$ of the last summand exists, too. Indeed, since $x,y$ do not belong to $\Rn1\setminus 2B$, for $\varepsilon<r(B)$,
\begin{equation}
T_{\mu,\epsilon}\chi_{\Rn1\setminus 2B}(x)-T_{\mu,\epsilon}\chi_{\Rn1\setminus 2B}(y)=\int_{\Rn1\setminus 2B} \big(\nabla_1\E(x,z)-\nabla_1\E(y,z)\big)d\mu(y).
\end{equation}
If we assume $\tilde \alpha\leq \alpha$ in the statement of the main theorem, an application of the Calder\'on-Zygmund property of the kernel combined with a dyadic decomposition of the domain of integration gives
\begin{equation}\label{maj_p_gamma}
\begin{split}
\Big|\int_{\Rn1\setminus 2B}& \big(\nabla_1\E(x,z)-\nabla_1\E(y,z)\big)d\mu(z)\Big|	\lesssim |x-y|^\alpha\sum_{j=1}^{+\infty}\int_{2^{j+1}B\setminus 2^j B}\frac{1}{|x-z|^{n+\alpha}}d\mu(z)\\
&\leq P_{\mu,\alpha}(B)\leq P_{\mu,\tilde \alpha}(B)<+\infty.
\end{split}
\end{equation}
In particular, this tells that $T\mu(x)-T\mu(y)$ exists in the principal value sense for almost every $x,y\in B$.

We also want to point out that $T\mu-m_{\mu,B}(T\mu)$ defines an $L^2(\mu|_B)$-function. Indeed, for $x\in B$ and using \eqref{maj_p_gamma},
\begin{equation}
\begin{split}
|T\mu(x)-m_{\mu,B}(T\mu)|&\leq \frac{1}{\mu(B)}\int_B|T\mu(x)-T\mu(y)|d\mu(y)\\
&\leq |T(\chi_{2B}\mu)(x)| + \big(m_{\mu,B}|T(\chi_{2B}\mu)|^2\big)^{1/2} + P_{\mu,\tilde{\alpha}}(B).
\end{split}
\end{equation}
The right hand side of the previous majorization defines an $L^2(\mu|_B)$ function because of the assumptions $T(\chi_{2B}\mu)\in L^2(\mu|_B)$ and $P_{\mu, \tilde{\alpha}}(B)<+\infty$ in Theorem \ref{main_theorem}.

\section{The Main Lemma}\label{section_periodic_symmetric}
A careful read of \cite{GT} shows that the same arguments as the ones for the Riesz transform give that, in order to prove Theorem \ref{main_theorem}, it suffices to prove the following result.
\begin{lemm}[Main Lemma]\label{mainlemma_origin}Let $n>1$ and let $C_0,C_1>0$ be some arbitrary constants. There exist
$M=M(C_0,C_1, n)>0$ big enough, $\lambda(C_0,C_1, n)>0$ and $\epsilon=\epsilon(C_0,C_1,M, n)>0$ small enough such that if $\delta=\delta(M,C_0,C_1,n)>0$ is sufficiently small, then the following holds. Let $\mu$ be a Radon measure in $\mathbb{R}^{n+1}$ with compact support and $Q_0\subset\mathbb{R}^{n+1}$ a cube centered at the origin satisfying the properties:
\begin{enumerate}
\item $\ell(MQ_0)\leq \lambda.$
\item $\mu(Q_0)=\ell(Q_0)^n$.
\item $P_{\mu,\tilde{\alpha}}(MQ_0)\leq C_0$.
\item For all $x\in 2Q_0$ and $0<r\leq \ell(Q_0),$ $\Theta_{\mu}(B(x,r))\leq C_0.$
\item $Q_0$ has $C_0$-thin boundary.
\item $\alpha^L_\mu(3MQ_0)\leq\delta,$ for some hyperplane $L$ through the origin.
\item $T_{\mu|_{2Q_0}}$ is bounded on $L^2(\mu|_{2Q_0})$ with $\|T_{\mu|_{2Q_0}}\|_{L^2(\mu|_{2Q_0})\to L^2(\mu|_{2Q_0})}\leq C_1.$
\item We have
\begin{equation}\label{bmotypee}
\int_{Q_0}|T\mu(x)-m_{\mu,Q_0}(T\mu)|^2d\mu(x)\leq \epsilon\mu(Q_0).
\end{equation}
\end{enumerate}
Then there exists some constant $\tau>0$ and a uniformly $n$-rectifiable set $\Gamma\subset \mathbb{R}^{n+1}$ such that
\begin{equation}
\mu(Q_0\cap\Gamma)\geq \tau \mu(Q_0),
\end{equation}
where the constant $\tau$ and the uniform rectifiability constants of $\Gamma$ depend on all the constants above.
\end{lemm}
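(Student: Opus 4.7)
The proof proceeds by contradiction and follows the broad strategy of the Riesz transform case in \cite{GT}, with an essential additional reduction step to handle the lack of symmetry and scale invariance of the elliptic kernel $K(x,y)=\nabla_1\mathcal{E}_A(x,y)$. Suppose the conclusion fails: no sufficiently large piece of $\mu|_{Q_0}$ is covered by a uniformly rectifiable set. The first step, carried out in Section~5, is to replace the matrix $A$ by an auxiliary matrix $\widetilde{A}$ which coincides with (a mollification of) $A_{0,s} := \tfrac12(A(0)+A(0)^T)$ in a neighborhood of $2Q_0$, and which outside that neighborhood is extended to an $\ell$-periodic elliptic matrix with symmetric part equal to $A_{0,s}$, with ellipticity and H\"older data controlled by those of $A$. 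By the freezing estimate of Lemma \ref{lemm_freezing} together with the hypothesis $\ell(MQ_0)\leq\lambda$, the operator associated with $L_A$ differs from that associated with $L_{\widetilde{A}}$ by an error which is negligible in $L^2(\mu|_{2Q_0})$ provided $\lambda$ is small enough; in particular hypotheses (7) and (8) transfer essentially unchanged. The payoff of the reduction is that $L_{\widetilde{A}}$ has \emph{symmetric} leading part and a globally periodic structure, so Lemma \ref{lemm_estim_freezing_periodic} and the homogenization estimate \eqref{approx_hom_par} apply globally and uniformly.

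Once the reduction is in place, I would carry out a stopping-time decomposition of $Q_0$ along the lines of Sections~6--9. Using $\alpha^L_\mu(3MQ_0)\leq\delta$, the thin-boundary condition, and the density/growth hypotheses (2)--(4), stop on dyadic sub-cubes where the density $\Theta_\mu$ departs significantly from its value on $Q_0$, where the flatness of $\mu$ with respect to $L$ deteriorates, or where the local $L^2$-oscillation of $T\mu$ becomes large. On the complementary good region, $\mu$ remains quantitatively close (in the $\alpha$-number sense) to a fixed multiple of $\mathcal{H}^n|_L$. From the family of stopping cubes one then constructs a periodic auxiliary measure $\overline\mu$, supported near a periodic extension of $L$, with period comparable to a small multiple of $\ell(Q_0)$ and with mass per period comparable to $\ell(Q_0)^n$, chosen so that $\overline\mu$ approximates $\mu$ on the good region of $Q_0$. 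It is crucial that both $\widetilde{A}$ and $\overline\mu$ be periodic, with \emph{compatible} periods, so that the subsequent potential-theoretic analysis is globally controlled.

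The contradiction is then derived by a variational argument in Section~11. Consider the single layer potential $U(x)=\int \mathcal{E}_{\widetilde{A}}(x,y)\,d\overline\mu(y)$ and its suitable smooth truncates, whose convergence and localization near the relevant strip are established in Section~10 using the periodicity of $\widetilde{A}$ and $\overline\mu$. Hypothesis (8) combined with the stopping-time control forces $\nabla U$ to have small mean oscillation on $\mathrm{supp}(\overline\mu)$; an energy/duality inequality then implies that the $L^2$ norm of $\nabla U$ on a suitably reflected/antisymmetrized configuration across $L$ must also be small. On the other hand, the normalization $\mu(Q_0)=\ell(Q_0)^n$ together with the density comparability provides an absolute lower bound for the same quantity. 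Combining the two forces a bound of the form $\epsilon\gtrsim c(C_0,C_1,M,n)>0$, which contradicts the choice of $\epsilon$ small.

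The main technical obstacle is the maximum-principle-type estimate on the infinite slab $\{|x_{n+1}|<h\}$ which underlies the variational argument. In the Riesz transform setting of \cite{GT}, one uses harmonicity of the potential off the support together with the odd symmetry and homogeneity of the Riesz kernel to obtain both the explicit boundary contributions and a clean maximum principle. For a general $L_A$ with only H\"older coefficients neither of these is available, and this is precisely the reason for passing to $\widetilde{A}$: since its symmetric part is constant and its periodic perturbation can be arranged to be compatible with reflection across $L$, the operator $L_{\widetilde{A}}$ admits a weak maximum principle on slabs with constants depending only on the ellipticity and H\"older data, and its fundamental solution admits a symmetrization across $L$ that plays the role of the odd symmetry of the Riesz kernel. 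I expect the verification that this symmetrization is jointly compatible with periodicity, ellipticity, and H\"older regularity, and that the resulting slab estimate has constants independent of the period $\ell$, to be the most delicate technical point of the argument.
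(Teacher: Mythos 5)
Your broad picture of the architecture---reduce to a periodic auxiliary matrix, run a stopping-time argument, periodize the measure, prove a localization for the potential, and close with a variational argument and a slab maximum principle---matches the paper's Sections 5--11. However, two of the central steps are described in a way that would break the argument.

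The first and more serious issue is your reduction of the matrix. You propose replacing $A$ by an auxiliary $\widetilde A$ that \emph{coincides with the constant matrix $A_{0,s}=\tfrac12(A(0)+A(0)^T)$ on a neighborhood of $2Q_0$}, claiming the error in $L^2(\mu|_{2Q_0})$ is negligible by the freezing estimate. This does not work. Lemma~\ref{lemm_freezing} compares $\nabla_1\mathcal{E}_A(x,y)$ to $\nabla_1\Theta(x,y;A(x))$, and the resulting difference kernel is $O(|x-y|^{\alpha-n})$, which is integrable against the $n$-growth measure, so Schur's lemma applies; but the matrix in $\Theta(x,y;A(x))$ is the \emph{variable} $A(x)$, not the frozen constant $A(0)$. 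Replacing $A(x)$ by $A(0)$ in the constant-coefficient kernel costs $|\nabla_1\Theta(x,y;A(x))-\nabla_1\Theta(x,y;A(0))|\lesssim\ell(Q_0)^\alpha|x-y|^{-n}$, which is \emph{not} Schur-integrable against $\mu$ (the exponent $-n$ is critical for an $n$-dimensional measure), so the operator difference is not small in $L^2(\mu)$ by this route. The paper therefore does something different: after the linear change of variable making $A_s(z_{Q_0})=Id$ and after reducing to symmetric $A$ (Lemma~\ref{lemm_symetrization}), it builds $\bar A$ to agree with the \emph{original variable-coefficient} $A$ on $2Q_0$, uses reflections $\psi_j$ and a cutoff to extend this to a matrix on $6Q_0$ invariant under the $\psi_j$'s, and then periodizes. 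The whole homogenization machinery (correctors $\chi_\ell$, homogenized matrix $A_0$, Lemma~\ref{lemm_estim_freezing_periodic}) exists precisely because the matrix is genuinely variable on $2Q_0$; if one could take it constant there, all of this would be unnecessary.

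The second discrepancy is the stopping-time. You stop on cubes where density deviates, flatness deteriorates, or the local oscillation of $T\mu$ becomes large. The paper's stopping in Section 8 is a single condition: the family $\LD$ of \emph{low-density} cells with $\Theta_\mu(3.5B_Q)\leq\theta_0$, and the Key Lemma \ref{key_lemma} is the statement that these cells cannot cover most of $Q_0$. The flatness hypothesis (6), the thin-boundary condition, and the oscillation hypothesis (8) are not used as stopping criteria; they feed into the localization lemmas (Sections 6, 9, 10) and into the final contradiction (Section 11). Once the Key Lemma is proven, the conclusion of Lemma~\ref{mainlemma_origin} is obtained by the covering argument of \cite[Section 10]{GT} with \cite[Theorems 1.1--1.2]{PPT} replacing the Nazarov--Tolsa--Volberg input; the uniformly rectifiable set $\Gamma$ is not built directly in the stopping-time step. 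Your description of the variational/maximum-principle endgame is at the right level of generality, but note that the paper applies the maximum principle to $\bar T^*(\vec g\,\mathcal{L}^{n+1})$ (not to the scalar potential), exploits the identity $\bar T^*[\bar A^T\nabla\varphi\,\mathcal{L}^{n+1}]=\varphi$ in the duality step, and crucially uses that $\bar A$ is symmetric and invariant under the reflections $\psi_j$ to relate $\bar T\nu(x_{S+})$ and $\bar T\nu(x_{S-})$ (Lemma~\ref{lemm_comparison_plus_minus}).
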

The matrix $A$ may have a very general form. In particular, we need some additional argument to overcome the lack of ``symmetries'' of the matrix with respect to reflections and to periodization (the exact meaning of this sentence will be clear after the reading of Section \ref{section_change}, where we recall how second order PDE's in divergence form are affected by a change of variable). Indeed, this is a crucial point for our proof to work. A similar problem has been faced in \cite{PPT}. First, in order to be able to argue via a change of variables, we have to show that we can assume the matrix $A$ to be symmetric.\\

We recall Schur's lemma for integral operators with a reproducing kernel. The proof is a standard application of Cauchy-Schwarz's inequality.
\begin{lemm}\label{schur_lemma}Let $K\colon\mathbb{R}^{n+1}\times \mathbb{R}^{n+1}\to \mathbb{R}^{n+1}$ be a function such that, for a constant $C>0$, we have
\begin{equation}\label{int_kernel_x}
\int |K(x,y)|d\mu(x)\leq C
\end{equation}
and
\begin{equation}\label{int_kernel_y}
\int |K(x,y)|d\mu(y)\leq C.
\end{equation}
Then the operator $Tf=K\ast f$ is a continuous operator from $L^2(\mu)$ to $L^2(\mu)$ and
\begin{equation}\label{norm_estim_schur}
\|T\|_{L^2(\mu)\to L^2(\mu)}\leq C.
\end{equation}
\end{lemm}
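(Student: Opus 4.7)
The plan is a direct argument based on Cauchy--Schwarz, exactly as the hint suggests. A minor preliminary: the kernel is stated as vector-valued, so I would first reduce to the scalar case by replacing $K$ by its Euclidean norm $|K|$ throughout. This does not affect hypotheses \eqref{int_kernel_x}--\eqref{int_kernel_y}, and it only makes $|Tf(x)|$ larger. So without loss of generality $K \geq 0$.

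The first step is to produce a pointwise bound on $|Tf(x)|^2$ in which the kernel appears as a weight. I would write $K(x,y)|f(y)| = K(x,y)^{1/2} \cdot K(x,y)^{1/2} |f(y)|$ inside the defining integral of $Tf(x)$ and apply Cauchy--Schwarz in the variable $y$ with respect to $\mu$, obtaining
\begin{equation*}
|Tf(x)|^2 \leq \left(\int K(x,y)\,d\mu(y)\right)\!\left(\int K(x,y)\,|f(y)|^2\,d\mu(y)\right) \leq C\int K(x,y)\,|f(y)|^2\,d\mu(y),
\end{equation*}
where the second inequality uses \eqref{int_kernel_y}.

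The second and final step is to integrate this pointwise estimate against $d\mu(x)$ and swap the order of integration via Fubini--Tonelli, which is legitimate because the integrand is non-negative. Hypothesis \eqref{int_kernel_x} then bounds the inner $x$-integral and yields
\begin{equation*}
\|Tf\|_{L^2(\mu)}^2 \leq C\int |f(y)|^2 \left(\int K(x,y)\,d\mu(x)\right) d\mu(y) \leq C^2 \|f\|_{L^2(\mu)}^2.
\end{equation*}
Taking square roots gives \eqref{norm_estim_schur}. There is no real obstacle in this argument: it is entirely standard, the only mild subtlety (the vector-valued target) being absorbed into the initial reduction to $|K|$.
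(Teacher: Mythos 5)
Your proof is correct and is exactly the standard Cauchy--Schwarz/Schur-test argument the paper alludes to (it states only that the proof ``is a standard application of Cauchy--Schwarz's inequality'' and gives no details). The preliminary reduction to $|K|$ and the split $|K|^{1/2}\cdot|K|^{1/2}|f|$ before Cauchy--Schwarz and Fubini--Tonelli are precisely the intended steps.
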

Let $A$ be a matrix as before. We denote by $A_s=(A+A^T)/2$ its symmetric part  and by $T^{A_s}_\mu$ its correspondent gradient of the single layer potential.

Recalling that, for any matrix $A_0$ with constant coefficients we have $\Theta(\cdot,\cdot;A_0)=\Theta(\cdot,\cdot;A_{0,s}),$ we can formulate the following lemma.
\begin{lemm} \label{lemm_symetrization}Let $Q$ be a cube in $\Rn1$ such that, for $M>1,$ 
$
P_{\mu,\alpha}(MQ)\leq C_1.
$
The operator $T^{(s)}_{\mu|2Q}$ is bounded on $L^2(\mu|2Q)$ if and only if $T_{\mu|2Q}$ is bounded on $L^2(\mu|2Q)$. In particular
\begin{equation}\label{estim_A_A_S}
\big\|T_{\mu|_{2Q}}\big\|_{L^2(\mu|2Q)\to L^2(\mu|2Q)} = \big\|T^{A_s}_{\mu|_{2Q}}\big\|_{L^2(\mu|2Q)\to L^2(\mu|2Q)}  + O(\ell(Q)^\alpha).
\end{equation}
Moreover
\begin{equation}\label{estim_mean_symmetric_matrix}
\begin{split}
\int_{Q}\big|T^{A_s}\mu(x)-&m_{\mu,Q}(T^{A_s}\mu)\big|^2d\mu(x)\\
&\lesssim_{\Lambda,\|A\|_\alpha} \int_{Q}\big|T\mu(x)-m_{\mu,Q}(T\mu)\big|^2 d\mu(x) + \big(M^\alpha\ell(Q)^\alpha + M^{-\alpha}\big)^2\mu(Q).
\end{split}
\end{equation}
\end{lemm}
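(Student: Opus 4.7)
\textbf{Proof plan for Lemma \ref{lemm_symetrization}.}

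The key starting observation is that the difference kernel $D(x,y) := \nabla_1 \E_A(x,y) - \nabla_1 \E_{A_s}(x,y)$ decays like an ``error'' term of the freezing lemma. Indeed, since for any constant-coefficient matrix $A_0$ one has $\Theta(z;A_0)=\Theta(z;A_{0,s})$ by formula (\ref{fund_sol_const_matrix}), it follows that $\nabla_1\Theta(x,y;A(x))=\nabla_1\Theta(x,y;A_s(x))$. Adding and subtracting this common quantity and applying Lemma \ref{lemm_freezing} to each of $\E_A$ and $\E_{A_s}$ separately, we deduce the pointwise estimate
\begin{equation*}
|D(x,y)| \lesssim |x-y|^{\alpha-n}, \qquad 0<|x-y|\leq R,
\end{equation*}
with implicit constants depending on $\Lambda$, $\|A\|_\alpha$ and $R$. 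By Lemma \ref{lemcz}(b), $D(\cdot,\cdot)$ inherits the H\"older smoothness of the individual kernels, i.e.\ $|D(x,z)-D(y,z)|\lesssim |x-y|^\alpha |x-z|^{-n-\alpha}$ whenever $2|x-y|\leq|x-z|$.

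For (1), I apply Schur's lemma (Lemma \ref{schur_lemma}) to the difference operator $S_{\mu|_{2Q}}:=T_{\mu|_{2Q}}-T^{A_s}_{\mu|_{2Q}}$, whose kernel is $D$ restricted to $2Q\times 2Q$. A dyadic annular decomposition around $x$, combined with the local $n$-growth of $\mu$ (available from the ambient hypothesis that $P_{\mu,\alpha}(MQ)\leq C_1$ together with the density control $\Theta_\mu(B(x,r))\lesssim 1$ on the scales $r\leq\ell(Q)$ present in the Main Lemma context), yields
\begin{equation*}
\sup_{x\in 2Q}\int_{2Q}|D(x,y)|\,d\mu(y) \,+\, \sup_{y\in 2Q}\int_{2Q}|D(x,y)|\,d\mu(x) \lesssim \ell(Q)^\alpha.
\end{equation*}
Therefore $\|S_{\mu|_{2Q}}\|_{L^2(\mu|_{2Q})\to L^2(\mu|_{2Q})}\lesssim \ell(Q)^\alpha$, which via the triangle inequality gives both directions of the equivalence and the quantitative identity (\ref{estim_A_A_S}).

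For (2), the natural strategy is to split $g:=T\mu-T^{A_s}\mu$ at the scale $MQ$, writing $g=g_1+g_2$ with $g_1(x)=\int_{MQ} D(x,y)\,d\mu(y)$ and $g_2(x)=\int_{\mathbb{R}^{n+1}\setminus MQ} D(x,y)\,d\mu(y)$, so that the two error contributions to the mean oscillation are controlled by complementary quantities depending on $M$. For $g_1$, the same dyadic decomposition as above (now out to scale $M\ell(Q)$) gives $|g_1(x)|\lesssim (M\ell(Q))^\alpha$ for $x\in Q$, hence $\int_Q |g_1-m_{\mu,Q}(g_1)|^2 d\mu \lesssim (M^\alpha\ell(Q)^\alpha)^2\mu(Q)$. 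For $g_2$, since $M$ can be assumed large enough that $2|x-y|\leq|x-z|$ whenever $x,y\in Q$ and $z\notin MQ$, the H\"older estimate for $D$ yields
\begin{equation*}
|g_2(x)-g_2(y)|\lesssim |x-y|^\alpha \sum_{j\geq 0} (2^j M\ell(Q))^{-\alpha}\,\Theta_\mu(2^{j+1}MQ) \lesssim \frac{\ell(Q)^\alpha}{(M\ell(Q))^\alpha}\, P_{\mu,\alpha}(MQ)\lesssim M^{-\alpha},
\end{equation*}
so that $\int_Q |g_2-m_{\mu,Q}(g_2)|^2 d\mu \lesssim M^{-2\alpha}\mu(Q)$. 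Combining these with the triangle inequality
\begin{equation*}
\int_Q |T^{A_s}\mu - m_{\mu,Q}(T^{A_s}\mu)|^2 d\mu \lesssim \int_Q |T\mu-m_{\mu,Q}(T\mu)|^2 d\mu + \int_Q |g-m_{\mu,Q}(g)|^2 d\mu
\end{equation*}
delivers (\ref{estim_mean_symmetric_matrix}).

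The only subtle point is the simultaneous use of two growth estimates on $\mu$: the local $n$-growth inside $MQ$ (needed to make the near-diagonal dyadic sum for $g_1$ converge), and the tail decay supplied by $P_{\mu,\alpha}(MQ)\leq C_1$ (needed to control $g_2$). Both are available in the setting in which this lemma will be applied (the Main Lemma, hypotheses (3)--(4)), and the final bound $(M^\alpha\ell(Q)^\alpha+M^{-\alpha})^2$ is exactly what one would hope for: it is small when $M$ is chosen large but $M\ell(Q)$ is kept small, which is the regime used to apply the Main Lemma.
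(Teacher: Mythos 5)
Your proposal is correct and follows essentially the same route as the paper: freezing to the constant-coefficient fundamental solution (where $\Theta(\cdot;A(x))=\Theta(\cdot;A_s(x))$) plus Lemma \ref{lemm_freezing} to get the kernel bound $|D(x,y)|\lesssim|x-y|^{\alpha-n}$, Schur's lemma for \eqref{estim_A_A_S}, and a split of the oscillation at scale $MQ$ for \eqref{estim_mean_symmetric_matrix} with the near part controlled pointwise by $(M\ell(Q))^\alpha$ and the far part by $M^{-\alpha}$ via the CZ smoothness of the kernel together with $P_{\mu,\alpha}(MQ)\leq C_1$. The only cosmetic difference is that you package the near-diagonal error as a single difference kernel $D$ and bound the oscillation of $g_2=T(\chi_{(MQ)^c}\mu)-T^{A_s}(\chi_{(MQ)^c}\mu)$ directly, whereas the paper bounds the oscillations of $T(\chi_{(MQ)^c}\mu)$ and $T^{A_s}(\chi_{(MQ)^c}\mu)$ separately; the computations are identical.
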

\begin{proof}
Let us first prove \eqref{estim_A_A_S}.
The identity \eqref{a_eq_a_sym} for  matrices with constant coefficients leads to
\begin{equation}\label{lemma_symmetric_matrix_potential}
\begin{split}
&T^{A_s}_{\mu|_{2Q}} f(x)=\int_{2Q}\nabla_1\E_{A_s}(x,y)f(y)d\mu(y)\\
&\qquad =\int_{2Q}\big(\nabla_1\E_{A_s}(x,y)-\nabla_1\Theta(x,y;A_s(x))\big)f(y)d\mu(y) \\
&\qquad + \int_{2Q}\big(\nabla_1\Theta(x,y;A(x))-\nabla_1\E(x,y)\big)f(y)d\mu(y)
+ \int_{2Q}\nabla_1\E(x,y)f(y)d\mu(y)\\
&\qquad \equiv I + II + T_{\mu|_{2Q}} f(x).
\end{split}
\end{equation}
To estimate $I$ and $II$ in \eqref{lemma_symmetric_matrix_potential} it suffices, then, to invoke Lemma \ref{lemm_estim_freezing_periodic} and Schur's Lemma. This finishes the proof of the first part of the lemma.

Let us now prove \eqref{estim_mean_symmetric_matrix}. We split
\begin{equation}\label{AAA}
\begin{split}
&T\mu(x)-m_{\mu,Q}(T\mu)\\
&\qquad=\Big(T(\chi_{MQ}\mu)(x)-m_{\mu,Q}\big(T(\chi_{MQ}\mu)\big)\Big)+\Big(T(\chi_{(MQ)^c}\mu)(x)-m_{\mu,Q}\big(T(\chi_{(MQ)^c}\mu)\big)\Big).
\end{split}
\end{equation}
Let us estimate the two terms in the right hand side  separately. Again, as a consequence of \eqref{lemma_symmetric_matrix_potential} and  Lemma \ref{lemm_freezing} we can write
\begin{equation}
\begin{split}
&\Big|T(\chi_{MQ}\mu)-m_{\mu,Q}(T(\chi_{MQ}\mu)) - \Big(T^{A_s}(\chi_{MQ}\mu)+m_{\mu,Q}\big(T^{A_s}(\chi_{MQ}\mu)\big)\Big)\Big|\lesssim M^\alpha\ell(Q)^\alpha.
\end{split}
\end{equation}
To bound the second term in the right hand side of \eqref{AAA}, notice that for $x,y\in Q$ standard estimates together with Lemma \ref{lemm_estim_freezing_periodic} give
\begin{equation}\
\begin{split}
&\big|T_\mu \chi_{(MQ)^c}(x)-T_\mu\chi_{(MQ)^c}(y)\big|\lesssim \int_{(MQ)^c}\frac{|x-y|^{\alpha}}{|x-z|^{n+{\alpha}}}d\mu(z)\\
&\qquad \lesssim \frac{|x-y|^{\alpha}}{\ell(MQ)^{\alpha}}P_{\mu,{\alpha}}(MQ)\lesssim \frac{1}{M^{\alpha}}P_{\mu,{\alpha}}(MQ)\lesssim \frac{1}{M^{\alpha}},
\end{split}
\end{equation}
so that, averaging over $y$ in $Q$ we have
\begin{equation}
\big|T(\chi_{(MQ)^c}\mu)(x)-m_{\mu,Q}\big(T(\chi_{(MQ)^c}\mu)\big)\big|\lesssim M^{-{\alpha}}
\end{equation}
The same calculations lead to
\begin{equation}
\Bigl|T^{A_s}(\chi_{(MQ)^c}\mu)(x)-m_{\mu,Q}\big(T^{A_s}(\chi_{(MQ)^c}\mu)\big)\Bigr|\lesssim M^{-{\alpha}},
\end{equation}
so the inequality  \eqref{estim_mean_symmetric_matrix}  in the statement of the lemma follows by gathering all the previous considerations.
\end{proof}
Gathering Lemma \ref{mainlemma_origin} and Lemma \ref{lemm_symetrization} shows that it suffices to prove Theorem \ref{main_theorem} under the additional assumption that the matrix $A$ is symmetric. Indeed, proving Lemma \ref{mainlemma_origin} with $A=A_s$ gives it in the non-symmetric case with worse assumptions on the parameters involved. We omit further details.
\begin{rem} Arguing as in Lemma \ref{lemm_symetrization}, one could prove that 
\begin{equation}
\big\|T_{\mu|_{2Q}}\big\|_{L^2(\mu|_{2Q})\to L^2(\mu|_{2Q})} = \big\|T^{a}_{\mu|_{2Q}}\big\|_{L^2(\mu|_{2Q})\to L^2(\mu|_{2Q})}+ O(\ell(Q)^\alpha),
\end{equation}
where $T^a$ is the operator corresponding to the antisymmetric part of the kernel $K(\cdot,\cdot)$, that is to say $K^a(x,y)=(K(x,y)-K(y,x))/2$. However, as in \cite{PPT} and \cite{CMT}, we prefer not to make this reduction because it would create problems later on in the proof. In particular, it would be an obstacle to the application of the maximum principle, which is a crucial tool in Section \ref{section_var_arg}.
\end{rem}

\section{The modification of the matrix}\label{section_change}
\subsection{The change of variable}
The following lemma deals with how the fundamental solution and its gradient are affected by a change of variable. 
\begin{lemm}[see \cite{PPT}, Lemma 13]\label{lemma_change_var_fund_sol} Let $\phi\colon\Rn1\to\Rn1$ be a locally bilipschitz map and let $A\in\Lambda_\alpha$. Let $\E_A$ be the fundamental solution of $L_A=-\Div(A\nabla\cdot).$ Set
${A}_\phi\coloneqq  |\det\phi|D({\phi}^{-1})(A\circ\phi)D({\phi}^{-1})^T.$ Then
\begin{equation}
\E_{{A}_\phi}(x,y)=\E_A(\phi(x),\phi(y)) \text{ for }x,y\in\Rn1,
\end{equation}
and
\begin{equation}
\n1 \E_{A_\phi}(x,y)=D(\phi)^T(x)\n1\E_A(\phi(x),\phi(y)) \quad \text{for}\quad x\in\Rn1.
\end{equation}

\end{lemm}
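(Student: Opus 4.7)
The plan is to use the standard change-of-variables argument for divergence-form operators, reducing the identities for $\mathcal{E}_{A_\phi}$ to the defining property of $\mathcal{E}_A$ together with the uniqueness of the fundamental solution (as constructed in \cite{HK}).

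First I would establish the following auxiliary fact: if $u$ is $L_A$-harmonic on an open set $\Omega \subset \mathbb{R}^{n+1}$, then $v \coloneqq u \circ \phi$ is $L_{A_\phi}$-harmonic on $\phi^{-1}(\Omega)$. Starting from the weak formulation $\int_\Omega A \nabla u \cdot \nabla \varphi \,dy = 0$, I substitute $y = \phi(x)$, use the chain rule identity $(\nabla u)(\phi(x)) = D(\phi^{-1})(\phi(x))^T \nabla v(x)$, and choose the pullback test function $\psi = \varphi \circ \phi$. The Jacobian factor $|\det D\phi(x)|$ coming from $dy$ combines with the two copies of $D(\phi^{-1})$ acting on each gradient to produce precisely the matrix $A_\phi$ in the statement. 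Running the same computation without the harmonicity assumption on $u$ yields the distributional identity
\begin{equation*}
L_{A_\phi} (u \circ \phi)(x) = |\det D\phi(x)|\,(L_A u)(\phi(x)).
\end{equation*}

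Applying this with $u(\cdot) = \mathcal{E}_A(\cdot, w)$ for a fixed $w$ gives $L_{A_\phi, x}\bigl[\mathcal{E}_A(\phi(x), w)\bigr] = |\det D\phi(x)|\,\delta_w(\phi(x))$. Since the distributional pullback satisfies $\delta_w \circ \phi = |\det D\phi(\phi^{-1}(w))|^{-1}\delta_{\phi^{-1}(w)}$ and the Jacobian factor is evaluated at $x = \phi^{-1}(w)$ where the Dirac mass is supported, these two factors cancel and the pullback solves $L_{A_\phi, x}[\,\cdot\,] = \delta_{\phi^{-1}(w)}$. Uniqueness of the fundamental solution for $L_{A_\phi}$ (where $A_\phi$ inherits the uniform ellipticity bounds from $A$ together with the bilipschitz constants of $\phi$, so the construction of \cite{HK} applies) then forces $\mathcal{E}_A(\phi(x), w) = \mathcal{E}_{A_\phi}(x, \phi^{-1}(w))$. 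Setting $w = \phi(y)$ yields the first identity, and the gradient identity is then immediate from applying the chain rule to $x \mapsto \mathcal{E}_A(\phi(x), \phi(y))$.

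The main technical point requiring care is the uniqueness step: the fundamental solution is pinned down only after prescribing the decay behavior at infinity (otherwise one can add any $L_{A_\phi}$-harmonic function). For $n \geq 2$ this decay transfers from $\mathcal{E}_A$ using the global bilipschitz bound on $\phi$, while for $n = 1$ one has to track the logarithmic normalization appearing in \eqref{fund_sol_const_matrix} through the change of variables. An alternative route that avoids these global uniqueness issues is to verify the first identity locally on $\mathbb{R}^{n+1}\setminus\{y\}$ using interior regularity on both sides, and then to match the prescribed singularity at the pole $y$ via Lemma~\ref{lemm_freezing} (which identifies the leading-order behavior of $\mathcal{E}_A$ with $\Theta(\cdot;A(y))$ and of $\mathcal{E}_{A_\phi}$ with $\Theta(\cdot;A_\phi(y))$), reducing the matching to the elementary change-of-variables formula for the explicit kernels of constant-coefficient operators given by \eqref{fund_sol_const_matrix}.
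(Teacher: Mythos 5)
The paper does not prove Lemma~\ref{lemma_change_var_fund_sol}; it imports it verbatim from~\cite{PPT}, Lemma~13, so there is no in-text proof to compare against. Your argument is the standard one and it is correct. A slightly cleaner way to package the key step is to test directly against $\varphi\in C_c^\infty$ and change variables, which gives the identity
\begin{equation*}
\langle L_{A_\phi}(u\circ\phi),\varphi\rangle \;=\; \int A_\phi\,\nabla(u\circ\phi)\cdot\nabla\varphi\,dx \;=\; \int A\,\nabla u\cdot\nabla(\varphi\circ\phi^{-1})\,dy \;=\;\langle L_A u,\varphi\circ\phi^{-1}\rangle,
\end{equation*}
so that taking $u=\E_A(\cdot,w)$ yields $L_{A_\phi}\bigl(\E_A(\phi(\cdot),w)\bigr)=\delta_{\phi^{-1}(w)}$ at once, without routing through the pointwise intermediate formula $L_{A_\phi}(u\circ\phi)(x)=|\det D\phi(x)|(L_A u)(\phi(x))$ and the pullback rule for $\delta_w$ — the two Jacobian factors you invoke cancel, as you say, but the direct computation sidesteps the need to interpret $(L_A u)\circ\phi$ distributionally. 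The uniqueness step and the chain rule for the gradient are handled correctly.

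One caveat on the ``alternative route'' you sketch at the end: verifying the identity away from the pole and matching the leading singularity via Lemma~\ref{lemm_freezing} does not by itself circumvent the global uniqueness question. After cancelling the singularity, the difference of the two candidate fundamental solutions is an entire $L_{A_\phi}$-harmonic function, and to conclude it vanishes one still needs a Liouville-type argument that uses decay at infinity — exactly the ingredient you were trying to avoid. So the first route (transfer the Hofmann--Kim decay $|\E_A(x,y)|\lesssim|x-y|^{1-n}$ through the bilipschitz bound on $\phi$, valid for $n\geq 2$, and invoke uniqueness in that class) is the one to rely on. Finally, for the statement as written to make sense one implicitly needs the local bilipschitz bounds to be uniform (so that $A_\phi\in\Lambda_\alpha$ and \cite{HK} applies); in the paper's actual use (Lemma~\ref{lemma_change_matrix_identity}) $\phi$ is linear, so this is automatic.
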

Let us state a lemma concerning how the gradient of the fundamental solution transforms under a change of variable $\phi$ as in Lemma \ref{lemma_change_var_fund_sol}. We use the notation
\begin{equation}
T_\phi\mu(x)\coloneqq  \int \nabla_1\E_{A_\phi}(x,y)d\mu(y).
\end{equation}
\begin{lemm}[see \cite{PPT}, Lemma 14]\label{pot_changed}Let $\phi\colon\Rn1\to\Rn1$ be a bilipschitz change of variables. For every $x\in\Rn1$ we have
\begin{equation}
T_\phi\mu(x)=D(\phi)^T(x)T{\phi\sharp\mu}(\phi(x)).
\end{equation}
\end{lemm}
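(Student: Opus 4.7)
The proof is essentially a one-line computation that combines the identity from Lemma \ref{lemma_change_var_fund_sol} with the defining property of the pushforward measure, so my plan is simply to chain these two facts together and check that the integrands match.

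First I would unfold the definition
\begin{equation*}
T_\phi\mu(x)=\int \nabla_1\E_{A_\phi}(x,y)\,d\mu(y)
\end{equation*}
and invoke the second formula of Lemma \ref{lemma_change_var_fund_sol}, which yields
\begin{equation*}
\nabla_1\E_{A_\phi}(x,y)=D(\phi)^T(x)\,\nabla_1\E_A(\phi(x),\phi(y)).
\end{equation*}
Since the factor $D(\phi)^T(x)$ does not depend on the variable of integration $y$, I would pull it out of the integral to obtain
\begin{equation*}
T_\phi\mu(x)=D(\phi)^T(x)\int \nabla_1\E_A(\phi(x),\phi(y))\,d\mu(y).
\end{equation*}

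Next I would apply the change of variables $z=\phi(y)$. By the very definition $\phi\sharp\mu(E)=\mu(\phi^{-1}(E))$ of the pushforward measure, together with the standard identity $\int g(\phi(y))\,d\mu(y)=\int g(z)\,d(\phi\sharp\mu)(z)$ valid for any $\mu$-integrable $g$ (applied componentwise to the vector-valued integrand $y\mapsto\nabla_1\E_A(\phi(x),\phi(y))$, which is locally integrable since $\phi$ is bilipschitz and $K(\phi(x),\cdot)$ is locally Calder\'on--Zygmund by Lemma \ref{lemcz}), this becomes
\begin{equation*}
D(\phi)^T(x)\int \nabla_1\E_A(\phi(x),z)\,d(\phi\sharp\mu)(z)=D(\phi)^T(x)\,T(\phi\sharp\mu)(\phi(x)),
\end{equation*}
which is exactly the claimed identity.

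There is essentially no obstacle here: the bilipschitz hypothesis on $\phi$ guarantees that $D\phi$ is defined almost everywhere with bounded inverse, that the change of variables formula applies without integrability issues on compact pieces of $\supp\mu$, and that the pointwise identity of Lemma \ref{lemma_change_var_fund_sol} can be integrated against $\mu$. If one wishes to be fully precise about the case where the integral is only defined in a principal value sense (e.g.\ when $x\in\supp\mu$), one should perform the same manipulation on the $\varepsilon$-truncated operators $T_{\phi,\varepsilon}$ and $T_\varepsilon(\phi\sharp\mu)$, observing that the truncation region $\{|x-y|>\varepsilon\}$ transforms into a comparable region under $\phi$, and then let $\varepsilon\to 0$ using Theorem \ref{theorem_pv_layer_pot}; this is the only mildly technical point, and it is handled exactly as in \cite{PPT}.
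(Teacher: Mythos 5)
Your proof is correct and takes the obvious (and essentially unique) route: substitute the gradient identity from Lemma \ref{lemma_change_var_fund_sol} into the definition of $T_\phi\mu$, pull out the $y$-independent matrix $D(\phi)^T(x)$, and apply the pushforward change of variables $\int g\circ\phi\,d\mu = \int g\,d(\phi\sharp\mu)$. This is exactly the argument the paper outsources to \cite{PPT}, Lemma~14, and your closing remark about handling the principal-value case via the truncated operators is the only technical caveat worth flagging.
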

A particularly useful change of variable is the one that turns the symmetric part of the matrix at a given point into the identity. For the following statement we refer to \cite{AGMT}.
\begin{lemm}\label{lemma_change_matrix_identity}
Let $\Omega\subset\Rn1$ be an open set, and assume that $A$ is a uniformly elliptic matrix with real entries. Let $A_s = (A + A^T
)/2$ be the symmetric part of $A$ and for a fixed point $y_0 \in \Omega$ define $S =\sqrt{A_s(y_0)}$. If
\begin{equation}
\tilde{A}(\cdot)=S^{-1}(A\circ S)(\cdot)S^{-1},
\end{equation}
then $\tilde{A}$ is uniformly elliptic, $\tilde{A}_s(z_0)=Id$ if $z_0=S^{-1}y_0$ and $u$ is a weak solution of
$L_A u = 0$ in $\Omega$ if and only if $\tilde{u} = u \circ S$ is a weak solution of $L_{\tilde{A}}\tilde{u}=0$ in $S^{-1}(\Omega).$
\end{lemm}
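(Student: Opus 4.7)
The lemma is essentially a consequence of the fact that $A_s(y_0)$ is symmetric positive definite (by ellipticity), combined with how second-order divergence-form operators transform under linear changes of variable. All three conclusions will follow from direct linear-algebraic computations, leveraging that $S=\sqrt{A_s(y_0)}$ is symmetric. My plan is to first set up the algebraic properties of $S$, then check ellipticity and the identity at $z_0$ by straightforward matrix manipulation, and finally verify the equivalence of weak solutions via a change of variables in the weak formulation.

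\textbf{Setup.} Uniform ellipticity of $A$ forces $A_s(y_0)$ to be symmetric and strictly positive definite, so the standard functional calculus yields a unique symmetric positive definite square root $S=\sqrt{A_s(y_0)}$. In particular $S^T=S$, $(S^{-1})^T=S^{-1}$, and $S^2=A_s(y_0)$. These symmetries are what will make the computations below clean: both sides of the sandwich $S^{-1}(\cdot)S^{-1}$ in the definition of $\tilde A$ coincide with the transpose operation $(S^{-1})^T(\cdot)S^{-1}$ that would otherwise arise from the change of variables.

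\textbf{Ellipticity and identity at $z_0$.} For any $\xi\in\Rn1$ and any $z$ in $S^{-1}(\Omega)$, writing
$$\langle\tilde A(z)\xi,\xi\rangle=\langle S^{-1}A(Sz)S^{-1}\xi,\xi\rangle=\langle A(Sz)S^{-1}\xi,S^{-1}\xi\rangle$$
and invoking \eqref{eqelliptic1}--\eqref{eqelliptic2} for $A$ at the point $Sz$ gives uniform ellipticity of $\tilde A$ with constant $\Lambda\|S\|^2\|S^{-1}\|^2$. For the identity at $z_0$, I will use that, since $S^{-1}$ is symmetric, the symmetric part $\tilde A_s$ of $\tilde A$ equals $S^{-1}A_s(S\,\cdot)S^{-1}$; evaluating at $z_0=S^{-1}y_0$, so that $Sz_0=y_0$, yields $\tilde A_s(z_0)=S^{-1}A_s(y_0)S^{-1}=S^{-1}S^2S^{-1}=Id$.

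\textbf{Equivalence of weak solutions.} Given $\varphi\in C_c^\infty(S^{-1}(\Omega))$, set $\psi=\varphi\circ S^{-1}\in C_c^\infty(\Omega)$; conversely any test function on $\Omega$ arises this way. In the defining identity $\int_\Omega A\nabla u\cdot\nabla\psi=0$, I perform the change of variable $x=Sz$, which produces a Jacobian $|\det S|$ and, via the chain rule together with $S^T=S$, transforms gradients according to $\nabla_x u(Sz)=S^{-1}\nabla_z\tilde u(z)$ and likewise for $\psi$. The integrand becomes $|\det S|\langle A(Sz)S^{-1}\nabla\tilde u,S^{-1}\nabla\tilde\varphi\rangle=|\det S|\langle \tilde A(z)\nabla\tilde u,\nabla\tilde\varphi\rangle$, so the identity $\int\langle A\nabla u,\nabla\psi\rangle\,dx=0$ for all $\psi$ is equivalent to $\int\langle\tilde A\nabla\tilde u,\nabla\tilde\varphi\rangle\,dz=0$ for all $\tilde\varphi$, i.e.\ to $L_{\tilde A}\tilde u=0$. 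The only subtle point — and, such as it is, the main thing to watch — is the gradient transformation, where using the symmetry of $S$ is what makes the sandwich $S^{-1}(\cdot)S^{-1}$ in the definition of $\tilde A$ match the one produced by the change of variable (up to the scalar factor $|\det S|$, which is irrelevant for the weak formulation); this is consistent with Lemma \ref{lemma_change_var_fund_sol} modulo that harmless scalar.
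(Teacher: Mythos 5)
Your proof is correct and is essentially the standard direct verification. The paper itself does not prove this lemma: it simply cites \cite{AGMT} and moves on, so there is no proof in the paper to compare against. Your three steps (symmetry of $S$ from the functional calculus, ellipticity and $\tilde A_s(z_0)=Id$ by conjugation, and the change-of-variables in the weak formulation where $\nabla_x u(Sz)=S^{-1}\nabla_z\tilde u(z)$ thanks to $S^T=S$) are exactly the computations one would do. One microscopic quibble: the ellipticity constant you quote, $\Lambda\|S\|^2\|S^{-1}\|^2$, should really be $\Lambda\max(\|S\|^2,\|S^{-1}\|^2)$, since the lower bound yields $(\Lambda\|S\|^2)^{-1}$ and the upper bound yields $\Lambda\|S^{-1}\|^2$; but this does not affect the conclusion that $\tilde A$ is uniformly elliptic.
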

As a remark, we want to point out that the change of variables defined in Lemma \ref{lemma_change_matrix_identity} is a linear map and, in particular, a bilipschitz map of $\Rn1$ to itself. Its bilipschitz constant depends on the ellipticity of the matrix $A$.

We need the notion of flatness for images of cubes via maps of the aforementioned type.
For a set $E\subset\Rn1$, we define the $\alpha$-number in an analogous ways as for cubes. In particular, for any hyperplane $L$ and any measure $\nu$, we denote
\begin{equation}
\alpha^L_\nu(E)\coloneqq  \frac{1}{\diam(E)^{n+1}}\inf_{c\geq 0}d_E(\nu,c\Hcal^n|_L).
\end{equation}
This particular notation will be used only in this section.
\begin{lemm}\label{lemma_mod_alpha}
Let $\varphi$ be an affine, bilipschitz change of variables of $\Rn1$. Let $L$ be a hyperplane in $\Rn1$. Let $J_\varphi>0$ be the Jacobian of $\varphi$. Then, for any Radon measure $\nu$, for any cube $Q\subset \Rn1$ and any constant $c\geq 0$ we have that
\begin{equation}\label{measure_distance_change}
d_Q(\nu,c\Hcal^n|_L)\approx_{n,C} d_{\varphi(Q)}\big(\varphi\sharp\nu, c\Hcal^n|_{\varphi(L)}\big).
\end{equation}
Hence,
\begin{equation}\label{alpha_change}
\alpha^L_\nu(Q)\approx_{n,C} \alpha^{\varphi(L)}_{\varphi\sharp\nu}(\varphi(Q)\big).
\end{equation}
\end{lemm}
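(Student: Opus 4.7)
The plan is to compare the two supremum-type distances directly by pulling test functions back through $\varphi$, and then check that the infimum over the normalizing constant $c$ is preserved up to an affine factor coming from the way $\mathcal{H}^n|_L$ transforms.

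First I would set up the dictionary between test functions. A function $f\colon\Rn1\to\R$ with $\|f\|_{\Lip}\le 1$ and $\supp f\subseteq \varphi(Q)$ corresponds, via $g=f\circ\varphi$, to a function on $\Rn1$ with $\supp g\subseteq Q$ and $\|g\|_{\Lip}\leq \Lip(\varphi)$; conversely, $g$ with $\|g\|_{\Lip}\le 1$ and $\supp g\subseteq Q$ pulls back to $f=g\circ\varphi^{-1}$ with $\supp f\subseteq\varphi(Q)$ and $\|f\|_{\Lip}\le \Lip(\varphi^{-1})$. Since $\varphi$ is bilipschitz with constant depending only on the data $C$, the two classes of admissible test functions are thus the same up to multiplication by a constant.

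Next I would transform the integrals. By definition of pushforward, $\int f\,d(\varphi\sharp\nu)=\int f\circ\varphi\,d\nu$. For the affine term, because $\varphi$ is affine, $\varphi(L)$ is an $n$-plane and the restriction $\varphi|_L\colon L\to \varphi(L)$ is an affine bijection with constant $n$-dimensional Jacobian $J^L_\varphi>0$ (depending only on $\varphi$). The area formula gives
\begin{equation}
\int f\,d\Hcal^n|_{\varphi(L)} = J^L_\varphi \int f\circ\varphi\,d\Hcal^n|_L,
\end{equation}
so that
\begin{equation}
\int f\,d\big(\varphi\sharp\nu-c\Hcal^n|_{\varphi(L)}\big) = \int (f\circ\varphi)\,d\big(\nu- c\, J^L_\varphi\,\Hcal^n|_L\big).
\end{equation}
Setting $\tilde c:=c\, J^L_\varphi$ and taking the supremum over admissible $f$, the test-function dictionary above yields
\begin{equation}
d_{\varphi(Q)}\big(\varphi\sharp\nu,\, c\Hcal^n|_{\varphi(L)}\big) \approx_{n,C} d_Q\big(\nu,\,\tilde c\,\Hcal^n|_L\big),
\end{equation}
with two-sided comparability because pulling back by $\varphi^{-1}$ reverses the roles. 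Since $c\mapsto c\,J^L_\varphi$ is a bijection of $[0,\infty)$ onto itself, taking the infimum over $c\ge 0$ on the left is equivalent to taking the infimum over $\tilde c\ge 0$ on the right, which proves \eqref{measure_distance_change}.

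Finally I would divide by the normalizing factors. Because $\varphi$ is bilipschitz with constants depending on $C$ we have $\diam(\varphi(Q))\approx_{n,C}\diam(Q)\approx_n \ell(Q)$, so $\diam(\varphi(Q))^{n+1}\approx_{n,C} \ell(Q)^{n+1}$ and \eqref{alpha_change} follows immediately from \eqref{measure_distance_change}. The only genuine subtlety is the Jacobian factor $J^L_\varphi$ in the transformation of $\Hcal^n|_L$, which is the reason the affine (rather than merely bilipschitz) hypothesis on $\varphi$ is used; without it, $\varphi(L)$ would not in general be a hyperplane and $J^L_\varphi$ would not be constant along $L$, so the correspondence $c\leftrightarrow\tilde c$ in the infimum would be much less clean.
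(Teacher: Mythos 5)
Your proof is essentially the same approach as the paper's (pull test functions back through $\varphi$, compute the pushforward of $\Hcal^n|_L$, compare normalizations), but you handle the Jacobian of $\varphi|_L$ more precisely. In fact, you have noticed a genuine imprecision in the paper's treatment: the paper computes $\varphi\sharp(\Hcal^n|_L)=J_\varphi\,\Hcal^n|_{\varphi(L)}$ and then absorbs the factor $J_\varphi$ into a ``$\approx_{n,C}$'', writing
\[
d_{\varphi(Q)}\big(\varphi\sharp\nu,\varphi\sharp c\Hcal^n|_L\big)\approx_{n,C}d_{\varphi(Q)}\big(\varphi\sharp\nu,c\Hcal^n|_{\varphi(L)}\big),
\]
with the \emph{same} $c$ on both sides. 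That step is not valid as a fixed-$c$ statement: if, say, $\varphi\sharp\nu$ coincides with $cJ_\varphi\Hcal^n|_{\varphi(L)}$ on $\varphi(Q)$, the left-hand side vanishes while the right-hand side is comparable to $c|J_\varphi-1|\diam(\varphi(Q))^{n+1}$. Your rescaling $\tilde c=cJ^L_\varphi$ and the observation that the infimum over $c$ is invariant under $c\mapsto cJ^L_\varphi$ is exactly what makes the argument rigorous, and it is all that is actually used later (only \eqref{alpha_change}, not the fixed-$c$ identity \eqref{measure_distance_change}, enters the reduction in Section 5). The one thing to tidy up in your write-up: the final sentence of the display argument proves the $\tilde c$-shifted form
\[
d_{\varphi(Q)}\big(\varphi\sharp\nu,c\Hcal^n|_{\varphi(L)}\big)\approx_{n,C}d_Q\big(\nu,\tilde c\,\Hcal^n|_L\big),
\]
and then, after taking infima and dividing by $\diam(\varphi(Q))^{n+1}\approx_{n,C}\ell(Q)^{n+1}$, you obtain \eqref{alpha_change}; strictly speaking this does not give \eqref{measure_distance_change} as literally stated (same $c$ on both sides), so that claim should either be replaced by the $\tilde c$-shifted version or dropped in favour of going directly to \eqref{alpha_change}.
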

\begin{proof}
Formula \eqref{alpha_change} is an immediate consequence of \eqref{measure_distance_change} and the fact that $\ell(Q)\approx_C \diam(\varphi(Q)).$

Let us prove \eqref{measure_distance_change}.
For every $c\geq 0$
\begin{equation}
\varphi\sharp \big(c\Hcal^n|_L\big)=c(\varphi\sharp \Hcal^n)|_{\varphi(L)}.
\end{equation}
Indeed for any $\varphi\sharp \Hcal^n|_L$-measurable set $E$ we have
\begin{equation}
\begin{split}
\varphi\sharp (c\Hcal^n|_L) (E)=c\Hcal^n\big(\varphi^{-1}(E)\cap L\big)= c\Hcal^n\big(\varphi^{-1}(E\cap\varphi(L))\big)= c(\varphi\sharp \Hcal^n)|_{\varphi(L)}(E).
\end{split}
\end{equation}
Moreover, as a consequence of the Radon-Nikodym differentiation theorem (see \cite[Lemma 1, p. 92]{EG}), we have
\begin{equation}
\Hcal^n\big(\varphi^{-1}(E)\big)=J_\varphi\Hcal^n(E).
\end{equation}
So,
\begin{equation}
\begin{split}
d_Q(\nu,c\Hcal^n|_L)\approx_C d_{\varphi(Q)}\big(\varphi\sharp\nu, \varphi\sharp c\Hcal^n|_L\big)\approx_{n,C} d_{\varphi(Q)}\big(\varphi\sharp\nu, c\Hcal^n|_{\varphi(L)}\big),
\end{split}
\end{equation}
which proves the lemma.
\end{proof}
\subsection{Reduction of the Main Lemma to the case $A(0)=Id$}

Recall that by Lemma \ref{lemm_symetrization} we can assume $A$ to be a symmetric matrix.

Let us begin with a preliminary observation.
Let $Q_0\subset\Rn1$ be a cube as in the Main Lemma and let us denote $S\coloneqq   A_s(z_{Q_0})^{1/2}$, where $z_{{Q_0}}$ is the center of ${Q_0}$. We choose the map $\varphi$ so that $\varphi(x)=Sx$. By Lemma \ref{lemma_change_matrix_identity}  we have that $A_\varphi(\varphi^{-1}(z_{Q_0}))=Id$.
Denoting $\nu= \varphi^{-1}\sharp \mu$ and arguing as in \cite[Section 6]{PPT}, Lemma \ref{pot_changed} gives
\begin{equation}
\int_{{Q_0}}\big|T\mu(x)-m_{\mu, {{Q_0}}}(T\mu)\big|^2 d\mu(x)\approx \int_{\varphi^{-1}({Q_0})}\big|T_\varphi\nu(x)-m_{\nu,\varphi^{-1}({Q_0})}(T_\varphi \nu)\big|^2d\nu(x)
\end{equation}
and
\begin{equation}
\|T_\varphi\nu\|_{L^2\big(\nu|_{\varphi^{-1}(2{Q_0})}\big)}\approx \|T\mu\|_{L^2\big(\mu|_{(2{Q_0})}\big)},
\end{equation}
the implicit constants in the formulas above depending only on $\varphi$ and, hence, on the ellipticity of the matrix $A$.

Using these facts and Lemma \ref{lemma_mod_alpha}, in order to prove Lemma \ref{mainlemma_origin} it suffices to study the variant that stated below.
\begin{lemm}\label{main_lemma}
Let $n>1$ and let $C_0,C_1>0$ be some arbitrary constants. There exists $M=M(C_0,C_1, n)>1$ big enough, $\lambda(C_0,C_1, n)>0$ small enough and $\tilde\epsilon=\tilde\epsilon(C_0,C_1,M, n)>0$ small enough such that if $\delta=\delta(M,C_0,C_1,n)>0$ is small enough, then the following holds. Let $\mu$ be a Radon measure in $\mathbb{R}^{n+1}$, $Q_0\subset\mathbb{R}^{n+1}$ a cube centered at the origin and $\nu\coloneqq \varphi^{-1}\sharp\mu$, $\varphi$ being as in the comments before the lemma, satisfying the following properties:
\begin{enumerate}
\item $A_\varphi\big(\varphi^{-1}(0)\big)=Id$.
\item $\ell(MQ_0)\leq \lambda.$
\item $\nu\big(\varphi^{-1}(Q_0)\big)=\ell(Q_0)^n$.
\item $P_{\nu,\alpha/2}\big(\varphi^{-1}(MQ_0)\big)\leq C_0$.
\item For all $x\in 2Q_0$ and $0<r\leq \ell(\widetilde{Q}),$ $\Theta_{\mu}(B(x,r))\leq C_0.$
\item $Q_0$ has $C_0$-thin boundary.
\item $\alpha^{\varphi^{-1}(H)}_\nu\big(\varphi^{-1}(3MQ_0)\big)\leq\delta,$ where $H=\{x\in\mathbb{R}^{n+1}\colon x_{n+1}=0\}.$
\item $T_{\varphi,\nu|_{\varphi^{-1}(2 Q_0)}}$ is bounded on $L^2(\nu|_{\varphi^{-1}(2 Q_0)})$ with 
\begin{equation}
\big\|T_{\varphi,\nu|_{\varphi^{-1}(2Q_0)}}\big\|_{L^2(\nu|_{\varphi^{-1}(2 Q_0)})\to L^2(\nu|_{\varphi^{-1}(2Q_0)})}\leq C_1.
\end{equation}
\item we have
\begin{equation}
\int_{\varphi^{-1}(Q_0)}\big|T_\varphi\nu(x)-m_{\nu,\varphi^{-1}({Q_0})}(T_\varphi \nu)\big|^2d\nu(x)\leq \tilde\epsilon\nu\big(\varphi^{-1}(Q_0)\big).
\end{equation}
\end{enumerate}
Then there exists some constant $\tau>0$ and a uniformly $n$-rectifiable set $\Gamma\subset \mathbb{R}^{n+1}$ such that
\begin{equation}
\mu(Q_0\cap\Gamma)\geq \tau \mu(Q_0),
\end{equation}
where the constant $\tau$ and the UR constants of $\Gamma$ depend on all the constants above.
\end{lemm}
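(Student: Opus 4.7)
The plan is to prove the lemma by a compactness/contradiction argument along the lines of Girela-Sarri\'on and Tolsa's argument for the Riesz transform, with the elliptic analysis handled by replacing $A_\varphi$ with an auxiliary periodic matrix. I would assume for contradiction that there is a sequence of data $(\mu_k, Q_{0,k}, A_k)$ satisfying hypotheses (1)--(9) with parameters $\tilde\epsilon_k \to 0$ and $\delta_k \to 0$ but for which no uniformly rectifiable set $\Gamma$ covers a $\tau$-portion of $\mu_k(Q_{0,k})$. After rescaling so that $\ell(Q_{0,k})=1$ and applying the $n$-growth hypothesis (5), weak-$*$ compactness combined with the flatness hypothesis (7), transferred through Lemma \ref{lemma_mod_alpha}, yields a subsequential limit measure $\nu_\infty$ supported on the hyperplane $\varphi^{-1}(H)$ with controlled density. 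This passage to the limit, together with the machinery needed for the subsequent arguments, is the content of Sections 6--9.

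The new input relative to \cite{GT} is the construction of an auxiliary pair $(\widetilde\nu,\widetilde A)$ in which $\widetilde\nu$ is a periodic measure concentrated near $\varphi^{-1}(H)$ and $\widetilde A$ is $\ell$-periodic for some small $\ell$, agreeing with $A_\varphi$ at $\varphi^{-1}(0)$ so that $\widetilde A(\varphi^{-1}(0))=Id$ (this is where hypothesis (1) is used). Lemmas \ref{CZ_l} and \ref{lemm_estim_freezing_periodic} ensure that $\nabla_1\mathcal E_{\widetilde A}$ is a globally Calder\'on--Zygmund kernel and, modulo an $O(\ell^\gamma)$ homogenization error, equals the explicit kernel $(Id+\nabla\chi_\ell)\nabla_1\Theta(\cdot,\cdot;A_0)$, allowing the smallness of $T\mu_k$ on $Q_0$ (hypothesis (9)) and the $L^2$-boundedness (hypothesis (8)) to be transferred to estimates on the periodic potential associated with $\widetilde\nu$. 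Section 10 then establishes two analytic facts that are indispensable for the final step: the convergence of appropriate smooth truncates of the potential of a bounded periodic function, and a localization estimate for the potential near a cube. Both proofs rely essentially on the $\ell$-periodicity and would fail for a general H\"older matrix.

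The contradiction is obtained in Section 11 via a variational argument: one constructs a competitor whose $L_{\widetilde A}$-energy, combined with the mean-oscillation control from hypothesis (9) and the thin-boundary condition (6), forces $\widetilde\nu$ to essentially coincide with $c\mathcal H^n|_{\varphi^{-1}(H)}$ on a large portion of $Q_0$, providing a uniformly rectifiable set $\Gamma$ and contradicting the initial assumption. The step I expect to be the main obstacle is the replacement for a maximum principle on an infinite strip $\mathbb{R}^n\times[-h,h]$ for $L_{\widetilde A}$: because $\widetilde A$ has variable coefficients and $\nabla_1\mathcal E_{\widetilde A}$ is neither homogeneous nor antisymmetric, the direct approach of \cite{GT} is unavailable. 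The remedy is to exploit the extra symmetry provided by the normalization $\widetilde A(\varphi^{-1}(0))=Id$ (which makes $\widetilde A$ symmetric at the origin) together with the $\ell$-periodicity to obtain the required decay/oscillation estimate on the strip. Once this elliptic-strip maximum principle is in place, the variational argument closes, yielding the desired set $\Gamma$ and completing the proof of the Main Lemma.
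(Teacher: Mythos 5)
Your proposal replaces the paper's deterministic stopping-time argument with a compactness/weak-limit framework, and this is a genuine gap rather than just a stylistic difference. The paper (following \cite{GT}) does not take a sequence $(\mu_k,Q_{0,k})$ with $\tilde\epsilon_k\to 0$ and pass to a weak-$*$ limit measure supported on a hyperplane; instead, it fixes one measure and one cube, reduces Lemma \ref{main_lemma} to the Key Lemma \ref{key_lemma} (that the low-density cells $\LD$ fail to cover most of $Q_0$), and proves the Key Lemma by assuming $\mu\big(\bigcup_{Q\in\LD}Q\big)>(1-\epsilon_0)\mu(Q_0)$ and deriving the quantitative absurdity $\nu(Q_0)\ll\nu(Q_0)$ for a carefully built auxiliary measure. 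The David--Mattila lattice, the low-density and stopping cells, and the chain of measure modifications $\mu\to\mu_0\to\widetilde\mu\to\eta\to\nu=b\eta$ (Sections 7--9) are the load-bearing objects that your sketch omits entirely; they are not a "passage to the limit" but the construction that makes the variational inequality \eqref{pw_ineq_ae} and the final integration-by-parts computation in Section 11 meaningful. Your proposed contradiction (that $\widetilde\nu$ coincides with a flat measure on a large portion of $Q_0$) is also not what the argument produces, and a soft compactness step would lose the explicit dependence of $\tau$ and the UR constants on $C_0,C_1$ that the statement requires.

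There is a second gap in the treatment of the auxiliary matrix. It is not sufficient for $\widetilde A$ to agree with $A_\varphi$ at the single point $\varphi^{-1}(0)$. The matrix $\bar A$ in the paper must coincide with $A$ on all of $2Q_0$ (so that Lemma \ref{lemm_modification} applies and the $L^2$-boundedness and BMO-type bounds transfer), and it is built by iterating the reflections $\psi_j$ and the interpolation of \eqref{def_a_hat_j} precisely so that the reflection identity \eqref{wdabar} in Lemma \ref{lemm_symm_mod_mat} holds. That built-in reflection symmetry, not merely $\bar A(0)=Id$, is what gives the cancellation in Lemma \ref{lemm_comparison_plus_minus}(2) and the existence of the antisymmetric strip function $f_S$ in Lemma \ref{lemm_aux_function_f}, both of which are essential for the approximate maximum principle on the strip. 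Without engineering this symmetry into the periodic matrix, the step you already flagged as the main obstacle would in fact fail.
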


The aim of most of the rest of the paper is to provide the proof of this result.

In what follows, for the sake of simplicity of the notation, we will assume that $A(0)=A(z_{Q_0})=Id$, which in particular gives that $\varphi=Id$, $\mu=\nu$ and $T_{\varphi,\mu}=T_\mu$.
Indeed, if this is not the case, we should carry the following proofs for the image of cubes via $\varphi^{-1}$, periodize with respect to the image of a lattice of standard cubes and work with $T_\varphi$ instead of $T$. This would be a merely notational complication that we prefer to avoid to make the arguments more accessible.



\vspace{2mm}
\noindent{\bf Reduction to a periodic matrix.}\label{section_reduction_periodic_matrix}
The forthcoming lemma shows, roughly speaking, that the local structure of  $A$ close to $Q_0$ is what matters to the purposes of Lemma \ref{mainlemma_origin}. An immediate consequence of this fact is that, without loss of generality, we can replace the matrix $A$ with a periodic matrix, provided that the new matrix coincides with $A$ in a suitable neighborhood of the cube $Q_0$.

In what follows, we assume the matrix $\bar A$ to have H\"older continuous coefficients of exponent $\tilde\alpha<\alpha$ for technical reasons that will result clearer later on.
\begin{lemm}\label{lemm_modification}
Let $\bar{A}\in\Lambda_{\tilde\alpha}$ be such that $\bar{A}(x)=A(x)$ for every $x\in 2Q_0$. Let $\bar{T}$ denote the gradient of the single layer potential associated with $\bar{A}$. The operator $T_{\mu|2Q_0}$ is bounded in $L^2(\mu|_{2Q_0})$ if and only if $\bar{T}_{\mu|_{2Q_0}}$ is bounded in $L^2(\mu|_{2Q_0})$ and
\begin{equation}\label{estim_norm_modification}
\|T_{\mu|_{2Q_0}}\|_{L^2(\mu|_{2Q_0})\to L^2(\mu|_{2Q_0})} = \|\bar{T}_{\mu|_{2Q_0}}\|_{L^2(\mu|_{2Q_0})\to L^2(\mu|_{2Q_0})}  + O\big(\ell(Q_0)^{\tilde\alpha}\big).
\end{equation}
Moreover we have
{\begin{equation}\label{estim_mean_modification}
\begin{split}
\int_{Q_0}|T\mu(x)-m_{\mu,Q_0}(T\mu)|&^2d\mu(x)\\
&\lesssim \int_{Q_0}|\bar{T}\mu(x)-m_{\mu,Q_0}(\bar{T}\mu)|^2 d\mu(x) + \big(\ell(M Q_0)^{2\tilde\alpha} + M^{-\tilde\alpha}\big)^2\mu(Q_0),
\end{split}
\end{equation}}
where $M$ is as in the statement of Lemma \ref{mainlemma_origin}{ and the implicit constant in \eqref{estim_mean_modification} depends on $\diam(\supp\mu)$.}
\end{lemm}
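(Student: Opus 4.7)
The whole proof rests on one pointwise kernel estimate: for $x\in 2Q_0$ and any $y\in\mathbb{R}^{n+1}$ with $|x-y|$ bounded by $R:=\diam(\supp\mu)+\ell(Q_0)$, I would apply Lemma \ref{lemm_freezing} separately to $\mathcal{E}_A$ (with exponent $\alpha$) and to $\mathcal{E}_{\bar A}$ (with exponent $\tilde\alpha$) at the base point $x$. Since the hypothesis $\bar A(x)=A(x)$ on $2Q_0$ makes $\Theta(x,y;A(x))=\Theta(x,y;\bar A(x))$, subtracting the two freezing bounds and using $\tilde\alpha\le\alpha$ together with the boundedness of $|x-y|$ yields
\begin{equation}\label{eq:plan_kernel}
\bigl|K(x,y)-\bar K(x,y)\bigr|\lesssim |x-y|^{\tilde\alpha-n}.
\end{equation}
This bound is valid for every $y\in\mathbb{R}^{n+1}$ and not only $y\in 2Q_0$, which is what allows the far-field oscillation argument below.

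For \eqref{estim_norm_modification}, I would apply Schur's lemma (Lemma \ref{schur_lemma}) to the operator with kernel $K-\bar K$. The $n$-growth of $\mu$ (assumption (4) of the Main Lemma) and a dyadic decomposition of $2Q_0$ into annuli around $x$ give
\begin{equation}
\int_{2Q_0}|x-y|^{\tilde\alpha-n}\,d\mu(y)\lesssim \ell(Q_0)^{\tilde\alpha}
\end{equation}
uniformly in $x\in 2Q_0$, and symmetrically in the other variable. Thus $\|T_{\mu|_{2Q_0}}-\bar T_{\mu|_{2Q_0}}\|_{L^2(\mu|_{2Q_0})\to L^2(\mu|_{2Q_0})}\lesssim\ell(Q_0)^{\tilde\alpha}$, and the triangle inequality for operator norms closes this half.

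For \eqref{estim_mean_modification}, the triangle inequality and $(a+b)^2\le 2a^2+2b^2$ reduce the claim to bounding $\int_{Q_0}|(T-\bar T)\mu-m_{\mu,Q_0}((T-\bar T)\mu)|^2\,d\mu$. I would split $(T-\bar T)\mu=(T-\bar T)(\chi_{MQ_0}\mu)+(T-\bar T)(\chi_{(MQ_0)^c}\mu)$. The local piece is controlled pointwise on $Q_0\subset 2Q_0$ by \eqref{eq:plan_kernel} and $n$-growth: $|(T-\bar T)(\chi_{MQ_0}\mu)(x)|\lesssim \ell(MQ_0)^{\tilde\alpha}$. For the far piece, since $x,x'\in Q_0$ and $y\notin MQ_0$ satisfy $2|x-x'|\le|x-y|$ once $M$ is large enough, the Calder\'on--Zygmund smoothness of both kernels (Lemma \ref{lemcz}, applied with the smaller H\"older exponent $\tilde\alpha$) gives
\begin{equation}
\bigl|(K-\bar K)(x,y)-(K-\bar K)(x',y)\bigr|\lesssim |x-x'|^{\tilde\alpha}\,|x-y|^{-n-\tilde\alpha},
\end{equation}
and a dyadic decomposition of the complement of $MQ_0$ (using that $\mu$ is compactly supported with $n$-growth, so $P_{\mu,\tilde\alpha}(MQ_0)$ is controlled by $\diam(\supp\mu)$) yields the oscillation bound $M^{-\tilde\alpha}$. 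Averaging the far-piece estimate over $x'\in Q_0$ and combining the two pieces gives a pointwise oscillation bound of order $\ell(MQ_0)^{\tilde\alpha}+M^{-\tilde\alpha}$, whose square integrated against $\mu$ on $Q_0$ is the desired error term.

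The argument is essentially bookkeeping once \eqref{eq:plan_kernel} is in hand, so the only genuine technical point I foresee is making sure that the freezing lemma can be invoked globally in $y$; this works precisely because $A(x)=\bar A(x)$ on the freezing variable $x\in 2Q_0$, while the $y$-behaviour is absorbed into the $R$-dependent implicit constants, with $R=\diam(\supp\mu)+\ell(Q_0)$ explaining the $\diam(\supp\mu)$-dependence stated in the lemma.
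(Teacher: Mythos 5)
Your argument is correct and follows the same route the paper intends: the paper explicitly says the proof mimics Lemma \ref{lemm_symetrization}, and your pointwise kernel estimate $|K(x,y)-\bar K(x,y)|\lesssim |x-y|^{\tilde\alpha-n}$ for $x\in 2Q_0$ (obtained by freezing both $\E_A$ and $\E_{\bar A}$ at $x$ via Lemma \ref{lemm_freezing} and using $\Theta(\cdot,\cdot;A(x))=\Theta(\cdot,\cdot;\bar A(x))$) is precisely the mechanism underlying that template; Schur's lemma for the operator norm and the $MQ_0$/$(MQ_0)^c$ split with CZ smoothness for the oscillation then match the structure of Lemma \ref{lemm_symetrization}'s proof step for step. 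One small note on bookkeeping: your computation naturally yields the error $\big(\ell(MQ_0)^{\tilde\alpha}+M^{-\tilde\alpha}\big)^2\mu(Q_0)$, which is the direct analogue of the $\big(\ell(MQ)^{\alpha}+M^{-\alpha}\big)^2\mu(Q)$ appearing in Lemma \ref{lemm_symetrization}, whereas the statement of Lemma \ref{lemm_modification} writes $\ell(MQ_0)^{2\tilde\alpha}$ inside the square; this appears to be a typo in the paper (the extra factor of $2$ is not what the Lemma \ref{lemm_symetrization}-style argument produces, and it is immaterial for the application since only smallness of the error matters).
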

The proof of Lemma \ref{lemm_modification} relies on the fact that $\Theta(\cdot,\cdot;A(x))=\Theta(\cdot,\cdot;\bar{A}(x))$ for every $x\in 2Q_0$ and it is very similar to the one of Lemma \ref{lemm_symetrization}, so that we omit it.

In the rest of the paper, without additional specifications, we will deal with a matrix $\bar A$ periodic with period $\ell$, $2\ell(Q_0)<\ell\lesssim \ell(Q_0)$.

\vspace{2mm}
\noindent{\bf The definition of the matrix $\bar A$.}
The construction in the present subsection is dictated by the necessity of having an auxiliary matrix which agrees with $A$ on $2Q_0$ and has the further properties of being periodic (which is crucial to use the estimates of the theory of homogenization) and of presenting `additional simmetries' with respect to reflections (see the forthcoming Lemma \ref{lemm_symm_mod_mat}). For a scheme of this construction we also refer to Figure 1.

Let $e_j$ denote the $j$-th element of the canonical basis of $\Rn1$. 
We denote by $\psi_j\colon\Rn1\to\Rn1$ the map
\begin{equation}\label{psijdef}
\psi_j(x)\coloneqq   x+(3\ell(Q_0)-2x_j)e_j,
\end{equation}
which corresponds to the reflection across the hyperplane $P_j$ orthogonal to $e_j$ and which passes through the point $\frac{3}{2}\ell(Q_0) e_j.$
Let $0<\delta<1/10$.
\begin{center}
\begin{figure}[h!]
\includegraphics[scale=0.5]{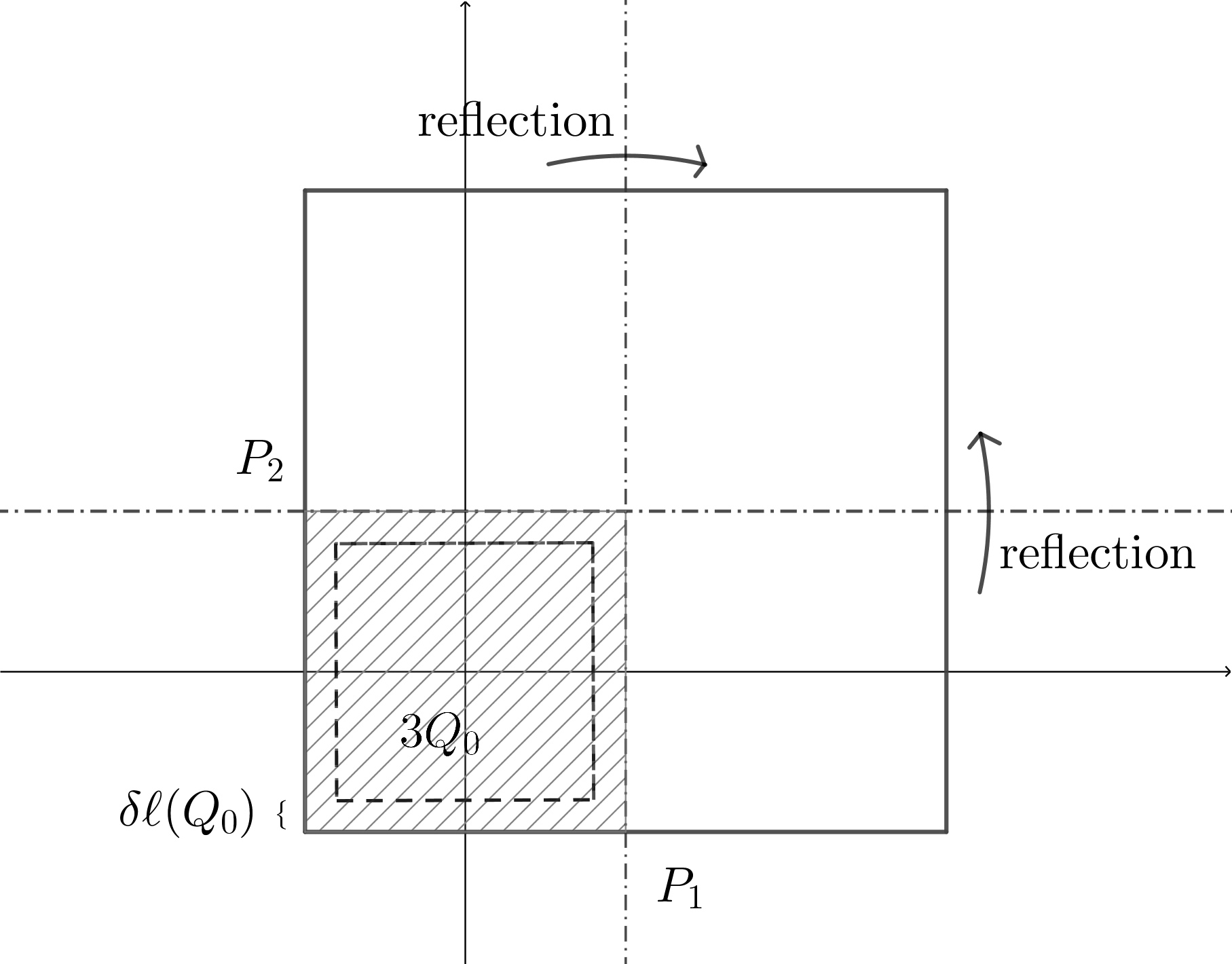}
\caption{A schematization of the construction of $\bar A$ at the level of the periodic unit.}
\end{figure}
\end{center}
Given a matrix $B(x)$ with variable coefficients, we define $B_{j}$ as
\begin{equation}\label{ch_var:A}
B_j=B_{\psi_j}=D({\psi_j^{-1}})(B\circ \psi_j)D(\psi_j^{-1})^T.
\end{equation}
Moreover, we define the matrix $\widetilde{B}$ as
\begin{equation}\label{def_a_hat_j}
\widetilde{B}(x)=
\begin{cases}
B(x) \qquad &\text{ for } \dist\big(x,\partial(3Q_0)\big)\geq \delta \ell(Q_0),\\
\frac{\dist(x,\partial(3Q_0))}{\delta \ell(Q_0)}B(x)+ \Big(1-\frac{\dist(x,\partial(3Q_0))}{\delta \ell(Q_0)}\Big)Id \qquad &\text{ for } \dist\big(x,\partial(3Q_0)\big)< \delta \ell(Q_0).
\end{cases}
\end{equation}
It is also useful to introduce the notation
\begin{equation}\label{mod_hat_j}
\widehat{B}_j(x)=
\begin{cases}
B(x) \qquad &\text{ for } x_j\leq \frac{3}{2}\ell(Q_0),\\
B_j(x) \qquad &\text{ for } x_j>\frac{3}{2}\ell(Q_0).
\end{cases}
\end{equation}
Let us apply the previous constructions to the matrix $A$. First, observe that the matrix $\hat{A}_j$ is not necessarily continuous. However, $\widehat{(\tilde{A})}_j$ is continuous because $Id_j=Id$ and $\tilde{A}|_{\partial(3Q_0)}\equiv Id.$
Our aim, now, is to define the final auxiliary matrix $\bar A$ by an iteration of the construction in \eqref{mod_hat_j} along every direction and which is followed by a periodization. 
Before doing so, let us observe that for $i, j\in\{1,\ldots,n+1\}$,
\begin{equation}
(\widetilde A_i)_j(x)=(\widetilde A_j)_i(x), \qquad x\in\Rn1.
\end{equation}
This follows directly from \eqref{ch_var:A} using the facts that $\psi_i\big(\psi_j(x)\big)=\psi_j\big(\psi_i(x)\big)$ and that the matrices $D(\psi_i^{-1}), D(\psi_j^{-1})$ are diagonal.
Thus by the linearity of the interpolation in \eqref{def_a_hat_j} we have that
\begin{equation}\label{welldefhat}
\widehat{\big(\widehat{(\widetilde A)}_i\big)}_j=\widehat{\big(\widehat{(\widetilde A)}_j\big)}_i \eqqcolon \widehat{(\widetilde A)}_{i,j},
\end{equation}
so the order of the modifications is not relevant.

Let us now construct the matrix $\bar A$ in two steps:
\begin{itemize}
\item For $x$ belonging to the cube of side length $6\ell(Q_0)$ centered at the point with coordinates $\frac{3}{2}\ell(Q_0)(1,\ldots, 1)$ we define
\begin{equation}
\bar A(x)\coloneqq \widehat{(\widetilde A)}_{1, \ldots, n+1}.
\end{equation}
\item By \eqref{def_a_hat_j}, the matrix $\bar A$ defined in the first step coincide with $Id$ for $x$ belonging to the boundary of the cube with side length $6\ell(Q_0)$ and centered at $\frac{3}{2}\ell(Q_0)(1,\ldots, 1)$. Hence, $\bar A$ admits a continuous and $6\ell(Q_0)$-periodic extension to $\mathbb{R}^{n+1}$ so that
\begin{equation}
\bar{A}(x)=\bar{A}\big(x+6\vec{k}\ell(Q_0)\big)
\end{equation}
for every $\vec k \in \mathbb{Z}^{n+1}$.
\end{itemize}

The following holds.
\begin{lemm}\label{lemma_second_modification}The matrix $\bar{A}$ is well-defined, H\"older continuous with exponent $\alpha/2^{n+1}$ and periodic with period $6\ell(Q_0)$.
\end{lemm}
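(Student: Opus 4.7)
The plan is to verify three claims in turn: well-definedness of $\bar A$, its $6\ell(Q_0)$-periodicity, and its H\"older continuity, the latter in the stronger form that $\bar A \in C^\alpha$ (which trivially implies the claimed exponent $\alpha/2^{n+1}$). I begin by checking that $\widetilde A$ defined in \eqref{def_a_hat_j} is $\alpha$-H\"older and uniformly elliptic. The distance function $\dist(\cdot,\partial(3Q_0))$ is $1$-Lipschitz, and a bounded Lipschitz function times an $\alpha$-H\"older function stays in $C^\alpha$; uniform ellipticity is preserved because $\widetilde A$ is a convex combination of the uniformly elliptic matrices $A$ and $Id$. The crucial structural property is that $\widetilde A \equiv Id$ on $\partial(3Q_0)$, since the coefficient multiplying $A$ in the interpolation vanishes there.

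For the well-definedness of $\widehat{(\widetilde A)}_{1,\ldots,n+1}$ I appeal to \eqref{welldefhat}: the reflections $\psi_i$ and $\psi_j$ commute because they fix orthogonal hyperplanes, the diagonal matrices $D(\psi_i^{-1})$ and $D(\psi_j^{-1})$ pairwise commute, and the linearity of the interpolation then makes the order of the reflections irrelevant. I argue by induction on the number of reflections applied: after $k$ steps the resulting matrix is continuous on the $k$-fold union of reflected copies of $3Q_0$, equals $Id$ on its boundary, and is $\alpha$-H\"older. The key observation at each step is that on the gluing hyperplane $P_j$ intersected with the current domain, the matrix is already equal to $Id$, because the relevant points lie on images of faces of $\partial(3Q_0)$ under previous reflections (where $\widetilde A = Id$), and the conjugation $M \mapsto D(\psi_j^{-1}) M D(\psi_j^{-1})^T$ fixes the identity. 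The gluing estimate then reads: for $x,y$ on opposite sides of $P_j$, picking $z \in P_j$ on the segment $[x,y]$ gives
\begin{equation*}
\bigl|\widehat{(\cdot)}_j(x) - \widehat{(\cdot)}_j(y)\bigr| \le C\bigl(|x-z|^\alpha + |z-y|^\alpha\bigr) \le 2C|x-y|^\alpha,
\end{equation*}
so the H\"older constant at most doubles per iteration. After $n+1$ iterations one obtains an $\alpha$-H\"older, uniformly elliptic matrix on the fundamental cube $Q^*$ of side $6\ell(Q_0)$ centered at $\tfrac{3}{2}\ell(Q_0)(1,\ldots,1)$, which equals $Id$ on $\partial Q^*$.

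Since $\bar A \equiv Id$ on $\partial Q^*$, the $6\ell(Q_0)$-periodic extension is continuous on $\mathbb{R}^{n+1}$. H\"older regularity across translates of $\partial Q^*$ follows from the same interface trick: for $x,y$ in adjacent translates, letting $z$ be the intersection of $[x,y]$ with the common face $F$, we have $\bar A(z) = Id$ and hence
\begin{equation*}
|\bar A(x) - \bar A(y)| \le |\bar A(x) - Id| + |Id - \bar A(y)| \le 2C|x-y|^\alpha.
\end{equation*}
For $x,y$ in non-adjacent translates one has $|x-y|\gtrsim \ell(Q_0)$ and the uniform boundedness of $\bar A$ closes the estimate. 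The main technical obstacle is the inductive bookkeeping: one needs to simultaneously verify that (i) the matrix equals $Id$ on each new reflection plane within the current domain, so that the glued pieces match continuously, and (ii) the H\"older constant accumulates only by a bounded multiplicative factor at each iteration. Both points are ultimately consequences of the structural identity $\widetilde A|_{\partial(3Q_0)} = Id$ established in the first step.
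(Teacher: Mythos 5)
Your proof is correct and in fact establishes a \emph{stronger} conclusion than the lemma asserts: you obtain $\bar A\in C^\alpha$ (with a Hölder constant increasing by at most a factor of $2$ per reflection), whereas the paper, citing \cite[Lemma~8.1]{PPT}, only claims exponent $\alpha/2^{n+1}$ on the grounds that ``every reflection halves the order of Hölder regularity.'' The discrepancy is worth flagging. In the present cut-off--free construction the key point is exactly the one you isolate: $\widetilde A\equiv Id$ on $\partial(3Q_0)$, the constant orthogonal conjugations $M\mapsto D(\psi_j^{-1})M D(\psi_j^{-1})^T$ fix $Id$, so the glued function is \emph{exactly} continuous across each $P_j$ within the relevant domain, and the interface trick $z\in[x,y]\cap P_j$ doubles the Hölder \emph{constant}, not the \emph{exponent}. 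The loss of exponent in the paper's sketch appears to be inherited from the radial cut-off used in \cite{PPT}, which the present paper explicitly notes it does not need; your more careful analysis is therefore a genuine improvement over what is written, and if it survives scrutiny it would permit taking $\tilde\alpha=\alpha$ throughout (compare the remark after Theorem~\ref{main_theorem} that a condition on $P_{\mu,\alpha}$ would be the natural one). One precision you should add: the intermediate functions $\widehat{(\widetilde A)}_{1,\ldots,k}$ are \emph{not} globally continuous on $\mathbb{R}^{n+1}$ — the matching $\widehat{(\widetilde A)}_{1,\ldots,k}(z)=D(\psi_j^{-1})\widehat{(\widetilde A)}_{1,\ldots,k}(z)D(\psi_j^{-1})^T$ can fail on parts of $P_j$ far from $3Q_0$. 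Your induction must therefore be restricted to the $k$-fold reflected domain $D_k$, and the essential verification is that for $z\in P_{k+1}\cap D_k$ the composed reflections $\Psi_\sigma$ carry $z$ onto the face $\{x_{k+1}=\tfrac{3}{2}\ell(Q_0)\}\cap\partial(3Q_0)$, where $\widetilde A=Id$, so $\widehat{(\widetilde A)}_{1,\ldots,k}(z)=D_\sigma\,Id\,D_\sigma^T=Id$. You gesture at this (``the current domain'', ``images of faces under previous reflections'') but should pin it down explicitly, since it is the only place the argument could break.
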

The well-definition of $\bar{A}$ follows from \eqref{welldefhat}. The proof of the H\"older regularity is a minor variation of that of \cite[Lemma 8.1]{PPT}, where a similar modification of the matrix was involved. In particular, the exponent $\alpha/2^{n+1}$ is given by the fact that every reflection of the matrix across a hyperplane halves the order of the H\"older regularity. We also point out that, being $\bar A$ periodic, there is no need to introduce a radial cut-off for the matrix as in \cite{PPT}.

For the rest of the paper we use the notation $\tilde{\alpha}\coloneqq \alpha/2^{n+1}.$

\vspace{2mm}
\noindent{\bf Properties of $\mathcal E_{\bar A}$.}
As a consequence of the definition of $\bar{A}$ and, more specifically, of its periodicity and the fact that by construction	
\begin{equation}
\bar A_j(x)=\bar A(x)
\end{equation}
for every $x\in\mathbb{R}^{n+1}$ and $j=1,\ldots, n+1$, we have the following.
\begin{lemm}\label{lemm_symm_mod_mat}
\begin{equation}\label{wdabar}
\E_{\bar A}(x,y)=\E_{\bar A}(\psi_j(x),\psi_j(y)) \qquad\text{ for }\qquad j=1,\ldots,n+1
\end{equation}
and
\begin{equation}\label{wdabarper}
\E_{\bar A}(x,y)=\E_{\bar A}\big(x+6\vec{k}\ell(Q_0),y+6\vec{k}\ell(Q_0)\big) \qquad \text{ for }\qquad\vec{k}\in\mathbb{Z}^{n+1}.
\end{equation}
\end{lemm}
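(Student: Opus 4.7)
The plan is to derive both identities in Lemma \ref{lemm_symm_mod_mat} from the transformation rule for fundamental solutions (Lemma \ref{lemma_change_var_fund_sol}) together with two symmetries of the modified matrix $\bar A$: its invariance under each coordinate reflection $\psi_j$ in the sense that $\bar A_j = \bar A$, and its $6\ell(Q_0)$-periodicity. Once those two symmetries are available, each identity is a one-line application of Lemma \ref{lemma_change_var_fund_sol}.

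For the reflection identity \eqref{wdabar}, I would apply Lemma \ref{lemma_change_var_fund_sol} with $\phi=\psi_j$ and matrix $\bar A$. Since $\psi_j$ is an affine reflection with $|\det D(\psi_j)|=1$, the associated matrix $\bar A_{\psi_j}$ coincides with $\bar A_j$ as defined in \eqref{ch_var:A}, so the lemma yields $\E_{\bar A_j}(x,y)=\E_{\bar A}(\psi_j(x),\psi_j(y))$. Therefore \eqref{wdabar} is reduced to the global identity $\bar A_j=\bar A$. For the periodicity identity \eqref{wdabarper}, I would apply Lemma \ref{lemma_change_var_fund_sol} with the translation $\phi(x)=x+6\vec k\,\ell(Q_0)$. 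Here $D(\phi^{-1})=Id$ and $|\det D(\phi)|=1$, so $\bar A_\phi(x)=\bar A(\phi(x))$; the $6\ell(Q_0)$-periodicity of $\bar A$ established in Lemma \ref{lemma_second_modification} gives $\bar A_\phi=\bar A$, and \eqref{wdabarper} follows immediately.

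It remains to check $\bar A_j=\bar A$ on $\Rn1$. I would verify it first on the basic period cube $[-\tfrac{3}{2}\ell(Q_0),\tfrac{9}{2}\ell(Q_0)]^{n+1}$, where $\bar A=\widehat{(\widetilde A)}_{1,\ldots,n+1}$, and then invoke the $6\ell(Q_0)$-periodic extension. The fundamental computation is that, for any matrix $B$, $(\widehat B_j)_j=\widehat B_j$; this follows directly from the two-line definition \eqref{mod_hat_j} together with $D(\psi_j^{-1})^2=Id$ and $\psi_j\circ\psi_j=Id$, which handle the two cases $x_j<\tfrac{3}{2}\ell(Q_0)$ and $x_j>\tfrac{3}{2}\ell(Q_0)$ respectively. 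To propagate this identity through the iterated modification one uses that the reflections $\psi_i$, $\psi_k$ commute and that their differentials are diagonal and mutually commuting, so that $(\widehat B_i)_k=(\widehat B_k)_i$ whenever $B_i=B$; combined with the order-independence \eqref{welldefhat}, an induction on the number of applied reflections yields $(\widehat{(\widetilde A)}_{1,\ldots,n+1})_j=\widehat{(\widetilde A)}_{1,\ldots,n+1}$ for every $j$. Finally, since $\bar A\equiv Id$ on the boundary of the period cube, the periodic extension is compatible with each $\psi_j$ modulo the lattice $6\ell(Q_0)\mathbb{Z}^{n+1}$, so $\bar A_j=\bar A$ holds globally.

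The main technical point to be careful about is the interaction between the reflection modifications $\widehat{\cdot}_j$ and the cut-off interpolation $\widetilde{\cdot}$ of \eqref{def_a_hat_j}. Because the interpolant $\widetilde A$ is a convex combination of $A$ and $Id$, and both $A$ (via the base case) and $Id$ (trivially) transform correctly under the reflection $\psi_j$ applied to the weight $\dist(\cdot,\partial(3Q_0))$ (which is itself $\psi_j$-invariant on the relevant half-space), the interpolation does not obstruct the commutation argument. This observation, together with the elementary computation $(\widehat B_j)_j=\widehat B_j$, supplies the remaining ingredient and completes the proof.
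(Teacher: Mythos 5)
Your argument is correct and follows essentially the same route as the paper: apply Lemma \ref{lemma_change_var_fund_sol} with $\phi=\psi_j$ (resp. the translation by $6\vec k\,\ell(Q_0)$), reducing the two identities to the facts that $\bar A_j=\bar A$ and that $\bar A$ is $6\ell(Q_0)$-periodic. The only difference is that the paper takes $\bar A_j=\bar A$ as immediate ``by construction,'' whereas you supply the verification via $(\widehat B_j)_j=\widehat B_j$, the commutativity \eqref{welldefhat}, and periodicity; this is a welcome but not conceptually different addition.
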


By Lemma \ref{CZ_1}, the function $\bar K=\n1\E_{\bar A}(\cdot,\cdot)$ is (globally) a Calder\'on-Zygmund kernel. In particular
\begin{itemize}
\item[(a)] $|\bar K(x,y)| \lesssim |x-y|^{-n}$ for all $x,y\in \R^{n+1}$ with $x \not=y$.
\item[(b)] $|\bar K(x,y)-\bar K(x,y')| + |\bar K(y,x) - \bar K(y',x)| \lesssim |y-y'|^{\tilde\alpha} |x-y|^{-n-{\tilde\alpha} } \,$ for $\,2|y-y'| \leq |x-y|$.
\end{itemize}
Let $\bar T_\mu$ denote the singular integral operator associated with $\bar K$,
\begin{equation}
\bar T_\mu f(x)=\int \bar K(x,y)f(y)d\mu(y).
\end{equation}
Lemma \ref{lemm_modification} tells that we can prove the Main Lemma for $\bar{T}$ instead of $T$, possibly by slightly worsening the parameters involved.

\section{A first localization lemma}
It is useful to provide a local analogue of the BMO-type estimate \eqref{bmotypee}. This is possible because of the smallness of the $\alpha$-number and the bound for the $P_{\mu,\tilde\alpha}$-density. Also, recall that because of the assumptions in Lemma \ref{mainlemma_origin}, we have $\mu(MQ_0)\lesssim M^n\mu(Q_0)$. In what follows we sketch the proof of the localization of \eqref{bmotypee} for $\bar T_\mu$, highlighting the differences with respect to the case of the Riesz transform (see \cite[Lemma 4.2]{GT}).

In the rest of the paper we omit to indicate the dependence of the implicit constants in our estimates on $C_0$ and $C_1$.
\begin{lemm}\label{localization_BMO_lemma}For $\delta$ small enough depending on $M$, the following inequality holds
\begin{equation}\label{localization_BMO}
\int_{Q_0}|\bar T_\mu \chi_{MQ_0}|^2d\mu\lesssim \Big(\epsilon+\frac{1}{M^{2\tilde\alpha }}+ M^{4n+2}\delta^{1/(4n+4)} + (M\ell(Q_0))^{2\tilde\alpha}\Big)\mu(Q_0).
\end{equation}
\end{lemm}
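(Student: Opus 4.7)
The result is the elliptic analogue of Lemma 4.2 in \cite{GT}, and the strategy is to combine hypothesis (6) of Lemma \ref{mainlemma_origin}, the passage from $T$ to $\bar T$ supplied by Lemma \ref{lemm_modification}, a Calder\'on--Zygmund tail bound that decouples $\bar T(\chi_{MQ_0}\mu)$ from the full potential $\bar T\mu$, and finally the $\alpha$-flatness hypothesis together with the reflection/periodicity symmetries of $\bar A$ (Lemma \ref{lemm_symm_mod_mat}) to control the residual mean of $\bar T(\chi_{MQ_0}\mu)$ on $Q_0$.

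First I would use Lemma \ref{lemm_modification} in the direction opposite to the one written (which holds by the same proof since $A=\bar A$ on $2Q_0$) to transfer the BMO-type bound \eqref{bmotypee} from $T\mu$ to $\bar T\mu$:
\begin{equation*}
\int_{Q_0}\big|\bar T\mu - m_{\mu,Q_0}(\bar T\mu)\big|^2\,d\mu \;\lesssim\; \Big(\epsilon + \ell(MQ_0)^{2\tilde\alpha} + M^{-2\tilde\alpha}\Big)\mu(Q_0).
\end{equation*}

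Second, I would split $\bar T\mu = \bar T(\chi_{MQ_0}\mu) + \bar T(\chi_{(MQ_0)^c}\mu)$ and invoke the global Calder\'on--Zygmund smoothness of $\bar K$ provided by Lemma \ref{CZ_1}(2), combined with hypothesis (3) of Lemma \ref{mainlemma_origin} rewritten in cube form ($P_{\mu,\tilde\alpha}(MQ_0)\lesssim 1$), to obtain
\begin{equation*}
\big|\bar T(\chi_{(MQ_0)^c}\mu)(x)-\bar T(\chi_{(MQ_0)^c}\mu)(y)\big| \;\lesssim\; M^{-\tilde\alpha} \qquad \text{for all } x,y\in Q_0,
\end{equation*}
so that this tail is constant up to $O(M^{-\tilde\alpha})$ on $Q_0$. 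Combining with the first step gives the variance bound
\begin{equation*}
\int_{Q_0}\big|\bar T(\chi_{MQ_0}\mu)-m_{\mu,Q_0}\bar T(\chi_{MQ_0}\mu)\big|^2\,d\mu \;\lesssim\; \Big(\epsilon + M^{-2\tilde\alpha} + \ell(MQ_0)^{2\tilde\alpha}\Big)\mu(Q_0),
\end{equation*}
so it remains to prove $|m_{\mu,Q_0}(\bar T(\chi_{MQ_0}\mu))|^2 \lesssim M^{4n+2}\delta^{1/(4n+4)}$.

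Third (and this is the main obstacle), I would estimate this mean using hypothesis (7). Writing the mean by Fubini as a pairing against the adjoint,
\begin{equation*}
m_{\mu,Q_0}\big(\bar T(\chi_{MQ_0}\mu)\big) \;=\; \frac{1}{\mu(Q_0)}\int \chi_{MQ_0}(y)\,\bar T^\ast(\chi_{Q_0}\mu)(y)\,d\mu(y),
\end{equation*}
the plan is to approximate $\mu|_{3MQ_0}$ by $c\,\mathcal H^n|_L$ using $\alpha^L_\mu(3MQ_0)\leq\delta$; but since $\bar T^\ast(\chi_{Q_0}\mu)$ is only $|x-y|^{-n}$ and not Lipschitz, one must first regularize it by truncating the kernel at a scale $\eta\ell(Q_0)$ to be optimized. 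The truncated function has Lipschitz norm $\lesssim \eta^{-n-1}\ell(Q_0)^{-1}$ on $3MQ_0$, so the $\alpha$-number pays $\delta\cdot \eta^{-n-1}(M\ell(Q_0))^{n+1}/\ell(Q_0)$; the error from truncation is controlled by $\eta^{\tilde\alpha}$ times growth bounds on $\mu$ and $M^n$. Balancing these two errors produces a power of $\delta$ and a large power of $M$; a standard optimization choosing $\eta\sim \delta^{1/(n+1+\tilde\alpha)}$ (which, after absorbing $\tilde\alpha$, yields the exponent $1/(4n+4)$ in the bound) gives $M^{4n+2}\delta^{1/(4n+4)}$. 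For the flat piece $c\,\mathcal H^n|_L$, the reflection and periodicity symmetries $\bar A(\psi_j x)= \bar A(x)$ (Lemma \ref{lemm_symm_mod_mat}) force the mean of $\bar T(c\mathcal H^n|_L\cap MQ_0)$ on $Q_0$ to vanish up to errors absorbed by the other terms on the right-hand side.

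The genuinely hard step is the third one. The Riesz kernel is antisymmetric, so in \cite{GT} the flat piece is handled by a direct cancellation. Here the kernel $\bar K$ is not antisymmetric, and the whole point of constructing $\bar A$ by periodization of the reflected matrix in Section \ref{section_change} is precisely to recover an analogous cancellation. Carefully booking the dependence of the Lipschitz-truncation scale on $M$, $\delta$ and $\ell(Q_0)$ so that the final optimal exponent $\delta^{1/(4n+4)}$ emerges, while keeping the $M^{4n+2}$ prefactor honest, is where the care is required.
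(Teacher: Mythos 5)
Your decomposition into a variance term and a mean term, the Calder\'on--Zygmund tail bound $\big|\bar T_\mu\chi_{(MQ_0)^c}(x)-\bar T_\mu\chi_{(MQ_0)^c}(y)\big|\lesssim M^{-\tilde\alpha}$ using $P_{\mu,\tilde\alpha}(MQ_0)\lesssim 1$, and the passage from $T$ to $\bar T$ via Lemma \ref{lemm_modification} all match the paper. The Fubini/adjoint rewriting of the mean, the approximation of $\mu|_{3MQ_0}$ by $c\,\mathcal H^n|_L$ through the $\alpha$-number, and the truncate-and-optimize step are also the same as in \cite{GT} and in the paper (giving the $M^{2n+1}\delta^{1/8(n+1)}$ term, whose square yields $M^{4n+2}\delta^{1/(4n+4)}$).

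However, your third step misidentifies the cancellation mechanism on the flat piece. You claim that the reflection and periodicity symmetries of $\bar A$ (Lemma \ref{lemm_symm_mod_mat}) make the mean of $\bar T(c\,\mathcal H^n|_L\cap MQ_0)$ over $Q_0$ vanish. This cannot work: the reflections $\psi_j$ of \eqref{psijdef} are across the hyperplanes $\{x_j=\tfrac32\ell(Q_0)\}$, which pass neither through the origin nor through $H$ (the hyperplane carrying the flat surrogate in Lemma \ref{main_lemma}), and they do not map $Q_0$ to itself. Those symmetries of $\bar A$ are built for a completely different purpose and are only exploited in Sections 10--11 (periodization, the maximum principle on the strip, and Lemma \ref{lemm_comparison_plus_minus}). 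What the paper actually uses at this point is the \emph{freezing} technique: write
\begin{equation}
\bar K(x,y)=\tfrac12\big[\bar K(x,y)-\nabla_1\Theta(x,y;\bar A(x))\big]+\tfrac12\big[\bar K(x,y)-\nabla_1\Theta(x,y;\bar A(y))\big]+\tfrac12\big[\nabla_1\Theta(x,y;\bar A(x))+\nabla_1\Theta(x,y;\bar A(y))\big],
\end{equation}
where the last bracket integrates to zero against $\mathcal H^n|_H\otimes\mathcal H^n|_H$ because $\nabla_1\Theta(\cdot,\cdot;A_0)$, the gradient of a \emph{constant-coefficient} fundamental solution \eqref{grad_const_coeff_sol}, is an odd function of $x-y$. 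The first two brackets are of size $|x-y|^{\tilde\alpha-n}$ by Lemma \ref{lemm_freezing}, and integrating this error over $Q_0\times MQ_0$ produces exactly the $(M\ell(Q_0))^{\tilde\alpha}\ell(Q_0)^n$ in \eqref{estim_planes}, hence the $(M\ell(Q_0))^{2\tilde\alpha}$ in the conclusion (and the self-piece $|m_{\mu,Q_0}(\bar T_\mu\chi_{Q_0})|\lesssim\ell(Q_0)^{\tilde\alpha}$ in \eqref{mean_error_loc_cube} is handled the same way). So the genuinely new ingredient relative to \cite{GT} in this lemma is the freezing-to-a-constant-coefficient-odd-kernel comparison, not the discrete symmetries of $\bar A$; you should replace the last sentence of your step three accordingly.
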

\begin{proof}
First, observe that
\begin{equation}\label{first_eq}
\int_{Q_0}|\bar T_\mu(\chi_{MQ_0})|^2d\mu\leq 2 \int_{Q_0}|\bar T_\mu(\chi_{MQ_0})- m_{\mu,Q_0}(\bar T_\mu \chi_{MQ_0}) |^2 d\mu + 2|m_{\mu,Q_0}(\bar T_\mu \chi_{MQ_0})|^2\mu(Q_0).
\end{equation}
Let us estimate the two summands on the right hand side of \eqref{first_eq} separately.
To study the first one, we write
\begin{equation}\label{first_part}
\begin{split}
&\int_{Q_0}\big|\bar T_\mu\chi_{MQ_0}- m_{\mu,Q_0}(\bar T_\mu \chi_{MQ_0}) \big|^2 d\mu \\
&\qquad\leq 2 \int_{Q_0}\big|\bar T_\mu\chi_{{(MQ_0)}^c}(x)-m_{\mu,Q_0}(\bar T_\mu\chi_{{(MQ_0)}^c})\big|^2d\mu(x)
+ 2\int_{Q_0}|\bar T\mu -m_{\mu,Q_0}(\bar T\mu)|^2d\mu.
\end{split}
\end{equation}
Applying Lemma \ref{lemcz}, it follows that for $x,y\in Q_0$
\begin{equation}
\begin{split}
&|\bar T_\mu\chi_{(MQ_0)^c}(x)-\bar T_\mu\chi_{(MQ_0)^c}(y)|\leq \int_{(MQ_0)^c}|\bar K(x,z)-\bar K(y,z)|d\mu(z)\\
&\qquad\lesssim |x-y|^{\tilde\alpha }\int_{(MQ_0)^c}\frac{1}{|x-z|^{n+{\tilde \alpha} }}d\mu(z)\\
&\qquad\lesssim |x-y|^{\tilde\alpha}\sum_{j=1}^\infty\int_{2^{j+1}MQ_0\setminus 
2^jMQ_0}\frac{1}{|x-z|^{n+{\tilde\alpha} }}d\mu(z)
\lesssim \frac{|x-y|^{\tilde\alpha }}{\ell(MQ_0)^{\tilde\alpha}}P_{\mu, {\tilde\alpha} }(MQ_0)
\lesssim \frac{1}{M^{\tilde\alpha }},
\end{split}
\end{equation}
being $P_{\mu, \tilde\alpha}(MQ_0)\lesssim 1.$ Then, averaging the previous inequality over the variable $y$, we get
\begin{equation}
\big|\bar T_\mu\chi_{{(MQ_0)}^c}(x)-m_{\mu,Q_0}(\bar T_\mu\chi_{{(MQ_0)}^c})\big|\lesssim \frac{1}{M^{\tilde\alpha }}
\end{equation}
and
\begin{equation}
\int_{Q_0}|\bar T_\mu\chi_{{(MQ_0)}^c}(x)-m_{\mu,Q_0}(\bar T_\mu\chi_{{(MQ_0)}^c})|^2d\mu(x)\lesssim\frac{1}{M^{2\tilde\alpha}}\mu(Q_0).
\end{equation}
Recalling that by hypothesis we have
\begin{equation}
\int_{Q_0}\big|\bar T\mu-m_{\mu,Q_0}(\bar T\mu)\big|^2d\mu\leq \epsilon\mu(Q_0),
\end{equation}
we can estimate \eqref{first_part} as
\begin{equation}\label{second_part}
\int_{Q_0}\big|\bar T_\mu(\chi_{(MQ_0)^c})- m_{\mu,Q_0}(\bar T_\mu \chi_{MQ_0}) \big|^2 d\mu \lesssim \Big(\epsilon+\frac{1}{M^{2\tilde\alpha}}\Big)\mu(Q_0).
\end{equation}
An application of Lemma \ref{lemm_estim_freezing_periodic} together with the antisimmetry of $\n1\Theta(\cdot,\cdot;\bar A(x))$ also gives
\begin{equation}\label{mean_error_loc_cube}
\big|m_{\mu,Q_0}(\bar T_\mu\chi_{Q_0})\big|\lesssim \frac{1}{\mu(Q_0)}\int_{Q_0}\int_{Q_0}|x-y|^{-n+\tilde\alpha}d\mu(x)d\mu(y)\lesssim \ell(Q_0)^{\tilde\alpha}.
\end{equation}
Minor variations of the arguments which prove \cite[(4.2)]{GT} show that that
\begin{equation}\label{mean_lemma_local_first}
\begin{split}
|m_{\mu,Q_0}(\bar T_\mu \chi_{MQ_0})|&\overset{\eqref{mean_error_loc_cube}}{\lesssim} |m_{\mu,Q_0}(\bar T_\mu \chi_{MQ_0\setminus Q_0})| + \ell(Q_0)^{\tilde{\alpha}}\\
&\lesssim  M^{2n+1}\delta^{1/8(n+1)} + \big(M\ell(Q_0)\big)^{\tilde\alpha}+ \ell(Q_0)^{\tilde{\alpha}}\lesssim 
M^{2n+1}\delta^{1/8(n+1)} + \big(M\ell(Q_0)\big)^{\tilde\alpha}.
\end{split}
\end{equation}
For the sake of brevity we omit the details and we just point out that the presence of the second summand on the right hand side comes from the estimate
\begin{equation}\label{estim_planes}
\Big|\int_{Q_0}\bar T\big(\varphi\mathcal H^n|_H\big)\, d\mathcal H^n|_H\Big|\lesssim \big(M\ell(Q_0)\big)^{\tilde{\alpha}}\ell(Q_0)^n,
\end{equation}
where $\varphi$ is a proper even $C^1$ function with $0\leq \varphi\leq 1$ and supported on $MQ_0\setminus Q_0$.
{To get the estimate \eqref{estim_planes}, we just write
\begin{equation}
\begin{split}
\Big|\int_{Q_0}&\bar T\big(\varphi\mathcal H^n|_H\big)\, d\mathcal H^n|_H\Big|\\
& \leq \int_{Q_0}\int_{MQ_0}\Big|\frac{1}{2}\bar K(x,y)-\frac{1}{2}\nabla_1\Theta(x,y;\bar A(x))\Big|\,d\mathcal{H}^n|_H(x)d\mathcal{H}^n|_H(y)\\
&\qquad + \int_{Q_0}\int_{MQ_0}\Big|\frac{1}{2}\bar K(x,y)-\frac{1}{2}\nabla_1\Theta(x,y;\bar A(y))\Big|\,d\mathcal{H}^n|_H(x)d\mathcal{H}^n|_H(y)\\
&\qquad + \frac{1}{2}\Big|\int_{Q_0}\int_{MQ_0}\big(\nabla_1\Theta(x,y;\bar A(x))+\nabla_1\Theta(x,y;\bar A(y))\big)\,d\mathcal{H}^n|_H(x)d\mathcal{H}^n|_H(y)\Big|.
\end{split}
\end{equation}
Then, the third summand is null because of the antisymmetry of its integrand and the first two terms can be estimated via Lemma \ref{lemm_freezing}.}

Gathering \eqref{first_eq}, \eqref{second_part} and \eqref{mean_lemma_local_first} we are able to conclude the proof of the lemma.
\end{proof}
\section{The David and Mattila lattice associated with $\mu$ and its properties}\label{DM_section}
The dyadic lattice constructed by David and Mattila \cite[Theorem 3.2]{DM} is a powerful tool in the study of the geometry of Radon measures. Its main properties are listed in the following lemma, that we state for a general Radon measure with compact support.
\begin{lemm}[David and Mattila]
Let $\sigma$ be a compactly supported Radon measure in $\mathbb{R}^{n+1}$. Consider
two constants $K_0>1$ and $A_0>5000 K_0$ and denote $W=\supp \sigma$. Then there exists a sequence of
partitions of $W$ into Borel subsets $Q$, $Q \in \D_{\sigma,k}$, which we will refer to as cells, with the following
properties:
\begin{itemize}
\item  For each integer $k\geq 0$, $W$ is the disjoint union of the cells $Q$, $Q\in\D_{\sigma,k}$. If $k < l$,
$Q \in \D_{\sigma,l}$, and $R\in\D_{\sigma,k}$ , then either $Q\cap R=\emptyset$ or $Q\subset R$.
\item 
 For each $k\geq 0$ and each
cell $Q \in \D_{\sigma,k}$, there is a ball $B(Q) = B\big(z_Q ,r(Q)\big)$ such that
\begin{align}
z_Q\in W, \,\, &A_0^{-k}\leq r(Q)\leq K_0A_0^{-k}\\
W\cap B(Q)\subset Q \subset W\cap& 28 B(Q)= W\cap B\big(z_Q,28 r(Q)\big),
\end{align}
and the balls $5B(Q),Q\in\D_{\sigma,k}$ are disjoint.
\item The cells $Q\in\D_{\sigma,k}$ have small boundaries. By this, we mean that for each $Q\in\D_{\sigma,k}$ and each integer $l\geq 0$, if we set
\begin{align}
N_l^{\inter}&\coloneqq   \big\{x\in Q: \dist (x,W\setminus Q)<A_0^{-k-l}\big\}\\
N_l^{\ext}(Q)&\coloneqq   \big\{x\in W\setminus Q:\dist(x,Q)<A_0^{-k-l}\big\}
\end{align}
and
\begin{equation}
N_l(Q)\coloneqq   N_l^{\inter}(Q)\cup N_l^{\ext}(Q),
\end{equation}
we get
\begin{equation}
\sigma\big(N_l(Q)\big)\leq \big(C^{-1}K_0^{-3(n+1)-1}A_0\big)^{-l}\sigma\big(90B(Q)\big)
\end{equation}
\item Denote by $\D^{\db}_{\sigma,k}$
the family of cells $Q\in \D_{\sigma,k}$ for which
\begin{equation}
\sigma\big(100B(Q)\big) \leq K_0\sigma\big(B(Q)\big).
\end{equation}
We have that $r(Q)= A_0^{-k}$ when $Q\in\D_{\sigma,k}\setminus \D^{\db}_{\sigma,k}$
and

\begin{equation}\label{ndb}
\sigma\big(100B(Q)\big) \leq K_0^{-1}\sigma\big(100^{l+1}B(Q)\big)
\end{equation}
for all $l \geq 1$ with $100^l \leq K_0$ and $Q \in \D_{\sigma,k}\setminus \D^{\db}_{\sigma,k}.$
\end{itemize}
\end{lemm}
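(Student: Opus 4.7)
The statement is verbatim Theorem 3.2 of David and Mattila \cite{DM}, to which I would refer for the full argument. Since nothing new is claimed here, the ``proof'' is really just to recall how the construction works and to indicate which ingredients are doing the work. The strategy mirrors the Christ--Whitney dyadic construction, but has to cope with the fact that $\sigma$ need not be doubling: in the classical AD-regular setting one fixes once and for all a radius $A_0^{-k}$ at generation $k$, while here one must allow the radius of the ball $B(Q)$ to grow, up to $K_0 A_0^{-k}$, precisely at those cells where $\sigma$ fails to be doubling.

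The construction itself proceeds generation by generation. At step $k$, choose a maximal $A_0^{-k}/5$-separated net $\{z_Q\}\subset W=\supp\sigma$ and assign to each $z_Q$ a nominal ball $B(z_Q, A_0^{-k})$. Then one enlarges $r(Q)$ greedily: if the concentric dilation fails the doubling inequality $\sigma(100B(Q))\leq K_0\sigma(B(Q))$, one increases the radius by a bounded factor and iterates. Because $\sigma$ has finite total mass, this enlargement must terminate after a bounded number of steps, producing an $r(Q)\in[A_0^{-k},K_0 A_0^{-k}]$ for each cell, and the geometric series hidden in the iteration yields exactly the non-doubling tail estimate \eqref{ndb} for $Q\in\D_{\sigma,k}\setminus\D^{\db}_{\sigma,k}$. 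The disjointness of the $5B(Q)$ follows from the maximality of the net, provided $A_0$ is chosen sufficiently large relative to $K_0$ (which is where the hypothesis $A_0>5000K_0$ enters). The cells $Q$ themselves are then defined by a Voronoi/greedy assignment of each point of $W$ to the closest center $z_Q$, refined downward through the generations so that the nesting property $Q\subset R$ or $Q\cap R=\emptyset$ holds.

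The small boundary property is obtained by a standard shifting argument: the annuli $N_l(Q)$ are controlled by a ``Vitali--Chebyshev'' pigeonhole that shows only a geometrically small fraction of the mass of $90 B(Q)$ can sit within a very thin tubular neighbourhood of $\partial Q$, once one chooses the radii carefully inside the allowed interval $[A_0^{-k},K_0 A_0^{-k}]$. The constant $C^{-1}K_0^{-3(n+1)-1}A_0$ appearing in the exponent is precisely what this pigeonhole argument yields.

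The main conceptual obstacle, and the reason the proof in \cite{DM} is nontrivial, is reconciling three competing demands simultaneously: (i) the partition/nesting property across generations, (ii) control of $r(Q)$ by $A_0^{-k}$, and (iii) the doubling dichotomy with the precise tail \eqref{ndb}. The flexibility of the radius $r(Q)$ within $[A_0^{-k},K_0A_0^{-k}]$ is what makes a simultaneous arrangement possible, but it forces the bookkeeping to be carried out in the correct order (finest scale first, then propagating upward). Since we use the statement entirely as a black box in what follows, I would not reprove these bookkeeping details and instead refer the reader to Sections~3--4 of \cite{DM}.
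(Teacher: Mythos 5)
The paper does not prove this lemma; it simply cites David and Mattila \cite[Theorem 3.2]{DM}, exactly as you do, so your approach coincides with the paper's. Your surrounding sketch is harmless scene-setting, though a couple of details are slightly off and worth flagging: the radius enlargement terminates because $r(Q)$ is constrained to lie in $[A_0^{-k},K_0A_0^{-k}]$, not because $\sigma$ has finite total mass, and the non-doubling cells $Q\in\D_{\sigma,k}\setminus\D^{\db}_{\sigma,k}$ keep the minimal radius $r(Q)=A_0^{-k}$ rather than an enlarged one, which is precisely why the failure of doubling cascades across the scales $100^l\leq K_0$ and yields \eqref{ndb}.
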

Let us denote $\D_\sigma\coloneqq  \bigcup_k\D_{\sigma, k}$. Let us choose $A_0$ big enough so that
\begin{equation}\label{assumption_a_0}
C^{-1}K_0^{-3(n+1)-1}A_0>A_0^{1/2}>10.
\end{equation}
Here we list some useful quantities associated with each cell $Q\in\D_{\sigma,k}$:
\begin{itemize}
\item $J(Q)\coloneqq   k$, which may be interpreted as the \textit{generation} of $Q$.
\item $\ell(Q)\coloneqq   56 K_0 A^{-k}_0$, that we also call \textit{side length}. Notice that
\begin{equation}
\frac{1}{28}K_0^{-1}\ell(Q)\leq \diam(28 B(Q))\leq \ell(Q)
\end{equation}
and $r(Q)\approx\diam(Q)\approx \ell(Q).$
\item calling $z_Q$ the \textit{center} of $Q$, we denote $B_Q\coloneqq   28B(Q)=B(z_Q,28r(Q)),$ which in particular gives
\begin{equation}
Q\cap\frac{1}{28}B_Q\subset Q\subset B_Q.
\end{equation}
\end{itemize}
We recall, now, some of the properties of the cells in the David and Mattila lattice.

The choice in \eqref{assumption_a_0} implies, for $0<\lambda\leq 1$, the estimate
\begin{equation}
\begin{split}
\sigma\big(\{x\in Q:\dist(x,W\setminus Q)\leq \lambda \ell(Q)\}\big) + \sigma\big(\{x\in 3.5B_Q\setminus Q:\dist(x, Q)\leq \lambda \ell(Q)\}\big)\\ \leq c\lambda^{1/2}\sigma(3.5 B_Q).
\end{split}
\end{equation}
We denote $\D^{\db}_\sigma\coloneqq   \bigcup_{k\geq 0}\D^{\db}_{\sigma,k}$ and we say that it is the lattice of \textit{doubling cells}. This notation is justified by the fact that, for $Q\in\D^{\db}_\sigma,$ we have
\begin{equation}
\sigma(3.5 B_Q)\leq \sigma(100 B(Q))\leq K_0 \sigma(B(Q))\leq K_0\sigma(Q).
\end{equation}
An important feature of the David and Mattila lattice is that every cell $Q\in\D_\sigma$ can be covered by doubling cells up to a set of $\sigma$-measure zero (\cite[Lemma 5.28]{DM}). Moreover, if we have two cells $R,Q\in D_\sigma$ with $Q\subset R$ and such that every intermediate cell $Q\subsetneq S \subsetneq R$ belongs to $\D_\sigma\setminus\D^{\db}_\sigma,$ we have the control
\begin{equation}\label{dec_measure_chain_non_doubling}
\sigma(100B(Q))\leq A_0^{-10n(J(Q)-J(R)-1)}\sigma(100 B(R))
\end{equation}
on the decay of the measure. The estimate \eqref{dec_measure_chain_non_doubling} is proved via an iterated application of the inequality
\begin{equation}\label{estim_DM_for_decay}
\sigma(100B(Q))\leq A_0^{-10n}\sigma(100 B(\hat{Q})),
\end{equation}
where $\hat{Q}$ is the cell from $\D_{\sigma,J(Q)-1}$ containing $Q$ (also called \textit{parent} of $Q$). We remark that \eqref{estim_DM_for_decay} follows by \eqref{ndb} and a proper choice of $A_0$ and $K_0$ (see \cite[Lemma 5.31]{DM}).

For $Q\in\D_\sigma,$ we denote by $\D_\sigma(Q)$ the cells in $\D_\sigma$ which are contained in $Q$ and $\D^{\db}_\sigma(Q)\coloneqq   \D_\sigma(Q)\cap \D^{\db}_\sigma.$ 

\section{The Key Lemma, the stopping time condition and a first modification of the measure}\label{section_key_lemma}
{The hearth of the proof of Lemma \ref{main_lemma} consists in providing a control on the abundance of cells with low density (in some sense that we clarify below). The whole construction that we are about to discuss depends on some auxiliary parameter to be chosen properly later in the proof.}
\begin{defin}[Low density cells] Let $0<\theta_0\ll 1$. A cell $Q\in\D_\mu$ is said to be of low density if
\begin{equation}
\Theta_\mu(3.5 B_Q)\leq \theta_0
\end{equation}
and it has maximal side length. We denote by $\LD$ the family of low density cells.
\end{defin}
Most of the rest of the paper deals with the proof of the fact that the low density cells fail to cover a significant portion of $Q_0$.
\begin{lemm}[Key Lemma]\label{key_lemma} Let $\epsilon,\delta$ and $M$ be as in Lemma \ref{mainlemma_origin}.
There exists $\epsilon_0>0$ such that if $M$ is big enough and $\theta_0, \delta$ and $\epsilon$ are small enough, then
\begin{equation}\label{abundance_LD}
\mu\Bigg(Q_0\setminus \bigcup_{Q\in\LD}Q\Bigg)\geq \epsilon_0 \mu(Q_0).
\end{equation}
\end{lemm}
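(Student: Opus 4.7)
I would argue by contradiction: assume $\mu\bigl(Q_0 \setminus \bigcup_{Q \in \LD} Q\bigr) < \epsilon_0 \mu(Q_0)$, so that LD cells carry almost all the $\mu$-mass of $Q_0$. Since LD cells are pairwise disjoint (by maximality of their side length) and $\mu(Q) \leq \mu(3.5B_Q) \leq \theta_0\, \ell(Q)^n$, this yields the key combinatorial bound
\[
\sum_{Q \in \LD,\, Q\subset Q_0} \ell(Q)^n \;\gtrsim\; \frac{1-\epsilon_0}{\theta_0}\,\ell(Q_0)^n.
\]

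First I would use the $\alpha$-number hypothesis $\alpha^{H}_\mu(3MQ_0)\leq\delta$ together with $\mu(Q_0)=\ell(Q_0)^n$ to extract a constant $c\approx 1$ and a hyperplane $H$ through the origin such that $\mu$ is $\delta$-close to $c\,\mathcal{H}^n|_H$ in the $d_{3MQ_0}$ distance. Testing against a Lipschitz bump concentrated off $H$ gives, for every $\eta\in(0,1)$,
\[
\mu\bigl(\{x\in 3MQ_0:\mathrm{dist}(x,H)>\eta\ell(Q_0)\}\bigr) \;\lesssim\; \frac{\delta}{\eta}\,\ell(Q_0)^n.
\]
Combined with the combinatorial bound above, this forces a positive $\mu$-portion of the LD cells to be displaced from $H$ by at least a fixed fraction of their side length — otherwise the LD-mass would have to sit in an $\eta\ell(Q_0)$-neighborhood of $H$ and produce density $\approx c\gg\theta_0$ there, contradicting the LD condition.

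Next I would exploit the BMO-type bound (assumption (9) of Lemma \ref{main_lemma}). Via Lemma \ref{lemm_modification}, transfer the oscillation hypothesis from $T$ to $\bar T$ with error of order $\ell(MQ_0)^{\tilde\alpha}+M^{-\tilde\alpha}$, and split $\bar T\mu = \bar T(c\mathcal{H}^n|_H) + \bar T(\mu-c\mathcal{H}^n|_H)$. The second summand contributes $O(\delta^{\gamma_0})\mu(Q_0)$ to the oscillation, via the standard estimate of such $L^2$ norms by the $\alpha$-number (Tolsa's comparison between $\alpha$- and $\beta_1$-coefficients, together with the CZ estimates of Lemma \ref{lemcz}). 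For the first summand, I would freeze $\bar A$ at the evaluation point through Lemma \ref{lemm_estim_freezing_periodic}, replacing $\bar K(x,y)$ by the constant-coefficient kernel $\nabla_1\Theta(x,y;\bar A(x))$ whose explicit form \eqref{grad_const_coeff_sol} produces the classical \emph{jump relation} for the normal component of the single-layer gradient across $H$, of size $\approx c/2\approx 1/2$. Picking $x_1\in H\cap Q_0$ and, for each far-from-$H$ LD cell $Q$, a point $x_2\in Q$ with $\mathrm{dist}(x_2,H)\gtrsim\ell(Q)$, the jump relation yields $|\bar T(c\mathcal{H}^n|_H)(x_1)-\bar T(c\mathcal{H}^n|_H)(x_2)|\gtrsim 1$; integrating over the positive $\mu$-family of such LD cells gives $\int_{Q_0}|\bar T\mu - m_{\mu,Q_0}(\bar T\mu)|^2\,d\mu \gtrsim \mu(Q_0)$, which contradicts assumption (9) for $\epsilon,\delta$ small enough and $M$ large enough.

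\textbf{Main obstacle.} The hard point is the lack of antisymmetry and homogeneity of $\bar K$, which prevents a direct use of the jump relation, as done by Girela-Sarri\'on--Tolsa in the Riesz-transform case. Overcoming this relies on two facts specific to the elliptic setting: first, the smallness of $\ell(MQ_0)\leq\lambda$, which keeps the freezing error in Lemma \ref{lemm_estim_freezing_periodic} harmless, of order $\ell(MQ_0)^{\tilde\alpha}$; and second, the reflection and periodicity symmetries of $\bar A$ from Lemma \ref{lemm_symm_mod_mat}, which ensure that the frozen operator is governed by the classical constant-coefficient single-layer gradient, for which the jump relation is available. A secondary delicate point is the bookkeeping of the small parameters $\theta_0,\delta,\epsilon,M^{-1},\lambda$, which must be chosen in the correct order so each error absorbs the previous one.
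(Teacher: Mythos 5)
Your proposal captures the right heuristic (flatness plus abundance of low-density cells should force oscillation in $\bar T\mu$), but the route you sketch is not the one the paper follows, and it contains a gap that I do not see how to close.

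The gap is in the geometric step where you claim that the flatness estimate $\alpha^H_\mu(3MQ_0)\leq\delta$ forces a positive $\mu$-fraction of the $\LD$ cells to sit far from $H$ relative to their own side length. The $\alpha$-number controls $\mu$ at the single scale $\ell(Q_0)$; it does not pass any information down to the scale $\ell(Q)$ of a given $\LD$ cell $Q$, which can be arbitrarily small. A measure can be $\delta$-close to $c\mathcal H^n|_H$ at scale $\ell(Q_0)$ while, near a point of $H$, oscillating between scales so that $\Theta_\mu(3.5B_Q)\leq\theta_0$ for a family of small cells $Q$ sitting essentially on $H$. There is no contradiction there: the density assumptions (3), (4) in the Main Lemma are only upper bounds, and the LD condition is a lower-scale statement that the top-scale flatness cannot rule out. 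Consequently the subsequent jump-relation step has no foothold: your test points $x_2\in Q$ at distance $\gtrsim\ell(Q)$ from $H$ are only a tiny distance from $H$, and at that scale the self-contribution of the $\LD$ cells (which carry almost all the mass) to $\bar T\mu(x_2)$ is precisely what is uncontrolled. You also attribute a jump-relation argument to Girela-Sarri\'on--Tolsa, but this is not what they do; their proof, like the present paper's, is a contradiction argument built on periodization, smoothing, a variational pointwise inequality, a maximum principle on an infinite strip, and an integration-by-parts identity rather than any boundary jump formula.

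What the paper actually does (Sections 8--11): assume \eqref{contrad_LD_hyp}, pass to a finite family of stopping cells, discard the $\Bad$ cells near $\partial Q_0$, and modify $\mu$ to $\mu_0$; periodize $\mu_0$ along $H$ to match the period of $\bar A$ and smooth it to an absolutely continuous measure $\eta$; prove localization estimates for $\bar T\eta$ using the homogenization estimate \eqref{approx_hom_par} (this replaces the missing global Calder\'on--Zygmund control); minimize the energy $J(a)=\lambda\|a\|_\infty\eta(Q_0)+\int_{Q_0}|\bar T(a\eta)|^2d\eta$ over periodic densities to get a function $b$ and measure $\nu=b\eta$ satisfying the pointwise bound \eqref{pw_ineq_ae}; extend this bound to a strip $S$ via the $L_{\bar A}$-harmonicity of $\bar T^*(\vec g\,\mathcal L^{n+1})$ off $\supp\nu$, a maximum principle on $S$ (Lemma \ref{max_princ_strip}) and the auxiliary function $f_S$; finally, set $\psi=\bar A^T\nabla\varphi$ for a bump $\varphi\geq\chi_{Q_0}$ and use the distributional identity $\bar T^*(\psi\,\mathcal L^{n+1})=\varphi$ to obtain $\nu(Q_0)\lesssim\big(\texttt{Err}+\lambda^{1/2}\big)^{1/2}\nu(Q_0)$, a contradiction once $K_S, C_S, \lambda,\ell(Q_0)$ are chosen in the right order. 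None of these ingredients appear in your proposal; in particular, the variational argument and the maximum principle are the precise substitutes for the jump-relation intuition you are invoking informally, and they cannot be dispensed with.
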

To prove the main Lemma \ref{main_lemma} using the results in the Key Lemma, it suffices to refer to the construction in \cite[Section 10]{GT}, which relies on a subtle covering argument together with the connection between uniform rectifiability and the Riesz transform, and invoke \cite[Theorem 1.1 and Theorem 1.2]{PPT} in place of the results of Nazarov, Tolsa and Volberg. So, the rest of the present article (a part from the last section) is devoted to the proof of Lemma \ref{key_lemma}.\\

We argue by contradiction: assume that \eqref{abundance_LD} does not hold, that is to say
\begin{equation}\label{contrad_LD_hyp}
\mu\Bigg(\bigcup_{Q\in\LD}Q\Bigg)>(1-\epsilon_0) \mu(Q_0).
\end{equation}
More specifically, we want to show that a choice of $\epsilon_0$ small enough leads to an absurd. The proof is based on a stopping time argument. Roughly speaking, for $Q\in\LD$, we say that a cell $R$ belongs to its associated stopping family if it is a descendant of $Q$ (i.e. $R\subset Q$) and it is sufficiently small. The definition of stopping cells depends on a parameter $t$, which has to be thought  small and that will be appropriately chosen later.
\begin{defin}[Stopping cells]\label{def_stop_cells}
Let $Q\in\LD.$ Let $0<t<1.$ We say that $R\in\Stop(Q)$ if the two following conditions are verified and it has maximal side length
\begin{itemize}
\item $R\in\D^{\db}_\mu$, $R\subset Q$.
\item $\ell(R)\leq t \ell(Q)$.
\end{itemize}
We also denote $\Stop\coloneqq  \bigcup_{Q\in\LD}\Stop(Q)$ the family of all the stopping cells.
\end{defin}
Assuming that the stopping cells in $\Stop(Q)$ are doubling makes sense in light of the fact that doubling cells cover $Q$ up to a set of $\mu$-measure zero. In particular, this implies that \eqref{contrad_LD_hyp} is equivalent to
\begin{equation}
\mu\Bigg(\bigcup_{Q\in\Stop}Q\Bigg)>(1-\epsilon_0) \mu(Q_0).
\end{equation}
The proof of the Key Lemma \ref{key_lemma} involves a periodization of the measure $\mu,$ which is essentially carried out by replicating $\mu|Q_0$ on the horizontal plan according to the periodicity of the matrix $\bar A$. 

The cells close to the boundary of $Q_0$ may give problems, so that our first temptation would be to try not to incorporate them into the contruction. This is possible just in the case their contribution to the measure of $Q_0$ is negligible. So, we say that $P\in\Bad$ if $P\in\Stop$ and $1.1B_P\cap \partial Q_0\neq\emptyset.$

Another technical problem is that $\Stop$ may contain infinitely many cells. This second difficulty can be easily overcome considering a finite family of cells, named $\Stop_0,$ which contains a big portion of the measure of $\Stop$, e.g.
\begin{equation}\label{def_stop_0}
\mu\Bigg(\bigcup_{Q\in\Stop_0}Q\Bigg)>(1-2\epsilon_0) \mu(Q_0).
\end{equation}
The rest of the section is devoted to a justification of the last affirmations concerning $\Bad$ and the first modification of the measure $\mu$. It is essentially a rewriting of \cite[Lemma 6.2, Lemma 6.3, Lemma 6.4]{GT} in our context, in which we highlight the right homogeneities coming from our elliptic setting.

The following lemma contains an estimate of the density $P_{\mu,\tilde\alpha}$ of the stopping cells in terms of the low density parameter $\theta_0$.
\begin{lemm}\label{alpha_dens}
Let $Q\in\Stop$ and let $t=\theta_0^{1/(n+\tilde\alpha)}$. We have
\begin{equation}
\Theta_{\mu}(2B_Q)\leq P_{\mu,{\tilde\alpha} }(2B_Q)\lesssim \theta_0^{\frac{\tilde\alpha}{n+\tilde\alpha}}.
\end{equation}
\end{lemm}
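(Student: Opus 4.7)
The inequality $\Theta_\mu(2B_Q)\le P_{\mu,\tilde\alpha}(2B_Q)$ is trivial, as $\Theta_\mu(2B_Q)$ is precisely the $j=0$ term of the series defining $P_{\mu,\tilde\alpha}(2B_Q)$. So the content is the second bound. My plan is the standard two-regime split of the tail of $P_{\mu,\tilde\alpha}$: below the scale of the parent low-density cell we exploit the smallness $\Theta_\mu(3.5 B_R)\le\theta_0$, while above it we convert the remaining tail into $P_{\mu,\tilde\alpha}$ at the scale of $R$, which the hypothesis $P_{\mu,\tilde\alpha}(MQ_0)\le C_0$ controls. Let $R\in\LD$ be the low density cell with $Q\in\Stop(R)$, and set $\sigma:=r(B_Q)$, $\rho:=r(B_R)$. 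From the David--Mattila structure together with the maximality in the definition of $\Stop(R)$, one has $\sigma\approx t\rho$ up to constants depending on $A_0,K_0$ (the case where a chain of non-doubling ancestors forces $\sigma\ll t\rho$ is handled separately by the decay estimate \eqref{dec_measure_chain_non_doubling}, whose strength $A_0^{-10n}$ per level dominates the extra $(\rho/\sigma)^n$ growth and actually gives a better bound).

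Choose the threshold $j_0$ by $2^{j_0}\sigma\le\rho<2^{j_0+1}\sigma$, so that $j_0\approx\log_2(1/t)$. For $0\le j\le j_0$, since $z_Q\in Q\subset R\subset B_R$ and $2^{j+1}\sigma\le 2\rho$, the ball $2^{j+1}B_Q$ is contained in $3.5B_R$, hence
\[
\Theta_\mu(2^{j+1}B_Q)\le\frac{\mu(3.5B_R)}{(2^{j+1}\sigma)^n}\le\theta_0\,\Big(\frac{3.5\rho}{2^{j+1}\sigma}\Big)^n.
\]
Summing the resulting geometric series gives
\[
S_1:=\sum_{j=0}^{j_0}2^{-j\tilde\alpha}\Theta_\mu(2^{j+1}B_Q)\lesssim\theta_0\Big(\frac{\rho}{\sigma}\Big)^n\lesssim\frac{\theta_0}{t^n}=\theta_0^{\tilde\alpha/(n+\tilde\alpha)},
\]
where the last equality is precisely the reason for the exponent $t=\theta_0^{1/(n+\tilde\alpha)}$.

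For $j>j_0$ write $i:=j-j_0\ge 1$, so that $2^{j+1}\sigma\approx 2^i\rho$. Since $|z_Q-z_R|\le\rho$, the ball $2^{j+1}B_Q$ is contained in $C\cdot 2^i B_R$ for some absolute constant $C$, which yields $\Theta_\mu(2^{j+1}B_Q)\lesssim\Theta_\mu(C\cdot 2^i B_R)$. A change of index then gives
\[
S_2:=\sum_{j>j_0}2^{-j\tilde\alpha}\Theta_\mu(2^{j+1}B_Q)\lesssim 2^{-j_0\tilde\alpha}\sum_{i\ge 1}2^{-i\tilde\alpha}\Theta_\mu(C\cdot 2^i B_R)\lesssim t^{\tilde\alpha}\,P_{\mu,\tilde\alpha}(CB_R).
\]
It remains to check $P_{\mu,\tilde\alpha}(CB_R)\lesssim 1$, which I do by splitting its defining series in three regimes and using (i) the growth bound $\Theta_\mu(B(x,r))\le C_0$ for $x\in 2Q_0$, $r\le\ell(Q_0)$ at small scales, (ii) the bound $\Theta_\mu(c MQ_0)\lesssim C_0$ coming from $P_{\mu,\tilde\alpha}(MQ_0)\le C_0$ at intermediate scales, and (iii) the same hypothesis $P_{\mu,\tilde\alpha}(MQ_0)\le C_0$ at large scales via the same scale-shift trick used to pass from $2B_Q$ to $CB_R$. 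Combining yields $S_2\lesssim t^{\tilde\alpha}=\theta_0^{\tilde\alpha/(n+\tilde\alpha)}$, and adding to $S_1$ finishes the proof.

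\textbf{Main obstacle.} The conceptual step is clean; the delicate bookkeeping is (a) verifying that $\sigma\approx t\rho$ even in the presence of non-doubling intermediate ancestors, for which one needs the $A_0^{-10n}$ per-level decay along chains of non-doubling cells in the David--Mattila lattice, and (b) the auxiliary bound $P_{\mu,\tilde\alpha}(CB_R)\lesssim 1$, whose proof is a smaller replica of the very estimate we are trying to establish but for the comparatively harmless ratio $r(CB_R)/\ell(MQ_0)$ rather than $\sigma/\rho$. Everything else is geometric series and radius bookkeeping, producing only constants depending on $C_0$, $K_0$, $A_0$, and $M$.
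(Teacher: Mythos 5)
Your proof is correct and follows essentially the same route as the paper: both split $P_{\mu,\tilde\alpha}(2B_Q)$ at the scale of $R$, use the low density $\Theta_\mu(3.5B_R)\le\theta_0$ for the finer scales (with the non-doubling chain decay \eqref{dec_measure_chain_non_doubling} controlling the case where $Q$ is far below scale $t\ell(R)$), and transfer the coarser tail to $P_{\mu,\tilde\alpha}(MQ_0)\le C_0$; the paper just makes the non-doubling-chain regime explicit by introducing the maximal cell $R'$ with $\ell(R')\le t\ell(R)$, yielding a four-piece decomposition where you use two. Both treatments are, in fact, sketches that defer the bookkeeping (the paper to \cite{GT}), so your level of detail is comparable to the paper's.
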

\begin{proof}
The first inequality is an immediate consequence of the definition of $P_{\mu,\tilde\alpha}$.  To prove the second inequality, we consider the maximal cell $R'\in\D_\mu$ such that $Q\subset R'\subset R$ and $\ell(R')\leq t \ell(R)$ and write
\begin{equation}\label{lemm_poisson_density_stopping}
\begin{split}
P_{\mu,{\tilde\alpha} }(2B_Q)\lesssim & \sum_{P\in\D_\sigma:Q\subset P\subset R'}\Theta_\mu(2B_P)\Big(\frac{\ell(Q)}{\ell(P)}\Big)^{\tilde\alpha} +\sum_{P\in\D_\sigma:R'\subset P\subset R}\Theta_\mu(2B_P)\Big(\frac{\ell(Q)}{\ell(P)}\Big)^{\tilde\alpha} \\
&+\sum_{P\in\D_\sigma:R\subset P\subset Q_0}\Theta_\mu(2B_P)\Big(\frac{\ell(Q)}{\ell(P)}\Big)^{\tilde\alpha} 
+\sum_{k\geq 1}2^{-k{\tilde\alpha} }\Theta_\mu(2^kB_P) \\
= &I + II + III + IV.
\end{split}
\end{equation}
Then, the estimates work as in the case of the Riesz transform. In particular, the same arguments prove
\begin{equation}
I + II\lesssim \frac{\theta_0}{t^n}
\end{equation}
and
\begin{equation}
III + IV\lesssim t^{\tilde\alpha} ,
\end{equation}
which justifies the choice of $t$ in the statement of the lemma.
\end{proof}
For the rest of the paper we assume $t =\theta_0^{1/(n+\tilde\alpha)}$ in Definition \ref{def_stop_cells}.

Using the estimates in Lemma \ref{alpha_dens}, one can prove (see \cite[Lemma 6.3]{GT}) that
\begin{equation}\label{measure_bad_cubes}
\mu\bigg(\bigcup_{\Bad} Q\bigg) \lesssim \theta_0^{\frac{\tilde\alpha}{n+\tilde\alpha}}\mu(Q_0).
\end{equation}
\vspace{2mm}
\noindent{\bf First modification of the measure.}
As already mentioned, for technical purposes it is useful to modify the measure inside $Q_0$ by taking just finitely many stopping cells and getting rid of the cells in $\Bad$. To make the previous statement rigorous, we choose a small parameter $0<\kappa_0\ll 1$ to be fixed later and, after denoting
\begin{equation}
I_{\kappa_0}(Q)\coloneqq  \{x\in Q: \dist(x,\supp\sigma\setminus Q)\geq\kappa_0 \ell(Q)\},
\end{equation}
we define the modified measure
\begin{equation}
\mu_0\coloneqq  \mu|_{Q^c_0}+\sum_{Q\in\Stop_0\setminus \Bad}\mu|_{I_{\kappa_0}(Q)}.
\end{equation}
Using \eqref{def_stop_0} and \eqref{measure_bad_cubes}, it is not difficult to prove that $\mu_0$ differs from  $\mu$, in the sense of the total mass, possibly by a very small quantity. Indeed,
\begin{equation}\label{drop_measure}
\|\mu-\mu_0\|\leq \Big(2\epsilon_0 + C\theta^{\tilde\alpha/(n+\tilde\alpha)}_0+\kappa_0^{1/2}\Big)\mu(Q_0).
\end{equation}
For this modification to be valid to our purposes, we need the gradient of the single layer potential associated with this measure to satisfy a localization estimate analogous to \eqref{localization_BMO}.
This is easily proved by gathering the $L^2(\mu|_{Q_0})$-boundedness of $\bar T_{\mu|_{Q_0}}$, the estimate \eqref{drop_measure} and the localization estimate \eqref{localization_BMO} for $\mu$ (see \cite[Lemma 6.4]{GT}).
\begin{lemm} \label{localization_lemma_first_modification}If $\delta$ is chosen small enough (depending on $M$), then
\begin{equation}
\int_{Q_0}|\bar T(\chi_{MQ_0}\mu_0)|^2d\mu_0\lesssim \Big(\epsilon+\frac{1}{M^{2\tilde\alpha}}+M^{4n+2}\delta^{1/(4n+4)}+(M\ell(Q_0))^{2\tilde\alpha}+\epsilon_0+\theta_0^{\tilde\alpha/(n+ \tilde\alpha)}+\kappa_0^{1/2}\Big)\mu(Q_0).
\end{equation}
\end{lemm}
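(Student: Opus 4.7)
The plan is to reduce the estimate for $\mu_0$ to the corresponding estimate for $\mu$ (already proved in Lemma \ref{localization_BMO_lemma}) by exploiting the fact that $\mu_0\leq \mu$ on $2Q_0$ and that the signed difference $\mu-\mu_0$ is tiny in total mass by \eqref{drop_measure}. Since $\mu-\mu_0$ is a positive measure supported in $Q_0\subset MQ_0$, I write
\begin{equation*}
\bar T(\chi_{MQ_0}\mu_0)(x) = \bar T(\chi_{MQ_0}\mu)(x) - \bar T(\mu-\mu_0)(x),
\end{equation*}
and apply $(a-b)^2\leq 2a^2+2b^2$ together with the inclusion $\mu_0\leq \mu$ on $Q_0$ to obtain
\begin{equation*}
\int_{Q_0}|\bar T(\chi_{MQ_0}\mu_0)|^2 d\mu_0
\leq 2\int_{Q_0}|\bar T(\chi_{MQ_0}\mu)|^2 d\mu + 2\int_{Q_0}|\bar T(\mu-\mu_0)|^2 d\mu_0.
\end{equation*}

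For the first summand, Lemma \ref{localization_BMO_lemma} (with $\delta$ small enough relative to $M$) furnishes the bound
\begin{equation*}
\int_{Q_0}|\bar T(\chi_{MQ_0}\mu)|^2 d\mu \lesssim \Big(\epsilon+ \tfrac{1}{M^{2\tilde\alpha}}+ M^{4n+2}\delta^{1/(4n+4)}+(M\ell(Q_0))^{2\tilde\alpha}\Big)\mu(Q_0),
\end{equation*}
which accounts for the first four terms appearing on the right-hand side of the claim.

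For the second summand, I write $\mu-\mu_0=g\,\mu|_{2Q_0}$ with $0\leq g\leq 1$, so that $\bar T(\mu-\mu_0)=\bar T_{\mu|_{2Q_0}}(g)$. By Lemma \ref{lemm_modification}, the operator $\bar T_{\mu|_{2Q_0}}$ is bounded on $L^2(\mu|_{2Q_0})$ with norm at most $C_1 + O(\ell(Q_0)^{\tilde\alpha})$. Using once more $\mu_0\leq \mu$ on $Q_0$ and $g^2\leq g$,
\begin{equation*}
\int_{Q_0}|\bar T(\mu-\mu_0)|^2 d\mu_0
\leq \int_{2Q_0}|\bar T_{\mu|_{2Q_0}}(g)|^2\, d\mu
\lesssim \int g^2 d\mu \leq \|\mu-\mu_0\|.
\end{equation*}
The mass estimate \eqref{drop_measure} then bounds this by $\big(\epsilon_0+\theta_0^{\tilde\alpha/(n+\tilde\alpha)}+\kappa_0^{1/2}\big)\mu(Q_0)$ (up to constants), producing the remaining three terms in the statement. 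Summing the two contributions yields the lemma.

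The argument is essentially routine: the only (minor) subtlety is the passage from the $L^2$-boundedness of $T_{\mu|_{2Q_0}}$ in the assumption of the Main Lemma to the analogous statement for $\bar T_{\mu|_{2Q_0}}$, which is precisely what Lemma \ref{lemm_modification} provides at the cost of the harmless error $O(\ell(Q_0)^{\tilde\alpha})$ already absorbed in the implicit constant.
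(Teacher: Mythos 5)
Your proof is correct and matches the approach the paper intends: the paper itself omits the argument and simply says to combine the $L^2$-boundedness of $\bar T_{\mu|_{2Q_0}}$, the mass estimate \eqref{drop_measure}, and the localization \eqref{localization_BMO}, referring to \cite[Lemma 6.4]{GT} --- which is precisely the decomposition $\bar T(\chi_{MQ_0}\mu_0)=\bar T(\chi_{MQ_0}\mu)-\bar T(\mu-\mu_0)$ you use. (One can even note that in your bound for the second term $g$ is a characteristic function, so $g^2=g$ exactly.)
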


\section{Periodization and smoothing of the measure	}
\vspace{2mm}
\noindent{\bf The periodization.}
{We want to get rid of the truncation at the level of $M\ell(Q_0)$ present in Lemma \ref{localization_lemma_first_modification}. This can be done replicating the measure periodically by means of horizontal translations. The localization of the gradient of the single layer potential associated with this auxiliary measure will eventually make us able to implement a variational argument in Section \ref{section_var_arg}.}

We denote by 
\begin{equation}
\M\coloneqq   \big\{Q_0+z_P:z_P\in 6\ell(Q_0)\mathbb{Z}^n\times \{0\}\big\}
\end{equation}
the family of disjoint cubes covering $H$ and obtained translating $Q_0$ along the coordinate (horizontal) axes. The factor 6 is chosen in order for this periodization to be coherent with the period of the matrix $\bar A$. Given $P\in\M$ we denote by $z_P$ its center and by $T_P\colon\Rn1\to\Rn1$ the translation
\begin{equation}
T_P(x)\coloneqq   x+z_P,
\end{equation}
so that the periodization of the measure reads
\begin{equation}
\widetilde{\mu}\coloneqq  \sum_{P\in\M}T_P\sharp \mu_0|_{Q_0}.
\end{equation}
{Observe that $\mu_0(\partial Q_0)=0,$ which implies $\chi_{Q_0}\widetilde{\mu}=\mu_0.$}

As for the first modification of the measure, we have to prove the equivalent of the localization Lemma \ref{localization_lemma_first_modification}.
This can be done as for the Riesz transform (see \cite[Lemma 7.2]{GT}), using that $\widetilde{\mu}$ is very flat at the level of $3MQ_0$.
\begin{lemm}
Let $\kappa_0,\theta_0$ and $\epsilon_0$ be as in Section \ref{section_key_lemma} and $\delta$ as in the Main Lemma.
Letting
\begin{equation}
\tilde{\delta}\coloneqq   M^{n+1}\Big(\epsilon_0+\theta_0^{\tilde\alpha/(n+\tilde \alpha)}+\kappa_0^{1/2}+\delta^{1/2}\Big),
\end{equation}
we have
\begin{equation}\label{estim_alpha_and_second_localization}
\alpha^H_{\widetilde{\mu}}(3MQ_0)\lesssim \tilde{\delta}.
\end{equation}
Moreover, for
\begin{equation}
\tilde{\epsilon}\coloneqq   \epsilon+\frac{1}{M^{2\tilde\alpha}}+M^{4n+2}\delta^{1/(4n+4)}+\epsilon_0+\theta_0^{\tilde\alpha/(n+ \tilde\alpha)}+\kappa_0^{1/2}+ M^{2n+2}\tilde{\delta}^{1/(4n+5)}+ {(M\ell(Q_0))^{2\tilde\alpha}}
\end{equation}
we have
\begin{equation}
\int_{Q_0}\big|\bar T(\chi_{MQ_0}\widetilde{\mu})\big|^2d\widetilde{\mu}\lesssim \tilde{\epsilon}\widetilde{\mu}(Q_0).
\end{equation}
\end{lemm}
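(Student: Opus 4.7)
\medskip

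\textbf{Plan.} The proof splits into the two displayed estimates, both exploiting that $\widetilde\mu$ is obtained by horizontally translating $\mu_0|_{Q_0}$ along the lattice $\mathcal M$, together with the closeness of $\mu_0$ to $\mu$ coming from \eqref{drop_measure} and the $\alpha$-number assumption (7) of Lemma \ref{main_lemma}.

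For the first bound, I would fix a Lipschitz test function $f$ with $\|f\|_{\Lip}\le 1$ and $\supp f \subset 3MQ_0$, and a constant $c$ realizing (approximately) the infimum in $\alpha^H_\mu(3MQ_0)$. Writing $\widetilde\mu = \sum_{P\in\mathcal M} T_P\sharp(\mu_0|_{Q_0})$ and using the horizontal translation invariance of $\mathcal H^n|_H$, the integral $\int f\,d(\widetilde\mu - c\mathcal H^n|_H)$ decomposes as $\sum_P \int f_P\, d(\mu_0|_{Q_0} - c\mathcal H^n|_{H\cap Q_0})$ with $f_P(x) = f(x+z_P)$ a Lip-$1$ function supported in $Q_0$, and the number of nonzero summands is $\lesssim M^n$. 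Each summand is split as $\int f_P\, d(\mu - c\mathcal H^n|_H) + \int f_P\, d(\mu_0-\mu)$: for the first piece I would extend $f_P$ from $Q_0$ to a Lip-$O(1)$ function supported in $3MQ_0$ (using a smooth cutoff; the cutoff adds negligible Lipschitz constant since $\ell(Q_0)/\ell(3MQ_0) \lesssim 1/M$) and apply hypothesis (7); for the second piece I would use that $|f_P|\lesssim \ell(Q_0)$ together with \eqref{drop_measure}. Summing over $P$, normalizing by $\ell(3MQ_0)^{n+1}$, and absorbing the resulting powers of $M$ and $\delta$ (using $\delta\le \delta^{1/2}$ when $\delta$ is small) yields \eqref{estim_alpha_and_second_localization}.

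For the localization estimate on $\bar T(\chi_{MQ_0}\widetilde\mu)$, the idea is to compare it to $\bar T(\chi_{MQ_0}\mu_0)$, to which Lemma \ref{localization_lemma_first_modification} already applies. I would write
\begin{equation*}
\chi_{MQ_0}\widetilde\mu - \chi_{MQ_0}\mu_0 = \sum_{P\in\mathcal M,\,P\neq Q_0,\,P\cap MQ_0\neq\emptyset} \chi_{MQ_0}T_P\sharp(\mu_0|_{Q_0}) \;-\; \chi_{MQ_0\setminus Q_0}\mu,
\end{equation*}
which is a signed measure supported in $MQ_0\setminus Q_0$. Both terms are close to the same multiple of $\mathcal H^n|_H$ at scale $3MQ_0$: the first by construction and the flatness proved in part one, the second by hypothesis (7) of Lemma \ref{main_lemma}. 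Hence this difference measure is ``$\tilde\delta$-flat,'' and the task reduces to bounding $\bar T$ of two ``small'' signed measures, evaluated on $Q_0$.

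The main obstacle is that $\bar K(x,\cdot)$ is not Lipschitz, so the $\alpha$-number cannot be directly plugged in. I would resolve this by a dyadic partition of $MQ_0\setminus Q_0$ into cubes $R$ of varying sizes controlled from below by their distance to $Q_0$, approximating $\bar K(x,\cdot)$ on each $R$ by a smooth, suitably normalized kernel and controlling the error with the Calderón--Zygmund regularity of $\bar K$ from Lemma \ref{CZ_l}. The contribution of the ``flat'' part (i.e.\ the integral of $\bar K$ against $c\mathcal H^n|_H$ pieces) is handled using the reflection symmetry \eqref{wdabar} and an argument analogous to \eqref{estim_planes}. Optimizing the scale at which one trades $\alpha$-regularity of $\bar K$ against flatness of the measure produces the exponent $1/(4n+5)$ and the factor $M^{2n+2}$ in the final bound. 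Integrating the pointwise estimate on $Q_0$ against $\widetilde\mu$ and combining with Lemma \ref{localization_lemma_first_modification} applied to $\bar T(\chi_{MQ_0}\mu_0)$ then yields the stated inequality, with the extra term $M^{2n+2}\tilde\delta^{1/(4n+5)}$ absorbed into the definition of $\tilde\epsilon$.
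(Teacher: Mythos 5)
Your proposal tracks the route the paper indicates (periodization in the spirit of \cite[Lemma~7.2]{GT}, then comparison against Lemma~\ref{localization_lemma_first_modification}), and you identify the right ingredients: the translation structure of $\widetilde\mu$, the smallness of $\|\mu-\mu_0\|$ from \eqref{drop_measure}, the $\alpha$-number hypothesis, and a Whitney/smoothing device to handle the non-Lipschitz kernel in the second estimate. But the opening step of your first argument contains a genuine gap. You write
\begin{equation}
\int f\,d(\widetilde\mu - c\mathcal H^n|_H)=\sum_P\int f_P\,d\big(\mu_0|_{Q_0}-c\mathcal H^n|_{H\cap Q_0}\big),
\end{equation}
and the $\mathcal H^n|_H$ half of this identity requires $\sum_P T_P\sharp\big(\mathcal H^n|_{H\cap Q_0}\big)=\mathcal H^n|_H$, i.e.\ that the translates $Q_0+z_P$, $z_P\in 6\ell(Q_0)\mathbb Z^n\times\{0\}$, tile $H$. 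They do not: the cubes have side $\ell(Q_0)$ but the centers are spaced $6\ell(Q_0)$ apart, so they are pairwise disjoint with gaps of width $5\ell(Q_0)$. This is precisely where the elliptic setting diverges from \cite{GT}, where one can periodize with period $\ell(Q_0)$; here the period is forced to be $6\ell(Q_0)$ to match $\bar A$. As written your decomposition is invalid, and the sparseness is not cosmetic: a $1$-Lipschitz test function concentrated near the translates and oscillating over the gaps suggests a lower bound of order $1/M$ for $\alpha^H_{\widetilde\mu}(3MQ_0)$ against \emph{any} planar multiple, so some additional argument is needed to justify comparison with $c\mathcal H^n|_H$ at this scale.

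Two smaller points. When bounding $\int f_P\,d(\mu_0-\mu)$ you use $|f_P|\lesssim\ell(Q_0)$, but a $1$-Lipschitz $f$ supported in $3MQ_0$ only gives $|f_P|\lesssim M\ell(Q_0)$ (this still survives the division by $\ell(3MQ_0)^{n+1}$, but the assertion as stated is wrong). The justification that the cutoff ``adds negligible Lipschitz constant since $\ell(Q_0)/\ell(3MQ_0)\lesssim 1/M$'' confuses scales: the cutoff happens over a window of size $O(\ell(Q_0))$ near $\partial Q_0$, and since $\|f_P\|_\infty$ can be of order $M\ell(Q_0)$ it contributes a Lipschitz constant of order $M$; one should first subtract $f_P(z_{Q_0})$, and the leftover constant term then needs a separate estimate because the two measures do not have equal mass on $Q_0$. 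Finally, the exponent $1/(4n+5)$ and the factor $M^{2n+2}$ in the second estimate are presented as the output of an optimization you do not carry out, so there is no evidence yet that your Whitney-type argument produces precisely these constants rather than some other power.
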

It is not difficult to see that the measure $\widetilde{\mu}$ has polynomial growth:
\begin{equation}
\widetilde{\mu}(B(x,r))\lesssim r^n\quad\text{ for every $x\in\Rn1$ and $r>0.$}
\end{equation}
The following lemma contains a technical estimate for a suitably modified version of the density $P_{\widetilde{\mu},{\tilde\alpha}}(2B_Q)$.
\begin{lemm}\label{lemma_technical_int_dens}For every $Q\in\Stop_0\setminus \Bad$ the inequality
\begin{equation}
\int_{1.1B_Q\setminus Q}\int_Q\frac{1}{|x-y|^n}d\widetilde{\mu}(x)d\widetilde{\mu}(y)\lesssim \theta_0^{\frac{\tilde\alpha}{(n+\tilde\alpha)(1+2n)}}\widetilde{\mu}(Q)
\end{equation}
holds. Moreover, the function
\begin{equation}
p_{\widetilde{\mu},{\tilde\alpha}}(x)\coloneqq  \sum_{Q\in\Stop_0\setminus \Bad}\chi_Q P_{\widetilde{\mu},{\tilde\alpha}}(2B_Q)
\end{equation}
satisfies
\begin{equation}\label{estim_p_tilde_mu}
\int_{Q_0}p^2_{\widetilde{\mu},{\tilde\alpha}}d\widetilde{\mu}\lesssim \theta_0^{\frac{2\tilde\alpha}{(n+\tilde\alpha)(1+2\tilde\alpha)}}\widetilde{\mu}(Q_0).
\end{equation}
\end{lemm}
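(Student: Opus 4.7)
Let $\sigma \coloneqq \theta_0^{\tilde\alpha/(n+\tilde\alpha)}$, the small parameter that controls the densities of stopping cells. The plan is to combine three ingredients already available: the polynomial $n$-growth of $\widetilde{\mu}$ (inherited from $\mu$ through the periodic construction); the density bound $\widetilde{\mu}(1.1B_Q) \leq \mu(1.1B_Q) \lesssim \sigma\, \ell(Q)^n$, which follows from Lemma \ref{alpha_dens} together with the fact that, for $Q \in \Stop_0\setminus \Bad$, one has $1.1B_Q \subset Q_0$ and $\widetilde{\mu} = \mu_0 \leq \mu$ on $1.1B_Q$; and the David--Mattila thin-boundary inequality for the doubling cell $Q$, which gives $\mu(\{y \in Q : \dist(y, W\setminus Q) \leq \lambda\ell(Q)\}) \lesssim \lambda^{1/2}\mu(3.5B_Q)\lesssim \lambda^{1/2}\widetilde{\mu}(Q)$, where $W=\supp\mu$.

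For the first estimate, I would apply Fubini to rewrite the double integral as $\int_Q \big(\int_{1.1B_Q\setminus Q}|x-y|^{-n}d\widetilde{\mu}(x)\big)d\widetilde{\mu}(y)$, fix $y \in Q$, and set $d_y^\ast \coloneqq \dist(y, W \setminus Q)$. Since $\supp\widetilde{\mu} \cap 1.1B_Q \subset W$, every $x\in (1.1B_Q\setminus Q)\cap \supp\widetilde{\mu}$ satisfies $|x-y|\geq d_y^\ast$, so a layer-cake identity gives
\begin{equation*}
\int_{1.1B_Q\setminus Q}\frac{d\widetilde{\mu}(x)}{|x-y|^n} \leq n\int_{d_y^\ast}^{\infty}\widetilde{\mu}\big(B(y,r)\cap 1.1B_Q\big) r^{-n-1}dr.
\end{equation*}
I would then bound the inner measure by $\min(Cr^n, C\sigma\ell(Q)^n)$ with transition at $r^\ast \coloneqq \sigma^{1/n}\ell(Q)$, yielding piecewise
\begin{equation*}
\int_{1.1B_Q\setminus Q}\frac{d\widetilde{\mu}(x)}{|x-y|^n} \lesssim \begin{cases}\sigma\, \ell(Q)^n (d_y^\ast)^{-n} & \text{if } d_y^\ast \geq r^\ast,\\ 1+\log(r^\ast/d_y^\ast) & \text{if } d_y^\ast < r^\ast.\end{cases}
\end{equation*}

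Integrating in $y$ then proceeds by dyadic decomposition. In the first regime I decompose $\{y\in Q: d_y^\ast \in [2^j r^\ast, 2^{j+1}r^\ast]\}$ for $j\geq 0$: the pointwise estimate is $2^{-jn}$ and thin boundary gives $\widetilde\mu(\cdot) \lesssim 2^{j/2}\sigma^{1/(2n)}\widetilde{\mu}(Q)$, so the geometric series $\sum_j 2^{-j(n-1/2)}$ converges for $n\geq 1$ and produces $\lesssim \sigma^{1/(2n)}\widetilde{\mu}(Q)$. The second regime yields the same bound by a layer-cake in $\log$-space combined with the $\lambda^{1/2}$ decay. Since $1/(2n) > 1/(2n+1)$, the resulting exponent $\tilde\alpha/(2n(n+\tilde\alpha))$ on $\theta_0$ is strictly stronger than the exponent $\tilde\alpha/((n+\tilde\alpha)(1+2n))$ in the statement.

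For the second estimate, the disjointness of the cells in $\Stop_0\setminus \Bad$ reduces matters to proving the uniform bound $P_{\widetilde{\mu},\tilde\alpha}(2B_Q) \lesssim \sigma$. To achieve this I would split the series defining $P$ at the scale $2^j\ell(Q)\sim \ell(Q_0)$: for smaller $j$ the ball $2^{j+1}B_Q$ lies in $Q_0$, so $\widetilde{\mu}\leq \mu$ there and the partial sum is controlled by $P_{\mu,\tilde\alpha}(2B_Q) \lesssim \sigma$ via Lemma \ref{alpha_dens}; for larger $j$ the global $n$-growth of $\widetilde{\mu}$ gives $\Theta_{\widetilde{\mu}}\lesssim 1$ and the geometric tail $\sum 2^{-j\tilde\alpha}$ is controlled by $(\ell(Q)/\ell(Q_0))^{\tilde\alpha}\leq t^{\tilde\alpha}=\sigma$, using $t=\theta_0^{1/(n+\tilde\alpha)}$ from Definition \ref{def_stop_cells}. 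Hence $p_{\widetilde\mu,\tilde\alpha}\lesssim\sigma$ on $Q_0$ and
\begin{equation*}
\int_{Q_0}p_{\widetilde{\mu},\tilde\alpha}^2\, d\widetilde{\mu} \lesssim \sigma^2\sum_Q \widetilde{\mu}(Q)\leq \sigma^2\, \widetilde{\mu}(Q_0)=\theta_0^{2\tilde\alpha/(n+\tilde\alpha)}\,\widetilde{\mu}(Q_0),
\end{equation*}
which dominates the claimed bound since $1>1/(1+2\tilde\alpha)$. The main obstacle is the balancing argument above: combining the two regimes at the critical scale $r^\ast$ with the $\lambda^{1/2}$ thin-boundary decay is what produces an honest power of $\sigma$ instead of only a logarithmically bounded constant; the second estimate is essentially routine once the uniform $P_{\widetilde{\mu},\tilde\alpha}$-bound has been reduced to the corresponding estimate for $\mu$ at small scales.
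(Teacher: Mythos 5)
Your proof is correct, and it takes a route that is genuinely different from what the paper sketches. The paper's ``Remark on the proof'' follows \cite[Lemma~7.4]{GT}, running a boundary-layer decomposition at scale $\bar\kappa\ell(Q_0)$ and optimizing $\bar\kappa$ at the end; that device is forced in GT because their periodization translates $Q_0$ by $\ell(Q_0)\mathbb{Z}^n\times\{0\}$, so a ball $2^jB_Q$ centered near $\partial Q_0$ can collect mass from several adjacent copies and the density estimate for $\mu$ is lost at intermediate scales. Here, however, the translates are by $6\ell(Q_0)\mathbb{Z}^n\times\{0\}$ to match the period of $\bar A$, so $\supp\widetilde\mu$ has gaps of width $5\ell(Q_0)$ between consecutive copies. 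This is exactly what your argument exploits: for $Q\in\Stop_0\setminus\Bad$ (so $z_Q\in Q_0$) and any $j$ with $2^{j+1}r(B_Q)\lesssim\ell(Q_0)$, the ball $2^{j+1}B_Q$ does not reach another copy, hence $\widetilde\mu(2^{j+1}B_Q)=\mu_0(2^{j+1}B_Q\cap Q_0)\leq\mu(2^{j+1}B_Q)$ even though the ball may stick out of $Q_0$ (your phrasing ``the ball lies in $Q_0$'' should be replaced by this weaker but sufficient statement). Combined with $\ell(Q)\lesssim t\ell(Q_0)$ (which holds because the $\LD$ cell above $Q$ is contained in $Q_0$, a fact already used implicitly in the proof of Lemma~\ref{alpha_dens}) and the global $n$-growth of $\widetilde\mu$ for the tail, this yields the uniform bound $P_{\widetilde\mu,\tilde\alpha}(2B_Q)\lesssim\theta_0^{\tilde\alpha/(n+\tilde\alpha)}$, and the second estimate then follows with exponent $2\tilde\alpha/(n+\tilde\alpha)$, which is stronger than the one stated. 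In short, the gap in the periodization makes the $\bar\kappa$-optimization unnecessary, which is a real simplification worth noting. Your layer-cake treatment of the first inequality via the thin-boundary property is also fine and likewise gives a slightly better exponent $\tilde\alpha/(2n(n+\tilde\alpha))$; two small points to record are that for $j$ with $2^{j+1}\sigma^{1/n}>1$ one should use the trivial bound $\widetilde\mu(Q)$ instead of the thin-boundary estimate (the resulting geometric sum still converges), and that $\widetilde\mu(Q)\approx\mu(Q)$ for $Q\in\Stop_0\setminus\Bad$ (thin boundary at scale $\kappa_0$), which is needed to pass from $\mu(3.5B_Q)$ to $\widetilde\mu(Q)$.
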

\begin{proof}[Remark on the proof.] 
In order to prove \eqref{estim_p_tilde_mu} it suffices to follow the path of \cite[Lemma 7.4]{GT} taking into consideration the right homogeneity given by $\tilde \alpha$, which leads to
\begin{equation}\label{ineq_lemma_density_proof}
\int_{Q_0}p^2_{\widetilde{\mu},{\tilde\alpha}}d\widetilde{\mu}\lesssim\Big({\bar\kappa} + \frac{\theta_0^{\frac{{2\tilde\alpha}}{(n+\tilde\alpha)}}}{{\bar\kappa}^{{2\tilde\alpha}}}+\theta_0^{\frac{\tilde\alpha}{n+\tilde\alpha}}\Big)\widetilde{\mu}(Q_0),
\end{equation}
where $0<\bar\kappa<1$ is a small constant. Inequality \eqref{ineq_lemma_density_proof} gives the desired estimate after making the choice $\bar\kappa=\theta_0^{{{2\tilde\alpha}}/[(n+\tilde\alpha)(1+2\tilde\alpha)]}.$
\end{proof}
\vspace{2mm}
\noindent{\bf The smoothing.}
A priori, the measure $\mu_0$ may not be absolutely continuous with respect to the Lebesgue measure on $\Rn1$. This would constitute a problem when trying to implement the variational techniques. For this reason, it it useful to consider the following further modification of the measure
\begin{equation}
\eta_0\coloneqq  \sum_{Q\in\Stop_0\setminus\Bad}\frac{\mu_0(Q)}{\mathcal H^{n+1}\big(\frac{1}{4}B(Q)\big)}\mathcal H^{n+1}|_{\frac{1}{4}B(Q)}
\end{equation}
and its periodization
\begin{equation}
\eta\coloneqq  \sum_{P\in\M}T_P\sharp\eta_0.
\end{equation}
We remark that, being $\Stop_0$ a finite family, the measures $\eta_0$ and $\eta$ both have bounded density with respect to $\mathcal H^{n+1}$. A specific control on the density is not relevant to the purposes of our proof. The following lemma contains a localization estimate for the potential associated with $\eta$.
\begin{lemm}
Denoting
\begin{equation}
\epsilon'\coloneqq  \widetilde{\epsilon} + \ell(Q_0)^{2\tilde\alpha} + M^n\kappa_0^{-2n-2\tilde\alpha}\theta_0^{\frac{2\tilde\alpha}{(n+\tilde\alpha)(1+2\tilde\alpha)}} + \theta_0^{\frac{2\tilde\alpha}{(n+\tilde\alpha)(1+2n)}},
\end{equation}
we have
\begin{equation}
\int_{Q_0}|\bar T(\chi_{MQ_0}\eta)|^2d\eta\lesssim\epsilon'\eta(Q_0).
\end{equation}
\end{lemm}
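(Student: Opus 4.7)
\medskip

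\noindent\textbf{Proof proposal.}
The plan is to reduce the estimate for $\eta$ to the analogous estimate for $\widetilde{\mu}$ proved in the previous lemma. To this end, I decompose
\begin{equation*}
\bar T(\chi_{MQ_0}\eta)(x) = \bar T(\chi_{MQ_0}\widetilde{\mu})(x) + \bar T\bigl(\chi_{MQ_0}(\eta-\widetilde{\mu})\bigr)(x)
\end{equation*}
and bound $L^2(\eta)$-norms on $Q_0$ of each term separately. For the first summand, since $\eta$ and $\widetilde{\mu}$ agree cell-by-cell on total mass, I would transfer the bound $\int_{Q_0}|\bar T(\chi_{MQ_0}\widetilde{\mu})|^2\,d\widetilde{\mu}\lesssim\widetilde{\epsilon}\,\widetilde{\mu}(Q_0)$ to one with $d\eta$ by comparing $|\bar T(\chi_{MQ_0}\widetilde{\mu})(x)|^2$ at $x\in\frac14 B(Q)$ to its mean over $Q\cap\supp\widetilde{\mu}$; the oscillation inside each $Q\in\Stop_0\setminus\Bad$ is controlled via the H\"older regularity from Lemma~\ref{CZ_l}(ii) plus a local error of the form $P_{\widetilde{\mu},\tilde\alpha}(2B_Q)$ coming from $\chi_{2B_Q}\widetilde{\mu}$, which Lemma~\ref{lemma_technical_int_dens} sums to $\lesssim\theta_0^{2\tilde\alpha/[(n+\tilde\alpha)(1+2\tilde\alpha)]}$ up to a factor $M^n\kappa_0^{-2n-2\tilde\alpha}$ (the $M^n$ accounting for cells in $MQ_0$, the $\kappa_0$-factor for the distance of $x$ to $\partial Q$).

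For the difference $\bar T(\chi_{MQ_0}(\eta-\widetilde{\mu}))(x)$, I split the integral over cells $Q\in\Stop_0\setminus\Bad$ with $Q\subset MQ_0$. For cells $Q$ with $x\notin 2B_Q$, I exploit the mass cancellation $\eta(Q)=\widetilde{\mu}(Q)=\mu_0(Q)$ together with the Calder\'on--Zygmund smoothness (Lemma~\ref{CZ_l}(ii)) applied to $\bar K(x,\cdot)$:
\begin{equation*}
\left|\int_{Q}\bar K(x,y)\,d(\eta-\widetilde{\mu})(y)\right|\lesssim \frac{\ell(Q)^{\tilde\alpha}}{\dist(x,Q)^{n+\tilde\alpha}}\,\mu_0(Q),
\end{equation*}
and the resulting sum telescopes and is controlled by a maximal-function-type expression bounded by $p_{\widetilde\mu,\tilde\alpha}(x)$. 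Integrating the square against $\eta$ and using~\eqref{estim_p_tilde_mu} produces the term $\theta_0^{2\tilde\alpha/[(n+\tilde\alpha)(1+2\tilde\alpha)]}$ (modulated by the $M^n\kappa_0^{-2n-2\tilde\alpha}$ factor).

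For the local cells near $x$, say those $Q$ with $x\in 2B_Q$ (there are only boundedly many, and they all have comparable side length to the one containing $x$), the cancellation is of no help and I bound each piece directly by the polynomial growth of $\eta+\widetilde{\mu}$ and the low-density estimate from Lemma~\ref{alpha_dens}: $\int_{2B_Q}|x-y|^{-n}\,d(\eta+\widetilde\mu)(y)\lesssim \Theta_{\widetilde\mu}(2B_Q)\lesssim\theta_0^{\tilde\alpha/(n+\tilde\alpha)}$; squaring and integrating against $\eta(Q)$ and summing yields the milder contribution $\theta_0^{2\tilde\alpha/[(n+\tilde\alpha)(1+2n)]}$, which corresponds to the final term in $\epsilon'$. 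The summand $\ell(Q_0)^{2\tilde\alpha}$ will appear as the intrinsic error coming from replacing $\nabla_1\mathcal E_{\bar A}(x,y)$ by its frozen counterpart $\nabla_1\Theta(x,y;\bar A(x))$ in the local estimates (via Lemma~\ref{lemm_estim_freezing_periodic}), since this error is of order $\ell^{\tilde\alpha}|x-y|^{\tilde\alpha-n}$ and has to be integrated against balls of radius $\lesssim\ell(Q_0)$.

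The main obstacle will be the control of the local term at the cell containing $x$, since $\eta|_Q$ is uniformly distributed on the small ball $\tfrac14 B(Q)$ while $\widetilde{\mu}|_Q$ may be quite singular; the argument must use both the polynomial growth of both measures and the fact that, for the cells in $\Stop_0\setminus\Bad$, the density is forced to be small by membership of the parent in $\LD$. A secondary technical difficulty is the transfer of the localization estimate from $d\widetilde{\mu}$ to $d\eta$, which requires exploiting the cell-by-cell H\"older continuity of $\bar T(\chi_{MQ_0}\widetilde{\mu})$ on a scale where the kernel has already been modified to be periodic; this is precisely where the globality of the Calder\'on--Zygmund bounds in Lemma~\ref{CZ_l} becomes essential.
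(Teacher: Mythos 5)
Your global decomposition $\bar T(\chi_{MQ_0}\eta) = \bar T(\chi_{MQ_0}\widetilde{\mu}) + \bar T(\chi_{MQ_0}(\eta-\widetilde{\mu}))$ has a genuine problem in how the local cell is handled, and you essentially flag it yourself at the end without resolving it. The bound you invoke for the near part, $\int_{2B_Q}|x-y|^{-n}\,d(\eta+\widetilde\mu)(y)\lesssim \Theta_{\widetilde\mu}(2B_Q)\lesssim\theta_0^{\tilde\alpha/(n+\tilde\alpha)}$, is false for the $\widetilde{\mu}$ part when $x\in\frac14 B(Q)$: $\frac14 B(Q)$ is not separated from $\supp\widetilde{\mu}\cap Q=\supp(\mu|_{I_{\kappa_0}(Q)})$, and for $x$ at distance $d\ll\ell(Q)$ from that support the integral is as large as $O(\log(\ell(Q)/d))$ times the \emph{ambient} density bound $C_0$ from hypothesis (4) of the Main Lemma — the low-density estimate of Lemma \ref{alpha_dens} only controls $\Theta_{\widetilde{\mu}}(2B_Q)$, not the density at small scales inside $2B_Q$. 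This is precisely why the Girela-Sarri\'on--Tolsa scheme (to which the paper defers) never evaluates $\bar T\widetilde{\mu}$ at Lebesgue-generic points of $\frac14 B(Q)$: one first strips off $\bar T(\chi_{2B_Q}\eta)(x)$, which genuinely is $\lesssim\Theta_\eta(2B_Q)\lesssim\theta_0^{\tilde\alpha/(n+\tilde\alpha)}$ because $\eta|_{\frac14 B(Q)}$ has bounded density, and then compares the remaining tail $\bar T(\chi_{MQ_0\setminus 2B_Q}\eta)(x)$ with $\bar T(\chi_{MQ_0\setminus 2B_Q}\widetilde{\mu})(z)$ for $z\in Q\cap\supp\widetilde{\mu}$, using the cell-by-cell mass cancellation $\eta(Q')=\widetilde{\mu}(Q')$. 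Potentials of $\widetilde{\mu}$ are only ever evaluated on $\supp\widetilde{\mu}$, as principal values, where they are controlled via the $L^2(\widetilde{\mu})$ localization, Lemma \ref{lemma_technical_int_dens}, and the self-cell mean.

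Your identification of the $\ell(Q_0)^{2\tilde\alpha}$ term as a freezing error is right in spirit, but the paper is more precise about where it enters: through the nonvanishing of the self-cell mean $m_{\widetilde{\mu},Q}(\bar T_{\widetilde{\mu}}\chi_Q)$, which would be zero for an antisymmetric kernel; comparing $\bar K$ with the antisymmetric frozen kernel $\nabla_1\Theta(\cdot,\cdot;\bar A(x))$ as in \eqref{mean_error_loc_cube} gives this mean is $\lesssim\ell(Q)^{\tilde\alpha}\lesssim\ell(Q_0)^{\tilde\alpha}$. That is the one ingredient genuinely absent in the constant-coefficient case; the remaining pieces of your outline (mass cancellation via Calder\'on--Zygmund smoothness, $p_{\widetilde{\mu},\tilde\alpha}$ controlling nearby cells via Lemma \ref{lemma_technical_int_dens}) do track the arguments the paper delegates to \cite[Lemma 8.1]{GT}, once the structural fix above is in place.
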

The presence of the summand $\ell(Q_0)^{2\tilde\alpha}$ in $\epsilon'$ (already taken into account in $\widetilde{\epsilon}$) to point out that, as in \eqref{mean_lemma_local_first}, the lack of antisimmetry of $\bar K(\cdot,\cdot)$ gives the error term
\begin{equation}
\big|m_{\widetilde{\mu},Q}\big(\bar T_{\widetilde{\mu}}\chi_Q\big)\big|\lesssim \ell(Q)^{\tilde\alpha}\lesssim \ell(Q_0)^{\tilde\alpha}
\end{equation}
for every $Q\in\Stop_0\setminus\Bad.$ This contribution is not present in the case of an elliptic matrix with constant coefficients. The rest of the proof is analogous to the one of \cite[Lemma 8.1]{GT} and all is needed is a careful check that Lemma \ref{lemma_technical_int_dens} applies and the new homogeneity does not affect the final result. We omit further details.
\begin{rem}Observe that the expressions of $\widetilde{\delta},\widetilde{\epsilon}$ and $\epsilon'$ all include a summand which depends on $\epsilon_0$. In particular, the quantities in question are small if $\epsilon_0$ and $M\ell(Q_0)$ are chosen small enough. Then the choice of $\varepsilon_0\ll 1$ (which is possible because we assumed \eqref{contrad_LD_hyp} to hold) gives the localization for the potentials associated with the auxiliary measures.
\end{rem}
\section{The localization of $\bar T\eta$}
Let $L^\infty_{\mathcal{M}}$ denote the set of functions $f\in L^\infty(\eta)$ such that
\begin{equation}
f(x+z_P)=f(x)
\end{equation}
for every $x\in\mathbb{R}^{n+1}$ and $P\in\mathcal{M}.$

Let $\varphi\in C^1(\mathbb{R}^{n+1})$ be a non-negative radial function whose support is contained in $B(0,2)$ and that equals $1$ on 
$B(0,1).$ For $r>0$ and $x\in\mathbb{R}^{n+1}$ let us set $\varphi_r(x)\coloneqq  \varphi(x/r).$ Observe that $\|\nabla \varphi\|_\infty\lesssim 1.$ For $x,y\in\mathbb{R}^{n+1}$ we define the regularized kernel
\begin{equation}
\tilde{K}_{r}(x,y)=\bar K(x,y)\varphi_r(x-y)
\end{equation}
and its associated operator
\begin{equation}\label{reg_op}
\tilde{T}_r(f\eta)(x)\coloneqq  \int \tilde{K}_r(x,y)f(y)d\eta(y) \qquad\text{for}\,\, f\in L^\infty_{\mathcal{M}}(\eta),
\end{equation}
where the integral above is absolutely convergent.

We are interested in getting an existence result for the limit
\begin{equation}\label{pv_lim}
\pv \bar T(f\eta)(x)=\lim_{r\to \infty}\tilde{T}_r(f\eta)(x).
\end{equation}
For simplicity, we denote the principal value in \eqref{pv_lim} just as $\bar T(f\eta)(x).$

\begin{lemm} \label{pv_lemm}Let $f\in L^\infty_{\mathcal{M}}.$ The principal value $\bar T(f\eta)(x)$ exists for every $x\in \mathbb{R}^{n+1}.$ Moreover, given any compact set $F\subset\mathbb{R}^{n+1}$, there exist $r_0=r_0(F)>0$ and a constant $c_F$ depending on $F$ such that for $s>r\geq r_0$
\begin{equation}
\big\|\tilde{T}_r(f\eta)-\tilde{T}_s(f\eta)\big\|_{\infty,F}\lesssim \frac{c_F}{r^{{\gamma}}}\|f\|_\infty,
\end{equation}
where $\gamma\in(0,1)$ is as in Lemma \ref{lemm_estim_freezing_periodic}.
\end{lemm}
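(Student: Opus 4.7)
The plan is to use the homogenization approximation from Lemma~\ref{lemm_estim_freezing_periodic}, writing
$$\bar K(x,y) = (Id + \nabla\chi_{\ell}(x))\,\nabla_1\Theta(x-y;\bar A_0) + E(x,y),$$
where $\bar A_0$ is the homogenized matrix associated with $\bar A$ and $|E(x,y)|\lesssim \ell^{\gamma}|x-y|^{-n-\gamma}$ by \eqref{approx_hom_par}. The matrix factor $(Id+\nabla\chi_{\ell}(x))$ is uniformly bounded by \eqref{bound_grad_chi} and depends only on $x$, so it pulls out of the $y$-integral, reducing the analysis to the model kernel $g(z)\coloneqq \nabla_1\Theta(z;\bar A_0)$, which by \eqref{grad_const_coeff_sol} is \emph{odd} in $z$ and satisfies $|g(z)|\lesssim|z|^{-n}$ and $|\nabla g(z)|\lesssim|z|^{-n-1}$.

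The error contribution is handled directly: by a dyadic decomposition together with the $n$-growth of $\eta$,
$$\int_{|x-y|\geq r} |E(x,y)|\, d\eta(y) \lesssim \ell^{\gamma} \sum_{k\geq 0} (2^k r)^{-n-\gamma}(2^k r)^n \lesssim \ell^{\gamma} r^{-\gamma},$$
so $\int E(x,y) f(y)\, d\eta(y)$ converges absolutely and its truncation error is $O(\ell^\gamma\|f\|_\infty r^{-\gamma})$, uniformly for $x$ in any compact set $F$.

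For the main contribution I would exploit the horizontal periodicity of both $f\in L^\infty_\mathcal{M}$ and $\eta=\sum_{P\in\mathcal{M}}T_P\sharp\eta_0$ to rewrite the truncated integral as
$$(Id+\nabla\chi_{\ell}(x))\int_{Q_0} f(y) \sum_{P\in\mathcal{M}} g(x-y-z_P)\varphi_r(x-y-z_P)\, d\eta_0(y),$$
and then to pair the lattice points $\pm z_P$ in $\mathcal{M}$. Setting $v\coloneqq x-y$, which is bounded by some $c_F$ for $x\in F$ and $y\in\supp\eta_0$, the oddness of $g$ together with a Taylor expansion gives, for $|z_P|\gg |v|$,
$$g(v-z_P)+g(v+z_P) = 2v\cdot\nabla g(z_P) + O(|v|^2|z_P|^{-n-2}).$$
Since the number of lattice points in $\mathcal{M}$ in the dyadic annulus $R\leq|z_P|\leq 2R$ is of order $R^n$, the paired sum on that annulus is controlled by $c_F\cdot R^n\cdot R^{-n-1}=c_F/R$, and summing dyadic scales from $r$ to $2s$ yields a tail of $O(c_F/r)$. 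The smooth cutoff $\varphi_r$ breaks the $\pm z_P$ symmetry only in the transition bands $|z_P|\sim r$ and $|z_P|\sim s$; there the bound $|\nabla\varphi_r|\lesssim 1/r$ combined with $|v|\lesssim c_F$ produces a boundary correction of the same $O(c_F/r)$ order.

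Combining the two pieces yields $\|\tilde T_r(f\eta)-\tilde T_s(f\eta)\|_{\infty,F}\lesssim c_F\|f\|_\infty r^{-\gamma}$ (the slower of the rates $r^{-\gamma}$ and $r^{-1}$), and the Cauchy property then gives the existence of the principal value at every $x\in\mathbb{R}^{n+1}$. The main delicate point is making the pairing argument rigorous in the presence of the smooth truncation—verifying that the imperfect cancellation in the transition annulus does not overwhelm the gain from antisymmetry. A secondary point is that the pairing requires the period of $\bar A$ and $\chi_\ell$ to be compatible with the horizontal period $6\ell(Q_0)$ of $\eta$ and $f$; this is ensured by the construction of $\bar A$ in Section~\ref{section_change}.
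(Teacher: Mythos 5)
Your proposal is correct and follows essentially the same route as the paper: split the kernel via the homogenization estimate of Lemma~\ref{lemm_estim_freezing_periodic} into $(Id+\nabla\chi_\ell(x))\nabla_1\Theta(\cdot,\cdot;A_0)$ plus an error of size $\ell^\gamma|x-y|^{-n-\gamma}$, bound the error term directly by a dyadic sum, and handle the main term via the antisymmetry of $\nabla_1\Theta$ and the $\pm z_P$ pairing over the lattice $\mathcal{M}$ (which is precisely what the paper delegates to \cite[Lemma 8.2]{GT}). The only cosmetic imprecisions are your lattice-count "$\sim R^n$" (it is $(R/\ell(Q_0))^n$, compensated by the $\ell(Q_0)^n$ coming from $\eta(Q_0)$, which you suppress) and your not stating explicitly that one must first fix $r_0\geq 2\diam$ of a thickened $F$ so that the truncation $\varphi_{r,s}$ only "sees" lattice points with $|z_P|\geq r$ and the sum is finite — both of which are made explicit in the paper but are implicit in your dyadic bookkeeping.
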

\begin{rem}Lemma \ref{pv_lemm} implies that the limit in \eqref{pv_lim} converges uniformly on compact sets and in $\supp\eta.$
\end{rem}
\begin{proof}
Recall that we can assume $\ell(Q_0)<1$.
Let $s>r.$ Let us denote $\nu\coloneqq  f\eta$, $\varphi_{r,s}(x)\coloneqq  \varphi_r(x)-\varphi_s(x)$ for every $x\in\mathbb{R}^{n+1}$ and $\tilde K_{r,s}(x,y)\coloneqq\bar K(x,y)\varphi_{r,s}(x-y)$. Because of the periodicity of $f$ and the definition of $\eta,$ we have
\begin{equation}
\nu=\sum_{P\in\mathcal{M}}(T_P)_\sharp(\chi_{Q_0}\nu)
\end{equation}
so that
\begin{equation}\label{diff_sum}
\begin{split}
\tilde{T}_r(f\eta)(x)-\tilde{T}_s(f\eta)(x)&=\int \tilde{K}_{r,s}(x,y)d\bigg(\sum_{P\in\mathcal{M}}(T_P)_\sharp(\chi_{Q_0}\nu)\bigg)(y)\\
&=\sum_{P\in\mathcal{M}}\int_{Q_0}\tilde{K}_{r,s}(x,y+z_p)\,d\nu(y),
\end{split}
\end{equation}
the last equality being a consequence of $\tilde{K}_{r,s}$ having compact support, which implies that the sum has only finitely many non-zero terms.

Let $A_0$ be the homogenized matrix associated with $\{{L}_\epsilon\}_\epsilon>0$ and $\chi_\ell$ be as in Section \ref{section_preliminaries}, with $\ell=6\ell(Q_0)$. Recall that $$\|\nabla \chi_\ell\|_\infty\lesssim 1$$ (see \eqref{bound_grad_chi}). The matrix $A_0$ is an elliptic matrix whose coefficients are constants and can be expressed in terms of  $\chi$ and those of $A$. We denote by $\Theta(\cdot, \cdot;A_0)$ the fundamental solution of the operator ${L}_0=  -\diver(A_0\nabla).$ 
We decompose the right hand side of \eqref{diff_sum} as
\begin{align}
&\sum_{P\in\mathcal{M}}\int_{Q_0}\tilde{K}_{r,s}(x,y+z_p)d\nu(y)\\
 &= \sum_{P\in\mathcal{M}}\int_{Q_0}\big(\bar K(x,y+z_P)-(Id+\nabla\chi_\ell(x))\nabla_1\Theta(x,y+z_P;A_0)\big)\varphi_{r,s}(x-y-z_P)d\nu(y) \\
 &\qquad + \sum_{P\in\mathcal{M}}\int_{Q_0}\big(Id+\nabla\chi_\ell(x)\big)\nabla_1\Theta(x,y+z_P;A_0)\varphi_{r,s}(x-y-z_P)d\nu(y)
  \\
 &\equiv I_{r,s}(x) + II_{r,s}(x)
 .
\end{align}Let us observe that since $F$ is compact and $y\in Q_0,$ there exists a compact set $\tilde{F}$ such that $\pm(x-y)\in\tilde{F},$ so that if we choose $r_0\geq 2\diam\big(\tilde{F}\big),$ both $\varphi_{r,s}(x - y - z_P)$ and $\varphi_{r,s}(x-y +z_P)$ vanish for $|z_P|< r.$ Moreover, $|x-y|\leq \diam\big(\tilde{F}\big)\leq r/2\leq |z_P|$ and
\begin{equation}
|(x-y)-z_P|\approx |(x-y)+z_P|\approx |z_P|.
\end{equation}
Let us now estimate $I_{r,s}(x)$.
As stated in Lemma \ref{lemm_estim_freezing_periodic}, there exist $C>0$ and ${\gamma}\in(0,1)$ depending only on $n$ and $\alpha$ such that
\begin{equation}
\big|\nabla_1\E_{\bar A}(x,y+z_P)-\big(Id+\nabla\chi_\ell(x)\big)\nabla_1\Theta(x,y+z_P;A_0)\big|\leq C \ell(Q_0)^\gamma |x-y-z_P|^{-(n+{\gamma})}
\end{equation}
for every $x,y\in\mathbb{R}^{n+1}.$
Then, exploiting the linear growth of $\eta$ and the considerations on the support of $\varphi_{r,s}$, we get
\begin{equation}\label{estim_I}
|I_{r,s}(x)|\lesssim \sum_{P\in\mathcal{M},|z_P|\geq r}\int_{Q_0}\frac{\ell(Q_0)^{{\gamma}} d|\nu|(y)}{|x-y-z_P|^{n+{\gamma}}}\lesssim\|f\|_{\infty}\sum_{P\in\mathcal{M},|z_P|\geq r}\frac{\ell(P)^{n+{\gamma}}}{|z_P|^{n+{\gamma}}}\lesssim \frac{\|f\|_\infty \ell(Q_0)^{{\gamma}}}{r^{{\gamma}}}.
\end{equation}
In the last inequality of \eqref{estim_I} we used the convergence of $\sum_{P\in\mathcal{M}} {\ell(P)^n}{|z_P|^{-n}}$.

We are left with the estimate of $II_{r,s}(x).$
Using the antisymmetry of $\nabla_1\Theta(\cdot,\cdot;A_0)$ and the properties of standard Calder\'on-Zygmund kernels, the same argument of \cite[Lemma 8.2]{GT} proves that there exists a constant $c_F>0$ such that
\begin{equation}\label{estim_II}
\begin{split}
|II_{r,s}(x)|&\leq \|Id+\nabla\chi_{\ell}\|_{\infty}\sum_{P\in\tilde{\M}}\int_{Q_0}\nabla_1\Theta(x,y+z_P;A_0)\varphi_{r}(x-y-z_P)d\nu(y)\\
&\underset{\eqref{bound_grad_chi}}{\lesssim} \sum_{P\in\tilde{\M}}\int_{Q_0}\nabla_1\Theta(x,y+z_P;A_0)\varphi_{r}(x-y-z_P)d\nu(y) \\
&\lesssim\frac{c_F \|f\|_\infty}{r}.
\end{split}
\end{equation}
We conclude the proof of the lemma gathering \eqref{estim_I}, \eqref{estim_II} and observing that, being $\gamma\in(0,1)$ and $r>1$, $r^{-1}<r^{-\gamma}.$
\end{proof}
The measure $\eta$ is $\M$-periodic and the matrix $\bar A$, by construction, is  $6\ell(Q_0)$-periodic. This implies that for every $f\in L^\infty_{\M}(\eta)$ and $r>0$, the function $\widetilde{T}_r(f\eta)$ is $\M$-periodic, too. The same holds for $\pv T(f\eta).$
Using Lemma \ref{pv_lemm}, the following result is immediate.
\begin{coroll}$\bar T_\eta$ is a bounded operator from $L^\infty_{\mathcal{M}}$ to $L^\infty_{\mathcal{M}}.$ For $r>0$ big enough and for every $f\in L^\infty_{\mathcal{M}}(\eta)$ we have
\begin{equation}
\big\|\bar T(f\eta)-\tilde{T}_r(f\eta)\big\|_{\infty,F}\lesssim_F\frac{\|f\|_\infty}{r^{{\gamma}}}.
\end{equation}
\end{coroll}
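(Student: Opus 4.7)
The plan is to deduce everything from the convergence estimate already proved in Lemma \ref{pv_lemm}, together with the periodicity of $\eta$ and the matrix $\bar A$.

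First, I will observe that $\bar T(f\eta)$ is $\mathcal M$-periodic whenever $f\in L^\infty_{\mathcal M}$. Indeed, for any $P\in \mathcal M$, the translation $T_P$ is an isometry that respects the period of $\bar A$ (since $\ell=6\ell(Q_0)$ divides $|z_P|$ coordinatewise), so Lemma \ref{lemm_symm_mod_mat}, equation \eqref{wdabarper}, yields $\bar K(x+z_P,y+z_P)=\bar K(x,y)$. Combined with the invariance of $\eta$ and $f$ under $T_P$, a change of variables in the defining integral of $\tilde T_r(f\eta)$ and a passage to the principal value give $\bar T(f\eta)(x+z_P)=\bar T(f\eta)(x)$.

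Next, the convergence statement is immediate from Lemma \ref{pv_lemm}. Fix a compact $F\subset \mathbb{R}^{n+1}$ and let $r_0=r_0(F)$ be the threshold provided there. For $s>r\geq r_0$ we have
\begin{equation}
\bigl\|\tilde T_r(f\eta)-\tilde T_s(f\eta)\bigr\|_{\infty,F}\lesssim \frac{c_F}{r^{\gamma}}\|f\|_\infty,
\end{equation}
and letting $s\to\infty$ (which is justified because $\tilde T_s(f\eta)\to \bar T(f\eta)$ uniformly on $F$, as noted after Lemma \ref{pv_lemm}), we get
\begin{equation}
\bigl\|\bar T(f\eta)-\tilde T_r(f\eta)\bigr\|_{\infty,F}\lesssim \frac{c_F}{r^{\gamma}}\|f\|_\infty,
\end{equation}
which is the desired rate.

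Finally, for the global $L^\infty$ bound it suffices, by the $\mathcal M$-periodicity of $\bar T(f\eta)$, to estimate $\|\bar T(f\eta)\|_{\infty,Q_0}$. Take $F=Q_0$ and pick $r=r_0(Q_0)$. By the convergence estimate just obtained,
\begin{equation}
\bigl\|\bar T(f\eta)\bigr\|_{\infty,Q_0}\le \bigl\|\tilde T_{r_0}(f\eta)\bigr\|_{\infty,Q_0}+C_{Q_0}\|f\|_\infty.
\end{equation}
The truncated piece is controlled directly: since $\tilde K_{r_0}(x,\cdot)$ is supported in $B(x,2r_0)$, the Calder\'on--Zygmund bound from Lemma \ref{CZ_1}(1) together with the polynomial growth $\eta(B(x,r))\lesssim r^n$ yields
\begin{equation}
\bigl|\tilde T_{r_0}(f\eta)(x)\bigr|\le \|f\|_\infty\int_{B(x,2r_0)}\frac{d\eta(y)}{|x-y|^n}\lesssim r_0\,\|f\|_\infty,
\end{equation}
by a standard dyadic decomposition of $B(x,2r_0)$. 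Combining these two estimates gives $\|\bar T(f\eta)\|_\infty\lesssim_{Q_0}\|f\|_\infty$, and together with periodicity we conclude $\bar T_\eta\colon L^\infty_{\mathcal M}\to L^\infty_{\mathcal M}$ boundedly. There is essentially no hard step here; the whole content is packaged in Lemma \ref{pv_lemm}, and the only thing to verify carefully is that the periodicities of $\bar A$ and $\eta$ are compatible so that $\bar T(f\eta)$ inherits $\mathcal M$-periodicity.
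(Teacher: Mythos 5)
Your proof takes essentially the same route as the paper: periodicity of $\bar T(f\eta)$ from \eqref{wdabarper} together with the $\mathcal{M}$-periodicity of $\eta$ and $f$, followed by letting $s\to\infty$ in Lemma \ref{pv_lemm} for the quantitative estimate. The paper records the periodicity observation just before the corollary and then states that the corollary is immediate.

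There is, however, a flaw in the justification of the bound for the truncated piece at the end. You estimate $\int_{B(x,2r_0)}|x-y|^{-n}\,d\eta(y)\lesssim r_0$ by appealing to the polynomial growth $\eta(B(x,r))\lesssim r^n$ and ``a standard dyadic decomposition.'' This does not work: with kernel decay $|x-y|^{-n}$ and $n$-growth each dyadic shell contributes
\begin{equation}
\frac{\eta\big(B(x,2^{-k}\cdot 2r_0)\big)}{(2^{-k}\cdot 2r_0)^{n}}\lesssim 1,
\end{equation}
so the sum over $k$ diverges. The integral is in fact finite, but the reason is that $\eta$ is absolutely continuous with respect to $\mathcal{L}^{n+1}$ with bounded density — this is exactly what the smoothing $\eta_0=\sum_{Q}\frac{\mu_0(Q)}{\mathcal{H}^{n+1}(\frac14 B(Q))}\mathcal{H}^{n+1}|_{\frac14 B(Q)}$ was designed to achieve, and it is what lets the paper assert in \eqref{reg_op} that the integral defining $\tilde{T}_r(f\eta)$ is absolutely convergent. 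With that observation one gets $\int_{B(x,2r_0)}|x-y|^{-n}\,d\eta(y)\lesssim \|d\eta/d\mathcal{L}^{n+1}\|_{\infty}\, r_0$. Alternatively you can avoid the explicit estimate entirely: $\bar T(f\eta)$ is the uniform-on-compacts limit of the continuous functions $\tilde T_r(f\eta)$, hence continuous, and being $\mathcal{M}$-periodic it is automatically bounded — which is all the first assertion of the corollary requires.
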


Our next intent is to prove the final localization estimate
\begin{equation}\label{full_loc}
\int_{Q_0}\big|\bar T\eta\big|^2d\eta\ll\eta(Q_0).
\end{equation}
We have already proved that for $M$ big enough there exists $\epsilon'\ll 1$ such that 
\begin{equation}\label{loc_inside}
\int_{Q_0}\big|\bar T(\chi_{MQ_0}\eta)\big|^2d\eta\lesssim \epsilon'\eta(Q_0).
\end{equation}
Then, in order to prove \eqref{full_loc}, it suffices to use the estimate in the following lemma.
\begin{lemm}\label{lem102}Let $f\in L^1_{loc}(\eta)$ be a $\mathcal{M}$-periodic function and let $\tilde{M}=6\tilde{N}$, where $\tilde{N}\geq 3$ is an odd number. For all $x\in 2Q_0$ we have
\begin{equation}\label{loc_outside}
\big|\bar T\big(\chi_{(\tilde{M}Q_0)^c}f\eta\big)(x)\big|\lesssim\frac{1}{\tilde{M}^\gamma \ell(Q_0)^n}\int_{Q_0}|f|d\eta.
\end{equation}
\end{lemm}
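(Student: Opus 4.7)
The plan is to exploit the $\mathcal{M}$-periodicity of $f\eta$ together with the homogenization estimate \eqref{approx_hom_par} to reduce to the antisymmetric, constant-coefficient kernel $\nabla_1\Theta(\cdot,\cdot;A_0)$, where a classical Calder\'on--Zygmund cancellation argument applies. First, I would observe that, since $\tilde M=6\tilde N$ with $\tilde N\geq 3$ odd, the faces of $\tilde MQ_0$ lie at coordinates $\pm 3\tilde N\ell(Q_0)$ and are never crossed by any cube $P=Q_0+z_P\in\mathcal{M}$ (whose horizontal center coordinates are multiples of $6\ell(Q_0)$ and whose last coordinate is zero). Hence each $P\in\mathcal{M}$ is entirely inside or entirely outside $\tilde MQ_0$. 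Letting $\mathcal{M}^c$ denote the latter cubes and using $\supp\eta\subset\bigcup_{P}P$, $\eta|_P=T_{P\sharp}(\eta|_{Q_0})$ and $f\circ T_P=f$, I would rewrite
\begin{equation*}
\bar T(\chi_{(\tilde MQ_0)^c}f\eta)(x)=\sum_{P\in\mathcal{M}^c}\int_{Q_0}\bar K(x,y+z_P)\,f(y)\,d\eta(y).
\end{equation*}

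Next, with $\ell\coloneqq 6\ell(Q_0)<1$, I would apply \eqref{approx_hom_par} to split
\begin{equation*}
\bar K(x,y+z_P)=\bigl(Id+\nabla\chi_\ell(x)\bigr)\nabla_1\Theta(x,y+z_P;A_0)+E_P(x,y),\quad |E_P(x,y)|\lesssim \ell(Q_0)^\gamma|x-y-z_P|^{-n-\gamma}.
\end{equation*}
For $x\in 2Q_0$, $y\in Q_0$ and $P\in\mathcal{M}^c$, one has $|x-y|\lesssim\ell(Q_0)\ll|z_P|\approx\ell(Q_0)|k_P|_\infty$ with $|k_P|_\infty\gtrsim\tilde N$, so $|x-y-z_P|\approx|z_P|$. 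Summing the error over $P$,
\begin{equation*}
\sum_{P\in\mathcal{M}^c}\int_{Q_0}|E_P(x,y)|\,|f(y)|\,d\eta(y)\lesssim \frac{1}{\ell(Q_0)^n}\sum_{k\in\mathbb{Z}^n,\,|k|_\infty\gtrsim\tilde N}\frac{1}{|k|^{n+\gamma}}\int_{Q_0}|f|\,d\eta\lesssim \frac{1}{\ell(Q_0)^n\tilde M^\gamma}\int_{Q_0}|f|\,d\eta,
\end{equation*}
which already matches the target bound.

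For the main term I would factor out the bounded prefactor $Id+\nabla\chi_\ell(x)$ (using \eqref{bound_grad_chi}) and extract cancellation from $\sum_{P\in\mathcal{M}^c}\nabla_1\Theta(x-y-z_P;A_0)$. By \eqref{grad_const_coeff_sol}, $\nabla_1\Theta(\cdot;A_0)$ is odd, while the set $\{z_P:P\in\mathcal{M}^c\}$ is symmetric under $z\mapsto-z$ since both $\mathcal{M}$ and $\tilde MQ_0$ are origin-symmetric. Pairing $z_P$ with $-z_P$ and writing $u\coloneqq x-y$, each pair contributes $\tfrac{1}{2}[\nabla_1\Theta(u-z_P;A_0)-\nabla_1\Theta(-u-z_P;A_0)]$, which by the Calder\'on--Zygmund smoothness $|\nabla_1\Theta(z;A_0)-\nabla_1\Theta(z';A_0)|\lesssim |z-z'||z|^{-n-1}$ (applicable since $|u|\lesssim\ell(Q_0)\ll|z_P|$) is bounded by $|u|\,|z_P|^{-n-1}$. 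Summing yields
\begin{equation*}
\Bigl|\sum_{P\in\mathcal{M}^c}\nabla_1\Theta(x-y-z_P;A_0)\Bigr|\lesssim \ell(Q_0)\sum_{|k|_\infty\gtrsim\tilde N}\bigl(\ell(Q_0)|k|\bigr)^{-n-1}\lesssim \frac{1}{\ell(Q_0)^n\tilde M},
\end{equation*}
which is stronger than needed, and combining with the error estimate gives the lemma.

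The main obstacle is precisely the lack of antisymmetry and homogeneity of $\bar K(\cdot,\cdot)$: a direct cancellation argument at the scale $\tilde M\ell(Q_0)$, as in the Riesz case of \cite{GT}, is not available. The homogenization approximation by the constant-coefficient kernel $\nabla_1\Theta(\cdot;A_0)$ is the key device for restoring these symmetries, and the Calder\'on--Zygmund error intrinsic to that approximation, of rate $\ell(Q_0)^\gamma|z|^{-n-\gamma}$, is exactly what forces the exponent $\tilde M^{-\gamma}$ in the final bound. A minor arithmetic subtlety to verify along the way is that the parity assumption on $\tilde N$ really does prevent any cube of $\mathcal{M}$ from straddling $\partial(\tilde MQ_0)$, which is essential for the clean periodic decomposition in the opening step.
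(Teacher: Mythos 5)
Your proposal is correct and follows essentially the same route as the paper: decompose via the $\mathcal{M}$-periodicity of $f\eta$ (with the parity of $\tilde N$ ensuring a clean split of cubes into inside/outside $\tilde M Q_0$), apply the homogenization estimate \eqref{approx_hom_par} to separate the kernel into the constant-coefficient piece $(Id+\nabla\chi_\ell)\nabla_1\Theta(\cdot;A_0)$ and an error of size $\ell(Q_0)^\gamma|x-y-z_P|^{-n-\gamma}$, sum the error to get $\tilde M^{-\gamma}\ell(Q_0)^{-n}$, and exploit the oddness of $\nabla_1\Theta(\cdot;A_0)$ by pairing $z_P$ with $-z_P$ to bound the main term by $\tilde M^{-1}\ell(Q_0)^{-n}$. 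The only cosmetic difference is that the paper carries out the estimate for truncated kernels $\tilde K_r$ and then passes to the limit $r\to\infty$, while you argue directly at the level of the principal value; this is harmless since your bounds are uniform in the truncation.
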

\begin{proof}
Being $\widetilde{N}$ odd, there exists a subfamily $\widetilde{\M}\subset\M$ such that
\begin{equation}
\chi_{(\widetilde{M}Q_0)^c}\eta =\sum_{P\in\widetilde{\M}}T_P\sharp\eta
\end{equation}
and whose elements $P\in\widetilde{\M}$ satisfy $|z_P|\gtrsim \widetilde{M}\ell(Q_0).$ In particular
\begin{equation}\label{approz_z_p}
|x-y-z_P|\approx |z_p|\quad \text{for  $x,y\in 2Q_0.$}
\end{equation}
Let $r>0$ and $x\in 2Q_0$. Denote $\nu\coloneqq  f\eta$  and observe that there are just finitely many cubes $P\in \widetilde{\M}$ such that $|z_P|<r$. Arguing as in the proof of Lemma \ref{pv_lemm}, this justifies the following writings.
\begin{equation}
\begin{split}
&\tilde{T}_r\big(\chi_{(\tilde{M}Q_0)^c}f\eta\big)(x)=\int \bar K(x,y)\varphi_r(x-y)d\nu(y)\\
&\qquad=\sum_{P\in \widetilde{M}}\int_{Q_0}\bar K(x,y+z_P)\varphi_r({x-y-z_P})d\nu(y)\\
&\qquad= \sum_{P\in\tilde{\M}}\int_{Q_0}\big(\bar K(x,y+z_P)-(Id+\nabla\chi_\ell(x))\nabla_1\Theta(x,y+z_P;A_0)\big)\varphi_{r}(x-y-z_P)d\nu(y) \\
 &\qquad\quad+ \sum_{P\in\tilde{\M}}\int_{Q_0}\big(Id+\nabla\chi_\ell(x)\big)\nabla_1\Theta(x,y+z_P;A_0)\varphi_{r}(x-y-z_P)d\nu(y)
  \\
 &\qquad\equiv I_{r}(x) + II_{r}(x)
\end{split}
\end{equation}
Let us estimate $I_r(x).$ Using \eqref{approz_z_p} together with Lemma \ref{lemm_estim_freezing_periodic} and the estimate $|z_P|\gtrsim \tilde{M}\ell(Q_0)$ for $P\in\tilde{\M}$, we can write
\begin{equation}\label{estim_I_r}
\begin{split}
|I_r(x)|&\lesssim \sum_{P\in\tilde{\M}}\int_{Q_0}\frac{\ell(Q_0)^{{\gamma}}}{|x-y-z_P|^{n+{\gamma}}}d\nu(y)\approx
\sum_{P\in\tilde{\M}}\int_{Q_0}\frac{\ell(Q_0)^{{\gamma}}}{|z_P|^{n+{\gamma}}}d\nu(y)\\
&=\sum_{P\in\tilde{\M}}\frac{\ell(Q_0)^{{\gamma}}}{|z_P|^{n+{\gamma}}}|\nu|(Q_0)\lesssim \frac{|\nu|(Q_0)}{\tilde{M}^{{\gamma}} \ell(Q_0)^n} \bigg(\sum_{P\in\tilde{\M}}\frac{\ell(Q_0)^n}{|z_P|^n}\bigg)\lesssim \frac{1}{\tilde{M}^{{\gamma}}\ell(Q_0)^n}\int_{Q_0}|f|d\eta.
\end{split}
\end{equation}
We claim that
\begin{equation}\label{estim_II_r}
|II_r(x)|\lesssim \frac{1}{\tilde{M}\ell(Q_0)^n}\int_{Q_0}|f|d\eta.
\end{equation}
The calculations to prove \eqref{estim_II_r} exploit \eqref{bound_grad_chi} and the antisimmetry of $\n1\Theta(\cdot,\cdot;A_0)$; they resemble those of \cite[Lemma 8.4]{GT}, so that we leave the verification to the reader.\\
The estimates \eqref{estim_I_r} and \eqref{estim_II_r}, together with the observation that $\tilde{M}^{-1}\leq\tilde{M}^{-{\gamma}},$ conclude the proof of the lemma after taking the limit for $r\to \infty$.
\end{proof}
\begin{coroll}[Final localization estimate]We have
\begin{equation}
\int_{Q_0}\big|\bar T\eta\big|^2d\eta \lesssim \Big(\frac{1}{M^{2{\gamma}}}+\epsilon'\Big)\eta(Q_0).
\end{equation}
\end{coroll}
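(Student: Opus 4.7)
The plan is to split $\bar T\eta$ into a near part and a far part with respect to $MQ_0$ and then apply the two localization estimates that have just been established. Concretely, I would write
\begin{equation}
\bar T\eta(x) = \bar T(\chi_{MQ_0}\eta)(x) + \bar T(\chi_{(MQ_0)^c}\eta)(x), \qquad x \in Q_0,
\end{equation}
and control the $L^2(\eta|_{Q_0})$-norm of each summand separately. Without loss of generality I would assume from the outset that the parameter $M$ has been chosen of the form $M = 6\tilde N$ with $\tilde N \geq 3$ an odd integer: this only affects constants, and it is needed so that Lemma \ref{lem102} applies directly with $\tilde M = M$.

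For the near part, the bound $\int_{Q_0}|\bar T(\chi_{MQ_0}\eta)|^2\,d\eta \lesssim \epsilon' \eta(Q_0)$ is exactly \eqref{loc_inside}, so there is nothing to do. For the far part, I would apply Lemma \ref{lem102} with $f \equiv 1$, which is trivially $\mathcal{M}$-periodic and satisfies $\int_{Q_0} |f|\,d\eta = \eta(Q_0)$. This yields the pointwise bound
\begin{equation}
|\bar T(\chi_{(MQ_0)^c}\eta)(x)| \lesssim \frac{\eta(Q_0)}{M^\gamma \ell(Q_0)^n}, \qquad x \in 2Q_0 \supset Q_0.
\end{equation}

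To convert this to the desired form I need $\eta(Q_0) \lesssim \ell(Q_0)^n$. This follows directly from the construction of $\eta$: by disjointness of the balls $\tfrac14 B(Q)$ (which in turn follows from the disjointness of $5B(Q)$), and from the inclusion $\tfrac14 B(Q) \subset Q$ for $Q \in \Stop_0 \setminus \Bad$, one has $\eta(Q_0) = \sum_{Q \in \Stop_0 \setminus \Bad} \mu_0(Q) \leq \mu_0(Q_0) \leq \mu(Q_0) = \ell(Q_0)^n$. Substituting into the pointwise bound gives $|\bar T(\chi_{(MQ_0)^c}\eta)(x)| \lesssim M^{-\gamma}$ on $Q_0$, and squaring and integrating produces
\begin{equation}
\int_{Q_0}|\bar T(\chi_{(MQ_0)^c}\eta)|^2\,d\eta \lesssim \frac{1}{M^{2\gamma}}\,\eta(Q_0).
\end{equation}

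Combining the two estimates via $(a+b)^2 \leq 2(a^2+b^2)$ finishes the proof. I do not anticipate any genuine obstacle here, since both of the key ingredients have already been proved; the only minor subtlety is the convention on $M$ needed to invoke Lemma \ref{lem102}, and the elementary verification that $\eta(Q_0) \lesssim \ell(Q_0)^n$.
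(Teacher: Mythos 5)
Your proof is correct and follows exactly the paper's route: split $\bar T\eta$ via $\chi_{MQ_0}+\chi_{(MQ_0)^c}$, control the near part with \eqref{loc_inside} and the far part pointwise with Lemma \ref{lem102} applied to $f\equiv1$. You additionally make explicit two small points the paper leaves tacit — that $M$ must be taken of the admissible form $6\tilde N$ with $\tilde N\ge3$ odd, and that $\eta(Q_0)\lesssim\ell(Q_0)^n$ (which indeed holds, since $\eta(Q_0)=\sum_{Q\in\Stop_0\setminus\Bad}\mu_0(Q)\le\mu_0(Q_0)\le\mu(Q_0)=\ell(Q_0)^n$, the disjointness of the cells $Q$ being what is actually used here, rather than that of the balls $\tfrac14 B(Q)$) — but these are clarifications, not a different argument.
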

\begin{proof}
Inequality \eqref{loc_outside} in the case $f\equiv 1$ reads
\begin{equation}
\big|\bar T(\chi_{MQ_0}\eta)\big|\lesssim \frac{1}{M^{{\gamma}}},
\end{equation}
so that applying it together with \eqref{loc_inside}, we have
\begin{equation}
\int_{Q_0}\big|\bar T\eta\big|^2d\eta\lesssim \int_{Q_0}\big|\bar T(\chi_{MQ_0}\eta)\big|^2d\eta + \int_{Q_0}\big|\bar T(\chi_{(MQ_0)^c}\eta)\big|^2d\eta\lesssim\Big(\frac{1}{M^{2{\gamma}}}+\epsilon'\Big)\eta(Q_0),
\end{equation}
which finishes the proof.
\end{proof}

\section{A pointwise inequality and the conclusion of the proof}\label{section_var_arg}
The following lemma implements a variational technique inspired by potential theory that allows to obtain a pointwise inequality for the potential of a proper auxiliary measure.  We denote as $\bar{T}^*\vec{\xi}$ the operator that, given a vector-valued measure $\vec{\xi}$, is defined by
\begin{equation}
\bar T^*\vec \xi(x)=\int\nabla_1 \bar{\E}(y,x)\cdot d\vec{\xi}(y)
\end{equation}
and which corresponds to the adjoint of $\bar T$.
\begin{lemm}\label{var_lemma}
Suppose that for some $0<\lambda\leq 1$ the inequality
\begin{equation}
\int_{Q_0}|\bar T\eta|^2d\eta\leq\lambda\eta(Q_0)
\end{equation}
holds. Then there is a function $b\in L^\infty(\eta)$ such that
\begin{itemize}
\item $0\leq b\leq 2.$
\item $b$ is $\mathcal{M}$-periodic.
\item $\int_{Q_0}b \,d\eta=\eta(Q_0).$
\end{itemize}
and such that the measure $\nu=b\eta$ satisfies
\begin{equation}\label{loc_measure_variational_argument}
\int_{Q_0}|\bar T\nu|^2d\nu\leq \lambda\nu(Q_0)
\end{equation}
and
\begin{equation}\label{pw_ineq_ae}
|\bar T\nu|^2(x)+2 \bar T^*\big((\bar T\nu)\nu\big)(x)\leq 6\lambda \text{ for }\nu\text{-a.e. } x\in\mathbb{R}^{n+1}.
\end{equation}
\end{lemm}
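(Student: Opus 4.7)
My approach to proving Lemma \ref{var_lemma} is the classical variational method from potential theory. The plan is to take $b$ to be a minimizer of the functional
\begin{equation*}
J(a) := \int_{Q_0} |\bar T(a\eta)|^2\, a\, d\eta
\end{equation*}
over the convex, weak-$\ast$ compact set
\begin{equation*}
\mathcal{B} := \Bigl\{ a \in L^\infty_{\mathcal{M}}(\eta) : 0 \leq a \leq 2,\ \int_{Q_0} a\, d\eta = \eta(Q_0) \Bigr\}.
\end{equation*}
Since $1 \in \mathcal{B}$ and $J(1) = \int_{Q_0}|\bar T\eta|^2\, d\eta \leq \lambda\eta(Q_0)$, the existence of a minimizer $b$ (via weak-$\ast$ compactness of $\mathcal{B}$ together with the lower semicontinuity of $J$, which in turn follows from the $L^\infty_{\mathcal{M}} \to L^\infty_{\mathcal{M}}$ boundedness of $\bar T$ established in the corollary to Lemma \ref{pv_lemm}) immediately yields $\nu = b\eta$ with $J(b) \leq J(1) \leq \lambda\nu(Q_0)$, proving \eqref{loc_measure_variational_argument}.

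The next step is to derive the first-order optimality condition. For an $\mathcal{M}$-periodic $\phi$ with $\int_{Q_0}\phi\, d\eta = 0$ and such that $b + t\phi \in [0,2]$ for $t \in [0, t_0]$ small, expanding $J(b+t\phi)$ and reading off the linear term yields
\begin{equation*}
\left. \tfrac{d}{dt}J(b+t\phi)\right|_{t=0^+} = \int_{Q_0}\phi\bigl(|\bar T\nu|^2 + 2\bar T^*(\chi_{Q_0}(\bar T\nu)\nu)\bigr)\, d\eta.
\end{equation*}
Using $\mathcal{M}$-periodicity of $\phi$ together with the invariance $\mathcal{E}_{\bar A}(x+z_P, y+z_P) = \mathcal{E}_{\bar A}(x,y)$ for $P \in \mathcal{M}$ supplied by Lemma \ref{lemm_symm_mod_mat}, the sum over $\mathcal{M}$-translates of $Q_0$ can be recombined to identify the right-hand side with $\int_{Q_0}\phi\, F\, d\eta$, where $F := |\bar T\nu|^2 + 2\bar T^*((\bar T\nu)\nu)$. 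Minimality forces $\int_{Q_0}\phi\, F\, d\eta \geq 0$ for every admissible $\phi$, and the standard Lagrange/KKT analysis produces a constant $c$ such that $F = c$ on $\{0 < b < 2\}$, $F \leq c$ on $\{b = 2\}$, and $F \geq c$ on $\{b = 0\}$. In particular, $F \leq c$ $\nu$-a.e.

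It remains to show $c \leq 6\lambda$. By decomposing $\bar T\nu = \sum_{P\in\mathcal{M}} \bar T(\chi_{Q_0+z_P}\nu)$ and applying the same periodicity of $\mathcal{E}_{\bar A}$, a direct computation gives the key identity $\int_{Q_0} F\, d\nu = 3J(b)$. Setting $\bar b := 2 - b \in \mathcal{B}$ and $\bar\nu := \bar b\eta$, and testing the EL inequality $F \geq c$ on $\supp\bar\nu = \{b < 2\}$ against $\bar\nu$ gives
\begin{equation*}
c\,\eta(Q_0) \leq \int_{Q_0} F\, d\bar\nu = 2\int_{Q_0} F\, d\eta - 3J(b).
\end{equation*}
To control the right-hand side, I would invoke the minimality $J(\bar b) \geq J(b)$ and expand $|\bar T\bar\nu|^2 = |2\bar T\eta - \bar T\nu|^2$ against $\bar b\, d\eta$, combining the resulting cross-terms via Cauchy--Schwarz (using $\bar b \leq 2$) with the hypothesis $\int|\bar T\eta|^2\, d\eta \leq \lambda\eta(Q_0)$ and the bound $J(b)\leq\lambda\eta(Q_0)$. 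After careful accounting this yields $c \leq 6\lambda$, establishing \eqref{pw_ineq_ae}.

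The main obstacle is precisely this last step: because $J$ is a cubic, non-convex functional, standard convex duality does not apply directly, and the sharp factor $6$ requires tight balancing of all the cross-terms $\int \bar T\eta \cdot \bar T\nu\, d\mu$ (for $\mu \in \{\eta,\nu,\bar\nu\}$) arising in the $J(\bar b) - J(b)$ expansion, while strictly respecting the $\mathcal{M}$-periodic structure induced by $\bar A$. A secondary issue is the justification of weak-$\ast$ lower semicontinuity of $J$ needed for existence of the minimizer, which hinges on the uniform convergence on compact sets of the smoothly truncated operators from Lemma \ref{pv_lemm}.
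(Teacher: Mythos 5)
Your functional is genuinely different from the one the paper actually uses. The paper follows \cite[Lemma~9.1]{GT} and minimizes the \emph{penalized} functional
$J(a)=\lambda\|a\|_{L^\infty(\eta)}\eta(Q_0)+\int_{Q_0}|\bar T(a\eta)|^2\,a\,d\eta$
over the set $\mathcal A$ of nonnegative $\mathcal M$-periodic functions with $\int_{Q_0}a\,d\eta=\eta(Q_0)$, with \emph{no} a priori upper bound; the bound $b\leq 2$ then falls out automatically from $J(b)\leq J(1)\leq 2\lambda\eta(Q_0)$. You instead drop the $\lambda\|a\|_\infty$ penalty, keep only the cubic term, and impose the box constraint $0\leq a\leq 2$ by hand. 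Your existence argument, the computation of the Euler--Lagrange condition with multiplier $c$, and the key identity $\int_{Q_0}F\,d\nu=3J(b)$ via $\mathcal M$-periodicity are all correct.

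However, there is a genuine gap exactly where you flag it, and I do not think your route closes it. You reduce the claim $c\leq 6\lambda$ to
$c\,\eta(Q_0)\leq\int_{Q_0}F\,d\bar\nu=2\int_{Q_0}F\,d\eta-3J(b)$,
and this requires an upper bound of order $\lambda\eta(Q_0)$ on $\int_{Q_0}F\,d\eta=\int_{Q_0}|\bar T\nu|^2\,d\eta+2\int_{Q_0}\bar T\nu\cdot\bar T\eta\,d\nu$. The second summand is fine via Cauchy--Schwarz, but the first, $\int_{Q_0}|\bar T\nu|^2\,d\eta$, is only controlled against $b\,d\eta$ (that is the content of $J(b)\leq\lambda\eta(Q_0)$), and $b$ may be zero or arbitrarily small on a large part of $\supp\eta$. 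The competitor $\bar b=2-b$ that you propose makes things worse: expanding $J(\bar b)\geq J(b)$ as you suggest yields a \emph{lower} bound of the form $2\int|\bar T\nu|^2\,d\eta\geq 2J(b)-8\int|\bar T\eta|^2 d\eta+4\int|\bar T\eta|^2 d\nu+8\int\bar T\nu\cdot\bar T\eta\,d\eta-4\int\bar T\nu\cdot\bar T\eta\,d\nu$, i.e.\ it points in the wrong direction. Likewise, testing $F\geq c$ and $F\leq c$ against $\nu$ itself only produces $c\geq 3J(b)/\eta(Q_0)$, again a lower bound. In your framework the Lagrange multiplier $c$ associated with the mass constraint has no a priori link to $\lambda$ at all; in the degenerate case where $b$ takes only the values $0$ and $2$, $c$ can sit anywhere between $\sup_{\{b=2\}}F$ and $\inf_{\{b=0\}}F$, and nothing you have derived pins it down by $O(\lambda)$.

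This is precisely what the $\lambda\|a\|_\infty$ penalty buys in the paper's argument. When one perturbs the minimizer by adding a small amount of mass near a point of $\supp\nu$ and compensating elsewhere, the penalty term changes at first order by at most $\lambda\eta(Q_0)$ per unit increment of the sup norm; this linear-in-$\lambda$ term enters the Euler--Lagrange inequality on equal footing with $F$, which is what forces the multiplier to be comparable to $\lambda$. Without the penalty, that coupling disappears. To make your approach work you would have to either reinstate the penalty (at which point you are back to the paper's functional with the extra, now redundant, constraint $a\leq 2$), or find a qualitatively different competitor that produces an upper bound on $\int_{Q_0}|\bar T\nu|^2\,d\eta$, and I do not see one.
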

\begin{proof}
The proof is a minor variation of the proof of \cite[Lemma 9.1]{GT}. In particular, we recall that the way to prove \eqref{pw_ineq_ae} consists in defining an adapted energy functional
\begin{equation}
J(a)=\lambda\|a\|_{L^\infty(\eta)}\eta(Q_0)+\int_{Q_0}|\bar T(a\eta)|^2 d\eta,
\end{equation}
where $a$ ranges in
\begin{equation}
\mathcal A =\Big\{a\in L^\infty(\eta): a\geq 0 , a \text{ is }\mathcal M\text{-periodic, and }\int_{Q_0}a\,d\eta=\eta(Q_0)\Big\}.
\end{equation}
Then, one proves that $J$ admits a minimizer in $\mathcal{A}$ and gets \eqref{pw_ineq_ae} by taking proper competitors. The proof does not use the antisymmetry of the kernel of $\bar T$ but just its $\mathcal M$-periodicity which follows by the construction of $\bar A$.
\end{proof}
\subsection{A maximum principle}
Let $\lambda, b$ and $\nu$ be as in Lemma \ref{var_lemma}. In order to be able to perform the final argument to get the contradiction, we need to extend the inequality \eqref{pw_ineq_ae} out of the support of $\nu$.
More precisely, the next step consists in proving that a inequality similar to that provided by Lemma \ref{var_lemma} holds in a suitable strip. To this purpose, some version of maximum principle is needed. The elliptic setting of the problem makes this procedure slightly more technical then the one adopted by Girela-Sarri\'on and Tolsa in the case of the Riesz transform.

Before presenting the main result of the section, we introduce some notation.  We denote by $\tilde{H}$ the hyperplane
\begin{equation}
\tilde{H}\coloneqq  \{x\in\Rn1: x_{n+1}= 3\ell(Q_0)/2\},
\end{equation}
which corresponds to the translate of $H$ that contains the upper face of $3Q_0.$
Let $K_S\gg 1$ to be chosen later and let $S$ denote the strip
\begin{equation}
S\coloneqq  \{x\in\Rn1: \dist(x,\tilde{H})< K_S \ell(Q_0)\}.
\end{equation}
Its boundary $\partial S$ is given by the union of two hyperplanes $\partial S_+$ and $\partial S_-$ which lay in the upper and lower half spaces respectively. Let
\begin{equation}\label{x_s_pm}
x_{S\pm}{=\frac{3}{2}\ell(Q_0)(1,\ldots,1,1)\pm \big(0,\ldots,0,K_S\ell(Q_0)\big).}
\end{equation}
For the proof of our next lemma we need to invoke a result on elliptic measure. Suppose that $\Omega \subsetneq \Rn1$ is an open set with $n$-AD-regular boundary and consider a point $p\in\Omega$. Let $\omega^p_\Omega$ denote the elliptic measure on $\partial\Omega$ associated with the operator $L_{\bar A}$ with pole at $p$.
{For the proof of the following standard result we refer to \cite[Lemma 2.3]{AM}.}
\begin{lemm}Let $\Omega\subsetneq \Rn1$ be open with $n$-AD-regular boundary with constant $C_{AD}$. There exists $\vartheta=\vartheta(n,A,C_{AD})\in (0,1)$ such that for every $x\in\partial\Omega$ and $0<r<\diam\Omega$, we have
\begin{equation}\label{growth_elliptic_measure}
\omega_{\Omega}^y\big(B(x,r)^c\big)\leq C\Big(\frac{|x-y|}{r}\Big)^{\vartheta}\qquad \text{ for }y\in\Omega\cap B(x,r).
\end{equation}
\end{lemm}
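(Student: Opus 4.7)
The plan is to apply a standard iteration argument based on Bourgain-type non-degeneracy of elliptic measure at boundary points together with the maximum principle for $L_{\bar A}$. Set
\begin{equation}
u(y) \coloneqq \omega_\Omega^y\bigl(B(x,r)^c \cap \partial\Omega\bigr), \qquad y \in \Omega.
\end{equation}
Then $u$ is $L_{\bar A}$-harmonic in $\Omega$, with boundary trace $\chi_{\partial\Omega \setminus B(x,r)}$. The goal is to show that $u(y) \lesssim (|x-y|/r)^{\vartheta}$ for $y \in B(x,r) \cap \Omega$.

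First I would record the \textbf{Bourgain-type lower bound}: since $\partial\Omega$ is $n$-AD-regular and $L_{\bar A}$ is uniformly elliptic, there exists $c_0 = c_0(n, \bar A, C_{AD}) > 0$ such that for every $z \in \partial\Omega$ and every $0 < \rho < \diam\Omega$,
\begin{equation}\label{eq:bourgain}
\omega_\Omega^y\bigl(B(z,\rho) \cap \partial\Omega\bigr) \geq c_0 \qquad \text{for every } y \in \Omega \cap B(z,\rho/2).
\end{equation}
This is proved by testing the $L_{\bar A}$-harmonic measure against an appropriate barrier (e.g.\ a truncation of the fundamental solution supported near $z$) and using that the AD-regular set $B(z,\rho)\cap\partial\Omega$ has $n$-dimensional mass comparable to $\rho^n$, which prevents $L_{\bar A}$-capacitary degeneracy. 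This is exactly the content of the cited result, so I would quote it rather than reproduce it.

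Next comes the \textbf{geometric iteration}. Pick $k \in \mathbb{N}$ with $2^k |x-y| \leq r < 2^{k+1}|x-y|$, and for $j = 0,1,\dots,k$ consider the balls $B_j \coloneqq B(x, 2^{-j} r)$. Applying \eqref{eq:bourgain} with $z = x$ and $\rho = 2^{-j}r$, we obtain for any $y' \in \Omega \cap B(x, 2^{-j-1} r)$
\begin{equation}
\omega_\Omega^{y'}\bigl(B_j \cap \partial\Omega\bigr) \geq c_0,
\end{equation}
and consequently $\omega_\Omega^{y'}(\partial\Omega \setminus B_j) \leq 1 - c_0$. Since on $\partial\Omega$ the function $u$ vanishes on $B(x,r) \cap \partial\Omega \supset B_j \cap \partial\Omega$ and equals $1$ elsewhere, the maximum principle gives
\begin{equation}
u(y') \leq \omega_\Omega^{y'}\bigl(\partial\Omega \setminus B_j\bigr) \leq 1 - c_0 \qquad \text{for } y' \in \Omega \cap B(x, 2^{-j-1} r).
\end{equation}
Iterating this bound inside nested balls $B_0 \supset B_1 \supset \cdots \supset B_k$ by reapplying the maximum principle (comparing $u$ on $\Omega \cap B_{j+1}$ with its maximum on $\Omega \cap \partial B_j$, which is in turn controlled by $(1-c_0)$ times the previous maximum) yields
\begin{equation}
\sup_{\Omega \cap B(x, 2^{-k} r)} u \leq (1-c_0)^{k}.
\end{equation}
Finally, setting $\vartheta \coloneqq -\log_2(1-c_0) \in (0,1)$ and recalling $2^{-k} \leq 2|x-y|/r$, we get
\begin{equation}
u(y) \leq (1-c_0)^k = 2^{-k\vartheta} \leq 2^{\vartheta}\Bigl(\frac{|x-y|}{r}\Bigr)^{\vartheta},
\end{equation}
which is the desired estimate with $C = 2^\vartheta$.

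The main obstacle, and the only real input beyond soft arguments, is the Bourgain estimate \eqref{eq:bourgain}; once that is granted the rest is a clean maximum-principle iteration. Since the paper flags the statement as standard and cites \cite{AM}, I would just invoke \eqref{eq:bourgain} from there (it is proved precisely for elliptic operators of the class considered here, using the H\"older continuity of the coefficients of $\bar A$ and the $n$-AD-regularity of $\partial\Omega$) and present the iteration above.
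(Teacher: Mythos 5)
The paper itself does not prove this lemma; it cites \cite{AM} (Lemma~2.3 there), so there is no in-paper argument to compare against. Your proof is the standard one-variable version of the iteration that proves exactly such decay estimates, and it is, in substance, correct: the Bourgain non-degeneracy plus a dyadic maximum-principle iteration on nested balls is precisely the mechanism in the cited references. One small imprecision worth flagging: the form of the Bourgain estimate you record in \eqref{eq:bourgain} is the \emph{global} one ($\omega_\Omega^y(B(z,\rho)\cap\partial\Omega)\geq c_0$), but the decay step in the iteration — bounding $u$ on $\Omega\cap B_{j+1}$ by $(1-c_0)$ times its supremum on $\partial B_j\cap\Omega$ — actually needs the \emph{local} version $\omega^{y}_{\Omega\cap B_j}(\partial\Omega\cap B_j)\geq c_0$, since the comparison $\omega^{y}_{\Omega\cap B_j}(\partial\Omega\cap B_j)\geq \omega^{y}_\Omega(\partial\Omega\cap B_j)$ fails (it goes the other way by the maximum principle). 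This is harmless because the references prove the local version and derive the global one from it, but in a rigorous write-up you should state and invoke the local estimate when running the iteration in $\Omega\cap B_j$. With that adjustment your argument is the standard proof and matches what the paper is citing.
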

An application of \eqref{growth_elliptic_measure} gives a boundary regularity result for $L_{\bar{A}}$-harmonic functions, see e.g Lemma 2.10 in \cite{AGMT}.
\begin{lemm}\label{lemma_bd_estim}
Let $\Omega\subsetneq \Rn1$ be open with $n$-AD-regular boundary with constant $C_{AD}$.
Let $u\geq 0$ be $L_{\bar A}$-harmonic function in $B(x,4r)\cap \Omega$ and
continuous in $B(x,4r)\cap \bar \Omega$. Suppose, moreover, that $u\equiv 0$ in $\partial \Omega \cap B(x,4r)$. Then, extending $u$ by zero in $B(x,4r)\setminus \bar \Omega,$ there exists $\vartheta=\vartheta(n,A,C_{AD})\in (0,1)$ such that $u$ is $\vartheta$-H\"older continuous in $B(x,4r)$ and, in particular,
\begin{equation}
u(y)\lesssim_{n,A,C_{AD}} \Big(\frac{\dist(y,\partial\Omega)}{r}\Big)^\vartheta \sup_{B(x,2r)}u \qquad \text{ for all }y\in B(x,r).
\end{equation}
\end{lemm}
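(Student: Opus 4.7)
The plan is to first establish the pointwise decay estimate $u(y)\lesssim(\dist(y,\partial\Omega)/r)^\vartheta\sup_{B(x,2r)}u$ via the maximum principle combined with the elliptic measure estimate from the preceding lemma, and then to upgrade this pointwise bound to global $\vartheta$-Hölder continuity by interpolating with the standard interior De Giorgi--Nash--Moser regularity for divergence-form uniformly elliptic operators with bounded measurable coefficients.

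For the pointwise step I would fix $y\in B(x,r)\cap\Omega$ and set $d:=\dist(y,\partial\Omega)$. One may reduce to the case $d\leq r/2$, since otherwise $u(y)\leq\sup_{B(x,2r)}u$ already gives the result. Letting $y_0\in\partial\Omega$ realize the distance $d$, and working in the auxiliary domain $\Omega':=B(x,2r)\cap\Omega$, the maximum principle applied to the non-negative $L_{\bar A}$-harmonic function $u$ (which vanishes on $\partial\Omega\cap B(x,4r)$) yields
\begin{equation*}
u(y)\leq \sup_{B(x,2r)}u \cdot \omega^y_{\Omega'}\bigl(\partial B(x,2r)\cap\overline\Omega\bigr).
\end{equation*}
A triangle-inequality computation shows $|z-y_0|\geq 2r-|x-y|-d\geq r/2$ for every $z\in\partial B(x,2r)$, so $\partial B(x,2r)\cap\overline\Omega\subset B(y_0,r/2)^c$, and the growth estimate of the preceding lemma with center $y_0$ and radius $r/2$ produces $\omega^y_{\Omega'}(B(y_0,r/2)^c)\lesssim(d/r)^{\vartheta}$ for some $\vartheta=\vartheta(n,\bar A,C_{AD})\in(0,1)$. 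This closes the pointwise step.

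To derive Hölder continuity I would take $z_1,z_2$ in the relevant ball (extending $u$ by $0$ off $\overline\Omega$, which reduces any case involving points outside $\overline\Omega$ to the pointwise bound at a nearest boundary point) and compare $|z_1-z_2|$ to $d_i:=\dist(z_i,\partial\Omega)$. In the boundary regime $|z_1-z_2|\gtrsim\min(d_1,d_2)$, the triangle inequality forces both $d_i$ to be comparable to $|z_1-z_2|$, and applying the pointwise estimate separately to $z_1$ and $z_2$ already yields $|u(z_1)-u(z_2)|\lesssim(|z_1-z_2|/r)^{\vartheta}\sup_{B(x,2r)}u$. In the interior regime $|z_1-z_2|\ll d_1$ one has $B(z_1,d_1/2)\subset\Omega$, and the classical interior De Giorgi--Nash estimate gives $|u(z_1)-u(z_2)|\lesssim(|z_1-z_2|/d_1)^{\vartheta'}\sup_{B(z_1,d_1)}u$; combining with the pointwise bound $\sup_{B(z_1,d_1)}u\lesssim(d_1/r)^{\vartheta}\sup_{B(x,2r)}u$ and replacing $\vartheta$ by $\min(\vartheta,\vartheta')$ closes the Hölder estimate.

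The main technical obstacle I expect is justifying the applicability of the elliptic-measure growth estimate on $\Omega'$, whose boundary is in general not $n$-AD-regular even when $\partial\Omega$ is, since the sphere $\partial B(x,2r)$ can meet $\partial\Omega$ in a complicated way. To address this, I would either apply the maximum principle on a smaller clean subdomain of the form $B(y_0,\rho)\cap\Omega$ with $\rho\approx r$, where only the AD-regularity of $\partial\Omega$ near $y_0$ enters, or perturb the radius $2r$ slightly so that the resulting boundary inherits a controlled AD-regularity constant; the template from \cite{AGMT} indicates how to make this rigorous without changing the form of the final estimate.
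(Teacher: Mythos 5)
The paper itself gives no proof and simply cites Lemma 2.10 of \cite{AGMT}; your two-step scheme (pointwise decay from the elliptic-measure growth estimate, then H\"older continuity by interpolation with interior De Giorgi--Nash) is exactly the standard argument behind that citation, and your interpolation step is correct as written, with $\vartheta$ replaced by $\min(\vartheta,\vartheta')$ and a harmless enlargement of the ball over which the supremum is taken.

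The obstacle you flag is genuine: $\Omega':=B(x,2r)\cap\Omega$ need not have globally $n$-AD-regular boundary, which the stated form of \eqref{growth_elliptic_measure} requires, and neither of your proposed fixes clearly closes this. Re-centering at $y_0$ produces the same difficulty, and a perturbed radius does not obviously restore lower AD-regularity of $\partial\Omega'$ at the ``corner'' where the sphere meets $\partial\Omega$. The correct resolution, which you gesture at in your option (a), is that the growth estimate is local in nature: it is proved by iterating a Bourgain-type nondegeneracy lemma whose hypotheses involve only a lower density (capacity density) condition on $\partial\Omega'$ near the center, at the scales being iterated. Since $|y_0-x|\leq d+|y-x|<3r/2$ one has $\dist(y_0,\partial B(x,2r))\geq r/2$, hence $\partial\Omega'\cap B(y_0,r/2)=\partial\Omega\cap B(y_0,r/2)$, and the iteration centered at $y_0$ at scales $\leq r/4$ sees only the AD-regular piece of the boundary; this yields $\omega^y_{\Omega'}\bigl(B(y_0,r/4)^c\bigr)\lesssim(d/r)^\vartheta$ and closes the pointwise step. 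Alternatively, one can bypass elliptic measure of the truncated domain altogether by running the classical oscillation-decay argument, based on the weak Harnack inequality and the exterior density of $\Omega^c$ that lower AD-regularity of $\partial\Omega$ provides.
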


\begin{lemm}[Maximum principle on the strip]\label{max_princ_strip} Let $S$ be the strip as before and let $f$ be a bounded continuous $L_{\bar A}$-harmonic function on $S$ so that $f|_{\partial S}\equiv 0$. Then $f\equiv 0$ on $S$.
\end{lemm}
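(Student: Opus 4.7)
The plan is to prove the result by a barrier argument combined with the boundary Hölder regularity from Lemma \ref{lemma_bd_estim}. Fix $x_0 \in S$ and set $M \coloneqq \|f\|_{L^\infty(S)}$. For every $R$ large enough (depending on $|x_0|$), I would consider the bounded Wiener-regular domain $S_R \coloneqq S \cap B(0, 2R)$ and let $u_R$ be the bounded Perron solution of the Dirichlet problem for $L_{\bar A}$ on $S_R$ with boundary data equal to $0$ on $\partial S \cap \overline{B(0,2R)}$ and to $M$ on $\partial B(0, 2R) \cap \bar S$. Then $0 \leq u_R \leq M$, $u_R$ is $L_{\bar A}$-harmonic in $S_R$, and by Lemma \ref{lemma_bd_estim} applied locally it extends continuously by zero across the smooth portion $\partial S \cap B(0, 2R)$ of the boundary.

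Next, I would apply the standard maximum principle on the bounded domain $S_R$ to the $L_{\bar A}$-harmonic functions $u_R \pm f$: their lower boundary limits are non-negative at every regular point of $\partial S_R$ (indeed $\pm f = 0 = u_R$ on $\partial S \cap B(0, 2R)$ and $\pm f \leq M = u_R$ on the interior of $\partial B(0, 2R) \cap \bar S$), so $|f| \leq u_R$ throughout $S_R$. It therefore suffices to show $u_R(x_0) \to 0$ as $R \to \infty$.

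To this end, pick $\tilde x \in \partial S$ realizing $|\tilde x - x_0| = \dist(x_0, \partial S) \leq K_S\ell(Q_0)$, and apply Lemma \ref{lemma_bd_estim} with $\Omega = S$ at the point $\tilde x$ with radius $r \coloneqq R/8$. For $R$ large enough that $|\tilde x| + 4r < 2R$ one has $B(\tilde x, 4r) \cap S \subset S_R$, so $u_R$ is $L_{\bar A}$-harmonic in $B(\tilde x, 4r) \cap S$ and vanishes on $\partial S \cap B(\tilde x, 4r)$. Since $\partial S$ is trivially $n$-AD-regular as a union of two hyperplanes, the lemma yields
$$u_R(x_0) \lesssim \left(\frac{\dist(x_0, \partial S)}{r}\right)^\vartheta \sup_{B(\tilde x, 2r)\cap S} u_R \lesssim M \left(\frac{K_S\ell(Q_0)}{R}\right)^\vartheta.$$
Letting $R \to \infty$ forces $f(x_0) = 0$, and arbitrariness of $x_0 \in S$ finishes the proof.

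The most delicate point will be to justify rigorously that $u_R$ dominates $|f|$ on $S_R$ in spite of the jump discontinuity of its boundary data on the codimension-two corner set $\partial S \cap \partial B(0, 2R)$. I would handle this either through the Perron-Wiener-Brelot formalism for $L_{\bar A}$ (noting that the corner set has $L_{\bar A}$-capacity zero since $n > 1$), or by approximating $u_R$ by solutions corresponding to continuous data that smoothly interpolate between $0$ and $M$ near the corners and then passing to the limit. Neither the periodicity nor the reflection symmetries of $\bar A$ enter the argument — only ellipticity and Hölder continuity of the coefficients are used, so the same conclusion holds for any uniformly elliptic operator of this type on the infinite strip $S$.
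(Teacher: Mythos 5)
Your argument is correct and rests on the same essential ingredient as the paper's proof: exhaust $S$ by bounded Wiener-regular subdomains $S_R$ and use the quantitative H\"older decay near the two flat boundary hyperplanes to kill the influence of the far lateral boundary as $R\to\infty$. Where you differ is in execution. The paper applies the representation formula \emph{directly to $f$}, writing $f(p)=\int f\,d\omega^p_R$ and then bounding $\omega^p_R(\partial S_R\setminus\partial S)$ by the elliptic-measure growth estimate \eqref{growth_elliptic_measure}. You instead construct an explicit barrier $u_R$, invoke the maximum principle on $S_R$ to obtain $|f|\le u_R$, and then control $u_R(x_0)$ via the boundary regularity Lemma~\ref{lemma_bd_estim} --- which the paper itself derives from \eqref{growth_elliptic_measure}. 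The two routes are in fact equivalent, since $u_R(x_0)=M\,\omega^{x_0}_R(\partial S_R\setminus\partial S)$ up to a set of elliptic-measure zero; but the paper's version is a bit cleaner because it never manufactures a barrier with discontinuous boundary data, and hence sidesteps the corner-capacity issue you correctly flag (which your PWB/approximation remedy does handle, given $n>1$). Your closing observation that only ellipticity and H\"older continuity of $\bar A$, and none of its periodicity or reflection symmetry, enter this lemma matches the paper's proof as well.
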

\begin{proof}
Choose $R> 100 K_S$ and set $S_R\coloneqq  S\cap [-R,R]^{n+1}$. For $p\in S,$ denote $h_p\coloneqq  \dist(p,\partial S)$ and let $x_p$ be a point that realizes the distance. We choose $p$ far from the ``vertical" parts $\partial S_R\setminus (\partial S_+\cup\partial S_-)$ of $\partial S_R$, in particular such that $B(x_p, R/10)\cap (\partial S_R\setminus \partial S)=\emptyset$. Let $\omega^p_R$ denote the elliptic measure with pole at $p$ associated with $L_{\bar A}$ on $S_R$. The family $\{S_R\}_R$ is a collection of AD-regular sets whose AD-regularity constants do not depend on $R$. Then inequality \eqref{growth_elliptic_measure} implies that there exit two constants $C$ and $\vartheta$, both independent on $R$, such that
\begin{equation}
\omega^p_R(\partial S_R\setminus \partial S)\leq \omega^p_R\big(B(x_p,R/10)^c\big)\leq C\Big(\frac{h_p}{R}\Big)^{\vartheta}.
\end{equation}
By hypothesis we may assume that $f\leq 1$ on ${\partial S_R\setminus \partial S}.$ Thus, we have
\begin{equation}\label{end_lemm:elliptic_meas}
|f(p)|=\Big|\int f d\omega^p_R\Big|\leq \norm{f|_{\partial S_R\setminus \partial S}}_\infty\omega^p_R(\partial S_R\setminus \partial S)\leq C\Big(\frac{h_P}{R}\Big)^{\vartheta}.
\end{equation}
The results stated in the lemma follows by passing to the limit in \eqref{end_lemm:elliptic_meas} for $R\to \infty$.
\end{proof}
Now, we prove an existence result on the infinite strip $S$.
\begin{lemm}\label{lemm_aux_function_f}There exists a function $f_S\colon \bar{S}\to\mathbb{R}$ such that:
\begin{enumerate}
\item $f_S$ is $L_{{\bar A}}$-harmonic in the strip $S$ and continuous in  $\bar S$.
\item $f_S$ is $\M$-periodic.
\item $f_S(x)= \pm 1$ on $\partial S_{\pm}$ and $f_S(x)=0$ for $x\in\tilde{H}$.
\end{enumerate}
\begin{proof}
Let $k\in \mathbb{N},$ $k\geq 100 K_S$ and denote $S_k= S\cap [-k,k]^{n+1}$. We define the continuous functions $f_k$ on $\partial S_k$ as
\begin{equation}
f_k(x)=\frac{x_{n+1}-\frac{3}{2}\ell(Q_0)}{K_S\ell(Q_0)}.
\end{equation}
In particular, observe that $f_k(x)=\pm 1$ for $x\in \partial S_\pm$ and
\begin{equation}
f_k(x)=-f_k\Big(\big(x_1,\ldots,x_n,-x_{n+1}+ {3}\ell(Q_0)\big)\Big),
\end{equation}
i.e. it is antisymmetric with respect to $\tilde{H}$.

Define $u_k$ be the $L_{\bar{A}}$-harmonic function such that $u_k|_{\partial S_k}=f_k$, whose existence is guaranteed by the continuity of $f_k$ and the AD-regularity of $S_k$. Our aim is to prove that, a part from possibly considering a proper subsequence, $u_j$ converges uniformly in the compact subsets of $S_k$, for every $k$ to an $L_{\bar A}$-harmonic function in $S$.

We claim that there exist $\gamma\in(0,1)$ and $C_k>0$ such that
\begin{equation}\label{equi_holder}
|u_j(x)-u_j(y)|\leq C_k |x-y|^\gamma \qquad\text{for}\qquad x,y\in \bar S_k, \quad j\geq k+2.
\end{equation}
Assume that \eqref{equi_holder} holds. As a consequence of Ascoli-Arzel\`a's theorem together with a standard diagonalization argument, there is a function $f_S$ so that $u_k$ converges to $f_S$ uniformly on the compact subsets of $S$. The $L_{\bar{A}}$-harmonicity of $f_S$ is a consequence of Caccioppoli's esimate (cfr. \cite[Theorem 3.77]{HKM}).

To prove (2), define $\vec{v}=(6\ell(Q_0), 0,\ldots,0)$ and observe that, being the matrix $\bar{A}$ $\mathcal{M}$-periodic and since $f_S$ is constant on $\partial S_\pm$, the function $ f(x)=f_S(x)-f_S(x+\vec{v})$ satisfies the hypotesis of Lemma \ref{max_princ_strip}. So, $f\equiv 0$ and $f_S$ is $\mathcal{M}$-periodic.

To prove (3), first observe that $A(x)=A_\phi\Big(\big(x_1,\ldots,x_n,-x_{n+1}+ {3}\ell(Q_0)\big)\Big)$, where $\phi$ is the function that maps a point to its reflected with respect to $\tilde{H}$ and $A_\phi$ is defined as in \eqref{ch_var:A}.
Then we can apply again Lemma \ref{max_princ_strip} to
\begin{equation}
\tilde{f}(x)=f_S(x)+f_S\Big(\big(x_1,\ldots,x_n,-x_{n+1}+ {3}\ell(Q_0)\big)\Big),
\end{equation}
which is $L_{\bar A}$-harmonic and vanishes on $\partial S$.

We are left with the proof of the claim \eqref{equi_holder}. By Lemma \ref{lemma_bd_estim}, there exists $\vartheta \in (0,1)$ depending only on $n$, $\bar A$ and the AD-regularity of $\partial \Omega$ (hence independent both on $j$ and $k$) such that $u_j$ is $\vartheta$-H\"older continuous in the set $\{x\in\bar \Omega: \dist(x,\partial\Omega)\leq 2\ell(Q_0)\}$.
Being $\|u_j\|_\infty \leq 2$ for every $j$, by De Giorgi-Nash interior estimates we can infer that there exists $\gamma_k$ independent on $j$ such that, for every $j\geq k+2$, $u_j$ is $\gamma_k$-H\"older continuous in $\{x\in \Omega_{k+1}:\dist(x,\partial\Omega)> \ell(Q_0)\}.$ Gathering the interior and the boundary regularity of $u_j$ proves \eqref{equi_holder}.
\end{proof}
\end{lemm}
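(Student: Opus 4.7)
The plan is to build $f_S$ as a limit of Dirichlet solutions on truncated strips $S_k \coloneqq S \cap [-k,k]^{n+1}$, and then verify the three listed properties using the maximum principle of Lemma \ref{max_princ_strip}, which already does the heavy lifting of ``escaping to infinity.''

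Concretely, I would first prescribe on $\partial S_k$ the natural affine data $g_k(x) \coloneqq \bigl(x_{n+1} - \tfrac{3}{2}\ell(Q_0)\bigr)/\bigl(K_S\ell(Q_0)\bigr)$, which equals $\pm 1$ on $\partial S_{\pm}$ and vanishes on $\tilde H$, and is odd under the reflection $\phi$ across $\tilde H$. Since $\partial S_k$ is Lipschitz and hence Wiener regular, there is a unique continuous $L_{\bar A}$-harmonic extension $u_k$ of $g_k$ to $\bar S_k$, with $\|u_k\|_\infty \leq 1$. To extract a convergent subsequence via Arzel\`a--Ascoli, I would prove equi-H\"older continuity on every fixed compact $\bar S_m$ for $k \gg m$: on the flat portions $\partial S_\pm$ of the boundary, Lemma \ref{lemma_bd_estim} (applied to $u_k - (\pm 1)$) gives a boundary H\"older exponent $\vartheta$ depending only on $n, \bar A$ and the AD-regularity of $\partial S$; in the interior, the De Giorgi--Nash--Moser theorem provides a H\"older bound on $\{\dist(\cdot,\partial S) > \ell(Q_0)\} \cap S_m$ depending on $m$ but not on $k$. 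Gluing these and diagonalizing in $m$ yields a limit $f_S \in C(\bar S)$, which is $L_{\bar A}$-harmonic in $S$ by Caccioppoli and weak compactness, with the correct boundary values on $\partial S_\pm \cup \tilde H$.

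Given $f_S$, properties (2) and (3) follow by uniqueness-type arguments on the infinite strip. For periodicity, fix any translation vector $\vec v \in 6\ell(Q_0)\mathbb Z^n \times \{0\}$ and consider $F(x) \coloneqq f_S(x) - f_S(x+\vec v)$. Since $\bar A$ is $6\ell(Q_0)$-periodic in the horizontal directions by construction, $F$ is bounded continuous on $\bar S$, $L_{\bar A}$-harmonic inside $S$, and vanishes on $\partial S$ (both boundary pieces are horizontal and $g_k$ is a function of $x_{n+1}$ alone). Lemma \ref{max_princ_strip} forces $F \equiv 0$, giving $\M$-periodicity. For the vanishing on $\tilde H$, I would use the reflection symmetry: with $\phi(x) = (x_1,\ldots,x_n, -x_{n+1}+3\ell(Q_0))$, the matrix satisfies $\bar A \circ \phi = \bar A_\phi$ in the sense of Lemma \ref{lemma_change_var_fund_sol}, and since $\bar A_{j}(x) = \bar A(x)$ for $j=n+1$ by the construction of $\bar A$ (see Lemma \ref{lemm_symm_mod_mat}), the function $f_S \circ \phi$ is again $L_{\bar A}$-harmonic. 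Hence $G \coloneqq f_S + f_S\circ \phi$ is $L_{\bar A}$-harmonic, bounded, and vanishes on $\partial S_\pm$; another application of Lemma \ref{max_princ_strip} gives $G \equiv 0$, which forces $f_S = 0$ on $\tilde H$.

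The main technical obstacle is the uniform-in-$k$ H\"older estimate used to produce the limit: the interior modulus depends on $m$, so the diagonalization must be done carefully, and one needs to verify that the boundary H\"older constants from Lemma \ref{lemma_bd_estim} are genuinely independent of $k$ (this uses that the AD-regularity constant of $\partial S_k$ is uniform in $k$, since the new ``vertical'' faces are disjoint from any fixed $S_m$ once $k \geq m+2$). A secondary delicate point is the symmetry argument for vanishing on $\tilde H$: one must check that the reflected matrix $\bar A \circ \phi$ really equals $\bar A$, which is built into the definition of $\bar A$ via \eqref{welldefhat} and the periodization, and without this symmetry one only gets $f_S \circ \phi$ solving a different elliptic equation and the argument collapses.
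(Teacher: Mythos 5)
Your proposal follows the same route as the paper's proof: truncate the strip, impose the affine data $g_k$, solve the Dirichlet problem, establish equi-H\"older continuity by combining the boundary estimate of Lemma \ref{lemma_bd_estim} with interior De Giorgi--Nash estimates, pass to a limit via Arzel\`a--Ascoli and diagonalization, and then derive periodicity and antisymmetry from the maximum principle on the strip (Lemma \ref{max_princ_strip}) together with the built-in translation- and reflection-symmetries of $\bar A$. The one small refinement you add --- applying Lemma \ref{lemma_bd_estim} to $u_k - (\pm 1)$ so that the vanishing-on-boundary hypothesis is literally satisfied --- is a reasonable precision that the paper leaves implicit; otherwise the arguments match.
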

By the previous lemma, Lemma \ref{lemma_bd_estim} and the fact that $f_S\equiv 0$ on $\tilde{H}$, we have the estimate
\begin{equation}
|f_S(y)|\lesssim \Big(\frac{\dist(y,\tilde{H})}{K_S \ell(Q_0)}\Big)^{\vartheta}, \qquad\text{ for }\quad y\in S \text{ with }
\dist(y,\tilde H)\leq 10\ell(Q_0).
\end{equation}
Let us define the auxiliary function
\begin{equation}
F_S(x)\coloneqq  f_S(x)\bar T\nu(x_{S+}).
\end{equation}
Observe that $F_S|_{\partial S_\pm}\equiv \pm\bar{T}\nu(x_{S+})$.
The rest of the present section is devoted to the proof of the following, which may be regarded as an approximated maximum principle on $S$.
\begin{lemm}[Pointwise bound for the potential on the strip]\label{main_lemma_maximum_principle} For $x\in S$ we have
\begin{equation}
|\bar T\nu(x)-F_S(x)|^2 + 4 \bar T^*\big((\bar T\nu)\nu\big)(x)\lesssim \lambda^{1/2} + \frac{1}{K_S^{2\tilde\alpha}} + \frac{1}{K_S^{\vartheta}} +(C_S \ell(Q_0))^{\tilde\alpha} + \Big(\frac{K_S}{C_S}\Big)^{\tilde\alpha},
\end{equation}
where $C_S$ is a constant chosen so that $C_S \gg K_S$.
\end{lemm}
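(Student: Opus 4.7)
Set $G(x) := |\bar T\nu(x) - F_S(x)|^2 + 4\bar T^*((\bar T\nu)\nu)(x)$. Following the strategy of \cite{GT} in the Riesz setting, the plan is first to verify $G \lesssim \text{RHS}$ on the boundary data $\supp\nu \cup \partial S$ of the region $S \setminus \supp\nu$, and then to propagate this estimate into the interior of the strip via an approximate maximum principle. The elliptic adaptation will proceed through the freezing Lemma \ref{lemm_estim_freezing_periodic}, the reflection/periodicity symmetries of $\bar A$ from Lemma \ref{lemm_symm_mod_mat}, and the strip maximum principle (Lemma \ref{max_princ_strip}).

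\textbf{Boundary bounds.} On $\supp\nu$, Lemma \ref{var_lemma} gives $|\bar T\nu| \lesssim \lambda^{1/2}$ and $\bar T^*((\bar T\nu)\nu) \leq 3\lambda$. Since $\supp\nu$ lies within vertical distance $\lesssim \ell(Q_0)$ of $\tilde H$ while $\dist(\tilde H, \partial S) = K_S\ell(Q_0)$, the Hölder estimate (Lemma \ref{lemma_bd_estim}) applied to $f_S$ (which vanishes on $\tilde H$) will yield $|f_S(x)| \lesssim K_S^{-\vartheta}$ on $\supp\nu$; combined with the $L^\infty_{\mathcal M}$-bound for $\bar T\nu$ from Lemma \ref{pv_lemm}, $|F_S(x)| \lesssim K_S^{-\vartheta}$. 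Expanding the square and applying Cauchy--Schwarz gives
\[
G(x) \lesssim \lambda + \lambda^{1/2}K_S^{-\vartheta} + K_S^{-2\vartheta} \lesssim \lambda^{1/2} + K_S^{-\vartheta}\quad\text{on}\ \supp\nu.
\]
On $\partial S_+$, $F_S \equiv \bar T\nu(x_{S+})$ is constant, so $\bar T\nu - F_S = \bar T\nu(x) - \bar T\nu(x_{S+})$; by $\mathcal M$-periodicity of $\bar T\nu$ we reduce to $|x - x_{S+}| \lesssim \ell(Q_0)$. Freezing the matrix at $x_{S+}$ via Lemma \ref{lemm_estim_freezing_periodic} introduces an error $(C_S\ell(Q_0))^{\tilde\alpha}$ inside $B(x_{S+}, C_S\ell(Q_0))$, while the Calder\'on--Zygmund tail outside this ball contributes $(K_S/C_S)^{\tilde\alpha}$ from the decay $|\bar K(x,y)-\bar K(x_{S+},y)| \lesssim |x-x_{S+}|^{\tilde\alpha}|x_{S+}-y|^{-n-\tilde\alpha}$; the residual constant-coefficient term is $\lesssim K_S^{-\tilde\alpha}$ using the smoothness of $\nabla_1\Theta(\cdot,\cdot;\bar A(x_{S+}))$ at scale $|x - \supp\nu| \gtrsim K_S\ell(Q_0)$. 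The same reasoning will handle $\bar T^*((\bar T\nu)\nu)$, and the reflection symmetry \eqref{wdabar} delivers the analogue on $\partial S_-$.

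\textbf{Interior propagation and main obstacle.} The hardest step will be to propagate these boundary bounds into the interior of $S\setminus\supp\nu$ via an approximate maximum principle. Were $\bar A$ constant, each component of $\bar T\nu$ and of $\bar T^*((\bar T\nu)\nu)$ would be $L_{\bar A}$-, respectively $L_{\bar A^T}$-, harmonic off $\supp\nu$, $F_S$ is $L_{\bar A}$-harmonic in $S$ by Lemma \ref{lemm_aux_function_f}, and the square norm of a harmonic function is subharmonic; $G$ would thus attain its supremum on $\supp\nu \cup \partial S$, so Lemma \ref{max_princ_strip} would close the argument on the infinite strip. For variable $\bar A$ we plan to replace $\bar K(x,y)$ by $\nabla_1\Theta(x,y;\bar A(x))$ within the local ball $B(x, C_S\ell(Q_0))$, obtaining a companion function $G_0$ which is subharmonic for a constant-coefficient elliptic operator, with $|G-G_0|$ controlled by $(C_S\ell(Q_0))^{\tilde\alpha} + (K_S/C_S)^{\tilde\alpha}$ from the freezing and Calder\'on--Zygmund tail errors respectively. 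This is exactly the main obstacle: unlike the Riesz case, variable-coefficient elliptic operators do not commute with taking derivatives, so $\nabla_1\bar\E(\cdot,y)$ is not $L_{\bar A}$-harmonic and $|\bar T\nu|^2$ is not intrinsically subharmonic, which forces us to work with frozen approximations and absorb the discrepancy into the error terms on the right-hand side. The reflection symmetries of $\bar A$ in Lemma \ref{lemm_symm_mod_mat} and the $6\ell(Q_0)$-periodicity built into our construction of $\bar A$ are what enable the freezing argument to close, with the choice $C_S \gg K_S$ optimizing the balance between the two error terms.
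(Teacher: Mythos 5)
Your boundary estimates are in line with the paper (they correspond to Lemma \ref{lemm_comparison_plus_minus}, the corollary after it, and Lemma \ref{lem_estim_FS_close}), and you correctly identify that the crux is propagating these bounds into the interior of $S$. However, the propagation step as you describe it has a genuine gap.

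You propose to replace $\bar K(x,y)$ by $\nabla_1\Theta(x,y;\bar A(x))$ and claim the resulting companion $G_0$ is ``subharmonic for a constant-coefficient elliptic operator.'' This is not so: the frozen kernel $\nabla_1\Theta(x,y;\bar A(x))$ has coefficients that move with $x$, so $x\mapsto\int \nabla_1\Theta(x,y;\bar A(x))\,d\nu(y)$ solves no fixed constant-coefficient equation, and there is no elliptic operator for which $G_0$ is globally subharmonic on $S\setminus\supp\nu$. Freezing at a single reference point does not help either, because the freezing lemma (Lemma \ref{lemm_estim_freezing_periodic}) only controls $\nabla_1\E_{\bar A}(x,y)-\nabla_1\Theta(x,y;\bar A(x))$ with the matrix frozen \emph{at $x$}, not at a fixed far-away point. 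In short, your reduction to a constant-coefficient subharmonic comparison function does not close.

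The mechanism the paper actually uses is different and is worth understanding. First, one observes that for \emph{variable} $\bar A$ the adjoint potential $\bar T^*(\vec g\,\mathcal L^{n+1})$ of a compactly supported density $\vec g$ is genuinely $L_{\bar A^T}$-harmonic (equivalently $L_{\bar A}$-harmonic, since we have reduced to $\bar A$ symmetric) away from $\supp\vec g$; this is a Fubini computation using $\bar\E(x,y)=\E_{\bar A^T}(y,x)$ and the definition of fundamental solution, and it is the key structural fact. Second, the square is linearized by the elementary identity $|z|^2=\sup_{\beta\geq 0,e\in\mathbb S^n}\big(2\langle e,z\rangle-\beta^2\big)$, so that for each fixed $(e,\beta)$ one works with the scalar expression $-2\bar T^*(\nu e)(x)-2\langle e,F_S(x)\rangle-\beta^2+4\bar T^*\big((\bar T\nu)\nu\big)(x)$, which \emph{is} $L_{\bar A}$-harmonic on $S\setminus\supp\nu$ and to which Lemma \ref{max_princ_strip} applies directly. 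Third, the passage between $\langle e,\bar T\nu\rangle$ and $-\bar T^*(\nu e)$ is quantified: since $\nabla_1\Theta(x,y;\bar A(x))$ is antisymmetric in $(x,y)$, the near-range part of $\tilde K_r(x,y)+\tilde K_r(y,x)$ is controlled by $(C_S\ell(Q_0))^{\tilde\alpha}$ via the freezing lemma, while the far-range part is controlled by $(K_S/C_S)^{\tilde\alpha}$ using the reflection symmetry \eqref{wdabar} and the Calder\'on--Zygmund estimates; this is where $C_S$ enters. Your error heuristics for these two regimes are basically right, but they must be attached to the antisymmetry defect of $\bar K$, not to a nonexistent frozen constant-coefficient comparison, and the maximum principle must be applied to the \emph{linear} $L_{\bar A}$-harmonic competitors produced by the $\sup$-representation rather than to a squared quantity.
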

Before proving this lemma, we need some auxiliary result.
\begin{lemm}\label{lemm_comparison_plus_minus}Let $x_{S+}$ and $x_{S-}$ as in \eqref{x_s_pm}. Then:
\begin{enumerate}
\item \label{lpot1} For $x\in\partial S_+$, $\dist(x,x_{S+})\lesssim \ell(Q_0)$ we have the estimate
\begin{equation}\label{oscillation_strip}
|\bar T\nu(x)-\bar T\nu(x_{S+})|\lesssim \frac{1}{K_S^{\tilde\alpha}}.
\end{equation}
The analogous estimate holds in $x\in\partial S_-$ replacing $x_{S+}$ with $x_{S-}$.
\item \label{lpot2} The difference of $-\bar T\nu(x_{S+})$ and $\bar T\nu(x_{S-})$ can be estimated as
\begin{equation}
|\bar T\nu(x_+)+\bar T\nu(x_{S-})|\lesssim \frac{1}{K_S^{\tilde\alpha}}.
\end{equation}
\item  For $x$ with $\dist(x,\tilde H)\geq 2\ell(Q_0)$ we have
\begin{equation}\label{esti_T_star}
\bar T^*\big((\bar T\nu)\nu\big)(x)\lesssim \lambda^{1/2}.
\end{equation}
\end{enumerate}
\end{lemm}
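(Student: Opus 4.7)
\textit{Approach and part (\ref{lpot1}).} All three estimates exploit that $x_{S\pm}$ sit at distance $\gtrsim K_S\ell(Q_0)$ from $\supp\nu$, making the kernel smooth there, together with the reflection symmetries $\E_{\bar A}(\psi_j(x),\psi_j(y))=\E_{\bar A}(x,y)$ built into $\bar A$ (Lemma \ref{lemm_symm_mod_mat}). For part (\ref{lpot1}), apply the $\tilde\alpha$-Hölder continuity of $\bar K$ in the first variable (Lemma \ref{CZ_l}(\ref{czl.2})):
$$|\bar K(x,y)-\bar K(x_{S+},y)|\lesssim \frac{\ell(Q_0)^{\tilde\alpha}}{|x_{S+}-y|^{n+\tilde\alpha}},$$
valid for $x\in\partial S_+$ with $|x-x_{S+}|\lesssim \ell(Q_0)$ and $y\in\supp\nu$. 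Integrating against $\nu$ using the $n$-dimensional polynomial growth of $\nu$ together with $\dist(x_{S+},\supp\nu)\gtrsim K_S\ell(Q_0)$, a dyadic decomposition yields $\int|x_{S+}-y|^{-n-\tilde\alpha}\,d\nu(y)\lesssim (K_S\ell(Q_0))^{-\tilde\alpha}$, which combined with the $\ell(Q_0)^{\tilde\alpha}$ factor gives the desired $K_S^{-\tilde\alpha}$; the argument at $x_{S-}$ is identical.

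\textit{Part (\ref{lpot2}).} Differentiating the symmetry identity $\E_{\bar A}(\psi_{n+1}x,\psi_{n+1}y)=\E_{\bar A}(x,y)$ in $x$ and using $\psi_{n+1}(x_{S+})=x_{S-}$ yields the pointwise identity $\bar K(x_{S-},y)=D\psi_{n+1}\bar K(x_{S+},\psi_{n+1}(y))$. Substituting rewrites
$$\bar K(x_{S+},y)+\bar K(x_{S-},y)=(I+D\psi_{n+1})\bar K(x_{S+},y)+D\psi_{n+1}\bigl[\bar K(x_{S+},\psi_{n+1}(y))-\bar K(x_{S+},y)\bigr].$$
The bracketed term is $O(\ell(Q_0)^{\tilde\alpha}|x_{S+}-y|^{-n-\tilde\alpha})$ by the Hölder continuity in the second variable, since $|y-\psi_{n+1}(y)|\lesssim\ell(Q_0)$ on $\supp\nu$, and it integrates to $O(K_S^{-\tilde\alpha})$ as in part (\ref{lpot1}). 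The first term equals $(I+D\psi_{n+1})\bar T\nu(x_{S+})$ and projects onto the horizontal components of $\bar T\nu(x_{S+})$ (since $I+D\psi_{n+1}=\mathrm{diag}(2,\ldots,2,0)$). For each $j\in\{1,\ldots,n\}$ the same identity applies with $\psi_j$, using that $x_{S+}$ lies on the symmetry plane $P_j$ so $\psi_j(x_{S+})=x_{S+}$; this yields $2(\bar T\nu)_j(x_{S+})=(\bar T\nu-\bar T(\psi_j\sharp\nu))_j(x_{S+})$, and the right-hand side is again $O(K_S^{-\tilde\alpha})$, because $\psi_j\sharp\nu$ is the same periodic measure shifted by a half-period in the $j$-direction and both $\nu$ and $\psi_j\sharp\nu$ approximate the $\psi_j$-invariant flat measure $c_0\mathcal H^n|_H$ by the flatness assumption, so their difference pairs smallly against the slowly-varying kernel $\bar K(x_{S+},\cdot)$.

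\textit{Part (3) and main obstacle.} Cauchy-Schwarz applied to \eqref{loc_measure_variational_argument} gives $|\int_{Q_0}\bar T\nu\,d\nu|\leq \nu(Q_0)^{1/2}(\int_{Q_0}|\bar T\nu|^2 d\nu)^{1/2}\leq\lambda^{1/2}\nu(Q_0)$, which is the source of the $\lambda^{1/2}$. For $x$ with $\dist(x,\tilde H)\geq 2\ell(Q_0)$, one has $\dist(x,\supp\nu)\gtrsim\ell(Q_0)$; using the $\mathcal M$-periodicity of $\bar T\nu$, write
$$\bar T^*\bigl((\bar T\nu)\nu\bigr)(x)=\sum_{P\in\mathcal M}\int_{Q_0}\bar K(y+z_P,x)\cdot\bar T\nu(y)\,d\nu|_{Q_0}(y),$$
and on each periodic copy split $\bar K(y+z_P,x)=\bar K(z_{Q_0}+z_P,x)+O\bigl(\ell(Q_0)^{\tilde\alpha}|x-z_{Q_0}-z_P|^{-n-\tilde\alpha}\bigr)$. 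The leading piece sums to $\bigl(\sum_P\bar K(z_{Q_0}+z_P,x)\bigr)\cdot\int_{Q_0}\bar T\nu\,d\nu\lesssim\lambda^{1/2}$ by the polynomial decay of $\bar K$, while the Hölder remainder is absorbed using the same Cauchy-Schwarz bound $\int|\bar T\nu|\,d\nu|_{Q_0}\leq\lambda^{1/2}\nu(Q_0)$ and the convergent lattice sum. The main obstacle is part (\ref{lpot2}): the support of $\nu$ lies near $H$ while the reflection symmetry of $\bar A$ is across the offset plane $\tilde H$, so one cannot directly invoke a symmetry argument to cancel $\bar T\nu(x_{S+})$ against $\bar T\nu(x_{S-})$; the proof instead peels off the horizontal components of $\bar T\nu(x_{S+})$ one at a time via the $n$ horizontal reflection symmetries, each step paying a Hölder error and using that the flat periodic measure $\nu$ is approximately invariant under each horizontal reflection.
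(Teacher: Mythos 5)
Part (\ref{lpot1}) is correct and matches the paper: one uses the global H\"older estimate $|\bar K(x,y+z_P)-\bar K(x_{S+},y+z_P)|\lesssim\ell(Q_0)^{\tilde\alpha}\big((K_S\ell(Q_0))^{n+\tilde\alpha}+|z_P|^{n+\tilde\alpha}\big)^{-1}$ and sums over $P$.

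There is, however, a genuine gap in your argument for part (\ref{lpot2}). You reflect only across $\tilde H$ (i.e.\ apply $\psi_{n+1}$), which yields $\bar K(x_{S-},y)=D\psi_{n+1}\bar K(x_{S+},\psi_{n+1}(y))$ with $D\psi_{n+1}=\operatorname{diag}(1,\ldots,1,-1)$. Since $I+D\psi_{n+1}=\operatorname{diag}(2,\ldots,2,0)$ is \emph{not} zero, the leading term $(I+D\psi_{n+1})\bar T\nu(x_{S+})$ is not small a priori; you are left needing to show that the $n$ horizontal components of $\bar T\nu(x_{S+})$ are individually $O(K_S^{-\tilde\alpha})$. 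Your attempt to do so writes $2(\bar T\nu)_j(x_{S+})=(\bar T\nu-\bar T(\psi_j\sharp\nu))_j(x_{S+})$ and then asserts that $\nu$ and $\psi_j\sharp\nu$ are close because both ``approximate the flat measure''. But the measure $\nu=b\eta$ is obtained from a variational minimizer $b$ and there is no control whatsoever on how $b$ behaves under the reflection $\psi_j$; the flatness estimate $\alpha^H_{\tilde\mu}(3MQ_0)\lesssim\tilde\delta$ controls the measure at one fixed scale and does not translate into smallness of $\bar T(\nu-\psi_j\sharp\nu)(x_{S+})$. The paper avoids this issue entirely by using the \emph{composition} $x^*=\psi_1\circ\cdots\circ\psi_{n+1}(x)=2x_0-x$ (a point reflection through $x_0=\frac32\ell(Q_0)(1,\ldots,1)$, which maps $x_{S+}\mapsto x_{S-}$) whose differential is $-Id$. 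Then Lemma \ref{lemma_change_var_fund_sol} together with the built-in symmetries of $\bar A$ gives the exact identity $\bar K(x_{S+},y+z_P)=-\bar K(x_{S-},y^*+z_P^*)$ (with $z_P^*=-z_P$), so there is no leftover ``projection'' term at all, and the error is purely the H\"older mismatch $|y+z_P-(y^*+z_P)|=|y-y^*|\lesssim\ell(Q_0)$, handled as in part (\ref{lpot1}). The full point symmetry, not a single reflection, is the essential idea you are missing.

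Part (3) also has a gap. You split $\bar K(y+z_P,x)=\bar K(z_{Q_0}+z_P,x)+O(\ell(Q_0)^{\tilde\alpha}|x-z_{Q_0}-z_P|^{-n-\tilde\alpha})$ and then claim the leading term $\sum_P\bar K(z_{Q_0}+z_P,x)$ is controlled ``by the polynomial decay of $\bar K$''. But $\bar K$ decays only like $|z|^{-n}$, and you are summing over the $n$-dimensional lattice $6\ell(Q_0)\mathbb{Z}^n$, so $\sum_P|\bar K(z_{Q_0}+z_P,x)|$ diverges logarithmically; mere size estimates do not suffice. The paper's proof decomposes $\bar K(y+z_P,x)=(Id+\nabla\chi_\ell(y+z_P))\nabla_1\Theta(y+z_P,x;A_0)+O(\ell(Q_0)^\gamma|x-y-z_P|^{-n-\gamma})$ using Lemma \ref{lemm_estim_freezing_periodic}: the remainder has improved decay $-n-\gamma$ (so the lattice sum converges absolutely), while the leading constant-coefficient piece is handled via the oddness and homogeneity of $\nabla_1\Theta(\cdot;A_0)$, which yields cancellation across paired lattice points $\pm z_P$ exactly as in \cite[(8.20)]{GT}. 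Your argument never introduces the homogenized kernel and so has no mechanism to produce the needed cancellation. The Cauchy--Schwarz step producing the $\lambda^{1/2}$ is correct, but it must be coupled with this convergence argument.
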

\begin{proof}Let us begin with the proof of (\ref{lpot1}). Because of the $\mathcal{M}$-periodicity of $\bar T\nu$, we can assume without loss of generality that $x_H\in [-3\ell(Q_0),3\ell(Q_0)]^n\times\{0\}$, $x_H$ denoting the projection of $x$ on $H$.
We claim that for $P\in\M$ and $y\in Q_0$ we have
\begin{equation}
|\bar K(x,y+z_P)-\bar K (x_{S+},y+z_P)|\lesssim \frac{\ell(Q_0)^{\tilde\alpha}}{(K_S \ell(Q_0))^{n+\tilde\alpha}+|z_P|^{n+\tilde\alpha}}.
\end{equation}
This follows from the (global) Calder\'on-Zygmund estimates for $\bar K(\cdot,\cdot)$ once we observe that $|x-x_{S+}|\lesssim |x-y-z_P|\approx K_S \ell(Q_0) + |z_P|.$ So, for $r>0$, standard calculations give
\begin{equation}
\begin{split}
\big|\tilde T_r\nu(x)-\tilde T_r\nu(x_{S+})\big|&\lesssim \sum_{P\in\M}\int_{Q_0}\frac{\ell(Q_0)^{\tilde\alpha}}{(K_S \ell(Q_0))^{n+\tilde\alpha}+|z_P|^{n+\tilde\alpha}}d\nu(y)\\
&=\sum_{P\in\M}\frac{\ell(Q_0)^{n+\tilde\alpha}}{(K_S \ell(Q_0))^{n+\tilde\alpha}+|z_P|^{n+\tilde\alpha}}\lesssim \frac{\ell(Q_0)^{n+\tilde\alpha}}{(K_S \ell(Q_0))^{n+\tilde\alpha}}=\frac{1}{K_S^{\tilde\alpha}}.
\end{split}
\end{equation}
Being this estimate independent on the choice of $r$, in the limit for $r\to \infty$ we have \eqref{oscillation_strip}. The proof of the analogous estimate for $x_{S-}$ is identical, so we omit it and we go to the proof of (\ref{lpot2}).

Denote by $x^*$ the reflection of the point $x$ across $x_0=\frac{3}{2}\ell(Q_0)(1,\ldots,1)$, i.e.
\begin{equation}
x^*=2x_0-x.
\end{equation}
By the specific choice of $x_0$, this transformation can be obtained via a composition of the reflections $\psi_j$'s with respect to the hyperplanes passing through $x_0$ which we defined in \eqref{psijdef}:
\begin{equation}
x^*=\psi_1 \circ \cdots\circ \psi_{n+1}(x).
\end{equation}
Moreover,
\begin{equation}\label{lem1171}
(x_{S+})^*=3\ell(Q_0)(1,\ldots,1)-\frac{3}{2}\ell(Q_0)(1,\ldots,1)-\big(0,\ldots,0,K_S\ell(Q_0)\big)=x_{S_-}.
\end{equation}
Thus, an immediate application of Lemma \ref{lemma_change_var_fund_sol} and \eqref{wdabar} gives that, for $y\in Q_0$,
\begin{equation}\label{lem1172}
\bar K (x_{S_+},y+z_P)=-\bar K(x_{S_-},y^*+z_P^*), \qquad \qquad P\in\mathcal M.
\end{equation}
Observe that 
\begin{equation}
|y+z_P-(y^*-z_P^*)|\leq|y-y^*| + |z_P-(-z^*_{P})|\lesssim \ell (Q_0)
\end{equation}
which, combined with Lemma \ref{CZ_l}, gives
\begin{equation}\label{eq_lem_refl_a}
\begin{split}
&\big|\bar K (x_{S_+},y+z_P)+\bar K(x_{S_-},y-z_P)\big|\\
&\qquad \overset{\eqref{lem1172}}{=}|\bar K(x_{S_+},y+z_P) -\bar K(x_{S_-}^*,y^*-z_P^*)|\\
&\qquad {=}|\bar K (x_{S_+},y+z_P)-\bar K (x_{S_+},y^*-z_P^*)|
\lesssim \frac{\ell(Q_0)^{\tilde\alpha}}{(K_S\ell(Q_0))^{n+\tilde\alpha}+|z_P|^{n+\tilde\alpha}},
\end{split}
\end{equation}
where the second equality uses \eqref{lem1171} (with $-z_P=z_{-P}$ replacing $z_P$).
Taking $r>0$ and applying \eqref{eq_lem_refl_a}, we have
\begin{equation}
\big|\tilde{T}_r\nu(x_{S^+})+\tilde{T}_r\nu(x_{S^-})\big|\lesssim \sum_{P\in\mathcal{M}}\frac{\ell(Q_0)^{n+\tilde\alpha}}{(K_S\ell(Q_0))^{n+\tilde\alpha}+|z_P|^{n+\tilde\alpha}}\lesssim \frac{1}{K_S^{\tilde\alpha}}
\end{equation}
which, taking the limit for $r\to \infty$, proves \eqref{lpot2}.

We are left with the proof of (3). Set $\sigma=(\bar T \nu)\nu$ and observe that this measure is $\mathcal{M}$-periodic. So, without loss of generality, we can assume that $x_H\in [-3\ell(Q_0),3\ell(Q_0)]^n\times \{0\}$. Let $r>0$ and, denoting by $A_0$ the homogenized matrix associated with $\bar A$, by $\chi$ the vector of correctors and $\ell=6\ell(Q_0)$, write
\begin{equation}
\begin{split}
\tilde T_r\sigma(x)&=\sum_{P\in \mathcal{M}}\int_{Q_0}\tilde{K}_r(y+z_P,x)d\sigma(y)\\
&=\sum_{P\in \mathcal{M}}\int_{Q_0}\big(\bar K(y+z_P,x)-(Id+\nabla\chi_\ell(y+z_P))\nabla_1\Theta(y+z_P,x;A_0)\big)\varphi_r(x-y-z_P)d\sigma(y)\\
&\quad+\sum_{P\in \mathcal M}\int_{Q_0}(Id+\nabla\chi_\ell(y+z_P))\nabla_1\Theta(y+z_P,x;A_0)\varphi_r(x-y-z_P)d\sigma(y)\\
&\equiv I_r + II_r.
\end{split}
\end{equation}
Recalling that $\|\nabla\chi_\ell\|_\infty\lesssim 1$ and using Lemma \ref{lemm_estim_freezing_periodic}, we can proceed with the following estimates
\begin{equation}\label{part_I_r}
\begin{split}
|I_r|&\lesssim \sum_{P\in \mathcal{M}}\int_{Q_0}\frac{\ell(Q_0)^\gamma}{|x-y-z_P|^{n+\gamma}}d|\sigma|(y)\lesssim \sum_{P\in \mathcal{M}} \int_{Q_0}\frac{\ell(Q_0)^\gamma}{(\dist(x,\tilde{H})+|z_P|)^{n+\gamma}}d|\sigma|(y)\\
&\lesssim \frac{\ell(Q_0)^{n+\gamma}}{(\dist(x,\tilde H)+|z_P|)^{n+\gamma}}\frac{|\sigma|(Q_0)}{\ell(Q_0)^{n}}
\lesssim \frac{\ell(Q_0)^{n+\gamma}}{\dist(x,\tilde{H})^\gamma}\frac{|\sigma|(Q_0)}{\ell(Q_0)^n}\lesssim \frac{|\sigma|(Q_0)}{\ell(Q_0)^n},
\end{split}
\end{equation}
where the last inequality holds because we assumed $\dist(x,\tilde{H})\geq 2\ell(Q_0)$. We claim that
\begin{equation}\label{part_II_r}
|II_r|\lesssim \frac{|\sigma|(Q_0)}{\ell(Q_0)}.
\end{equation}
It is possible to prove this estimate analogously to the case of the Riesz transform. We omit its proof in order not to make the presentation too lengthy. We remark that the calculations that lead to  \eqref{part_II_r} solely relies  on the Calder\'on-Zygmung property of the kernel and some geometric considerations that are independent on its specific expression. We refer to \cite[(8.20)]{GT} for more details.

Hence, gathering \eqref{part_I_r}, \eqref{part_II_r} and passing to the limit on $r$, we get
\begin{equation}
\bar T^*\big((\bar T\nu)\nu\big)(x)\lesssim \frac{1}{\ell(Q_0)^n}\int_{Q_0} |\bar T\nu|d\nu.
\end{equation}
Then, recalling \eqref{loc_measure_variational_argument}, the growth of $\nu$ and using Cauchy-Schwarz's inequality,
\begin{equation}
\bar T^*\big((\bar T\nu)\nu\big)(x)\lesssim \frac{1}{\ell(Q_0)^n}\Big(\int_{Q_0}|\bar T\nu|^2d\nu\Big)^{1/2}\nu(Q_0)\lesssim \lambda^{1/2},
\end{equation}
which finishes the proof of (3).
\end{proof}
The following result is a direct consequence of Lemma \ref{lemm_comparison_plus_minus}.
\begin{coroll} For $x\in\partial S$ 
\begin{equation}\label{estim_stric}
|\bar T\nu(x)-F_S(x)|^2\lesssim \frac{1}{K_S^{2\tilde\alpha}},
\end{equation}
where the implicit constant does not depend on $S$.
\end{coroll}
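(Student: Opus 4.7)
The plan is to treat the two boundary components $\partial S_+$ and $\partial S_-$ separately, using the $\mathcal{M}$-periodicity of $\bar T\nu$ to reduce in each case to a point at distance $\lesssim \ell(Q_0)$ from the distinguished reference point $x_{S\pm}$.

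First, recall that by construction $f_S\equiv \pm 1$ on $\partial S_\pm$, so on $\partial S_+$ we have $F_S(x)=\bar T\nu(x_{S+})$ and on $\partial S_-$ we have $F_S(x)=-\bar T\nu(x_{S+})$. Since both $\bar T\nu$ and the strip $S$ are $\mathcal M$-periodic, given any $x\in \partial S_+$ there exists $z_P$ with $P\in\mathcal M$ such that $x-z_P$ lies in a fixed bounded piece of $\partial S_+$ containing $x_{S+}$, with $|x-z_P-x_{S+}|\lesssim \ell(Q_0)$, and $\bar T\nu(x)=\bar T\nu(x-z_P)$. The analogous reduction applies on $\partial S_-$ with reference point $x_{S-}$.

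For $x\in \partial S_+$, after this reduction Lemma \ref{lemm_comparison_plus_minus}(1) immediately gives
\begin{equation}
|\bar T\nu(x)-F_S(x)|=|\bar T\nu(x)-\bar T\nu(x_{S+})|\lesssim \frac{1}{K_S^{\tilde\alpha}}.
\end{equation}
For $x\in \partial S_-$, after reduction we write
\begin{equation}
\bar T\nu(x)-F_S(x)=\bigl(\bar T\nu(x)-\bar T\nu(x_{S-})\bigr)+\bigl(\bar T\nu(x_{S-})+\bar T\nu(x_{S+})\bigr),
\end{equation}
and estimate the first summand by Lemma \ref{lemm_comparison_plus_minus}(1) (applied at $x_{S-}$) and the second by Lemma \ref{lemm_comparison_plus_minus}(2); both give a bound of order $K_S^{-\tilde\alpha}$. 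Squaring yields \eqref{estim_stric} in both cases.

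Since neither the periodicity reduction nor the invoked parts of Lemma \ref{lemm_comparison_plus_minus} introduce any dependence on $K_S$ beyond the stated $K_S^{-\tilde\alpha}$ factor (the implicit constants there depend only on $n$, $\|\bar A\|_{\tilde\alpha}$, $\Lambda$, and the growth constant of $\nu$), the resulting implicit constant in \eqref{estim_stric} is independent of $S$. There is no real obstacle here: the corollary is essentially a bookkeeping consequence of the two oscillation bounds already established, the only point that needs a brief comment being the use of $\mathcal{M}$-periodicity to move a general boundary point to the horizontal neighborhood of $x_{S\pm}$ where parts (1) and (2) of the lemma were formulated.
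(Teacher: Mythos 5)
Your proof is correct and matches the paper's intent: the paper records this corollary as a direct consequence of Lemma \ref{lemm_comparison_plus_minus} without further argument, and your writeup simply spells out that deduction, using $F_S|_{\partial S_\pm}\equiv\pm\bar T\nu(x_{S+})$, the $\mathcal M$-periodicity of $\bar T\nu$ to move to the fundamental domain (exactly as the paper does inside the proof of Lemma \ref{lemm_comparison_plus_minus}(1)), and parts (1)--(2) of that lemma.
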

Another result which is needed for the application of the maximum principle is the estimate of $|F_S(x)|$ for $x$ close to the support of the measure $\nu$.
\begin{lemm} \label{lem_estim_FS_close}{ Let $K_S$ be an odd natural number, $K_S\geq 3$.} For $x\in \Rn1$ with {$\dist(x,\tilde H)\leq 10\ell(Q_0)$} we have
\begin{equation}
|F_S(x)|\lesssim \frac{1}{K_S^{\vartheta}}.
\end{equation}
\end{lemm}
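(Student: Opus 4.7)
The plan is to write $F_S(x) = f_S(x)\,\bar T\nu(x_{S+})$ and estimate the two factors separately. For the factor $|f_S(x)|$, I would invoke the H\"older-type bound recorded immediately before the statement of this lemma (itself a consequence of Lemma~\ref{lemma_bd_estim} applied to $f_S$ in the half-strip above $\tilde H$, where $f_S$ vanishes on $\tilde H$ and is bounded by $1$ elsewhere on the boundary), yielding
\[
|f_S(x)|\lesssim \bigl(\dist(x,\tilde H)/(K_S\ell(Q_0))\bigr)^{\vartheta} \lesssim K_S^{-\vartheta}
\]
in the range $\dist(x,\tilde H)\leq 10\ell(Q_0)$. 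The remaining task is to show $|\bar T\nu(x_{S+})|\lesssim 1$ with a constant independent of $K_S$.

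To estimate $|\bar T\nu(x_{S+})|$, I would use the periodic decomposition $\nu=\sum_{P\in\mathcal{M}}T_P\sharp(\chi_{Q_0}\nu)$ from Lemma~\ref{pv_lemm} and split the sum into a near part $\mathcal{M}_1 = \{P\in\mathcal{M} : |z_P|\leq K_S\ell(Q_0)\}$ and a far part $\mathcal{M}_2=\mathcal{M}\setminus\mathcal{M}_1$. The key geometric fact, used throughout, is that $|x_{S+}-y-z_P|\approx\max(K_S\ell(Q_0),|z_P|)$ for $y\in Q_0$, since the horizontal coordinates of $x_{S+}$ are $O(\ell(Q_0))$ while its vertical coordinate is $\approx K_S\ell(Q_0)$. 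For the near part, the Calder\'on--Zygmund bound of Lemma~\ref{CZ_l} combined with $\nu(Q_0)\lesssim\ell(Q_0)^n$ gives a contribution
\[
\lesssim \#\mathcal{M}_1\cdot(K_S\ell(Q_0))^{-n}\ell(Q_0)^n\lesssim K_S^{n}\cdot K_S^{-n}=1.
\]

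The far part requires genuine cancellation, since the raw CZ bound integrated against the $n$-growth of $\nu$ would only produce a logarithmic sum. I would apply the homogenization estimate \eqref{approx_hom_par} to write
\[
\bar K(x_{S+},y+z_P) = (\mathrm{Id}+\nabla\chi_\ell(x_{S+}))\nabla_1\Theta(x_{S+},y+z_P;A_0) + O\bigl(\ell(Q_0)^{\gamma}|x_{S+}-y-z_P|^{-n-\gamma}\bigr),
\]
sum the error term over $P\in\mathcal{M}_2$ exactly as in the proof of Lemma~\ref{lemm_comparison_plus_minus}(1) to obtain $\lesssim K_S^{-\gamma}$, and then pair $z_P\leftrightarrow -z_P$ in the principal term (the lattice $6\ell(Q_0)\mathbb{Z}^{n}\times\{0\}$ is symmetric under this involution, and $K_S$ being odd matches the parity of $x_{S+}$ relative to the cell structure). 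Using that $\nabla_1\Theta(\cdot,\cdot;A_0)=\nabla\Theta(\cdot - \cdot;A_0)$ is odd, a second-order Taylor expansion yields
\[
\nabla\Theta(u-z_P;A_0)+\nabla\Theta(u+z_P;A_0) = 2\,D^{2}\Theta(z_P;A_0)\,u + O(|u|^{2}|z_P|^{-n-2}),\qquad u=x_{S+}-y,
\]
with $|u|\lesssim K_S\ell(Q_0)$. Integrating against $d\nu$ and summing over pairs with $|z_P|>K_S\ell(Q_0)$ yields a convergent series of size
\[
\lesssim K_S\ell(Q_0)\cdot\ell(Q_0)^{n}\cdot\!\!\sum_{|z_P|>K_S\ell(Q_0)}\!\!|z_P|^{-n-1}\lesssim 1,
\]
mirroring the tail computation behind Lemma~\ref{lemm_comparison_plus_minus}(2).

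The only non-routine step, and the main obstacle, is this pairing argument for the far copies: one genuinely needs the antisymmetry of the constant-coefficient kernel $\nabla\Theta(\cdot;A_0)$ (available only via homogenization, which costs the extra factor $\ell(Q_0)^{\gamma}|x-y|^{-\gamma}$) together with the $\pm z_P$ symmetry to extract the extra factor of $|z_P|^{-1}$ that turns the logarithmic tail into a convergent one. All the required ingredients are already present in the proofs of Lemma~\ref{pv_lemm} and Lemma~\ref{lemm_comparison_plus_minus}; the present argument is a direct adaptation of them to a pointwise evaluation at the single point $x_{S+}$ rather than to differences of evaluations or to the sum $\bar T\nu(x_{S+})+\bar T\nu(x_{S-})$.
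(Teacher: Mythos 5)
Your proposal is correct, but it takes a different route from the paper for the key step, namely showing $|\bar T\nu(x_{S+})|\lesssim 1$. The paper does not re-derive the cancellation at $x_{S+}$: it applies the already-proved localization Lemma~\ref{lem102} at the \emph{origin} (where the lemma is available, since $0\in 2Q_0$), with $\tilde M=6K_S$, to get $|\bar T(\chi_{(6K_SQ_0)^c}\nu)(0)|\lesssim K_S^{-\gamma}\lesssim 1$; then it transfers this bound from $0$ to $x_{S+}$ by an elementary Calder\'on--Zygmund difference estimate (using $|x_{S+}|\lesssim K_S\ell(Q_0)$ and integrating $|x_{S+}|^{\tilde\alpha}(|y|+|x_{S+}|)^{-n-\tilde\alpha}$ over $(6K_SQ_0)^c$); and finally it bounds the near piece $\bar T(\chi_{6K_SQ_0}\nu)(x_{S+})$ by the raw size estimate $\nu(6K_SQ_0)/\dist(x_{S+},\supp\nu)^n\lesssim 1$. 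Your argument instead re-runs, directly at $x_{S+}$, the homogenization-plus-$z_P\leftrightarrow -z_P$ pairing that underlies Lemma~\ref{lem102}. This is sound --- your near-part count ($\#\{|z_P|\le K_S\ell(Q_0)\}\approx K_S^n$ against $(K_S\ell(Q_0))^{-n}\ell(Q_0)^n$), the $\ell(Q_0)^\gamma|z_P|^{-n-\gamma}$ error sum, and the pairing yielding the extra factor $|u|/|z_P|$ with $|u|\lesssim K_S\ell(Q_0)$ all check out and give $\lesssim 1$. The tradeoff: the paper's route is shorter and avoids redoing the Taylor/parity bookkeeping by routing everything through the already-established Lemma~\ref{lem102} and then a one-line CZ transfer; yours is more self-contained and, as a side effect, does not actually use that $K_S$ is odd (the oddness is needed in the paper precisely so that $6K_SQ_0$ is a union of translates of $Q_0$, which is what makes Lemma~\ref{lem102} applicable with $\tilde M=6K_S$). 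One small caveat: the leading term in your Taylor expansion, $2D^2\Theta(z_P;A_0)u$, does not vanish --- it is exactly the term of size $|u|\,|z_P|^{-n-1}$ that you then sum; it would be clearer to say the pairing kills only the zeroth-order term $\nabla\Theta(\pm z_P)$, rather than presenting $2D^2\Theta(z_P)u$ as if it were part of the remainder.
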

\begin{proof}
Because of the H\"older continuity of $f_S$, we can write
\begin{equation}
|F_S(x)|\lesssim \Big(\frac{\dist(x,\tilde{H})}{K_S\ell(Q_0)}\Big)^\vartheta|\bar T\nu(x_{S+})|\lesssim \frac{|\bar T\nu(x_{S+})|}{K_S^\vartheta}.
\end{equation}
So, to prove the lemma, it suffices to show that
\begin{equation}
|\bar T\nu(x_{S+})|\leq C
\end{equation}
for some constant $C>0$ not depending on $K_S$. Recall now that $\nu=b\, \eta$.
Applying Lemma \ref{lem102} with $\tilde M=6 K_S$ and $f=b$ to the point $0\in 2Q_0$, we have the estimate
\begin{equation}\label{eq1181}
\big|\bar T \big(\chi_{(6K_SQ_0)^c}\nu\big)(0)\big|\lesssim \frac{1}{(6K_S)^\gamma \ell(Q_0)^n}\int_{Q_0}|b|\,d\eta \lesssim 1,
\end{equation}
where the implicit constant in the last inequality does not depend on $K_S$.
Now, we observe that the (global) Calder\'on-Zygmund properties of $\bar K$ and the fact that $|x_{S+}|\lesssim K_S\ell(Q_0)$ imply
\begin{equation}\label{eq1182}
\begin{split}
\big|\bar T \big(\chi_{(6K_SQ_0)^c} \nu\big)(0)-\bar T\big(\chi_{(6K_SQ_0)^c}\nu\big)(x_{S+})\big|& \lesssim \int_{(6K_S Q_0)^c}\big|\bar K (0,y)-\bar K(x_{S+},y)\big|d\nu(y)\\
&\lesssim \int_{(6K_SQ_0)^c}\frac{|x_{S+}|^{\tilde\alpha}}{(|y|+|x_{S+}|)^{n+\tilde{\alpha}}}d\nu(y)\lesssim 1.
\end{split}
\end{equation}
Then, by \eqref{eq1181}, \eqref{eq1182} and the triangle inequality, we have
\begin{equation}\label{eq1183}
\begin{split}
&\big|\bar T\big(\chi_{(6K_S Q_0)^c}\nu(x_{S+})\big)\big|\\
&\qquad\leq \big|\bar T \big(\chi_{(6K_SQ_0)^c}\nu\big)(0)\big| + \big|\bar T \big(\chi_{(6K_SQ_0)^c} \nu\big)(0)-\bar T\big(\chi_{(6K_SQ_0)^c}\nu\big)(x_{S+})\big|\lesssim 1.
\end{split}
\end{equation}
Moreover, since $\dist(x_{S+},\supp \nu)^n\gtrsim K_S\ell(Q_0)$ and estimating the kernel via Lemma \ref{CZ_l},
\begin{equation}\label{eq1184}
\begin{split}
\big|\bar T\big(\chi_{6K_SQ_0}\nu \big)(x_{S+})\big|&\leq \int_{6K_S Q_0} |\bar K(x_{S+}, y)|d\nu(y)\overset{}{\lesssim} \int_{6 K_S Q_0}\frac{1}{|x_{S+}-y|^n}d\nu(y)\\
&\lesssim \frac{\nu(6K_S Q_0)}{\dist(x_{S+},\supp \nu)^n}\lesssim \frac{K_S^n \ell(Q_0)^n}{\dist(x_{S+},\supp \nu)^n} \lesssim 1.
\end{split}
\end{equation}
Thus, gathering \eqref{eq1183} and \eqref{eq1184} we obtain 
\begin{equation}
|\bar T\nu(x_{S+})|\leq \big|\bar T\big(\chi_{6K_SQ_0}\nu \big)(x_{S+})\big| + \big|\bar T\big(\chi_{(6K_S Q_0)^c}\nu(x_{S+})\big)\big| \lesssim 1,
\end{equation}
which proves the lemma.
\end{proof}

In order to be able to use the previous lemma, from now on we assume without loss of generality $K_S\geq 3$ and we suppose it to be an odd number.
Observe that for $x\in\supp \nu$, Lemma \ref{lem_estim_FS_close} and \eqref{pw_ineq_ae} give
\begin{equation}\label{sup_on_the_measure}
\begin{split}
&\sup_{x\in\supp\nu}\big|\bar T\nu(x)-F_S(x)\big|^2+4\bar{T}^*((\bar T\nu)\nu)(x)\\
&\qquad\leq \sup_{x\in\supp\nu}2\big|\bar T\nu(x)|^2+4\bar{T}^*((\bar T\nu)\nu)(x) + 2|F_S(x)|^2\\
&\qquad\leq 12 \lambda + 2|F_S(x)|^2\lesssim \lambda +\frac{1}{K_S^{\vartheta}}.
\end{split}
\end{equation}
Moreover, by \eqref{esti_T_star} and \eqref{estim_stric},
\begin{equation}
\begin{split}
&\sup_{x\in\partial S}\big|\bar T\nu(x)-F_S(x)\big|^2+4\bar{T}^*((\bar T\nu)\nu)(x)\lesssim \frac{1}{K_S^{2\tilde\alpha}}+\lambda^{1/2}
\end{split}
\end{equation}
which, together with \eqref{sup_on_the_measure} brings us to
\begin{equation}\label{sup_on_meas_and_support}
\sup_{x\in \partial S\cup\supp\nu} \big|\bar T\nu(x)-F_S(x)\big|^2+4\bar{T}^*((\bar T\nu)\nu)(x)\lesssim \lambda^{1/2}+\frac{1}{K_S^{2\tilde\alpha}}+\frac{1}{K_S^{\vartheta}}.
\end{equation}
Finally, we provide the proof of Lemma \ref{main_lemma_maximum_principle}.
\begin{proof}
We recall that $\bar A=\bar A^T$.
Let $\vec{g}\in L^\infty(S;\Rn1).$
We claim that $\bar T^*\big(\vec g \mathcal L^{n+1}\big)$ is a $L_{\bar A^T}$-harmonic (vector valued) function. 
This would imply the maximum principle
\begin{equation}\label{max_prin_t_star}
\sup_{x\in S}\bar T^*\big(\vec g \mathcal L^{n+1}\big)(x)=\sup_{x\in\partial S\cap\supp\nu}\bar T^*\big(\vec g \mathcal L^{n+1}\big)(x).
\end{equation}
Observe that, because of Lemma \ref{lemm_aux_function_f}, the same 	equality holds with $F_S(x)$ in place of $\bar T^*(\vec g\mathcal L^{n+1})(x)$.
Let $\varphi\in C^\infty_c(S\setminus \supp \vec{g})$ be a test function.
To prove the claim, apply the definition of $\bar T^*$ together with Fubini's theorem together with the fact that $\bar \E(x,y)=\E_{A^T}(y,x)$:
\begin{equation}
\begin{split}
\int \bar A^T\nabla \bar{T}^*(\vec{g}\mathcal{L}^{n+1})\cdot \nabla \varphi&=\int \bar A^T\nabla_x\Big(\int \nabla_y\bar\E(y,x)\cdot \vec g (y)dy\Big)\cdot \nabla\varphi(x)dx\\
&=\int\nabla_y\Big(\int\bar A^T\nabla_x\bar \E(y,x)\cdot\nabla\varphi(x)dx\Big)\cdot \vec g(y)dy\\
&=\int\nabla_y\Big(\int\bar A^T\nabla_x\E_{\bar{A}^T}(x,y)\cdot\nabla\varphi(x)dx\Big)\cdot \vec g(y)dy\\
&=\int\nabla\varphi\cdot\vec g =0.
\end{split}
\end{equation}

Notice that for every $z\in\Rn1$ we have the elementary relation
\begin{equation}
|z|^2=\sup_{\beta\geq 0, e\in\mathbb{S}^n}2\langle e,z\rangle-\beta^2,
\end{equation}
so that, choosing $z=\bar T\nu(x)-F_S(x),$ it reads
\begin{equation}\label{legendre_transform}
\big |\bar T\nu(x)-F_S(x)\big |^2=\sup_{\beta\geq 0, e\in\mathbb{S}^n}2\langle e,\bar T\nu(x)\rangle -2\langle e, F_S(x)\rangle-\beta^2.
\end{equation}
We want to show that the argument of the supremum in the right hand side of \eqref{legendre_transform} differs from a $L_{\bar A}$-harmonic function possibly by a small term. This will allow to apply the maximum principle on the strip and to finish the proof.

For a fixed $e\in\mathbb{S}^n$ and $x\in\supp \nu$, we split
\begin{equation}
\langle e, \bar T\nu(x)\rangle =-\bar T^*(\nu e)(x)+\big(\bar T^*(\nu e)(x)+ \langle e, \bar T\nu(x)\rangle\big)
\end{equation}
and consider that, {claiming that the dominated convergence theorem applies},
\begin{equation}\label{antisym_part}
\bar T^*(\nu e)(x)+ \langle e, \bar T\nu(x)\rangle=\lim_{r\to \infty}\int \big(\tilde K_r(x,y)+\tilde{K}_r(y,x)\big)\cdot e\, d\nu(y).
\end{equation}
To prove that the previous identity holds, set $C_S\gg K_S$ to be chosen later.
By the triangle inequality, the antisimmetry of $\n1\Theta(x,y;\bar A(x))$ and the linear growth of $\nu$, we have
\begin{equation}\label{near_estim}
\begin{split}
&\int_{|x-y|<C_S \ell(Q_0)}\big|\tilde K_r(x,y)+\tilde K_r(y,x)\big| d\nu(y)\\
&\qquad\lesssim \int_{|x-y|<C_S \ell(Q_0)}\big|\bar K(x,y)-\n1{\Theta}(x,y;\bar A(x))\big| d\nu(y) \\
&\qquad\qquad+ \int_{|x-y|<C_S \ell(Q_0)}\big|\bar K(y,x)-\n1{\Theta}(y,x;\bar A(x))\big| d\nu(y)\\
&\qquad\lesssim \int_{|x-y|<C_S \ell(Q_0)}\frac{1}{|x-y|^{n-\tilde\alpha}}d\nu(y)\lesssim \big(C_S \ell(Q_0)\big)^{\tilde\alpha}.
\end{split}
\end{equation}
So, to bound \eqref{antisym_part} we have to estimate the integral on its rights hand side for $|x-y|>C_S \ell(Q_0).$
As before, by the periodicity of $M_S$ we can assume that $x_H\in [-3\ell(Q_0),3\ell(Q_0)]^n\times\{0\}.$
Hence, using arguments analogous to the ones in Lemma \ref{lemm_comparison_plus_minus}, it is possible to prove that for $y\in Q_0$ and $z_P$ such that $|x-y-z_P|>C_S \ell(Q_0),$ we have
\begin{equation}
\big|\bar K(x,y+z_P)+\bar K(x,y-z_P)\big|\lesssim \frac{(K_S \ell(Q_0))^{\tilde\alpha}}{|z_P|^{n+\tilde\alpha}+|x|^{n+\tilde\alpha}},
\end{equation}
hence, calling $\M_S$ the subset of $P\in\M$ such that $|x-y-z_P|>C_S \ell(Q_0)$ for every $y\in Q_0$, we have
\begin{equation}\label{far_estim_1}
\sum_{P\in\M_S}\int_{Q_0}|\tilde K_r(x,y+z_P)+\tilde K_r(x,y-z_P)|d\nu(y)\lesssim \Big(\frac{K_S}{C_S}\Big)^{\tilde\alpha}.
\end{equation}
Analogously, one can prove
\begin{equation}\label{far_estim_2}
\sum_{P\in\M_S}\int_{Q_0}|\tilde K_r(y+z_P,x)+\tilde K_r(y-z_P,x)|d\nu(y)\lesssim \Big(\frac{K_S}{C_S}\Big)^{\tilde\alpha},
\end{equation}
so, gathering \eqref{near_estim}, \eqref{far_estim_1} and \eqref{far_estim_2} and letting $r\to \infty$, we can use the dominated convergence theorem and we estimate \eqref{antisym_part} as
\begin{equation}\label{estim_error_adjoint}
\big|\bar T^*(\nu e)(x)+ \langle e, \bar T\nu(x)\rangle|\lesssim (C_S \ell(Q_0))^{\tilde\alpha}+ \Big(\frac{K_S}{C_S}\Big)^{\tilde\alpha}.
\end{equation}
We are now ready to proceed with the calculations for the maximum principle. Indeed, taking $x\in S$, an application of \eqref{legendre_transform} and \eqref{estim_error_adjoint} gives
\begin{equation}
\begin{split}
\big|\bar T\nu(x)-F_S(x)\big|^2+4 \bar T^*((\bar T\nu)\nu)(x)=\sup_{\beta\geq 0, e\in\mathbb{S}^n}2\langle e,\bar T\nu(x)\rangle -2\langle e, F_S(x)\rangle-\beta^2 + \bar T^*((\bar T\nu)\nu)(x)\\
\lesssim \sup_{\beta\geq 0, e\in\mathbb{S}^n}-2 \bar T^*(\nu e)(x) -2\langle e, F_S(x)\rangle-\beta^2 + \bar T^*((\bar T\nu)\nu)(x) +  (C_S \ell(Q_0))^{\tilde\alpha} + \Big(\frac{K_S}{C_S}\Big)^{\tilde\alpha}.
\end{split}
\end{equation}
Then, using the maximum principle \eqref{max_prin_t_star} we have
\begin{equation}
\begin{split}
&|\bar T\nu(x)-F_S(x)|^2+4 \bar T^*((T\nu)\nu)(x)\\
&\lesssim \sup_{\beta\geq 0, e\in\mathbb{S}^n}2 -\bar T^*\big(\nu e + (\bar T\nu)\nu\big)(x) -2\langle e, F_S(x)\rangle-\beta^2 +  (C_S \ell(Q_0))^{\tilde\alpha} + \Big(\frac{K_S}{C_S}\Big)^{\tilde\alpha}\\
&\leq \sup_{z\in \partial S\cup \supp\nu}\sup_{\beta\geq 0, e\in\mathbb{S}^n}-2 \bar T^*\big(\nu e + (\bar T\nu)\nu\big)(z) -2\langle e, F_S(z)\rangle-\beta^2 +  (C_S \ell(Q_0))^{\tilde\alpha} + \Big(\frac{K_S}{C_S}\Big)^{\tilde\alpha}.
\end{split}
\end{equation}
So, another application of \eqref{legendre_transform} and \eqref{estim_error_adjoint} concludes the proof of the lemma. Indeed, recalling the estimate \eqref{sup_on_meas_and_support} on $\partial S\cup \supp \nu$,
\begin{equation}
\begin{split}
&|\bar T\nu(x)-F_S(x)|^2+4 \bar T^*((T\nu)\nu)(x)\\
&\lesssim \sup_{z\in \partial S\cup \supp\nu}\sup_{\beta\geq 0, e\in\mathbb{S}^n}2\langle e,\bar T\nu(x)\rangle -2\langle e, F_S(x)\rangle-\beta^2 \\
&\qquad\qquad+ \bar T^*((\bar T\nu)\nu)(x)+  (C_S \ell(Q_0))^{\tilde\alpha} + \Big(\frac{K_S}{C_S}\Big)^{\tilde\alpha}\\
&\lesssim \sup_{z\in\partial S\cup \supp \nu}|\bar T\nu(z)-F_S(z)|^2 + 4\bar T^*((\bar T\nu)\nu)(z) + (C_S \ell(Q_0))^{\tilde\alpha} + \Big(\frac{K_S}{C_S}\Big)^{\tilde\alpha}\\
&\lesssim \lambda^{1/2} + \frac{1}{K_S^{2\tilde\alpha}} +\frac{1}{K_S^{\vartheta}}+ (C_S \ell(Q_0))^{\tilde\alpha} + \Big(\frac{K_S}{C_S}\Big)^{\tilde\alpha}.\qedhere
\end{split}
\end{equation}
\end{proof}
\subsection{The conclusion of the proof of the Key Lemma}
To simplify the notation, set
\begin{equation}
\texttt{Err}(K_S,C_S,\ell(Q_0))\coloneqq \frac{1}{K_S^{2\tilde\alpha}}+ \frac{1}{K_S^{\vartheta}}+(C_S\ell(Q_0))^{\tilde\alpha}+\Big(\frac{K_S}{C_S}\Big)^{\tilde\alpha}.
\end{equation}
Notice that if $x\in 2Q_0$, Lemma \ref{main_lemma_maximum_principle} together with Lemma \ref{lem_estim_FS_close} allows to majorize $|\bar T\nu(x)|^2$ as
\begin{equation}
\begin{split}\label{maj_2_cube}
\big|\bar T\nu(x)\big|^2&\lesssim |\bar T\nu(x)-F_S(x)|^2+|F_S(x)|^2+4\bar T^*((\bar T\nu)\nu)(x)-4\bar T^*((\bar T\nu)\nu)(x)\\
&\lesssim\lambda^{1/2}+ \texttt{Err}(K_S,C_S,\ell(Q_0)) +|F_S(x)|^2 - \bar T^*((\bar T\nu)\nu)(x)\\
&\lesssim \lambda^{1/2}+ \texttt{Err}(K_S,C_S,\ell(Q_0))  - \bar T^*((\bar T\nu)\nu)(x).
\end{split}
\end{equation}\label{inv_t_star}
Let $\varphi$ be a smooth function such that $\chi_{Q_0}\leq \varphi\leq\chi_{2Q_0}$ and $\|\nabla\varphi\|_{\infty}\lesssim \ell(Q_0)^{-1}$. Set $\psi\coloneqq \bar A^T\nabla\varphi$ and observe that it verifies
\begin{equation}
\begin{split}
\bar T^*[\psi \,\mathcal{L}^{n+1}](x)&=\bar T^*[\bar A^T\nabla\varphi \,\mathcal{L}^{n+1}](x)=\int\n1\E_{\bar{A}}(y,x)\cdot\bar A^T(y)\nabla\varphi(y)dy\\
&=\int \bar A(y)\n1\E_{\bar{A}}(y,x)\cdot\nabla\varphi(y)dy=\varphi(x),
\end{split}
\end{equation}
the last equality being a consequence of the definition of fundamental solution.

The choice of $\varphi\geq \chi_{Q_0}$, together with Cauchy-Schwarz's inequality, gives
\begin{equation}\label{formula_contradiction}
\begin{split}
\nu(Q_0)&\leq\int\varphi d\nu=\int \bar T^*(\psi\mathcal{L}^{n+1})d\nu=\int \bar T\nu\cdot\psi d\mathcal{L}^{n+1}\\ &\leq \Big(\int |\bar T\nu|^2|\psi|d\mathcal{L}^{n+1}\Big)^{1/2}\Big(\int|\psi|d\mathcal{L}^{n+1}\Big)^{1/2}.
\end{split}
\end{equation}
Now, observe that
\begin{equation}\label{inf_norm_psi}
\|\psi\|_\infty\leq \|\bar A^T\|_\infty \|\nabla\varphi\|_\infty\lesssim \ell(Q_0)^{-1}
\end{equation}
and
\begin{equation}\label{estim_int_psi}
\int |\psi|d\mathcal{L}^{n+1}\lesssim \frac{1}{\ell(Q_0)}\mathcal{L}^{n+1}(2Q_0)\lesssim \ell(Q_0)^n.
\end{equation}
We claim that
\begin{equation}\label{claim_contradiction}
\int |\bar T\nu|^2|\psi|d\mathcal{L}^{n+1} \ll \ell(Q_0)^n.
\end{equation}
Applying \eqref{maj_2_cube} and \eqref{estim_int_psi}, we can write
\begin{equation}\label{cont_estim_int}
\begin{split}
&\int |\bar T\nu|^2|\psi|d\mathcal{L}^{n+1}\\
&\lesssim \big(\lambda^{1/2}+ \texttt{Err}(K_S,C_S,\ell(Q_0))\big)\int|\psi|d\mathcal{L}^{n+1} + \Big|\int \bar T^*\big((\bar T\nu)\nu\big)|\psi| d\mathcal{L}^{n+1} \Big|\\
&\lesssim \big(\lambda^{1/2}+ \texttt{Err}(K_S,C_S,\ell(Q_0))\big)\int|\psi|d\mathcal{L}^{n+1} \\
&\qquad + \Big|\int \bar T^*\big(\chi_{(30Q_0)^c}(\bar T\nu)\nu\big)|\psi| d\mathcal{L}^{n+1} \Big| + \Big|\int \bar T^*\big(\chi_{30Q_0}(\bar T\nu)\nu\big)|\psi| d\mathcal{L}^{n+1} \Big|\\
&\lesssim \big(\lambda^{1/2}+ \texttt{Err}(K_S,C_S,\ell(Q_0))\big)\ell(Q_0)^n + \Big|\int \bar T^*\big(\chi_{(30Q_0)^c}(\bar T\nu)\nu\big)|\psi| d\mathcal{L}^{n+1} \Big| \\&\qquad+ \Big|\int \bar T^*\big(\chi_{30Q_0}(\bar T\nu)\nu\big)|\psi| d\mathcal{L}^{n+1} \Big|\\
&= \big(\lambda^{1/2}+ \texttt{Err}(K_S,C_S,\ell(Q_0))\big)\ell(Q_0)^n + I + II,
\end{split}
\end{equation}
where $I$ and $II$ are defined by the last equality.\\
The estimate for $I$ is an application of \eqref{loc_outside} with $\tilde{M}=30$. In particular, recalling \eqref{loc_measure_variational_argument},
\begin{equation}
\begin{split}
\big|\bar T^*\big(\chi_{(30Q_0)^c}(\bar T\nu)b\eta)\big)(x)\big|&\lesssim \frac{1}{\ell(Q_0)^n}\int_{Q_0}|(\bar T\nu) b|d\eta\\
&\leq\frac{\nu(Q_0)^{1/2}}{\ell(Q_0)^n}\Big(\int_{Q_0}|\bar T\nu|^2 d\nu\Big)^{1/2}{\leq} \lambda^{1/2}\frac{\nu(Q_0)}{\ell(Q_0)^n},
\end{split}
\end{equation}
which, together with \eqref{estim_int_psi}, implies
\begin{equation}\label{cont_estim_I}
I\lesssim \lambda^{1/2} \nu(Q_0).
\end{equation}
For the estimate of $II$, recall that $|\bar K(x,y)|\lesssim |x-y|^{-n}.$ This and \eqref{inf_norm_psi} imply
\begin{equation}
\big|\bar T\big(|\psi|\mathcal{L}^{n+1}\big)(x)\big|=\Big|\int \bar K(x,y)|\psi|(y)dy\Big|\lesssim \frac{1}{\ell(Q_0)}\int_{2Q_0}\frac{1}{|x-y|^n}dy\lesssim \frac{\ell(Q_0)}{\ell(Q_0)}=1.
\end{equation}
Then, by Cauchy-Schwarz's inequality, the periodicity of $\bar T\nu$ and the localization \eqref{loc_measure_variational_argument},
\begin{equation}\label{cont_estim_II}
II\leq \Big|\int_{30 Q_0}\bar T(|\psi|\mathcal L^{n+1})\cdot \bar T \nu d\nu\Big|\lesssim \nu(Q_0)^{1/2}\Big(\int_{30 Q_0}|\bar T\nu|^2d\nu\Big)^{1/2}\lesssim \lambda^{1/2} \nu(Q_0).
\end{equation}
So, gathering \eqref{formula_contradiction}, \eqref{cont_estim_int}, \eqref{cont_estim_I} and \eqref{cont_estim_II}, we have\begin{equation}\label{con_fin_ineq}
\nu(Q_0)\lesssim \big(\texttt{Err}(K_S,C_S,\ell(Q_0)) + \lambda^{1/2}\big)^{1/2}\nu(Q_0).
\end{equation}
Choosing $K_S$ big enough, $K_S/C_S$ small enough, $C_S\ell(Q_0)$ and $\lambda$ small enough, we have 
\begin{equation}
\texttt{Err}(K_S,C_S,\ell(Q_0)) + \lambda^{1/2}\ll 1,
\end{equation} so \eqref{con_fin_ineq} brings us to the contradiction
\begin{equation}
\nu(Q_0)\ll \nu(Q_0).
\end{equation}
This proves the Key Lemma and, hence, completes the proof of Theorem \ref{main_theorem}.

\section{The two-phase problem for the elliptic measure}

To the purpose of the application to the study of the elliptic measure, it is useful to reformulate Theorem \ref{main_theorem} under slightly different hypothesis. The proof of the following closely resembles that of \cite[Theorem 3.3]{AMT}. 
\begin{theor}\label{main_thm_new}
Let $\mu$ be a Radon measure in $\Rn1$ and let $B\subset \Rn1$ be a ball with $\mu(B)>0$. Assume that, for some constants $C_0,C_1>0$ and $0<\lambda,\delta, \tau\ll 1$ the following conditions hold:
\begin{enumerate}
\item $r(B)\leq\lambda$.
\item $P_{\mu,\tilde\alpha}(B)\leq C_0 \Theta_\mu(B)$.
\item There is some $n$-plane $L$ through the center of $B$ such that $\beta^L_{\mu,1}(B)\leq \delta \Theta_\mu(B)$.
\item There is $G_B\subset B$ such that for all $x\in G_B$
\begin{equation}
\sup_{0<r\leq 2r(B)} \frac{\mu\big(B(x,r)\big)}{r^n}+T_*(\chi_{2B}\mu)(x)\leq C_1\Theta_\mu (B).
\end{equation}
\item $\int_{G_B}|T\mu(x)-m_{\mu,G_B}|^2d\mu(x)\leq \tau \Theta_\mu(B)^2\mu(B).$
\end{enumerate}
There exists $\vartheta>0 $ such that, if $\delta,\tau$ and $\lambda$ are small enough (depending on $C_0$ and $C_1$), there is a $n$-uniformly rectifiable set $\Gamma$ such that
\begin{equation}
\mu(B\cap \Gamma)\geq \vartheta \mu(B).
\end{equation}
\end{theor}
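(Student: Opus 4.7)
The plan is to reduce Theorem \ref{main_thm_new} to the Main Theorem \ref{main_theorem} by rescaling and restricting the measure. First, multiplying $\mu$ by $\Theta_\mu(B)^{-1}$, we may assume $\Theta_\mu(B)=1$; all hypotheses and conclusions scale homogeneously in the appropriate degree, and the ellipticity and H\"older data of $A$ are untouched. Assuming (after a Vitali-type reduction to a sub-ball, or else concluding trivially by choosing $\vartheta$ sufficiently small) that $\mu(G_B)\geq \frac12\mu(B)\approx r(B)^n$, introduce the restricted measure $\tilde\mu\coloneqq \mu|_{G_B}$. From conditions (2) and (4) we get $P_{\tilde\mu,\tilde\alpha}(B)\leq P_{\mu,\tilde\alpha}(B)\leq C_0$, the AD-type bound $\tilde\mu(B)\approx r(B)^n$, and polynomial growth of degree $n$ with constant $\lesssim C_1$ at all scales $r\leq 2r(B)$ inside $B$. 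The flatness descends: $\beta^L_{\tilde\mu,1}(B)\leq \beta^L_{\mu,1}(B)\leq \delta$, hence $\beta_{\tilde\mu,1}(B)\leq \delta$. So hypotheses (1), (2), (3), (5) of Theorem \ref{main_theorem} hold for $\tilde\mu$ in $B$.

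The technical heart is to verify hypothesis (4) of Theorem \ref{main_theorem}: $L^2(\tilde\mu|_B)$-boundedness of $T_{\tilde\mu|_B}$ together with $T(\chi_{2B}\tilde\mu)\in L^2(\tilde\mu|_B)$. Since $\tilde\mu$ has polynomial growth of degree $n$ and $K(\cdot,\cdot)$ is a locally Calder\'on-Zygmund kernel by Lemma \ref{lemcz}, I would apply a non-homogeneous T1 theorem of Nazarov-Treil-Volberg type, in a version adapted to the non-antisymmetric kernel $K$ in the spirit of \cite{PPT}. The key input is $L^\infty(\tilde\mu)$-control of $T_{*,\tilde\mu}1$, which I would derive from the pointwise bound $T_*(\chi_{2B}\mu)(x)\leq C_1$ on $G_B=\supp\tilde\mu\cap B$ by subtracting the contribution of $\chi_{2B\setminus G_B}\mu$; this subtracted piece is handled via the smoothness of $K$ and a standard Calder\'on-Zygmund/stopping-time argument exploiting the growth of $\mu$.

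Finally, I would transfer hypothesis (5) for $T\mu$ on $G_B$ to the BMO-type condition (6) of Theorem \ref{main_theorem} for $T\tilde\mu$ on $B$. For $x\in B\cap\supp\tilde\mu\subset G_B$, split
\begin{equation*}
T\tilde\mu(x)=T\mu(x)-T\bigl(\chi_{B\setminus G_B}\mu\bigr)(x)-T\bigl(\chi_{B^c}\mu\bigr)(x).
\end{equation*}
The second summand is dominated uniformly by $T_*(\chi_{2B}\mu)(x)\leq C_1$, hence contributes $O(1)$ to the oscillation; the oscillation of the third on $B$ is controlled, via the Calder\'on-Zygmund smoothness of $K$ and a dyadic decomposition away from $B$, by a constant times $P_{\mu,\tilde\alpha}(B)\lesssim C_0$ multiplied by a small power of $r(B)/r(2^kB)$, which vanishes in the limit. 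Combining with (5) gives
\begin{equation*}
\int_B \bigl|T\tilde\mu - m_{\tilde\mu,B}(T\tilde\mu)\bigr|^2 \,d\tilde\mu \lesssim \bigl(\tau + o(1)\bigr)\,\tilde\mu(B),
\end{equation*}
which is $\leq \epsilon\,\tilde\mu(B)$ provided $\tau,\delta,\lambda$ are small enough. Theorem \ref{main_theorem} then produces a uniformly rectifiable set $\Gamma$ with $\tilde\mu(B\cap\Gamma)\gtrsim \tilde\mu(B)\gtrsim \mu(B)$, concluding the proof.

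The principal obstacle is the non-homogeneous T1 step: adapting the Nazarov-Treil-Volberg machinery to the non-antisymmetric, locally Calder\'on-Zygmund kernel $K$ of our elliptic setting, and extracting $L^\infty(\tilde\mu)$-control of $T_{*,\tilde\mu}1$ from the pointwise bound on $T_*(\chi_{2B}\mu)$ which is only given on $G_B$, not on all of $\supp\tilde\mu$. A secondary, more routine, difficulty is the careful tracking of the small error terms in the BMO transfer so that they are absorbed by the smallness of $\tau,\delta,\lambda$.
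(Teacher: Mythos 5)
Your overall reduction strategy---renormalize so that $\Theta_\mu(B)=1$, pass to the restriction $\tilde\mu=\mu|_{G_B}$, and verify the hypotheses of Theorem \ref{main_theorem} for $\tilde\mu$, with the $L^2(\tilde\mu|_B)$-boundedness of $T_{\tilde\mu}$ as the critical step---has the right shape, and is essentially what the paper inherits from \cite[Theorem 3.3]{AMT}. The gap is in the mechanism you propose for that step. You invoke a plain non-homogeneous $T1$ theorem plus a stopping-time argument to discard $T_*(\chi_{2B\setminus G_B}\mu)$, but the hypotheses of such a $T1$ theorem cannot be verified from the available data: the growth $\mu(B(x,r))\lesssim r^n$ and the bound on $T_*(\chi_{2B}\mu)$ are supplied only at points of $G_B$ and only at scales $r\leq 2r(B)$, while $\mu$ on $B\setminus G_B$ is controlled solely through the single quantity $P_{\mu,\tilde\alpha}(B)\lesssim C_0$, which gives no pointwise or small-scale information. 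The tool the paper actually names is the $Tb$ theorem for \emph{suppressed} kernels of Nazarov--Treil--Volberg: one forms
\begin{equation*}
\tilde K_\Phi(x,y)=\tilde\chi\Bigl(\frac{|x-y|^2}{\Phi(x)\Phi(y)}\Bigr)K(x,y)
\end{equation*}
with a $1$-Lipschitz suppression function $\Phi$ chosen as in \cite{AMT}, which cuts off the singularity of $K$ at the scales where the testing conditions cannot be checked. That suppression is precisely what makes the Nazarov--Treil--Volberg machinery applicable when the maximal-operator bound is available only on a subset; it is not a cosmetic variant of a $T1$ theorem, and without it your $L^2$-boundedness step is unproved.

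A secondary omission: the suppressed-kernel $Tb$ theorem requires an antisymmetric kernel, and $K=\nabla_1\E_A$ is not antisymmetric. The paper therefore splits $K=K^{(s)}+K^{(a)}$, applies the suppressed $Tb$ theorem to the antisymmetric part $K^{(a)}$, and disposes of $K^{(s)}$ via the freezing estimates of Lemma \ref{lemm_freezing}, which give $|K^{(s)}(x,y)|\lesssim|x-y|^{\alpha-n}$ locally so that the symmetric part is weakly singular and bounded by Schur's test (Lemma \ref{schur_lemma}). You allude to adapting to the non-antisymmetric kernel ``in the spirit of \cite{PPT},'' but this specific decomposition is the concrete device. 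Finally, in your BMO transfer you observe that $T(\chi_{B\setminus G_B}\mu)$ contributes only $O(1)$ to the oscillation and then write a final bound of $(\tau+o(1))\tilde\mu(B)$; that arithmetic does not close as stated, and is a further place where the omitted suppressed-kernel machinery, rather than a direct restriction to $\mu|_{G_B}$, is what makes the argument go through.
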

The proof in the case $A\equiv Id$ is based on a $Tb$ theorem for \textit{suppressed kernels} by Nazarov, Treil and Volberg. To replicate the proof of Azzam, Mourgoglou and Tolsa in the elliptic context, we define the suppressed kernel associated with $K(\cdot, \cdot)$ as
\begin{equation}
\tilde{K}_\Phi(x,y)=\tilde{\chi}\Big(\frac{|x-y|^2}{\Phi(x)\Phi(y)}\Big)K(x,y),
\end{equation}
where $\tilde{\chi}\colon [0,+\infty)\to [0,1]$ is a smooth, vanishes identically in $[0,1/2]$ and equals $1$ in $[1,+\infty)$ and $\Phi$ is a $1$-Lipschitz function to be chosen as in the proof of \cite{AMT}.
Then, one can split $$K(x,y)=\frac{1}{2}\big(K(x,y)+K(y,x)\big)+ \frac{1}{2}\big(K(x,y)-K(y,x)\big)= K^{(s)}(x,y)+K^{(a)}(x,y),$$ apply the $Tb$ theorem for suppressed kernels (see also \cite[Section 5.12]{tolsa_book} and the references therein) to the antisymmetric part  of $K$ and exploit the $L^2$-boundedness of the symmetric part guaranteed by the freezing technique of Lemma \ref{lemm_freezing}. We leave to the interested reader to check that there is no further difficulty in the proof Theorem \ref{main_thm_new}.

The rest of the present section is devoted to show how to apply Theorem \ref{main_thm_new} to prove the two-phase problem for the elliptic measure.


After possibly splitting the set $E$, we can assume $\diam E\leq \frac{1}{10}\min\big(\diam \Omega_1,\diam \Omega_2\big)$. We choose the poles $p_i$, $i=1,2$ such that $p_i\in \Omega_i\cap 2\tilde{B}\setminus \tilde{B}$, where $\tilde{B}$ is a ball centered at $E$ with radius $r(\tilde{B})=2\diam E$.

We are going to apply Theorem \ref{main_thm_new} to the measure $\omega_1$: we are going to prove that we can find an $n$-rectifiable set $F\subset E$ such that $\omega_1|_F\ll \mathcal H^n|_F\ll\omega_1|_F$. In particular, we can suppose that $\Omega_1$ is such that
\begin{equation}\label{estim_Omega_1}
\mathcal H^{n+1}\big(\tilde{B}\cap \Omega_1\big)\approx r(\tilde{B}).
\end{equation}
By the so-called Bourgain's estimates (see \cite[Lemma 32]{PPT} for the statement in the elliptic case and \cite{seven_authors} for a proof in the case $A\equiv Id$) together with \eqref{estim_Omega_1}, we can infer that there exists $\delta_0$ such that \begin{equation}
\omega_1\big(2\delta^{-1}\tilde{B}\big)\approx 1, \quad \text{ for }\,0<\delta<\delta_0.
\end{equation}
Let $a,\tilde\gamma>0$ and $i=1,2$. We say that a ball $B$ is $a$-$P_{\omega_i,\tilde\gamma}$-doubling if
\begin{equation}
P_{\omega_i,\tilde\gamma}(B)\leq a \Theta_{\omega_i}(B).
\end{equation}
The following lemma is important for the applicability of the doubling condition.
\begin{lemm}
Let $\tilde{\gamma}\in (0,1)$. Let $\Omega_1,\Omega_2$ be Wiener regular domains in $\Rn1$ and let $E\subset\partial\Omega_1\cap\partial\Omega_2$ be a set on which $\omega_1|_E\ll\omega_2|_E\ll\omega_1|_E$. Then there exists a constant $a=a(\tilde\gamma, n)$ big enough such that for $\omega_1|_E$-almost every $x\in\Rn1$ we can find a sequence of $a$-$P_{\omega_i,\tilde\gamma}$-doubling balls $B(x,r_i)$ with $r_i\to 0 $ as $i\to\infty$.
\end{lemm}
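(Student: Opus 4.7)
The plan is to apply the standard doubling lemma for $P_{\mu,\tilde\gamma}$ to each of the two elliptic measures $\omega_1,\omega_2$ separately, and then stitch the two conclusions together by invoking the mutual absolute continuity hypothesis on $E$.

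The main tool will be the following well-known fact, which I would cite rather than reprove: for every Radon measure $\mu$ on $\Rn1$ and every $\tilde\gamma\in(0,1)$, there exists a constant $a=a(\tilde\gamma,n)>0$ such that, for $\mu$-a.e.\ $x\in\Rn1$, one can find a sequence of radii $r_k\to 0$ for which $B(x,r_k)$ is $a$-$P_{\mu,\tilde\gamma}$-doubling. A proof essentially in this form can be found in \cite[Chapter 2]{tolsa_book}; the underlying idea is that if the doubling property failed at every small scale on a set $N$ of positive $\mu$-measure, then for $x\in N$ and every sufficiently small $r$ one would have
\begin{equation*}
\sum_{j\ge 1}2^{-j\tilde\gamma}\Theta_\mu(2^jB(x,r))>(a-1)\Theta_\mu(B(x,r)),
\end{equation*}
and iterating this estimate along dyadic scales with a stopping-time/Vitali covering argument forces $\mu$ to accumulate unbounded mass on a bounded region, contradicting the local finiteness of $\mu$. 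The crucial feature of this lemma, built into its statement, is that the constant $a$ depends only on $\tilde\gamma$ and the ambient dimension $n$, and \emph{not} on the particular measure $\mu$.

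First I would apply this fact to $\mu=\omega_1$, producing a Borel set $N_1$ with $\omega_1(N_1)=0$ such that every $x\in\Rn1\setminus N_1$ admits arbitrarily small $a$-$P_{\omega_1,\tilde\gamma}$-doubling balls centered at it. Then I would repeat the argument verbatim with $\mu=\omega_2$, obtaining an analogous exceptional set $N_2$ with $\omega_2(N_2)=0$. Since the constant $a=a(\tilde\gamma,n)$ is the same in both applications, I end up with a uniform doubling constant that works simultaneously for both measures.

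To finish, I would invoke the hypothesis $\omega_1|_E\ll\omega_2|_E\ll\omega_1|_E$: from $\omega_1|_E\ll\omega_2|_E$ together with $\omega_2(N_2)=0$ one immediately gets $\omega_1(N_2\cap E)=0$, and trivially $\omega_1(N_1\cap E)=0$, so $\omega_1\big((N_1\cup N_2)\cap E\big)=0$. Consequently, for $\omega_1|_E$-a.e.\ $x\in E$ the required sequences of $a$-$P_{\omega_i,\tilde\gamma}$-doubling balls exist for each $i\in\{1,2\}$. I do not anticipate any real obstacle here: the only substantive ingredient is the background doubling lemma, whose proof is standard, and merging the two exceptional null sets via mutual absolute continuity is a purely formal step.
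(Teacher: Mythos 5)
There is a genuine gap, and it is fatal: the ``well-known fact'' you rely on is false. It is \emph{not} true that for every Radon measure $\mu$ and every $\tilde\gamma\in(0,1)$ there is a constant $a=a(\tilde\gamma,n)$ (or even $a=a(\mu)$) such that $\mu$-a.e.\ point admits arbitrarily small $a$-$P_{\mu,\tilde\gamma}$-doubling balls. A concrete counterexample is $\mu=\mathcal L^{n+1}|_{B(0,1)}$. For $x$ in the interior of $B(0,1)$ and $r$ small one has $\Theta_\mu(B(x,r))\approx r$, while
\begin{equation}
P_{\mu,\tilde\gamma}\big(B(x,r)\big)=\sum_{j\geq 0}2^{-j\tilde\gamma}\Theta_\mu\big(2^jB(x,r)\big)\approx r\sum_{0\leq j\leq \log_2(1/r)}2^{j(1-\tilde\gamma)}+r^{-n}\sum_{j>\log_2(1/r)}2^{-j(\tilde\gamma+n)}\approx r^{\tilde\gamma},
\end{equation}
so that $P_{\mu,\tilde\gamma}(B(x,r))/\Theta_\mu(B(x,r))\approx r^{\tilde\gamma-1}\to\infty$ as $r\to0$. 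Hence no ball of sufficiently small radius is $a$-$P_{\mu,\tilde\gamma}$-doubling, for any fixed $a$. What \emph{is} standard (and is what you will find in \cite[Chapter~2]{tolsa_book}) is the existence of arbitrarily small $\beta$-doubling balls in the weaker sense $\mu(2B)\leq\beta\mu(B)$ with $\beta>2^{n+1}$; that does not control the Poisson-type quantity $P_{\mu,\tilde\gamma}$, which compares $\mu$ with an $n$-dimensional profile across all larger scales. Your heuristic that iterating the failure of $P$-doubling ``forces $\mu$ to accumulate unbounded mass'' is exactly backwards: failure of $P$-doubling at all small scales forces $\mu$ to \emph{decay} faster than $r^n$, which is perfectly consistent with local finiteness.

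The paper's actual proof exploits a fact that is special to elliptic measure on the mutually absolutely continuous piece, not a fact about general Radon measures. It fixes the exceptional set $Z_m$ of points $x$ for which $B(x,2^{-j})$ is not $a$-$P_{\omega_i,\tilde\gamma}$-doubling for all $j\geq m$, and then argues (following \cite[Lemma~6.1]{AMTV}) that for $x\in Z_m$ one has $\omega_i(B(x,r))\leq C(m)r^{n+\tilde\gamma}$ for $r\leq 2^{-m}$; this yields $\omega_1|_E\ll\mathcal H^{n+\tilde\gamma}$ on $Z_m$ and hence $\dim\omega_1|_E\geq n+\tilde\gamma$ if $\omega_1(Z_m\cap E)>0$. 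The contradiction then comes from the blow-up result of \cite{AM2}, which gives $\dim\omega_1|_E=n$ under the mutual absolute continuity hypothesis. So the mutual absolute continuity of $\omega_1$ and $\omega_2$ is doing real work here (through the dimension theorem), not merely allowing you to merge two null sets at the end. To repair your argument you would need to replace the false background lemma by the decay estimate on $Z_m$ and the dimension input from \cite{AM2}; the final bookkeeping step (using $\omega_1|_E\ll\omega_2|_E$ to pass null sets between the two measures) is fine as stated.
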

\begin{proof}
Let $i=1,2$. Let $m\in\mathbb Z, m\geq 1$ and denoting
\begin{equation}
Z_m\coloneqq \big\{x\in\partial \Omega_i: \text{ for all }j\geq m, \, B(x, 2^{-j}) \, \text{ is not }a\text{-}P_{\omega_i,\tilde\gamma}\text{-doubling}\big\}
\end{equation}
it suffices to prove that $\omega_i|_E(Z_m)=0$ for every $m$. Arguing as in \cite[Lemma 6.1]{AMTV} we have that, for $x\in Z_m$, we can estimate the elliptic measure of $B(x,r)$ as
\begin{equation}
\omega_i(B(x,r))\leq C(m)r^{n+\tilde{\gamma}} \qquad \text{ for }r\leq 2^{-m}.
\end{equation}
Then
\begin{equation}
\omega|_E(A)\leq \omega(A)\leq C(m)\mathcal H^{n+\tilde{\gamma}}(A) \qquad\text{ for any } A\subset Z_m.
\end{equation}
We recall that the dimension of $\omega|_E$ can be defined as
\begin{equation}
\begin{split}
\dim \omega|_E \coloneqq \inf \big\{s\colon &\exists\,  F\subset\partial \Omega \text{ s.t. } \mathcal{H}^s(F)=0 \\&\text{ and }\omega|_E(F\cap K)=\omega|_E(\partial \Omega\cap K)\, \forall K\subset\mathbb{R}^{n+1} \text{ compact}\big\}
\end{split}
\end{equation}
First let us bound $\dim \omega|_E$ from below.
Let $F\subset\partial \Omega$ be such that  $\mathcal{H}^{n+\tilde{\gamma}}(F)=0$. For $K\subset Z_m$ compact and such that $\omega|_E(K)>0$, we have $\omega|_E(F\cap K)\leq C(m) \mathcal H^{n+\tilde{\gamma}}(F\cap K)=0$. This in turn implies
\begin{equation}\label{estim_dim_omega}
\dim \omega|_E\geq n+\tilde{\gamma}.
\end{equation} 
Conversely, \cite{AM2} gives that $\dim \omega|_E = n$, which gathered with \eqref{estim_dim_omega} tells that
\begin{equation}
n\geq n+\tilde \gamma.
\end{equation}
Being $\tilde\gamma>0$, this brings to a contradiction and, in particular, this proves that $\omega(Z_m)=0$ for every $m$.
\end{proof}

Let $i=1,2$. Denote by $u_i(\cdot)=G_i(p_i,\cdot)$ the Green function associated with $\Omega_i$ with pole at $p_i$. We understand that $u_i$ is extended by zero to $\Omega_i^c$.
As a corollary of \cite[Theorem 1.5]{AGMT}, which was formulated under weaker assumptions on the regularity of the matrix $A$, we can state the following monotonicity formula.
\begin{lemm}[Monotonicity formula]\label{monotonicity}
Let $\Omega_i$ and $u_i$ be as above and let $R>0$. Suppose that that $A_s(\xi)=Id$ for $\xi\in \partial\Omega_1\cap \partial\Omega_2$. Then, setting
\begin{equation}
\gamma(\xi,r)=\Big(\frac{1}{r^2}\int_{B(\xi,2r)}\frac{|\nabla u_1(y)|^2}{|y-\xi|^{n-1}}dy\Big)\cdot \Big(\frac{1}{r^2}\int_{B(\xi,2r)}\frac{|\nabla u_2(y)|^2}{|y-\xi|^{n-1}}dy\Big),
\end{equation} 
we have that, for some $c>0$, 
\begin{equation}
\gamma(\xi,r)\leq \gamma(\xi,s)e^{c(s^\alpha-r^\alpha)}<\infty \quad \text{ for }0<r\leq s<R.
\end{equation}
\end{lemm}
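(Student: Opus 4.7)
The plan is to invoke \cite[Theorem 1.5]{AGMT} applied to the pair $(u_1, u_2)$. First I would verify the core structural hypotheses of that theorem. Both Green functions $u_1, u_2$ are nonnegative on $\mathbb{R}^{n+1}$ after the zero extension, and since $\Omega_1 \cap \Omega_2 = \emptyset$ the products satisfy $u_1 \cdot u_2 \equiv 0$, which is the usual disjointness condition in ACF-type monotonicity. Away from the poles $p_i$, the function $u_i$ is $L_A$-harmonic in $\Omega_i$; recalling that we arranged $\mathrm{diam}\, E \leq \frac{1}{10}\min(\mathrm{diam}\,\Omega_1, \mathrm{diam}\,\Omega_2)$ and chose $p_i \in \Omega_i \cap 2\tilde B \setminus \tilde B$, for $R$ small enough relative to $\mathrm{diam}\, E$ we have $p_1, p_2 \notin B(\xi, 2R)$, so $L_A u_i = 0$ classically inside $B(\xi, 2R) \cap \Omega_i$.

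Next I would check that the normalization $A_s(\xi) = Id$ matches the form of the monotonicity statement derived in \cite{AGMT}. This is exactly what makes $A$ coincide with the identity up to a Hölder error at $\xi$: by \eqref{eq:Holdercont} we have $|A(y) - A(\xi)| \lesssim |y-\xi|^\alpha$, and this Hölder perturbation of the Laplacian setting is what drives the exponential factor $e^{c(s^\alpha - r^\alpha)}$ in the monotonicity conclusion. Schematically, the ACF quantity $\gamma(\xi, r)$ is a perturbation of its constant-coefficient analogue, and a Grönwall-type argument in the radial variable converts the $|y - \xi|^\alpha$ error into the stated exponential correction. The fact that no change of variables is needed to reduce to the identity case is precisely why the result is invoked as a direct corollary.

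The finiteness $\gamma(\xi, s) < \infty$ for $0 < s < R$ I would derive from standard elliptic estimates: Caccioppoli's inequality on dyadic annuli around $\xi$ gives
\begin{equation*}
\int_{B(\xi, 2^{-k+1}s) \setminus B(\xi, 2^{-k}s)} |\nabla u_i|^2 \, dy \lesssim (2^{-k}s)^{-2} \int_{B(\xi, 2^{-k+2}s)} u_i^2 \, dy,
\end{equation*}
and the boundary Hölder regularity of Lemma \ref{lemma_bd_estim} yields $u_i(y) \lesssim (|y-\xi|/s)^{\vartheta}\sup_{B(\xi,2s)} u_i$. Summing over $k$ with the weight $|y-\xi|^{1-n}$ produces a convergent series, since the exponent $\vartheta$ beats the singularity of the weight.

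The main obstacle I expect is purely bookkeeping: ensuring that the precise normalization of \cite[Theorem 1.5]{AGMT} (the specific weight $|y-\xi|^{1-n}$, the prefactor $r^{-2}$, and the reference point at which $A_s = Id$ is imposed) matches \emph{verbatim} the form of $\gamma(\xi,r)$ stated here. If the cited theorem is phrased with a slightly different normalization, the gap is closed by the linear change of variables from Lemma \ref{lemma_change_matrix_identity} applied at $\xi$; under the assumption $A_s(\xi) = Id$ this change is the identity, so no genuine reduction is required.
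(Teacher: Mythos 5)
Your top-level strategy matches the paper exactly: the paper dispatches this lemma with a one-sentence citation to \cite[Theorem~1.5]{AGMT} and a footnote about Appendix~A.1 of that work, and you propose the same citation plus the Caccioppoli/H\"older-regularity bookkeeping needed to check the hypotheses and finiteness of $\gamma(\xi,s)$. That part is fine.

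However, there is a genuine gap in the one place where the lemma actually adds something to AGMT's statement. Their theorem was proved under the hypothesis $A(\xi)=Id$ (the full frozen matrix equals the identity), whereas the lemma here only assumes $A_s(\xi)=Id$, and this relaxation is exactly what the paper's remark and footnote address. You handle this point incorrectly in two ways. First, you write that $A_s(\xi)=Id$ ``is exactly what makes $A$ coincide with the identity up to a H\"older error at $\xi$,'' but this is false: if the antisymmetric part of $A(\xi)$ is nonzero then $A(y)\to A(\xi)\neq Id$ as $y\to\xi$, so $A$ is not a H\"older perturbation of the identity matrix. Second, your fallback of applying the linear change of variables from Lemma \ref{lemma_change_matrix_identity} cannot repair this, precisely because, as you note, that map is $x\mapsto \sqrt{A_s(\xi)}^{-1}x = x$ under the hypothesis, and therefore does nothing to the antisymmetric part of $A(\xi)$. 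The correct and missing observation, which is the content of the paper's footnote, is algebraic rather than geometric: for the constant matrix $A(\xi)$ the divergence-form operator $L_{A(\xi)}$ depends only on its symmetric part (this is \eqref{a_eq_a_sym}), so $L_{A(\xi)}=L_{A_s(\xi)}=-\Delta$. One then sets $D=A(\xi)-A$, which does satisfy $|D(y)|\lesssim |y-\xi|^\alpha$, and runs the AGMT argument viewing $L_A = -\Delta + L_D$ as a H\"older perturbation of the Laplacian at the level of operators. Without this step you have no license to treat the setting as a ``H\"older perturbation of the Laplacian,'' and the cited theorem's hypothesis is not verified.
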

We remark that Azzam, Garnett, Mourgoglou and Tolsa proved their result under the hypothesis $A(\xi)=Id$. However, the same proof works under our assumption.\footnote{It suffices to define the matrix $D$ in \cite[Appendix A.1]{AGMT} as $D=A(\xi)-A$ and observe that $L_{A(\xi)}=L_{A_s(\xi)}=Id$.}

The following lemma is crucial to prove the elliptic variant version of the blowups.
\begin{lemm}\label{lemm_estim_omega_delta}Let $\Omega_1$ be a Wiener regular domain and denote by $\omega_1=\omega_{1}^{p_1}$ its associated elliptic measure with pole at $p_1 \in \Omega_1$. Let $B$ be a ball centered at $\partial\Omega_1$ and such that $p_1\not\in 10 B$.
Assuming that $\omega_1(8B)\leq C \omega_1(\delta_0 B)$ and $\mathcal H^{n+1}(B\setminus \Omega_1)\geq C^{-1}r(B)^{-1}$, we have
\begin{equation}\label{estim_3_3}
\mathcal H^{n+1}(\Omega_1\cap 2\delta_0 B)\gtrsim  r(B)^{n+1}.
\end{equation}
Moreover
\begin{equation}\label{estim_3_4}
\mathcal H^{n+1}(2\delta_0 B\setminus\Omega_1)\approx\mathcal H^{n+1}(2\delta_0 B\setminus \Omega_2)\approx r(B)^{n+1}.
\end{equation}
\end{lemm}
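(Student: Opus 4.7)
The proof splits into two essentially independent pieces. For \eqref{estim_3_3}, my plan is to ``sandwich'' the elliptic measure $\omega_1(\delta_0 B)$ between the two solid contents. First, Bourgain's estimate for elliptic measure (Lemma 32 in \cite{PPT}) applied with pole $p_1\notin 10B$ translates the solid hypothesis $\mathcal{H}^{n+1}(B\setminus\Omega_1)\gtrsim r(B)^{n+1}$ (via the standard lower bound on the Newtonian capacity of volumetrically thick sets in a ball) into $\omega_1(8B)\gtrsim 1$; combined with $\omega_1(8B)\leq C\omega_1(\delta_0 B)$, this gives $\omega_1(\delta_0 B)\gtrsim 1$. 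The second and more delicate step is a ``reverse Bourgain'' argument: assuming for contradiction that $\mathcal{H}^{n+1}(\Omega_1\cap 2\delta_0 B)<\varepsilon\, r(B)^{n+1}$ for a small $\varepsilon$ to be fixed, the $L_{\bar A}$-harmonic function $u(x)=\omega_1^x(\delta_0 B)$ --- bounded by $1$ in $\Omega_1$ and vanishing on $\partial\Omega_1\setminus \delta_0 B$ --- must satisfy $u(p_1)\ll 1$. I expect this to follow by combining the H\"older boundary decay of Lemma \ref{lemma_bd_estim} on annular layers inside $4\delta_0 B$, Caccioppoli's inequality on the ``thin'' set $\Omega_1\cap 2\delta_0 B$, and a Harnack chain from a ball inside $\Omega_1\setminus 4\delta_0 B$ to $p_1$ (possible since $p_1\notin 10B$). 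Choosing $\varepsilon$ sufficiently small then contradicts $u(p_1)=\omega_1(\delta_0 B)\gtrsim 1$.

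For \eqref{estim_3_4}, the upper bounds are immediate from $\mathcal{H}^{n+1}(2\delta_0 B)\lesssim r(B)^{n+1}$. For the lower bound $\mathcal{H}^{n+1}(2\delta_0 B\setminus \Omega_2)\gtrsim r(B)^{n+1}$, I would use the disjointness $\Omega_1\cap \Omega_2=\emptyset$, which yields $\Omega_1\cap 2\delta_0 B\subset 2\delta_0 B\setminus\Omega_2$, together with \eqref{estim_3_3}. For $\mathcal{H}^{n+1}(2\delta_0 B\setminus\Omega_1)\gtrsim r(B)^{n+1}$, I plan to apply \eqref{estim_3_3} with the roles of $\Omega_1$ and $\Omega_2$ interchanged: the solid hypothesis $\mathcal{H}^{n+1}(B\setminus\Omega_2)\gtrsim r(B)^{n+1}$ follows from the same \eqref{estim_3_3} applied to $\Omega_1$ and disjointness, while the doubling $\omega_2(8B)\lesssim \omega_2(\delta_0 B)$ is transferred from that of $\omega_1$ by the mutual absolute continuity $\omega_1|_E\ll\omega_2|_E\ll\omega_1|_E$ (combined with the $a$-$P_{\omega_i,\tilde\gamma}$-doubling selection of the preceding lemma, which is formulated symmetrically in $i=1,2$).

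The main obstacle will be the reverse-Bourgain step in the first paragraph: the H\"older + Caccioppoli + Harnack-chain estimates must be quantitative and must be executed in the variable-coefficient $L_{\bar A}$ setting, where solutions are only H\"older regular up to the boundary and do not enjoy the sharp mean-value identities available for the Laplacian. A secondary, more bookkeeping-style subtlety is spelling out the quantitative transfer of the doubling condition from $\omega_1$ to $\omega_2$; this should be a direct consequence of the AD-regularity of $E$ and the mutual absolute continuity, but the constants must be tracked carefully so that the application of \eqref{estim_3_3} to $\Omega_2$ is effective.
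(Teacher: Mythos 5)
Your proposal takes a genuinely different route from the paper (a proof by contradiction / ``reverse Bourgain'' argument), but it hinges on a claim that is false in general, so the argument does not go through as written.

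The fatal step is your first normalization, namely that $\omega_1(8B)\gtrsim 1$ and hence $\omega_1(\delta_0 B)\gtrsim 1$. Bourgain's estimate gives a lower bound of the form $\omega_1^{x}(8B)\gtrsim 1$ only for poles $x$ sitting \emph{inside} $2B\cap\Omega_1$ (or a fixed corkscrew point within $B$). Here the hypothesis is precisely the opposite: $p_1\notin 10B$, and $B$ may be arbitrarily small compared with $\diam\Omega_1$. In that regime $\omega_1^{p_1}(\delta_0 B)$ is typically of order $(r(B)/\dist(p_1,\partial\Omega_1))^n\ll 1$ --- consider the half-space with the Laplacian and the Poisson kernel --- so your intended contradiction ($\omega_1(\delta_0 B)\gtrsim 1$ versus $\omega_1(\delta_0 B)\ll 1$) never materializes. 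Rescuing this would require transporting the lower Bourgain bound from a corkscrew point of $B$ to the far pole $p_1$ via Harnack chains, which is exactly the kind of connectivity assumption the lemma does not make.

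The paper sidesteps the normalization entirely: it never needs a lower bound on $\omega_1(\delta_0 B)$, because the same quantity appears on both sides of the final inequality and cancels. Concretely, taking a cutoff $\varphi$ with $\chi_{\delta_0 B}\le\varphi\le\chi_{2\delta_0 B}$ and $\|\nabla\varphi\|_\infty\lesssim(\delta_0 r)^{-1}$ and pairing with the Green function $u_1$,
\begin{equation}
\omega_1(\delta_0 B)\le\int\varphi\,d\omega_1=-\int A^T\nabla u_1\cdot\nabla\varphi\lesssim\frac{1}{\delta_0 r}\int_{\Omega_1\cap 2\delta_0 B}|\nabla u_1|,
\end{equation}
then Cauchy--Schwarz extracts the factor $\mathcal H^{n+1}(\Omega_1\cap 2\delta_0 B)^{1/2}$, Caccioppoli replaces $\nabla u_1$ by $u_1$, and the Green-function upper bound $\sup_{4\delta_0 B}u_1\lesssim\omega_1(8B)/r^{n-1}$ introduces a single factor of $\omega_1(8B)$. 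The doubling hypothesis $\omega_1(8B)\le C\omega_1(\delta_0 B)$ then lets one divide out $\omega_1(\delta_0 B)$ and solve directly for $\mathcal H^{n+1}(\Omega_1\cap 2\delta_0 B)\gtrsim r(B)^{n+1}$, with no contradiction argument, no Harnack chain, and no boundary H\"older regularity beyond what Caccioppoli needs. Your handling of \eqref{estim_3_4} (disjointness to pass from $\Omega_1\cap 2\delta_0 B$ to $2\delta_0 B\setminus\Omega_2$, and then applying \eqref{estim_3_3} with the roles of the domains exchanged) is consistent with the paper's intent, but it depends on the broken first part.
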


\begin{proof}
Denote $r=r(B)$. Let us first prove \eqref{estim_3_3}.
Consider a smooth function $\varphi\geq 0$ such that $\varphi\equiv 1$ on $\delta_0 B$ and $\supp \varphi\subset 2\delta_0 B$. In particular, suppose that $\|\varphi\|_\infty\lesssim (\delta_0 r)^{-1}$. Then, recalling that, by the properties of Green's function and being $x_1$ outside of the support of $\varphi$,
\begin{equation}
\int \varphi d\omega_1=-\int A^T \nabla u_1\cdot\nabla\varphi,
\end{equation}
we use the ellipticity of the matrix $A$ and write
\begin{equation}
\begin{split}
\omega_1(2\delta_0 B)&\leq \int \varphi d\omega_1\leq \int |\nabla u_1\cdot A\nabla \varphi|\\ &\lesssim \int |\nabla u_1| |\nabla\varphi| = \int_{\Omega_1\cap 2\delta_0 B} |\nabla u_1| |\nabla\varphi|\lesssim \frac{1}{\delta_0 r}\int_{\Omega_1\cap 2\delta_0 B} |\nabla u_1|.
\end{split}
\end{equation}
Then applying, in order,  H\"older's and Caccioppoli's inequalities,
\begin{equation}
\begin{split}
\frac{1}{\delta_0 r}\int_{\Omega_1\cap 2\delta_0 B} |\nabla u_1|&\leq \frac{\mathcal H^{n+1}(\Omega_1\cap 2\delta_0 B)^{1/2}}{\delta_0 r}\Big(\int_{2\delta_0 B}|\nabla u_1|^2\Big)^{1/2}\\ &\lesssim \frac{\mathcal H^{n+1}(\Omega_1\cap 2\delta_0 B)^{1/2}}{\delta_0 r}\frac{1}{\delta_0 r}\Big(\int_{4\delta_0 B}|u_1|^2\Big)^{1/2},
\end{split}
\end{equation}
so
\begin{equation}
\omega_1(2\delta_0 B)\lesssim {\mathcal H^{n+1}(\Omega_1\cap 2\delta_0 B)^{1/2}}\frac{(\delta_0 r)^{(n+1)/2}}{(\delta_0 r)^2}\sup_{4\delta_0 B}|u_1|.
\end{equation}
At this point, recalling that (see \cite[Lemma 32]{PPT})
\begin{equation}
\sup_{y\in 4\delta_0 B}u_1(y)\lesssim \frac{\omega_1(8B)}{r^{n-1}},
\end{equation}
we have
\begin{equation}
\omega_1(\delta_0 B)\lesssim {\mathcal H^{n+1}(\Omega_1\cap 2\delta_0 B)^{1/2}}\frac{(\delta_0 r)^{(n-3)/2}}{(\delta_0 r)^{n-1}}\omega_1(8 B)
\end{equation}
which, since we suppose $\omega_1(8B)\leq C \omega_1(\delta_0 B)$, concludes the proof of \eqref{estim_3_3}.

The second estimate in the statement of the lemma is a direct application of the first one (see also \cite[Lemma 3.4]{AMTV}).
\end{proof}
The following lemma provides the connection between the function $\gamma$ in Lemma \ref{monotonicity} and elliptic measure. 
\begin{lemm}\label{transition_monotonicity_elliptic_measure}
Let $i=1,2$ and $\Omega_i$, $p_i$ be as above. Let $0<R<\min_i\dist(p_i,\partial\Omega_i).$ Then, for $0<r<R/4$ and $\xi\in\partial\Omega_1\cap\partial\Omega_2$ we have
\begin{equation}\label{mon_1}
\frac{\omega_1(B(\xi,r))}{r^n}\frac{\omega_2(B(\xi,r)}{r^n}\lesssim \gamma(\xi,2r)^{1/2}.
\end{equation}
Moreover, if $r<\delta_0R/8$ and $\omega_i(B(\xi,8r))\lesssim \omega_i(B(\xi,\delta_o r))$,
\begin{equation}\label{mon_2}
\gamma(\xi,r)^{1/2}\lesssim \frac{\omega_1(B(\xi,16\delta_0^{-1}))}{r^n}\cdot \frac{\omega_2(B(\xi,16\delta_0^{-1}))}{r^n}.
\end{equation}
\end{lemm}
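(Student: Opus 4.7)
The plan is to observe that both estimates amount to comparing the weighted energy integral
\begin{equation*}
E_i(r) \coloneqq \int_{B(\xi, 2r)} \frac{|\nabla u_i(y)|^2}{|y-\xi|^{n-1}}\,dy
\end{equation*}
against the $n$-dimensional density $\omega_i(B(\xi, Cr))/r^n$: \eqref{mon_1} amounts to a lower bound on $E_i$ in terms of $\omega_i$, while \eqref{mon_2} amounts to an upper bound.

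For \eqref{mon_1}, I would test the elliptic measure against a smooth cutoff. Let $\varphi$ be smooth with $\chi_{B(\xi,r)} \leq \varphi \leq \chi_{B(\xi,2r)}$ and $|\nabla \varphi|\lesssim 1/r$. Since $p_i \notin B(\xi, 2r)$ (as $r < R/4$), the Green identity employed in the proof of Lemma~\ref{lemm_estim_omega_delta} yields
\begin{equation*}
\omega_i\big(B(\xi,r)\big) \leq \int \varphi\,d\omega_i = -\int A^T \nabla u_i \cdot \nabla \varphi \lesssim \frac{1}{r}\int_{B(\xi,2r)\setminus B(\xi,r)} |\nabla u_i|\,dy.
\end{equation*}
Applying Cauchy--Schwarz with the weight $|y-\xi|^{n-1}\approx r^{n-1}$ on the annulus gives $\omega_i(B(\xi,r)) \lesssim r^{n-1}E_i(r)^{1/2}$, so multiplying the two resulting inequalities for $i=1,2$ produces
\begin{equation*}
\frac{\omega_1(B(\xi,r))\,\omega_2(B(\xi,r))}{r^{2n}} \lesssim \frac{1}{r^2}\,E_1(r)^{1/2}E_2(r)^{1/2} = \gamma(\xi,r)^{1/2}.
\end{equation*}
The almost monotonicity in Lemma~\ref{monotonicity} then upgrades $\gamma(\xi,r)^{1/2}$ to $\gamma(\xi,2r)^{1/2}$ up to the bounded factor $e^{c((2r)^\alpha - r^\alpha)/2}$.

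For \eqref{mon_2}, the strategy is a dyadic decomposition of $B(\xi,2r)$. Setting $A_k = B(\xi,2^{-k+2}r)\setminus B(\xi,2^{-k+1}r)$ for $k\geq 1$, on $A_k$ the weight satisfies $|y-\xi|^{-(n-1)}\approx (2^{-k}r)^{-(n-1)}$. The modified Caccioppoli argument sketched in the proof of Lemma~\ref{lemm_estim_omega_delta} (using $u_i\eta^2$ as test function and exploiting that $u_i$ vanishes on $\partial\Omega_i$) yields
\begin{equation*}
\int_{B(\xi, 2^{-k+2}r)} |\nabla u_i|^2\,dy \lesssim (2^{-k}r)^{n-1}\big(\textstyle\sup_{B(\xi,2^{-k+3}r)} u_i\big)^2.
\end{equation*}
The boundary H\"older estimate of Lemma~\ref{lemma_bd_estim}, applied with vanishing data on $\partial\Omega_i\cap B(\xi,4r)$, provides the decay
\begin{equation*}
\textstyle\sup_{B(\xi,2^{-k+3}r)} u_i \lesssim 2^{-k\vartheta}\sup_{B(\xi,2r)} u_i,
\end{equation*}
and the Bourgain-type estimate for Wiener regular domains (cf.\ \cite[Lemma~32]{PPT}) controls the right-hand side by $\omega_i(B(\xi, c_0\delta_0^{-1}r))/r^{n-1}$ for a suitable constant $c_0$. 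Combining these inputs,
\begin{equation*}
\int_{A_k} \frac{|\nabla u_i|^2}{|y-\xi|^{n-1}}\,dy \lesssim 2^{-2k\vartheta}\,\frac{\omega_i(B(\xi,c_0\delta_0^{-1}r))^2}{r^{2(n-1)}},
\end{equation*}
and summing the geometric series in $k$ gives $E_i(r)\lesssim \omega_i(B(\xi,c_0\delta_0^{-1}r))^2/r^{2(n-1)}$. Taking the geometric mean and dividing by $r^2$ delivers \eqref{mon_2}, after adjusting $c_0$ to $16\delta_0^{-1}$.

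The main obstacle is the convergence of the dyadic sum: a naive application of Caccioppoli combined with Bourgain at each scale produces terms that do \emph{not} decay in $k$, and the resulting series diverges. The crucial ingredient is the H\"older estimate of Lemma~\ref{lemma_bd_estim}, which provides the scale-invariant factor $2^{-k\vartheta}$ coming from the vanishing of $u_i$ on $\partial\Omega_i$, and it is precisely this decay that makes the argument close. A secondary technical point is the tracking of constants in Bourgain's estimate in the Wiener regular setting, which is where the $\delta_0^{-1}$ enters the statement; the doubling hypothesis $\omega_i(B(\xi,8r))\lesssim \omega_i(B(\xi,\delta_0 r))$ is invoked to collapse the various scales of $\omega_i$ appearing at intermediate steps to the single scale $16\delta_0^{-1}r$ claimed in the conclusion.
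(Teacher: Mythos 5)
Your overall architecture is correct and matches the intended argument: for \eqref{mon_1} a single pairing with a cutoff plus weighted Cauchy--Schwarz, and for \eqref{mon_2} a dyadic shell decomposition, Caccioppoli at each scale, boundary decay to make the geometric series converge, and Bourgain's estimate to convert the final sup into elliptic measure. The derivation of \eqref{mon_1} is complete. You also correctly diagnose that the naive Caccioppoli--Bourgain iteration produces a divergent series and that boundary decay is the ingredient that saves it.

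The gap is in how you supply that decay. Lemma~\ref{lemma_bd_estim} is stated for open sets with $n$-AD-regular boundary, and $\Omega_1,\Omega_2$ here are only Wiener regular -- no Ahlfors regularity is available, so you cannot simply cite that lemma. What is actually available is the volume density of the complement coming from Lemma~\ref{lemm_estim_omega_delta}, namely $\mathcal{H}^{n+1}\big(2\delta_0 B(\xi,\rho)\setminus\Omega_i\big)\approx\rho^{n+1}$; a capacity-density (De Giorgi/Moser-type) version of the boundary H\"older estimate for $L_A$-harmonic functions vanishing on $\partial\Omega_i$ then produces the factor $(\rho/r)^{\vartheta}$ you need. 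This is precisely why the paper's proof invokes Lemma~\ref{lemm_estim_omega_delta} rather than Lemma~\ref{lemma_bd_estim}, and why the doubling hypothesis $\omega_i(B(\xi,8r))\lesssim\omega_i(B(\xi,\delta_0 r))$ appears in the statement (it is what makes Lemma~\ref{lemm_estim_omega_delta} applicable), not merely to ``collapse scales'' at the end. There is a further subtlety you should acknowledge: the H\"older decay at scale $2^{-k}r$ for all $k\geq 1$ requires the volume density condition down to all those scales, and Lemma~\ref{lemm_estim_omega_delta} alone only delivers it at the top scale $r$. In the application (points of $E_m$, where $\mathcal{H}^{n+1}(B(\xi,\rho)\cap\Omega_i)\geq m^{-1}\rho^{n+1}$ for all $\rho<1/m$) this is guaranteed, and this is also how the corresponding step is arranged in \cite[Lemma~3.5]{AMTV}. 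Replacing the appeal to Lemma~\ref{lemma_bd_estim} with the capacity-density boundary estimate, fed by the volume density from Lemma~\ref{lemm_estim_omega_delta} (or from the $E_m$ hypotheses at all scales), would close the argument.
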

The proof of \eqref{mon_1} is analogous to that for the harmonic measure in \cite{KPT}. The proof of \eqref{mon_2} is an application of Caccioppoli's inequality together with Lemma \ref{lemm_estim_omega_delta} (see also \cite[Lemma 3.5]{AMTV}).

The blowup technique for the elliptic measure developed in \cite{AM2} is crucial to prove the next lemma. We remark that the authors formulated this result under more general assumptions on the matrix $A$ then the ones of the present work.
\begin{lemm}
Let $\Omega_1, \Omega_2$ and $E$ be as above. Let $\varepsilon<1/100$ and, for $m\geq 1$, define $E_m$ as the set of $\xi\in E$ such that
for all $\xi \in E$, $0<r<1/m$ and $i=1,2$ the following properties hold:
\begin{enumerate}[label=(\subscript{E}{{\arabic*}})]
\item $\omega_i(B(\xi,2r))\leq m\, \omega_i(B(\xi,r)).$
\item $\mathcal H^{n+1}(B(\xi,r)\cap \Omega_i)\geq \frac{1}{m}r^{n+1}.$
\item $\beta_{\omega_1,1}(B(\xi,r))<\varepsilon r^{-n} \omega_1(B(\xi,r))$.
\end{enumerate}
The sets $E_m$ cover $E$ up to a set of $\omega_1$-measure $0$, i.e.
\begin{equation}
\omega_1\Big(E\setminus \bigcup_{m\geq 1} E_m\Big)=0.
\end{equation}
\end{lemm}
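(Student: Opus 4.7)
The strategy is to show that at $\omega_1|_E$-a.e.\ $\xi\in E$ every one of the conditions (E1)–(E3) holds for all sufficiently small radii, so that $\xi\in E_m$ for some $m=m(\xi)<\infty$; this directly gives $\omega_1(E\setminus\bigcup_m E_m)=0$. The principal input is the blow-up analysis for elliptic measure developed in \cite{AM2}, combined with the hypothesis of mutual absolute continuity $\omega_1|_E\ll\omega_2|_E\ll\omega_1|_E$.

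First I would address (E1). By \cite{AM2}, applied to both $\Omega_1$ and $\Omega_2$ (after normalising the matrix at $\xi$ via the linear change of variables of Lemma \ref{lemma_change_matrix_identity}), at $\omega_i|_E$-a.e.\ point $\xi$ every tangent measure of $\omega_i$ is a positive constant multiple of $\mathcal H^n|_L$ for some hyperplane $L\ni 0$. In particular $\omega_i|_E$ is asymptotically optimally doubling, so $\omega_i(B(\xi,2r))/\omega_i(B(\xi,r))\to 2^n$ as $r\to 0$. Mutual absolute continuity transfers the set of good points between $\omega_1$ and $\omega_2$; hence outside an $\omega_1$-null set, (E1) holds for all $r<r_\xi$, and thus for $m$ large enough.

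Next I would establish (E2). The same blow-up theorem implies that at $\omega_1|_E$-a.e.\ $\xi$ the pseudo-tangents of $(\Omega_i,\xi)$ are half-spaces bounded by $L(\xi)$, which forces
\begin{equation*}
\lim_{r\to 0}\frac{\mathcal H^{n+1}(B(\xi,r)\cap\Omega_i)}{r^{n+1}}=\tfrac12\mathcal H^{n+1}(B(0,1)),
\end{equation*}
so (E2) holds at such $\xi$ for all sufficiently small $r$.

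Finally, for (E3) the weak-$\ast$ convergence $r^{-n}T_{\xi,r}\sharp\omega_1\rightharpoonup c\,\mathcal H^n|_{L(\xi)}$ at $\omega_1|_E$-a.e.\ $\xi$ yields, by a routine semicontinuity argument applied to the definition of $\beta^L_{\omega_1,1}$ together with Lemma \ref{lemma_mod_alpha} used to undo the affine normalisation of the matrix, that
\begin{equation*}
\frac{\beta_{\omega_1,1}(B(\xi,r))}{r^{-n}\omega_1(B(\xi,r))}\longrightarrow 0\quad\text{as }r\to 0.
\end{equation*}
Assembling the three items, the set of $\xi\in E$ for which at least one of (E1)–(E3) fails for every $m$ is a countable union of $\omega_1$-null sets, whence $\omega_1(E\setminus\bigcup_m E_m)=0$. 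The main obstacle I foresee is invoking the elliptic blow-up of \cite{AM2} in our variable-coefficient setting: one must track how (E1)–(E3) are affected by the $\xi$-dependent normalisation $S=\sqrt{A_s(\xi)}$ from Lemma \ref{lemma_change_matrix_identity}, and verify that doubling ratios, Lebesgue densities, and $\beta_1$-numbers all transform in a controlled way — this is precisely what Lemma \ref{lemma_mod_alpha} and the bilipschitz invariance of $\Theta_{\mu}$ supply.
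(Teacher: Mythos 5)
Your proposal takes essentially the same route as the paper: both reduce (E1)--(E3) to facts about tangent measures of elliptic measure under mutual absolute continuity, invoking \cite{AM2}, Preiss \cite{Pr}, and the arguments of \cite{AMTV}. Two places call for more care. First, before the blow-up machinery applies, the paper explicitly restricts to the set $\Gamma$ of points of density of $E$ that are Lebesgue points of $h = d\omega_1/d\omega_2$ with $0<h(\xi)<\infty$; this is precisely where \cite[Theorem 1.3]{AM2} can be used, and your phrase ``mutual absolute continuity transfers the set of good points'' glosses over this genuine (if standard) reduction. Second, you assert stronger limits than are either established by the references or needed: flatness of all tangent measures yields, via \cite[Corollary~2.7]{Pr}, only $\limsup_{r\to 0}\omega_i(B(\xi,2r))/\omega_i(B(\xi,r))<\infty$ --- the asymptotically optimally doubling statement $\to 2^n$ requires finite upper and positive lower $n$-density, and is not what the paper uses for (P1); likewise, for (E2) the paper does not claim the domain blow-ups are half-spaces with exact Lebesgue density $\tfrac12$, but obtains the weaker $\liminf_{r\to 0}\mathcal H^{n+1}(B(\xi,r)\cap\Omega_i)/r^{n+1}>0$ from the elliptic-measure volume estimate of Lemma~\ref{lemm_estim_omega_delta} combined with the \cite{AMTV} argument. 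Neither overclaim would sink the argument, since the weaker statements suffice, but a write-up should not rest on them.
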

 The proof follows by known results in the literature. However, we think that it may be useful to the reader to dispose of precise references.
\begin{proof}[Sketch of the proof]
Set
\begin{equation}
E^*=\Big\{\xi \in E: \lim_{r\to 0}\frac{\omega_1(E\cap B(\xi,r))}{\omega_1(B(\xi,r))}=\lim_{r\to 0}\frac{\omega_2(E\cap B(\xi,r))}{\omega_2(B(\xi,r))}=1\Big\}.
\end{equation}
One can see that $\omega_i(E\setminus E^*)=0$, $i=1,2$. Now, for $\xi\in E^*$, set $h(\xi)=\frac{d\omega_1}{d\omega_2}(\xi)$, 
\begin{equation}
\Lambda=\big\{\xi \in E^*: 0<h(\xi)<\infty \big\}
\end{equation}
and
\begin{equation}
\Gamma = \{\xi \in \Lambda: \xi \text{ is a Lebesgue point for $h$ with respect to $\omega_1$}\}.
\end{equation}
By Lebesgue differentiantion theorem, $\omega_i(E\setminus \Gamma)=\omega_i(E^*\setminus \Gamma)$ for $i=1,2$.
Then, in order to prove the lemma it suffices to show that for $\omega_1$-almost every $\xi \in \Gamma$:
\begin{enumerate}[label=(\subscript{$P$}{{\arabic*}})]
\item \label{P1} $\omega_1$ is locally doubling, i.e.
\begin{equation}
\limsup_{r\to 0}\frac{\omega_1(B(\xi,2r))}{\omega_1(B(\xi,r))}<\infty.
\end{equation}
\item \label{P2} For $i=1,2$
\begin{equation}
\liminf_{r\to 0} \frac{\mathcal H^{n+1}(B(\xi,r)\cap \Omega_i)}{r^{n+1}}>0
\end{equation}
\item \label{P3} We have the flatness estimate
\begin{equation}
\lim_{r\to 0}\beta_{\omega_1,1}(B(\xi, r))\frac{r^n}{\omega_1(B(\xi,r))}=0.
\end{equation}
\end{enumerate}
The condition (P1) holds because of the flatness of the tangents $\text{Tan}(\omega_i, \xi)$, see \cite[Theorem 1.3]{AM2}, which is known to imply the locally doubling condition (\cite[Corollary 2.7]{Pr}).

The property (P2) follows by the arguments in \cite{AMTV} together with \eqref{estim_3_4}.

To prove (P3), it suffices to argue as in the end of \cite[Section 5]{AMTV}.
\end{proof}
Now consider $m\geq 1$ such that $\omega_i(E_m)$.
\begin{lemm}
Let $\delta>0$. For $\omega_1$-almost every $x\in E_m$ there is $r_x>0$ such that, given an $a$-$P_{\tilde{\gamma},\omega_1}$-doubling ball $B(x,r)$ with $r\leq r_x$, there exits a set $G_m(x,r)\subset E_m\cap B(x,r)$ such that
\begin{equation}
\frac{\omega_1(B(z,t))}{t^n}\lesssim \frac{\omega_1(B(x,r))}{r^n} \text{ for every }z\in G_m(x,r),\, 0<t\leq 2r.
\end{equation}
In particular,
\begin{equation}\label{delta_ineq_cover}
\omega_1(B(x,r)\setminus G_m(x,r))\leq \delta \omega_1(B(x,r)).
\end{equation}
and, if we denote by $\tilde{E}_{m\delta}$ the set of points where \eqref{delta_ineq_cover} is verified, we have
\begin{equation}
\omega_1\big(E_m\setminus \tilde{E}_{m,\delta}\big)=0.
\end{equation}
\end{lemm}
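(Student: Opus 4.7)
The plan is to prove the statement via three Lebesgue-differentiation-type reductions, with the $\omega_1$-smallness of $B(x,r) \setminus G_m(x,r)$ coming from a careful choice of the set on which the local density of $\omega_1$ stays uniformly bounded.

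First, I would apply the Lebesgue differentiation theorem to the indicator $\chi_{E_m^c}$ to obtain that, for $\omega_1|_{E_m}$-a.e.\ $x \in E_m$, $\omega_1(B(x,r) \setminus E_m)/\omega_1(B(x,r)) \to 0$ as $r \to 0$. In particular, for $r$ smaller than some $r_x^{(1)} > 0$ we have $\omega_1(B(x,r) \setminus E_m) \leq (\delta/3)\omega_1(B(x,r))$. This reduces the task to bounding $\omega_1((B(x,r) \cap E_m) \setminus G_m(x,r))$, once $G_m(x,r) \subset B(x,r) \cap E_m$ is defined.

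Second, I would exploit property $(E_1)$ and the fact that $\omega_1|_E$ is genuinely $n$-dimensional (as established in the proof of the previous lemma, where $\dim \omega_1|_E = n$) to show that for $\omega_1|_{E_m}$-a.e.\ $z$, the upper density $\Theta^*(z) := \sup_{0 < t < 1/m} \Theta_{\omega_1}(B(z,t))$ is finite. Hence $E_m$ decomposes (up to a $\omega_1$-null set) as an increasing union $\bigcup_N E_m^{(N)}$ with $E_m^{(N)} := \{z \in E_m : \Theta^*(z) \leq N\}$. For a.e.\ such $x$, fix $N = N(x)$ with $x \in E_m^{(N)}$; a further application of Lebesgue differentiation (to $\chi_{(E_m^{(N)})^c}$) yields $\omega_1(B(x,r) \cap E_m^{(N)})/\omega_1(B(x,r)) \to 1$, so for $r \leq r_x^{(2)}$ we obtain $\omega_1((B(x,r) \cap E_m) \setminus E_m^{(N)}) \leq (2\delta/3)\omega_1(B(x,r))$. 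Also, I would choose $r_x^{(3)}$ small enough so that $\Theta_{\omega_1}(B(x, r)) \geq \theta(x)/2$ for $r \leq r_x^{(3)}$, where $\theta(x) = \lim_{r\to 0}\Theta_{\omega_1}(B(x,r)) > 0$ exists and is finite for $\omega_1|_{E_m}$-a.e.\ $x$ (by the $n$-dimensional character of $\omega_1|_E$ and $(E_1)$).

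Now set $r_x := \min\bigl(r_x^{(1)}, r_x^{(2)}, r_x^{(3)}, 1/(4m)\bigr)$, and for any $a$-$P_{\tilde{\gamma},\omega_1}$-doubling ball $B(x,r)$ with $r \leq r_x$, define
\begin{equation*}
G_m(x,r) := B(x,r) \cap E_m^{(N)}.
\end{equation*}
The choice $r \leq 1/(4m)$ guarantees $2r < 1/m$, so for every $z \in G_m(x,r)$ and $0 < t \leq 2r$ we have $\Theta_{\omega_1}(B(z,t)) \leq N \leq (2N/\theta(x)) \Theta_{\omega_1}(B(x,r))$, and the density inequality holds with implicit constant depending on the ratio $N/\theta(x)$ (which is fixed for the given $x$). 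Combining the two reductions above,
\begin{equation*}
\omega_1(B(x,r) \setminus G_m(x,r)) \leq \omega_1(B(x,r) \setminus E_m) + \omega_1((B(x,r) \cap E_m) \setminus E_m^{(N)}) \leq \delta\,\omega_1(B(x,r)),
\end{equation*}
as required. Finally, $\omega_1(E_m \setminus \tilde{E}_{m,\delta}) = 0$ follows at once, since the construction above succeeds at every $x$ lying in the intersection of the three full-$\omega_1|_{E_m}$-measure sets identified in the first two paragraphs.

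The main obstacle is the finiteness of the upper density $\Theta^*(z)$ $\omega_1$-a.e.\ on $E_m$. A naive Vitali argument on the bad set $\{z \in B(x,r)\cap E_m : \exists t \leq 2r,\ \Theta_{\omega_1}(B(z,t)) > A\,\Theta_{\omega_1}(B(x,r))\}$ gives only a bound uniform in $A$ (of order $C(a,m)\omega_1(B(x,r))$), which is insufficient; this is why I route the smallness through Lebesgue differentiation on the layers $E_m^{(N)}$ rather than through a covering estimate. Verifying $\Theta^* < \infty$ a.e.\ requires the combination of the local doubling $(E_1)$ with the dimensional information $\dim\omega_1|_E = n$ coming from \cite{AM2}, and is precisely where the structure of elliptic measure (rather than an arbitrary Radon measure) enters decisively.
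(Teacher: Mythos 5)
Your route is genuinely different from the paper's (which, following \cite[Lemma 6.2]{AMTV}, goes through the ACF monotonicity formula of Lemma~\ref{monotonicity} together with Lemma~\ref{transition_monotonicity_elliptic_measure}), but it has a gap at precisely the point where the monotonicity formula is needed.

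The crucial step in your argument is the assertion that $\Theta^*(z)=\sup_{0<t<1/m}\Theta_{\omega_1}(B(z,t))<\infty$ for $\omega_1$-a.e.\ $z\in E_m$, which you propose to derive from the local doubling $(E_1)$ together with $\dim\omega_1|_E=n$. Neither ingredient yields an \emph{upper} bound on $\omega_1(B(z,t))/t^n$. Local doubling $\omega_1(B(z,2t))\le m\,\omega_1(B(z,t))$ controls how slowly the measure can decay and so gives \emph{lower} bounds on $\omega_1(B(z,t))$ at small $t$ in terms of larger scales, not upper bounds. The dimension statement $\dim\omega_1|_E=n$ says that $\omega_1|_E$ is carried by a set of $\mathcal H^s$-measure zero for every $s>n$; unwinding the definition, this controls $\liminf_{t\to 0}\log\omega_1(B(z,t))/\log t$ from above (so again something like a lower bound on the density along a sequence of scales), and it is perfectly compatible with $\Theta^*(z)=\infty$ everywhere. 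In the paper (and in \cite{AMTV}) the finiteness — and, more importantly, the \emph{quantitative} bound $\Theta_{\omega_1}(B(z,t))\lesssim\Theta_{\omega_1}(B(x,r))$ for all $z\in G_m(x,r)$, $t\le 2r$ with a constant depending only on $m,a,n,\delta$ — comes from the almost-monotonicity of $\gamma(z,\cdot)$: one uses $\gamma(z,2t)\lesssim\gamma(z,16\delta_0^{-1}r)$, converts this to a bound on $\Theta_{\omega_1}(B(z,t))\Theta_{\omega_2}(B(z,t))$ via Lemma~\ref{transition_monotonicity_elliptic_measure}, and then splits the product using the mutual absolute continuity of $\omega_1$ and $\omega_2$ (Lebesgue points of $h=d\omega_1/d\omega_2$) and the $a$-$P$-doubling of $B(x,r)$. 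Your argument never invokes the monotonicity formula, so this essential control is missing.

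A secondary but real defect is that even if one granted $\Theta^*(z)<\infty$ a.e., the layering $E_m^{(N)}$ produces a bound $\Theta_{\omega_1}(B(z,t))\le N(x)\le\big(2N(x)/\theta(x)\big)\,\Theta_{\omega_1}(B(x,r))$ whose constant depends on $x$. The lemma (and its downstream use: property (4) of Theorem~\ref{main_thm_new} with a fixed $C_1$, and Lemma~\ref{estim_T_dens}) wants a constant depending only on $m,a,n,\delta$; the monotonicity route gives exactly that, whereas the Lebesgue-differentiation route gives only an $x$-dependent one. So even if the density finiteness were established by other means, you would be proving a weaker statement than the one the paper needs.
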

This lemma can be proved arguing as in \cite[Lemma 6.2]{AMTV} and more precisely combining the locally doubling property of the elliptic measure ensured by the blowup argument together with Lemma \ref{transition_monotonicity_elliptic_measure}.

We also point out that their argument relies on the monotonicity formula of Alt, Caffarelli and Friedman. So, to prove it in the elliptic case we have to invoke Lemma \ref{monotonicity}, whose hypothesis include the assumption $A_s(x)=Id$. This, of course, is not true in general. However, one can argue via the change of variable in Lemma \ref{lemma_change_matrix_identity} to achieve this property. For a more detailed treatment of how the elliptic measure varies under that transformation we refer to \cite[Corollary 2.5]{AGMT}. We omit further details.

From now on fix $\tilde \gamma=\tilde\alpha$. The following lemma contains an estimate of the potential of $\omega_1$ which is needed to recollect the property (4) in Theorem \ref{main_thm_new}.
\begin{lemm}[cfr. {\cite[Lemma 6.3]{AMTV}}]\label{estim_T_dens}
Let $0<c\ll 1$ to be chosen small enough. For $m\geq 1$ and $\delta>0$, let $\tilde{E}_{m,\delta}$ and $r_{x_0}$ be as in the previous lemma. Consider $x_0\in \tilde{E}_{m,\delta}$ and take
\begin{equation}
0<r_0<\min\big(r_{x_0},1/m, \dist(p_1,\partial\Omega_1)\big).
\end{equation}
Assume, moreover, that $B_0=B(x_0,r)$ is an $a$-$P_{\omega_1,\tilde\alpha}$-doubling ball. Then, for all $x\in G_m(x_0,r_0)$ we have
\begin{equation}
T_*(\chi_{2B_0}\omega_1)(x)\lesssim \Theta_{\omega_1}(B_0).
\end{equation}
\end{lemm}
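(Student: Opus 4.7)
The plan is to follow the scheme of the analogous statement for the Riesz transform (see \cite[Lemma 6.3]{AMTV}), with two modifications dictated by our elliptic setting: first, a freezing step to reduce the kernel $K(x,y) = \nabla_1 \mathcal E_A(x,y)$ to its constant-coefficient model $\nabla_1 \Theta(x,y;A(x))$, which is antisymmetric; second, careful use of the $a$-$P_{\omega_1,\tilde\alpha}$-doubling hypothesis on $B_0$ to absorb error terms at distant scales.

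Fix $x \in G_m(x_0,r_0)$ and $\varepsilon > 0$. The aim is to bound $|T_\varepsilon(\chi_{2B_0}\omega_1)(x)|$ by $\Theta_{\omega_1}(B_0)$, uniformly in $\varepsilon$. Using Lemma \ref{lemm_freezing}, write
\[
K(x,y) = \nabla_1 \Theta(x,y;A(x)) + E(x,y), \qquad |E(x,y)| \lesssim |x-y|^{\alpha - n}.
\]
The contribution of $E$ integrates directly against $\chi_{2B_0}\omega_1$: by the density control $\omega_1(B(x,t)) \lesssim \Theta_{\omega_1}(B_0) t^n$ (valid for $0 < t \leq 2r_0$ since $x\in G_m(x_0,r_0)$), a standard dyadic annular decomposition gives
\[
\int_{2B_0} |E(x,y)|\, d\omega_1(y) \lesssim r_0^{\alpha}\, \Theta_{\omega_1}(B_0),
\]
which is harmless.

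The principal contribution $\int_{|x-y|>\varepsilon,\, y \in 2B_0} \nabla_1 \Theta(x,y;A(x))\, d\omega_1(y)$ is the truncated singular integral of $\omega_1$ against an antisymmetric Calder\'on--Zygmund kernel with constant coefficient matrix $A(x)$. I would handle it by a Green's identity argument: relating $\omega_1$ to the Green function $u_1 = G_1(p_1,\cdot)$ via the identity $L_A u_1 = -\delta_{p_1}$ (distributionally, with zero boundary data on $\partial \Omega_1$), one expresses the principal integral as a volume integral involving $\nabla u_1$. The antisymmetry of $\nabla_1 \Theta(x,\cdot;A(x))$ avoids the logarithmic divergence that would otherwise appear in a naive dyadic summation. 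On annuli inside $2B_0$, the volume integral is then controlled by Cauchy--Schwarz combined with Caccioppoli's inequality for $u_1$ and the Bourgain-type bound $u_1(y) \lesssim \omega_1(B_0)/r_0^{n-1}$ at interior corkscrew points, whose existence is guaranteed by Lemma \ref{lemm_estim_omega_delta}. The tail contributions from scales close to $2r_0$ and outside $B_0$ are handled by the $a$-$P_{\omega_1,\tilde\alpha}$-doubling property of $B_0$, which guarantees a polynomial decay estimate $\Theta_{\omega_1}(2^j B_0) \lesssim 2^{-j\tilde\alpha}\Theta_{\omega_1}(B_0)$ in the relevant range.

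The main obstacle is the lack of antisymmetry of the original kernel $K$: for the Riesz transform the Green's identity manipulation is transparent, whereas here one must confine the identity to the frozen antisymmetric kernel $\nabla_1\Theta(x,\cdot;A(x))$ and absorb the H\"older-regular discrepancy via Step~1. A secondary subtlety is the $\varepsilon$-uniformity: for small $\varepsilon$ one needs the density condition on $G_m(x_0,r_0)$ to control the innermost annulus $B(x,\varepsilon)\cap 2B_0$ from above by $\Theta_{\omega_1}(B_0)$ using just $|K(x,y)| \lesssim |x-y|^{-n}$, while for large $\varepsilon$ (comparable to $r_0$) the estimate is trivial. The smallness of $r_0 < 1/m$ ensures that the CZ constants in Lemma \ref{lemcz} and the freezing constants in Lemma \ref{lemm_freezing} are under control.
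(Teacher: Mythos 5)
Your plan follows the same broad strategy as the paper (freeze the kernel to reduce to an antisymmetric constant-coefficient model, exploit the Green function representation of $\omega_1$, then use Caccioppoli and Bourgain-type bounds), and the error-term estimate in your Step~1 is correct: for $x\in G_m(x_0,r_0)$ the density control $\omega_1(B(x,t))\lesssim t^n\Theta_{\omega_1}(B_0)$ for $t\leq 2r_0$ makes $\int_{2B_0}|x-y|^{\alpha-n}\,d\omega_1(y)\lesssim r_0^{\alpha}\Theta_{\omega_1}(B_0)$ via the usual dyadic decomposition.

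However, your handling of the frozen principal part is vague and misses the device the paper actually uses. The paper does not ``express the principal integral as a volume integral involving $\nabla u_1$''; instead it first reduces, via the AMTV scheme, to bounding $|\tilde T_r\omega_1(x)-\tilde T_{r_0/4}\omega_1(x)|$ for a smooth radial cutoff $\varphi_r$, recognizes this difference as $|\nabla(v_r-v_{r_0/4})(x)|$ for the auxiliary functions $v_r(z)=\mathcal E(p_1,z)-\int\mathcal E(z,y)\varphi_r(x-y)\,d\omega_1(y)$, and then performs a \emph{second} freezing step: after a change of variable making $A_s(x_0)=Id$, the model functions $\bar v_r$ (built from $\Theta(\cdot\,;Id)$) are genuinely harmonic in $B(x,r)$, so the mean value property gives $|\nabla(\bar v_r-\bar v_{r_0/4})(x)|\lesssim r^{-1}\fint_{B(x,r)}|\bar v_r-\bar v_{r_0/4}|$. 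A further freezing transfers this back to $\fint_{B(x,r)}|v_r-v_{r_0/4}|$, which is then estimated exactly as in AMTV via $u_1$, Caccioppoli and Bourgain. Your plan never introduces these auxiliary functions nor the mean-value reduction, and this is the step where the cancellation from antisymmetry is actually captured; without it, the dyadic sum you gesture at really would produce the logarithmic divergence you mention. Note also that the paper freezes at the center $x_0$ (so that $A_s(x_0)=Id$ after the change of variable), not at the running point $x$ as in your Step~1; freezing at $x$ is fine for the error term but makes the model kernel in the mean-value step depend on $x$, which complicates the harmonicity argument.

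Finally, the $P$-doubling claim is backwards: $P_{\omega_1,\tilde\alpha}(B_0)\leq a\,\Theta_{\omega_1}(B_0)$ gives the \emph{growth} control $\Theta_{\omega_1}(2^jB_0)\lesssim 2^{j\tilde\alpha}\Theta_{\omega_1}(B_0)$, not the decay $\lesssim 2^{-j\tilde\alpha}$ that you wrote. In any case the integration is only over $2B_0$, so there are no genuine ``tails''; in the paper the doubling enters this lemma only indirectly, through the single-scale doubling needed for Lemma~\ref{lemm_estim_omega_delta} and the Bourgain estimate.
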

\begin{proof}
Suppose $A_s(x_0)=Id$. Indeed, if this is not the case, one can argue via a change of variable as mentioned before. Also, without loss of generality, we can consider only the case $r\leq r_0/4.$

Let $\varepsilon >0$. The proof relies on the estimates for the smoothened potential
\begin{equation}
\tilde{T}_\varepsilon \omega_1(z)\coloneqq \int K(z,y)\varphi_\varepsilon (z-y)d\omega_1(y), \qquad z\in\Rn1,
\end{equation}
where $\varphi\colon\Rn1\to [0,1]$ is a smooth radial function whose support is contained in $\Rn1\setminus B(0,1)$, equals $1$ on $\Rn1\setminus B(0,2)$ and $\varphi_\epsilon$ denotes the dilate $\varphi_\varepsilon(z)=\varphi(\epsilon^{-1}z).$

Now take $x\in G_m(x_0,r_0)$ and considering $r\leq r_0/4$ and define
\begin{equation}\label{v_r_ell}
v_r(z)=\mathcal E(p_1,z)-\int \mathcal E(z,y)\varphi_r(x-y)d\omega_1(y), \qquad z\in \Rn1\setminus [\supp(\varphi_r(x-\cdot)\omega_1)\cup\{p_1\}].
\end{equation}
Recall that $A_s(x_0)=Id$ and that $\Theta(\cdot;A(x_0))=\Theta(\cdot;A_s(x_0))$. On the same range of $z$ of \eqref{v_r_ell} we consider
\begin{equation}
\bar v_r(z)= \Theta(p_1-z;Id)-\int \Theta (z-y;Id)\varphi_r(x-y)d\omega_1(y).
\end{equation}
As in \cite[Lemma 6.3]{AMTV}, to prove the lemma it suffices to show the validity of the estimate
\begin{equation}
|\tilde T _r\omega_1(x)-\tilde T _{r_0/4}\omega_1(x)|\lesssim \Theta_{\omega_1}(B_0).
\end{equation}
To this purpose, observe that
\begin{equation}
\begin{split}
|\tilde T _r\omega_1(x)-\tilde T _{r_0/4}\omega_1(x)|&=|\nabla v_r(x)-\nabla v_{r_0/4}(x)|\\
&=\Big|\int \n1 \E(x,y)\big(\varphi_r(x-y)-\varphi_{r_0/4}(x-y)\big)d\omega_1(y)\Big|.
\end{split}
\end{equation}
Now, using Lemma \ref{lemm_freezing} and the H\"older continuity of $A$, it is not difficult (recall that $r_0\leq 1$) to prove that
\begin{equation}
|\n1 \E(x,y)-\n1 \Theta(x-y;Id)|\lesssim \frac{r_0^{\tilde\alpha}}{|x-y|^n}\leq \frac{1}{|x-y|^n},
\end{equation}
which in turn implies
\begin{equation}\label{ttilde1}
\begin{split}
|\tilde T _r\omega_1(x)-\tilde T _{r_0/4}\omega_1(x)|&\lesssim \Theta_{\omega_1}(B_0)+\Big|\int \n1 \Theta(x-y;Id)\big(\varphi_r(x-y)-\varphi_{r_0/4}(x-y)\big)d\omega_1(y)\Big|\\
&= |\nabla\bar{v}_r(x)-\nabla\bar v_{r_0/4}(x)|+ \Theta_{\omega_1}(B_0).
\end{split}
\end{equation}
We claim that $|\bar{v}_r(x)-\bar v_{r_0/4}(x)|\lesssim  \Theta_{\omega_1}(B_0),$ which would conclude the proof.
To show this, notice that functions $\bar v_r$ and $\bar v_{r_0/4}$ are harmonic outside $\supp(\varphi_r(x-\cdot)\omega_1)\cup \{p_1\}$, hence in particular in $B(x,r)$. Then, an application of the mean value property gives
\begin{equation}\label{ttilde2}
|\nabla\bar{v}_r(x)-\nabla\bar v_{r_0/4}(x)|\lesssim \frac{1}{r}\fint_{B(x,r)}|\bar{v}_r(z)-\bar v_{r_0/4}(z)|dz.
\end{equation}
Another application of the freezing argument together with the $C^{\tilde\alpha}$-continuity of $A$ proves
\begin{equation}
|\bar{v}_r(z)-\bar v_{r_0/4}(z)-v_r(z)-v_{r_0/4}(z)|\lesssim  r_0^{\tilde\alpha} r\, \Theta_{\omega_1}(B_0), \qquad z\in B(x,r)
\end{equation}
that, gathered with \eqref{ttilde1} and \eqref{ttilde2} gives
\begin{equation}
\begin{split}
|\tilde T _r\omega_1(x)-\tilde T _{r_0/4}\omega_1(x)|&\lesssim \Theta_{\omega_1}(B_0)+\frac{1}{r}\fint_{B(x,r)}|v_r(z)-v_{r_0/4}(z)|dz \\
&\leq \Theta_{\omega_1}(B_0)+\frac{1}{r}\fint_{B(x,r)}|v_r(z)| dz + \frac{1}{r}\fint_{B(x,r)} |v_{r_0/4}(z)|dz.
\end{split}
\end{equation}
From this point on, the proof is analogous to that in \cite{AMTV}.
\end{proof}
The proof of the Theorem \ref{theorem_two_phase_elliptic} follows the footprints of that of \cite{AMT} and \cite{AMTV}. More precisely, taking $x_0\in \tilde E_{m,\delta}$ and $r_0$ as in Lemma \ref{estim_T_dens}, we split the set $G_m(x_0,r_0)$ as a union of
\begin{equation}
G^{zd}_m(x_0,r_0)=\big\{x\in G_m(x_0,r_0):\lim_{r\to 0}\Theta_{\omega_1}\big(B(x,r)\big)=0\big\}
\end{equation}
and
\begin{equation}
G^{pd}_m(x_0,r_0)=G_m(x_0,r_0)\setminus G^{zd}_m(x_0,r_0).
\end{equation}
Then, using Lemma \ref{estim_T_dens}, the elliptic analogue of \cite[Lemma 6.5]{AMTV} and the rectifiability Theorem \ref{main_thm_new} that we proved in the present paper, it is possible to infer that
\begin{equation}
\omega_1(G^{zd}_m(x_0,r_0))=0.
\end{equation}
On the other side, \cite[Theorem 3]{PPT} ensures the existence of an $n$-rectifiable set $F(x_0,r_0)\subset G^{pd}_m(x_0,r_0)$ of mutual absolute continuity of the elliptic measure $\omega_1|_{F(x_0,r_0)}$ and the Hausdorff measure $\mathcal H^n|_{F(x_0,r_0)}$ that covers $G_m(x_0,r_0)$ up to a $\omega_1$-null set. This concludes the proof of Theorem \ref{theorem_two_phase_elliptic}.

\label{Bibliography}

\end{document}